\definecolor{amethyst}{rgb}{0.54 0.17 0.89}
\def\l@subsection{\@tocline{2}{0pt}{2.5pc}{5pc}{}}
\theoremstyle{theorem}
\newtheorem{thm}{Theorem}[section]
\newtheorem{theorem}[thm]{Theorem}
\newtheorem{lemma}[thm]{Lemma}
\newtheorem{prop}[thm]{Proposition}
\newtheorem{proposition}[thm]{Proposition}
\newtheorem{cor}[thm]{Corollary}
\newtheorem{corollary}[thm]{Corollary}
\theoremstyle{definition}
\newtheorem{definition}[thm]{Definition}
\newtheorem{example}[thm]{Example}
\newtheorem{remark}[thm]{Remark}
\def\Fiber{\mathsf{Fiber}}
\def\Scale{\mathsf{Scale}}
\def\scale{\mathsf{Scale}}
\def\colscale{\mathsf{ColScale}}
\def\Scroll{\mathsf{Scroll}}
\def\Tape{\mathsf{Tape}}
\def\Table{\mathsf{Table}}
\def\Vector{\mathsf{Vector}}
\def\Snake{\mathsf{Snake}}
\def\Cosnake{\mathsf{CoSnake}}
\def\Ouroboros{\mathsf{Ouro}}
\def\Coouroboros{\mathsf{CoOuro}}
\def\Slither{\mathsf{Slither}}
\def\Coslither{\mathsf{CoSlither}}
\def\Head{\mathsf{Head}}
\def\Tail{\mathsf{Tail}}
\def\Live{\mathsf{Live}}
\def\Swallow{\mathsf{Swal}}
\def\Coswallow{\mathsf{CoSwal}}
\def\Affsnake{\Snake^{\text\Plane}\!}
\def\Affcosnake{\Cosnake^{\text\Plane}\!}
\def\0{\color{gray}0}
\def\1{\bf 1}
\definecolor{Red}{rgb}{.9,0,0}
\definecolor{Blue}{rgb}{0,0,.9}
\definecolor{Green}{HTML}{006600}
\def\hang{\hangindent 24pt}
\def\d@nger{\medbreak\begingroup\clubpenalty=10000
  \def\par{\endgraf\endgroup\medbreak} 
  \noindent\hang\hangafter=-2
  \hbox to0pt{\hskip-\hangindent\dbend\hfill}}
\outer\def\danger{\d@nger}
\newcommand{\p}{{\bf p}}
\newcommand{\q}{{\bf q}}
\newcommand{\ff}{\mathbb{F}}
\newcommand{\rr}{\mathbb{R}}
\newcommand{\zz}{\mathbb{Z}}
\newcommand{\cc}{\mathfrak{C}}
\newcommand{\nn}{\mathbb{N}}
\renewcommand{\ss}{\mathfrak{S}}
\newcommand{\lcm}{\operatorname{lcm}}
\newcommand{\Tog}{\operatorname{Tog}}
\definecolor{green}{HTML}{006600}
\newcommand{\dfn}[1]{\textcolor{blue}{\emph{#1}}}
\renewcommand{\labelitemii}{\scriptsize\raise2pt\hbox{$\!{\blacktriangleright}$}}
\newcommand{\cala}{\mathcal{A}}
\newcommand{\calc}{\mathcal{C}}
\newcommand{\cali}{\mathcal{I}}
\newcommand{\calj}{\mathcal{J}}
\newcommand{\call}{\mathcal{L}}
\newcommand{\cals}{\mathcal{S}}
\newcommand{\calt}{\mathcal{T}}
\newcommand{\calv}{\mathcal{V}}
\newcommand{\calx}{\mathcal{X}}
\renewcommand{\bar}{\overline}
\newcommand{\E}{E}
\newcommand\<{\left<}
\renewcommand\>{\right>}
\newcommand\longto{\longrightarrow}
\DeclareMathOperator{\codeg}{codeg}
\DeclareMathOperator{\row}{Row}
\newcommand\Tstrut{\rule{0pt}{2ex}}     \newcommand\Bstrut{\rule[-.9ex]{0pt}{0pt}}
\title{Torsors and Tilings from Toric Toggling}
\author[C. Defant, M. Joseph, M. Macauley, A. McDonough]{Colin Defant, Michael Joseph, Matthew Macauley, Alex McDonough}
\begin{document}

\begin{abstract}
Much of dynamical algebraic combinatorics focuses on global dynamical systems defined via maps that are compositions of local toggle operators. The second author and Roby studied such maps that result from toggling independent sets of a path graph. We investigate a ``toric'' analogue of this work by analyzing the dynamics arising from toggling independent sets of a cycle graph. 
Each orbit in the dynamical system can be encoded via a grid of 0s and 1s; two commuting bijections on the set of 1s in this grid produce torsors for what we call the infinite \emph{snake group} and the finite \emph{ouroboros groups}. By studying related covering maps, we deduce precise combinatorial properties of the orbits. Because the snake and ouroboros groups are abelian, they define tilings of cylinders and tori by parallelograms, which we also characterize. Many of the ideas developed here should be adaptable both to other toggle actions in combinatorics and to other cellular automata. 
\end{abstract}

\keywords{Dynamical algebraic combinatorics, independent set, toggle group, snake group, snakes on a plane, ouroboros on a torus, torsor}

\maketitle
\tableofcontents

\section{Introduction} 
\subsection{A motivating example} 

It has been said that mathematics is the study of patterns, and this paper is all about understanding and analyzing interesting patterns that arise from a simple combinatorial action. Rather than write this paper in a traditional format starting with an overview of the background followed by a list of necessary definitions, we will begin with an example that illustrates the types of curious patterns and structures that attracted us to this problem in the first place. The action we are studying is elementary enough that it can be easily understood by a student in elementary school, yet the mathematical ideas that arise come from the interplay of combinatorics, group theory, number theory, and algebraic topology. The commuting bijections that make this all possible---they define torsors and tilings from orbits of independent sets---also appear in other actions of combinatorial objects and certain cellular automata. 

An independent set of the cycle graph $\calc_n$ can be viewed as a cyclic binary string $v_1,\dots,v_n$ such that no two adjacent entries are both $1$, where ``cyclic'' means that $v_1$ and $v_n$ are adjacent. The \emph{toggle operation} at position $k$ is the function $\tau_k$ that ``attempts to flip'' the $k^{\rm th}$ bit. Specifically, if $v_k=1$, then $\tau_k$ flips it to $0$. On the other hand, if $v_k=0$, then $\tau_k$ flips it to $1$ if doing so does not introduce consecutive $1$s; otherwise, it fixes the $k^{\rm th}$ bit. In this paper, we will study the action of iteratively toggling the bits of our binary string in the order $\tau_1,\dots,\tau_n$, and we will denote this by the map $\tau=\tau_n\circ\cdots\circ\tau_1$. Given an initial cyclic binary string $x^{(0)}$, let $x^{(1)}=\tau\left(x^{(0)}\right)$, $x^{(2)}=\tau\left(x^{(1)}\right)$, and so on. Eventually, after some $m\geq 1$ number of steps, we will return to our original string. That is, $x^{(m+i)}=x^{(i)}$ for all $i$. An example of an orbit $x^{(0)},\dots,x^{(m-1)}$ of this action with $n=12$ and $m=15$ is shown in Figure~\ref{fig:motivating-example}.

\colorlet{color1}{Red}
\colorlet{color2}{Blue}
\colorlet{color3}{orange}
\colorlet{color4}{green!95!black}
\colorlet{color5}{orange}
\colorlet{color6}{green!95!black}
\colorlet{color7}{orange}
\colorlet{color8}{green!95!black}
\begin{figure}[!ht]
\setlength{\tabcolsep}{2.5pt}
\renewcommand{\arraystretch}{.4}
\begin{tabular}{ccccccccccccc}
$x$ & $v_1$ & $v_2$ & $v_3$ & $v_4$ & $v_5$ & $v_6$ &
$v_7$ & $v_8$ & $v_9$ & $v_{1\!0}$ & $v_{1\!1}$ & $v_{1\!2}$ \Bstrut
\\\hline
$x^{(0)}$ & \1 & \0 & \1 & \0 & \1 & \0 & \0 & \0 & \1 & \0 & \1 & \0 \\
$x^{(1)}$ & \0 & \0 & \0 & \0 & \0 & \1 & \0 & \0 & \0 & \0 & \0 & \1 \\
$x^{(2)}$ & \0 & \1 & \0 & \1 & \0 & \0 & \1 & \0 & \1 & \0 & \0 & \0 \\
$x^{(3)}$ & \0 & \0 & \0 & \0 & \1 & \0 & \0 & \0 & \0 & \1 & \0 & \1 \\
$x^{(4)}$ & \0 & \1 & \0 & \0 & \0 & \1 & \0 & \1 & \0 & \0 & \0 & \0 \\
$x^{(5)}$ & \0 & \0 & \1 & \0 & \0 & \0 & \0 & \0 & \1 & \0 & \1 & \0 \\
$x^{(6)}$ & \1 & \0 & \0 & \1 & \0 & \1 & \0 & \0 & \0 & \0 & \0 & \0 \\
$x^{(7)}$ & \0 & \1 & \0 & \0 & \0 & \0 & \1 & \0 & \1 & \0 & \1 & \0 \\
$x^{(8)}$ & \0 & \0 & \1 & \0 & \1 & \0 & \0 & \0 & \0 & \0 & \0 & \1 \\
$x^{(9)}$ & \0 & \0 & \0 & \0 & \0 & \1 & \0 & \1 & \0 & \1 & \0 & \0 \\
$x^{(10)}$ & \1 & \0 & \1 & \0 & \0 & \0 & \0 & \0 & \0 & \0 & \1 & \0 \\
$x^{(11)}$ & \0 & \0 & \0 & \1 & \0 & \1 & \0 & \1 & \0 & \0 & \0 & \1 \\
$x^{(12)}$ & \0 & \1 & \0 & \0 & \0 & \0 & \0 & \0 & \1 & \0 & \0 & \0 \\
$x^{(13)}$ & \0 & \0 & \1 & \0 & \1 & \0 & \1 & \0 & \0 & \1 & \0 & \1 \\
$x^{(14)}$ & \0 & \0 & \0 & \0 & \0 & \0 & \0 & \1 & \0 & \0 & \0 & \0\Bstrut \\ \hline \Tstrut
{\bf Sum:} & \bf{3} & \bf{4} & \bf{5} & \bf{3} & \bf{4} & \bf{5} & \bf{3} & \bf{4} & \bf{5} & \bf{3} & \bf{4} & \bf{5} 
\end{tabular}
\caption{An orbit of size $15$ consisting of independent sets of the cycle graph $\calc_{12}$.}\label{fig:motivating-example}
\end{figure}

Toggle actions on combinatorial objects are important in the field of \emph{dynamical algebraic combinatorics}; in the next subsection, we will provide some context, background, and motivation for studying them, and we will discuss why independent sets are of interest. For now, let us return to the example in Figure~\ref{fig:motivating-example} and point out that for this particular orbit, the cyclic string $\cdots345345\cdots$ consisting of the \emph{column sums} has period~$3$. Our computational experiments suggested that this period must be odd in any orbit. This was the first curious property that we wanted to understand. Another pattern we observed is that if one reads the \emph{orbit table} as one reads a book---across each row, from top to bottom---the resulting string (called the \emph{orbit vector}) consists of several repeating copies of a shorter string. For example, the table in Figure~\ref{fig:motivating-example} has $180$ entries, but it is easy to check that it is simply four repeated copies of the first 45 entries. For each fixed $n$, we also saw certain orbit sizes arise more often than others, and we wanted to better understand this. Finally, we saw patterns within the $1$s which held for any orbit table. Specifically, for every $1$, there is another $1$ either two positions to the right or one position diagonally down and to the right (where we allow ``wrapping around'' the end of the table and from bottom to top). Additionally, many local patterns of $1$s seemed to repeat throughout the tables. Notice how Figure~\ref{fig:motivating-example} has many examples of $10101$ substrings, as well as three consecutive diagonal $1$s. This regularity of patterns suggested that there could be some hidden algebraic structure. Indeed, there turns out to be a simply transitive action of a particular abelian group on the \emph{live entries} (the $1$s) in any orbit table. In other words, the live entries are endowed with a group structure, but there is no distinguished identity element; this is called a \emph{torsor}. This group, which is an invariant of the orbit, encodes a number of combinatorial properties of the original action. The fact that this group is abelian means that the it defines a regular tiling of an infinite cylinder by parallelograms. The combinatorial patterns that we initially saw, and much more, can be explained through this algebraic lens. In this paper, we will develop the theory of these actions, and we will prove a number of theorems about them. 

This paper is organized as follows. We will conclude the introduction in Section~\ref{subsec:toggling} by reviewing the notion of toggling and further discussing how we became interested in this problem. This can be considered ``optional,'' but it gives the rest of the paper context and should be helpful for the non-expert. In Section~\ref{sec:scrolls}, we formalize the orbits generated by toggling independent sets in two ways:\ as bi-infinite strings called \emph{ticker tapes} and as bi-infinite tables called \emph{scrolls}, which naturally live on cylinders. We introduce commuting bijections called the \emph{successor functions} and \emph{co-successor functions}, which define equivalence classes called \emph{snakes} and \emph{co-snakes}. These correspond to cosets of an infinite abelian \emph{snake group}, which acts simply transitively on the set of live entries. Studying this group and its actions---both on the scrolls and on their universal covers---helps us understand toggling dynamics. Not only does this group make the set of live entries into a torsor, but its action gives the (co-)snakes a regular ``shape'' called their \emph{(co-)slither}. These are invariants of the orbit, and they associate to the orbit tilings of the cylinder and the plane (its universal cover) by parallelograms. In Section~\ref{sec:classification}, we completely classify the dynamics of the action (Theorem~\ref{thm:2a+3b+4c=n+1}) by characterizing the possible orbits (Corollary~\ref{cor:2a+3b+4c=n+1}). In Section~\ref{sec:tables}, we take various quotients of our scrolls to obtain finite orbit tables that naturally embed into tori. The \mbox{(co-)snakes} merge into equivalence classes called \emph{(co-)ouroboroi}, inducing a homomorphism from the snake group to the finite abelian \emph{ouroboros group}. The snake group action descends to a simply transitive action on the live entries in the quotient tables, endowing them with a torsor structure for the ouroboros group. In Section~\ref{sec:sum-vectors}, we return to the original topic that drew us to this problem: the period of the so-called \emph{sum vector}. The odd periodicity  (Corollary~\ref{cor:sum-vector}) is a straightforward consequence of the theory developed in the previous sections. However, we prove a much stronger result by characterizing which odd numbers arise as the period of the sum vector of some orbit for a given $n$ (Theorem~\ref{thm:snakeconstruction}). We conclude this paper in Section~\ref{sec:conclusions} with discussions of open problems, how this framework can be broadened to other actions from dynamical algebraic combinatorics, and how it arises in certain cellular automata~\cite{david2021toggling}. The crux of why all this works is due to the existence of commuting bijections that act simply transitively on the live entries. The fact that this phenomenon appears in other problems from combinatorics and other fields such as cellular automata suggests that the theory is of general interest.

\subsection{Why toggle independent sets?}\label{subsec:toggling}

The notion of toggling has recently gained  considerable interest within the field of dynamical algebraic combinatorics; for surveys, see \cite{roby2016dynamical,striker2017dynamical}. Toggles yield an action on a collection $\call$ of subsets of a finite set. Common examples of the set $\call$ are the set of order ideals of a poset \cite{striker2012promotion}, the set of antichains of a poset \cite{joseph2019antichain}, the set of noncrossing partitions \cite{einstein2016noncrossing}, or the set of independent sets of a path graph \cite{joseph2018toggling}. In what follows, we may assume $\call$ is a collection of subsets of the set $[n]:=\{1,\ldots,n\}$. 
For $k\in[n]$, the \dfn{toggle} $\tau_k\colon\call\to\call$ is defined by
\begin{equation}\label{eqn:tau_k}
\tau_k(E)=\begin{cases}E\cup\{k\} & \text{if }k\not\in E \text{ and } E\cup\{k\}\in \call \\
E\setminus \{k\} & \text{if }k\in E \text{ and } E\setminus \{k\}\in \call \\
E & \text{otherwise}.
\end{cases}
\end{equation}

By construction, each toggle is a bijection, so this defines a group of permutations
\[
\Tog(\call)=\left<\tau_1,\dots,\tau_n\right>
\]
called the \dfn{toggle group}. Since each $\tau_k$ is an involution, $\Tog(\call)$ is a quotient of a Coxeter group. Following Coxeter theory, we will define a \dfn{Coxeter element} to be the product of all toggles in some order. Though sometimes it can be of interest to classify this group, work on toggling is usually focused elsewhere, such as understanding which classical bijections can be decomposed as products of toggles, which combinatorial statistics are invariant or \emph{homomesic} under toggling, and how the dynamics change under different toggle orders. 

The first objects to be toggled were order ideals of a poset $P$. In 1974, Brouwer studied a bijection on the set $\calj(P)$ of order ideals that sends $I$ to the order ideal generated by the minimal elements of $P\setminus I$ \cite{brouwer1974period}. In 1995, Cameron and and Fon-Der-Flaass showed that in this bijection can be constructed in graded posets by toggling each element of $P$ once, in a particular order: by rows, from top-to-bottom \cite{cameron1995orbits}. 
In 2012, Striker and Williams observed that the conjugate Coxeter element, toggling by columns, was closely related to the classic operation of \emph{promotion} on semistandard Young tableaux. This motivated them to name the aforemtioned bijection \emph{rowmotion}, denoted $\row_J$, and also to formalize and name toggles and the toggle group. 

In \cite{cameron1995orbits}, Cameron and and Fon-Der-Flaass also studied what is now known as \emph{antichain rowmotion}: the map $\row_{\cala}$ sending $A$ to the set of minimal element(s) of the complement of the order ideal $I(A):=\big\{x\in P\mid x\leq_P a\text{ for some $a\in A$}\big\}$.
Panyushev studied this map on root posets~\cite{panyushev2009orbits}; this is one of the works that was a major impetus for the development of dynamical algebraic combinatorics.
The second author of the present paper studied this bijection in the context of toggling \cite{joseph2019antichain}. Even though order ideal and antichain rowmotion are conjugate via $\row_J\circ I=I\circ\row_{\cala}$, relating their factorizations into individual toggles is surprisingly trickier. 

The aforementioned work led to the notion of toggling other combinatorial objects~\cite{striker2015toggle}, as discussed above and defined in Eq.~\eqref{eqn:tau_k}. Since every antichain is an independent set in a certain graph, toggling independent sets was a natural next step. Additionally, the second and third author (with others) considered toggling noncrossing partitions, viewed as collections of arcs \cite{einstein2016noncrossing}. This revealed factorizations of existing actions such as the Kreweras complement and Simion--Ullman involution in terms of toggles. Like with antichains, toggling noncrossing partitions is a special case of toggling independent sets. All of this motivated the third author and Roby to investigate independent set toggling on its own. This is a difficult problem in general, so they started with a path graph \cite{joseph2018toggling}. The goals of the authors of \cite{joseph2018toggling} were to study combinatorial statistics called \emph{homomesies} arising from toggling independent sets of path graphs, as well as to prove several conjectures of Propp. The actual toggle groups were later classified in \cite{numata2021action}. 

In this article, we direct our attention to toggling independent sets of a cycle graph.\footnote{This transition from path graphs to cycle graphs explains our use of the word ``toric'' in the title of this article. Indeed, the articles \cite{develin2016toric,defant2023toric} use the word ``toric'' to refer to cyclic analogues of objects that has previously been considered in ``linear'' settings. The paper \cite{defant2023toric} also uses local ``toggle operators'' to define an operator called \emph{toric promotion}, although the toggles considered there are different from the ones considered here.} When toggling independent sets of graphs, certain aspects of cycle graphs end up being more complicated than path graphs, but others end up being nicer. For example, when toggling independent sets of a path, all Coxeter elements are conjugate; this simplifies certain arguments and makes some results hold for all Coxeter elements. In contrast, when working with the independent sets of the cycle $\calc_n$, the Coxeter elements fall into $n-1$ conjugacy classes \cite{develin2016toric}. On the other hand, one nice property of cycle graphs is that they are vertex-transitive. In the end, different patterns and different questions arise over cycle graphs than over path graphs, in ways that were initially not clear to us, and the extent of which ultimately surprised us. This problem became much more algebraic in nature, and a broader mathematical theory emerged.

\section{Dynamics and actions on infinite sets}\label{sec:scrolls}
\subsection{Scrolls vs.\ ticker tapes}\label{subsec:scrolls}

Let $\calc_n$ denote the cycle graph with vertex set $V(\calc_n)=\zz_n$ (the integers modulo $n$) and edges $\{i,i+1\}$ (including $\{n,1\}$). We often identify $\zz_n$ with $[n]:=\{1,\ldots,n\}$ or $\{0,\ldots,n-1\}$ in the obvious manner. Throughout the paper, let $n\geq 2$. Though an independent set of $\calc_n$ is a subset of $[n]$, it will usually be easiest for us to denote it as a length-$n$ binary string, with the requirement that it does not contain a pair of consecutive $1$s, including those that ``wrap around'' the end of the word. We write $\mathcal{I}_n$ for the collection of independent sets of $\calc_n$, regardless of which notation we use. We will let $\ff_2=\{0,1\}$ denote the bits of the binary string, i.e., the states of the vertices. We may write a binary $n$-tuple either as a string $v_1\cdots v_n$ or as a vector $(v_1,\dots,v_n)$ in $\ff_2^n$.

For each vertex $k\in[n]$, there is a bijective \dfn{toggle operation} $\tau_k\colon\mathcal{I}_n\to\mathcal{I}_n$ that adds $k$ to an independent set $I$ if $k\not\in I$ and $I\cup\{k\}$ is an independent set, removes $k$ from $I$ if $k\in I$, and fixes $I$ otherwise.\footnote{Alternatively, this can be formalized by taking $\call=\cali_n$ in the definition of generalized toggling in Eq.~\eqref{eqn:tau_k}. Notice that the condition ``\emph{and $I\setminus\{k\}\in\call$}'' in the middle line is actually unnecessary, because removing a vertex from an independent set always leaves it independent.} Throughout this paper, we will toggle in the order $1,\dots,n$; that is, we consider the map
\[
\tau\in\Tog(\cali_n),\qquad
\tau:=\tau_n\circ\cdots\circ\tau_1.
\]
Sometimes, we will write $v_1,\dots,v_n$ rather than $1,\dots, n$ for extra emphasis, like we did in the header of Figure~\ref{fig:motivating-example}.

In the remainder of this subsection, we will describe two formats for viewing the dynamics that result from toggling independent sets of $\calc_n$. The first is a vertically bi-infinite periodic table of 0s and 1s called the \emph{scroll}, and the second is a bi-infinite periodic sequence called the \emph{ticker tape} that we get from reading the scroll like one reads a book, across the columns and downward row-by-row. Each format has its advantages and disadvantages. Certain features are more prominent in one while hidden in the other or are notationally simpler in one than the other. 

Let $x=x^{(0)}=(x_1,\dots,x_n)\in\cali_n$, and let $x^{(i)}=\tau^i(x)$ be the result of iterating $\tau$ exactly $i$ times from $x$. Since $\tau$ is a bijection on $\cali_n$, we can define this for all $i\in\zz$. Consider the table with $n$ columns, indexed by $j=1,\dots,n$, and rows indexed by $i\in\zz$, reading downward. The $(i,j)$-entry $X_{i,j}$ is the state of vertex $v_j$ in $x^{(i)}$. In other words, the $i^{\rm th}$ row is just the vector $x^{(i)}$. This infinite table is called the \dfn{scroll} of $x$, and we will denote it by $\cals=(X_{i,j})=\Scroll(x)$.
The scroll of $x=00001010000\in\ff_2^{11}$, shown in Figure~\ref{fig:example}, will be our new running example.

\colorlet{color1}{Red}
\colorlet{color2}{Blue}
\colorlet{color3}{Red}
\colorlet{color4}{Blue}
\colorlet{color5}{purple}
\colorlet{color6}{Green}
\colorlet{color7}{orange}
\colorlet{color8}{amethyst}

\begin{figure}[!ht]
\begin{tikzpicture}
\setlength{\tabcolsep}{2.5pt}
\renewcommand{\arraystretch}{.4}
\begin{scope}
\tikzstyle{every node}=[font=\small,anchor=south]
\node at (0,0) {
\begin{tabular}{cccccccccccc}
$x$ & $v_1$ & $v_2$ & $v_3$ & $v_4$ & $v_5$ & $v_6$ &
$v_7$ & $v_8$ & $v_9$ & $v_{1\!0}$ & $v_{1\!1}$ \Bstrut
\\\hline
$x^{(0)}$ & \0 & \0 & \0 & \0 & {\bf\color{color1}1} & \0 & {\bf\color{color1}1} & \0 & \0 & \0 & \0 \\
$x^{(1)}$ & {\bf\color{color2}1} & \0 & {\bf\color{color2}1} & \0 & \0 & \0 & \0 & {\bf\color{color1}1} & \0 & {\bf\color{color1}1} & \0 \\
$x^{(2)}$ & \0 & \0 & \0 & {\bf\color{color2}1} & \0 & {\bf\color{color2}1} & \0 & \0 & \0 & \0 & {\bf\color{color1}1} \\
$x^{(3)}$ & \0 & {\bf\color{color1}1} & \0 & \0 & \0 & \0 & {\bf\color{color2}1} & \0 & {\bf\color{color2}1} & \0 & \0 \\
$x^{(4)}$ & \0 & \0 & {\bf\color{color1}1} & \0 & {\bf\color{color1}1} & \0 & \0 & \0 & \0 & {\bf\color{color2}1} & \0 \\
$x^{(5)}$ & {\bf\color{color2}1} & \0 & \0 & \0 & \0 & {\bf\color{color1}1} & \0 & {\bf\color{color1}1} & \0 & \0 & \0 \\
$x^{(6)}$ & \0 & {\bf\color{color2}1} & \0 & {\bf\color{color2}1} & \0 & \0 & \0 & \0 & {\bf\color{color1}1} & \0 & {\bf\color{color1}1} \\ \vspace{-0.05 in} \\ 
\end{tabular}};
\node at (8,0) {
\begin{tabular}{cccccccccccc}
$x$ & $v_1$ & $v_2$ & $v_3$ & $v_4$ & $v_5$ & $v_6$ &
$v_7$ & $v_8$ & $v_9$ & $v_{1\!0}$ & $v_{1\!1}$ \Bstrut
\\\hline
$x^{(0)}$ & \0 & \0 & \0 & \0 & {\bf\color{color5}1} & \0 & {\bf\color{color6}1} & \0 & \0 & \0 & \0 \\
$x^{(1)}$ & {\bf\color{color3}1} & \0 & {\bf\color{color4}1} & \0 & \0 & \0 & \0 & {\bf\color{color7}1} & \0 & {\bf\color{color8}1} & \0 \\
$x^{(2)}$ & \0 & \0 & \0 & {\bf\color{color5}1} & \0 & {\bf\color{color6}1} & \0 & \0 & \0 & \0 & {\bf\color{color3}1} \\
$x^{(3)}$ & \0 & {\bf\color{color4}1} & \0 & \0 & \0 & \0 & {\bf\color{color7}1} & \0 & {\bf\color{color8}1} & \0 & \0 \\
$x^{(4)}$ & \0 & \0 & {\bf\color{color5}1} & \0 & {\bf\color{color6}1} & \0 & \0 & \0 & \0 & {\bf\color{color3}1} & \0 \\
$x^{(5)}$ & {\bf\color{color4}1} & \0 & \0 & \0 & \0 & {\bf\color{color7}1} & \0 & {\bf\color{color8}1} & \0 & \0 & \0 \\
$x^{(6)}$ & \0 & {\bf\color{color5}1} & \0 & {\bf\color{color6}1} & \0 & \0 & \0 & \0 & {\bf\color{color3}1} & \0 & {\bf\color{color4}1} \\ \vspace{-0.05 in} \\  
\end{tabular}};
\end{scope}
\end{tikzpicture}
\caption{The scroll of $x^{(0)}=(0,0,0,0,1,0,1,0,0,0,0)\in\ff_2^{11}$ consists of the seven rows $x^{(0)},\dots,x^{(6)}$ repeated indefinitely. This is shown twice, with different color schemes, to emphasize visual patterns among the \emph{live} (value of 1) entries that we will soon formalize as \emph{snakes} and \emph{co-snakes}, respectively. 
}\label{fig:example}
\end{figure}

The global dynamics of $\tau$ can be read off the scroll as one reads from a book: reading the rows from top to bottom, with each row read from left to right. This defines a bi-infinite sequence called the \dfn{ticker tape}, denoted $\calx=(X_k)=\Tape(x)$. To convert between ticker tape and scroll notation, let $X_1=X_{0,1}$, $X_2=X_{0,2}$, $X_3=X_{0,3}$, and so on, so that $X_{in+j}=X_{i,j}$. The ticker tape of the example in Figure~\ref{fig:example} is
\[ \small
\dots,
\underbrace{X_{-6},X_{-5},X_{-4},X_{-3},X_{-2},X_{-1},X_0}_{0,0,0,0,1,0,1},
\underbrace{X_1,X_2,X_3,X_4,X_5,X_6,X_7}_{0,0,0,0,1,0,1},\underbrace{X_8,X_9,X_{10},X_{11},X_{12},X_{13},X_{14}}_{0,0,0,0,1,0,1},\dots.
\]
Topologically, the ticker tape can be naturally embedded on an infinite line. In contrast, the scroll ``wraps around'' from the end of one row to the beginning of the next, so it is natural to view it as being embedded on a bi-infinite cylinder rather than on a plane. As such, it is always well-founded to speak of the entry immediately to the left or to the right of position $(i,j)$, even if it is in the first or the last column. Notationally, even though we index the columns by $j=1,\dots,n$, it will be convenient to set $X_{i,k+n}=X_{i+1,k}$ for each $k\in\mathbb Z$. 

At times, it will be more convenient to ``lift up'' the cylinder to its universal cover and work with points in the plane. Here, we think of a scroll as a map $\cals\colon\zz\times\zz_n\to\ff_2$, which naturally lifts to the \dfn{universal scroll} $\widehat\cals\colon\zz\times\zz\to\ff_2$, making the following diagram commute:
\begin{equation}\label{eqn:cd-universal}
\xymatrix{\zz\times\zz_{\color{white}n}\ar[d]_{\q}\ar@{-->}[rd]^{\widehat{\cals}} & \\ \zz\times\zz_n\ar[r]^\cals & \ff_2}\hspace{20mm}
\xymatrix{(i+k,j+kn)\ar@{|->}[d]_{\q}\ar@{|-->}[rd]^{\widehat{\cals}} & \\ (i,j)\ar@{|->}[r]^\cals & X_{i,j}}
\end{equation}
Note that the quotient map $\q$ is not quite the ``canonical'' quotient from the plane to a cylinder that reduces the second entry modulo $n$, because the row increases when we wrap around. There is no such analogue for lifting the ticker tape in this manner because its canonical domain is a subset of $\rr$, which is already simply connected.
Note that each infinite row of the universal scroll is a shifted copy of the ticker tape.
In this context, we will usually take $\zz_n=[n]=\{1,\dots,n\}$, rather than $\{0,\dots,n-1\}$, because we want to index the columns by the vertices, which are in $[n]$. A portion of the universal scroll of our running example from Figure~\ref{fig:example} is shown in Figure~\ref{fig:universal-scroll}. The shading is meant to highlight disjoint copies of orbits under the toggling map $\tau$.

\begin{figure}[!ht]
\begin{tikzpicture}[scale=1.2]
\tikzstyle{every node}=[font=\small,anchor=south,text=gray]
\colorlet{lightgrey}{black!10}
\colorlet{darkgrey}{black!90}
  \begin{scope}[shift={(-2.75,-.35)}]
 \colorlet{color1}{Red}
\colorlet{color2}{Blue}
\foreach \x in {0,...,6} {\node at (-.35,.35*\x) {$\cdots$};}
  \node at (0,2.1) {$0$}; \node at (.25,2.1) {$0$};
 \node at (.5,2.1) {$0$}; \node at (.75,2.1) {$0$};
 \node at (1,2.1) {$\color{color1}\mathbf{1}$}; \node at (1.25,2.1) {$0$};
 \node at (1.5,2.1) {$\color{color1}\mathbf{1}$}; \node at (1.75,2.1) {$0$};
 \node at (2,2.1) {$0$}; \node at (2.25,2.1) {$0$};
 \node at (2.5,2.1) {$0$};
\node at (0,1.75) {$\color{color2}\mathbf{1}$}; \node at (.25,1.75) {$0$};
 \node at (.5,1.75) {$\color{color2}\mathbf{1}$}; \node at (.75,1.75) {$0$};
 \node at (1,1.75) {$0$}; \node at (1.25,1.75) {$0$};
 \node at (1.5,1.75) {$0$}; \node at (1.75,1.75) {$\color{color1}\mathbf{1}$};
 \node at (2,1.75) {$0$}; \node at (2.25,1.75) {$\color{color1}\mathbf{1}$};
 \node at (2.5,1.75) {$0$};
 
   \node at (0,1.4) {$0$}; \node at (.25,1.4) {$0$};
 \node at (.5,1.4) {$0$}; \node at (.75,1.4) {$\color{color2}\mathbf{1}$};
 \node at (1,1.4) {$0$}; \node at (1.25,1.4) {$\color{color2}\mathbf{1}$};
 \node at (1.5,1.4) {$0$}; \node at (1.75,1.4) {$0$};
 \node at (2,1.4) {$0$}; \node at (2.25,1.4) {$0$};
 \node at (2.5,1.4) {$\color{color1}\mathbf{1}$};
 
   \node at (0,1.05) {$0$}; \node at (.25,1.05) {$\color{color1}\mathbf{1}$};
 \node at (.5,1.05) {$0$}; \node at (.75,1.05) {$0$};
 \node at (1,1.05) {$0$}; \node at (1.25,1.05) {$0$};
 \node at (1.5,1.05) {$\color{color2}\mathbf{1}$}; \node at (1.75,1.05) {$0$};
 \node at (2,1.05) {$\color{color2}\mathbf{1}$}; \node at (2.25,1.05) {$0$};
 \node at (2.5,1.05) {$0$};
 
 \node at (0,.7) {$0$}; \node at (.25,.7) {$0$};
 \node at (.5,.7) {$\color{color1}\mathbf{1}$}; \node at (.75,.7) {$0$};
 \node at (1,.7) {$\color{color1}\mathbf{1}$}; \node at (1.25,.7) {$0$};
 \node at (1.5,.7) {$0$}; \node at (1.75,.7) {$0$};
 \node at (2,.7) {$0$}; \node at (2.25,.7) {$\color{color2}\mathbf{1}$};
 \node at (2.5,.7) {$0$};
 
 \node at (0,.35) {$\color{color2}\mathbf{1}$}; \node at (.25,.35) {$0$};
 \node at (.5,.35) {$0$}; \node at (.75,.35) {$0$};
 \node at (1,.35) {$0$}; \node at (1.25,.35) {$\color{color1}\mathbf{1}$};
 \node at (1.5,.35) {$0$}; \node at (1.75,.35) {$\color{color1}\mathbf{1}$};
 \node at (2,.35) {$0$}; \node at (2.25,.35) {$0$};
 \node at (2.5,.35) {$0$};
 
  \node at (0,0) {$0$}; \node at (.25,0) {$\color{color2}\mathbf{1}$};
 \node at (.5,0) {$0$}; \node at (.75,0) {$\color{color2}\mathbf{1}$};
 \node at (1,0) {$0$}; \node at (1.25,0) {$0$};
 \node at (1.5,0) {$0$}; \node at (1.75,0) {$0$};
 \node at (2,0) {$\color{color1}\mathbf{1}$}; \node at (2.25,0) {$0$};
 \node at (2.5,0) {$\color{color1}\mathbf{1}$};
 \end{scope}
\begin{scope}[shift={(0,0)}]
\colorlet{color1}{Red}
\colorlet{color2}{Blue}
\draw [draw=lightgrey,fill=lightgrey] (-.12,.09) rectangle (2.63,2.54);
  \node at (0,2.1) {$0$}; \node at (.25,2.1) {$0$};
 \node at (.5,2.1) {$0$}; \node at (.75,2.1) {$0$};
 \node at (1,2.1) {$\color{color1}\mathbf{1}$}; \node at (1.25,2.1) {$0$};
 \node at (1.5,2.1) {$\color{color1}\mathbf{1}$}; \node at (1.75,2.1) {$0$};
 \node at (2,2.1) {$0$}; \node at (2.25,2.1) {$0$};
 \node at (2.5,2.1) {$0$};
 
   \node at (0,1.75) {$\color{color2}\mathbf{1}$}; \node at (.25,1.75) {$0$};
 \node at (.5,1.75) {$\color{color2}\mathbf{1}$}; \node at (.75,1.75) {$0$};
 \node at (1,1.75) {$0$}; \node at (1.25,1.75) {$0$};
 \node at (1.5,1.75) {$0$}; \node at (1.75,1.75) {$\color{color1}\mathbf{1}$};
 \node at (2,1.75) {$0$}; \node at (2.25,1.75) {$\color{color1}\mathbf{1}$};
 \node at (2.5,1.75) {$0$};
 
   \node at (0,1.4) {$0$}; \node at (.25,1.4) {$0$};
 \node at (.5,1.4) {$0$}; \node at (.75,1.4) {$\color{color2}\mathbf{1}$};
 \node at (1,1.4) {$0$}; \node at (1.25,1.4) {$\color{color2}\mathbf{1}$};
 \node at (1.5,1.4) {$0$}; \node at (1.75,1.4) {$0$};
 \node at (2,1.4) {$0$}; \node at (2.25,1.4) {$0$};
 \node at (2.5,1.4) {$\color{color1}\mathbf{1}$};
 
   \node at (0,1.05) {$0$}; \node at (.25,1.05) {$\color{color1}\mathbf{1}$};
 \node at (.5,1.05) {$0$}; \node at (.75,1.05) {$0$};
 \node at (1,1.05) {$0$}; \node at (1.25,1.05) {$0$};
 \node at (1.5,1.05) {$\color{color2}\mathbf{1}$}; \node at (1.75,1.05) {$0$};
 \node at (2,1.05) {$\color{color2}\mathbf{1}$}; \node at (2.25,1.05) {$0$};
 \node at (2.5,1.05) {$0$};
 
 \node at (0,.7) {$0$}; \node at (.25,.7) {$0$};
 \node at (.5,.7) {$\color{color1}\mathbf{1}$}; \node at (.75,.7) {$0$};
 \node at (1,.7) {$\color{color1}\mathbf{1}$}; \node at (1.25,.7) {$0$};
 \node at (1.5,.7) {$0$}; \node at (1.75,.7) {$0$};
 \node at (2,.7) {$0$}; \node at (2.25,.7) {$\color{color2}\mathbf{1}$};
 \node at (2.5,.7) {$0$};
 \node at (0,.35) {$\color{color2}\mathbf{1}$}; \node at (.25,.35) {$0$};
 \node at (.5,.35) {$0$}; \node at (.75,.35) {$0$};
 \node at (1,.35) {$0$}; \node at (1.25,.35) {$\color{color1}\mathbf{1}$};
 \node at (1.5,.35) {$0$}; \node at (1.75,.35) {$\color{color1}\mathbf{1}$};
 \node at (2,.35) {$0$}; \node at (2.25,.35) {$0$};
 \node at (2.5,.35) {$0$};
  \node at (0,0) {$0$}; \node at (.25,0) {$\color{color2}\mathbf{1}$};
 \node at (.5,0) {$0$}; \node at (.75,0) {$\color{color2}\mathbf{1}$};
 \node at (1,0) {$0$}; \node at (1.25,0) {$0$};
 \node at (1.5,0) {$0$}; \node at (1.75,0) {$0$};
 \node at (2,0) {$\color{color1}\mathbf{1}$}; \node at (2.25,0) {$0$};
 \node at (2.5,0) {$\color{color1}\mathbf{1}$};
 \end{scope}
 \begin{scope}[shift={(2.75,.35)}]
 \colorlet{color1}{Red}
\colorlet{color2}{Blue}
\foreach \x in {0,...,6} {\node at (2.95,.35*\x) {$\cdots$};}
  \node at (0,2.1) {$0$}; \node at (.25,2.1) {$0$};
 \node at (.5,2.1) {$0$}; \node at (.75,2.1) {$0$};
 \node at (1,2.1) {$\color{color1}\mathbf{1}$}; \node at (1.25,2.1) {$0$};
 \node at (1.5,2.1) {$\color{color1}\mathbf{1}$}; \node at (1.75,2.1) {$0$};
 \node at (2,2.1) {$0$}; \node at (2.25,2.1) {$0$};
 \node at (2.5,2.1) {$0$};
   \node at (0,1.75) {$\color{color2}\mathbf{1}$}; \node at (.25,1.75) {$0$};
 \node at (.5,1.75) {$\color{color2}\mathbf{1}$}; \node at (.75,1.75) {$0$};
 \node at (1,1.75) {$0$}; \node at (1.25,1.75) {$0$};
 \node at (1.5,1.75) {$0$}; \node at (1.75,1.75) {$\color{color1}\mathbf{1}$};
 \node at (2,1.75) {$0$}; \node at (2.25,1.75) {$\color{color1}\mathbf{1}$};
 \node at (2.5,1.75) {$0$};
   \node at (0,1.4) {$0$}; \node at (.25,1.4) {$0$};
 \node at (.5,1.4) {$0$}; \node at (.75,1.4) {$\color{color2}\mathbf{1}$};
 \node at (1,1.4) {$0$}; \node at (1.25,1.4) {$\color{color2}\mathbf{1}$};
 \node at (1.5,1.4) {$0$}; \node at (1.75,1.4) {$0$};
 \node at (2,1.4) {$0$}; \node at (2.25,1.4) {$0$};
 \node at (2.5,1.4) {$\color{color1}\mathbf{1}$};
   \node at (0,1.05) {$0$}; \node at (.25,1.05) {$\color{color1}\mathbf{1}$};
 \node at (.5,1.05) {$0$}; \node at (.75,1.05) {$0$};
 \node at (1,1.05) {$0$}; \node at (1.25,1.05) {$0$};
 \node at (1.5,1.05) {$\color{color2}\mathbf{1}$}; \node at (1.75,1.05) {$0$};
 \node at (2,1.05) {$\color{color2}\mathbf{1}$}; \node at (2.25,1.05) {$0$};
 \node at (2.5,1.05) {$0$};
 \node at (0,.7) {$0$}; \node at (.25,.7) {$0$};
 \node at (.5,.7) {$\color{color1}\mathbf{1}$}; \node at (.75,.7) {$0$};
 \node at (1,.7) {$\color{color1}\mathbf{1}$}; \node at (1.25,.7) {$0$};
 \node at (1.5,.7) {$0$}; \node at (1.75,.7) {$0$};
 \node at (2,.7) {$0$}; \node at (2.25,.7) {$\color{color2}\mathbf{1}$};
 \node at (2.5,.7) {$0$};
 \node at (0,.35) {$\color{color2}\mathbf{1}$}; \node at (.25,.35) {$0$};
 \node at (.5,.35) {$0$}; \node at (.75,.35) {$0$};
 \node at (1,.35) {$0$}; \node at (1.25,.35) {$\color{color1}\mathbf{1}$};
 \node at (1.5,.35) {$0$}; \node at (1.75,.35) {$\color{color1}\mathbf{1}$};
 \node at (2,.35) {$0$}; \node at (2.25,.35) {$0$};
 \node at (2.5,.35) {$0$};
  \node at (0,0) {$0$}; \node at (.25,0) {$\color{color2}\mathbf{1}$};
 \node at (.5,0) {$0$}; \node at (.75,0) {$\color{color2}\mathbf{1}$};
 \node at (1,0) {$0$}; \node at (1.25,0) {$0$};
 \node at (1.5,0) {$0$}; \node at (1.75,0) {$0$};
 \node at (2,0) {$\color{color1}\mathbf{1}$}; \node at (2.25,0) {$0$};
 \node at (2.5,0) {$\color{color1}\mathbf{1}$};
 \end{scope}
 \begin{scope}[shift={(-2.75,2.1)}]
 \colorlet{color2}{Red}
\colorlet{color1}{Blue}
\draw [draw=lightgrey,fill=lightgrey] (-.12,.09) rectangle (2.63,2.54);
\foreach \x in {0,...,6} {\node at (-.35,.35*\x) {$\cdots$};}
\foreach \x in {0,...,10} {\node at (.25*\x,2.45) {$\vdots$};}
\node at (-.35,2.45) {$\ddots$};
  \node at (0,2.1) {$0$}; \node at (.25,2.1) {$0$};
 \node at (.5,2.1) {$0$}; \node at (.75,2.1) {$0$};
 \node at (1,2.1) {$\color{color1}\mathbf{1}$}; \node at (1.25,2.1) {$0$};
 \node at (1.5,2.1) {$\color{color1}\mathbf{1}$}; \node at (1.75,2.1) {$0$};
 \node at (2,2.1) {$0$}; \node at (2.25,2.1) {$0$};
 \node at (2.5,2.1) {$0$};
   \node at (0,1.75) {$\color{color2}\mathbf{1}$}; \node at (.25,1.75) {$0$};
 \node at (.5,1.75) {$\color{color2}\mathbf{1}$}; \node at (.75,1.75) {$0$};
 \node at (1,1.75) {$0$}; \node at (1.25,1.75) {$0$};
 \node at (1.5,1.75) {$0$}; \node at (1.75,1.75) {$\color{color1}\mathbf{1}$};
 \node at (2,1.75) {$0$}; \node at (2.25,1.75) {$\color{color1}\mathbf{1}$};
 \node at (2.5,1.75) {$0$};
   \node at (0,1.4) {$0$}; \node at (.25,1.4) {$0$};
 \node at (.5,1.4) {$0$}; \node at (.75,1.4) {$\color{color2}\mathbf{1}$};
 \node at (1,1.4) {$0$}; \node at (1.25,1.4) {$\color{color2}\mathbf{1}$};
 \node at (1.5,1.4) {$0$}; \node at (1.75,1.4) {$0$};
 \node at (2,1.4) {$0$}; \node at (2.25,1.4) {$0$};
 \node at (2.5,1.4) {$\color{color1}\mathbf{1}$};
   \node at (0,1.05) {$0$}; \node at (.25,1.05) {$\color{color1}\mathbf{1}$};
 \node at (.5,1.05) {$0$}; \node at (.75,1.05) {$0$};
 \node at (1,1.05) {$0$}; \node at (1.25,1.05) {$0$};
 \node at (1.5,1.05) {$\color{color2}\mathbf{1}$}; \node at (1.75,1.05) {$0$};
 \node at (2,1.05) {$\color{color2}\mathbf{1}$}; \node at (2.25,1.05) {$0$};
 \node at (2.5,1.05) {$0$};
 \node at (0,.7) {$0$}; \node at (.25,.7) {$0$};
 \node at (.5,.7) {$\color{color1}\mathbf{1}$}; \node at (.75,.7) {$0$};
 \node at (1,.7) {$\color{color1}\mathbf{1}$}; \node at (1.25,.7) {$0$};
 \node at (1.5,.7) {$0$}; \node at (1.75,.7) {$0$};
 \node at (2,.7) {$0$}; \node at (2.25,.7) {$\color{color2}\mathbf{1}$};
 \node at (2.5,.7) {$0$};
 \node at (0,.35) {$\color{color2}\mathbf{1}$}; \node at (.25,.35) {$0$};
 \node at (.5,.35) {$0$}; \node at (.75,.35) {$0$};
 \node at (1,.35) {$0$}; \node at (1.25,.35) {$\color{color1}\mathbf{1}$};
 \node at (1.5,.35) {$0$}; \node at (1.75,.35) {$\color{color1}\mathbf{1}$};
 \node at (2,.35) {$0$}; \node at (2.25,.35) {$0$};
 \node at (2.5,.35) {$0$};
  \node at (0,0) {$0$}; \node at (.25,0) {$\color{color2}\mathbf{1}$};
 \node at (.5,0) {$0$}; \node at (.75,0) {$\color{color2}\mathbf{1}$};
 \node at (1,0) {$0$}; \node at (1.25,0) {$0$};
 \node at (1.5,0) {$0$}; \node at (1.75,0) {$0$};
 \node at (2,0) {$\color{color1}\mathbf{1}$}; \node at (2.25,0) {$0$};
 \node at (2.5,0) {$\color{color1}\mathbf{1}$};
 \end{scope}
\begin{scope}[shift={(0,2.45)}]
\colorlet{color2}{Red}
\colorlet{color1}{Blue}
\foreach \x in {0,...,10} {\node at (.25*\x,2.45) {$\vdots$};}
  \node at (0,2.1) {$0$}; \node at (.25,2.1) {$0$};
 \node at (.5,2.1) {$0$}; \node at (.75,2.1) {$0$};
 \node at (1,2.1) {$\color{color1}\mathbf{1}$}; \node at (1.25,2.1) {$0$};
 \node at (1.5,2.1) {$\color{color1}\mathbf{1}$}; \node at (1.75,2.1) {$0$};
 \node at (2,2.1) {$0$}; \node at (2.25,2.1) {$0$};
 \node at (2.5,2.1) {$0$};
   \node at (0,1.75) {$\color{color2}\mathbf{1}$}; \node at (.25,1.75) {$0$};
 \node at (.5,1.75) {$\color{color2}\mathbf{1}$}; \node at (.75,1.75) {$0$};
 \node at (1,1.75) {$0$}; \node at (1.25,1.75) {$0$};
 \node at (1.5,1.75) {$0$}; \node at (1.75,1.75) {$\color{color1}\mathbf{1}$};
 \node at (2,1.75) {$0$}; \node at (2.25,1.75) {$\color{color1}\mathbf{1}$};
 \node at (2.5,1.75) {$0$};
   \node at (0,1.4) {$0$}; \node at (.25,1.4) {$0$};
 \node at (.5,1.4) {$0$}; \node at (.75,1.4) {$\color{color2}\mathbf{1}$};
 \node at (1,1.4) {$0$}; \node at (1.25,1.4) {$\color{color2}\mathbf{1}$};
 \node at (1.5,1.4) {$0$}; \node at (1.75,1.4) {$0$};
 \node at (2,1.4) {$0$}; \node at (2.25,1.4) {$0$};
 \node at (2.5,1.4) {$\color{color1}\mathbf{1}$};
   \node at (0,1.05) {$0$}; \node at (.25,1.05) {$\color{color1}\mathbf{1}$};
 \node at (.5,1.05) {$0$}; \node at (.75,1.05) {$0$};
 \node at (1,1.05) {$0$}; \node at (1.25,1.05) {$0$};
 \node at (1.5,1.05) {$\color{color2}\mathbf{1}$}; \node at (1.75,1.05) {$0$};
 \node at (2,1.05) {$\color{color2}\mathbf{1}$}; \node at (2.25,1.05) {$0$};
 \node at (2.5,1.05) {$0$};
 \node at (0,.7) {$0$}; \node at (.25,.7) {$0$};
 \node at (.5,.7) {$\color{color1}\mathbf{1}$}; \node at (.75,.7) {$0$};
 \node at (1,.7) {$\color{color1}\mathbf{1}$}; \node at (1.25,.7) {$0$};
 \node at (1.5,.7) {$0$}; \node at (1.75,.7) {$0$};
 \node at (2,.7) {$0$}; \node at (2.25,.7) {$\color{color2}\mathbf{1}$};
 \node at (2.5,.7) {$0$};
 \node at (0,.35) {$\color{color2}\mathbf{1}$}; \node at (.25,.35) {$0$};
 \node at (.5,.35) {$0$}; \node at (.75,.35) {$0$};
 \node at (1,.35) {$0$}; \node at (1.25,.35) {$\color{color1}\mathbf{1}$};
 \node at (1.5,.35) {$0$}; \node at (1.75,.35) {$\color{color1}\mathbf{1}$};
 \node at (2,.35) {$0$}; \node at (2.25,.35) {$0$};
 \node at (2.5,.35) {$0$};
  \node at (0,0) {$0$}; \node at (.25,0) {$\color{color2}\mathbf{1}$};
 \node at (.5,0) {$0$}; \node at (.75,0) {$\color{color2}\mathbf{1}$};
 \node at (1,0) {$0$}; \node at (1.25,0) {$0$};
 \node at (1.5,0) {$0$}; \node at (1.75,0) {$0$};
 \node at (2,0) {$\color{color1}\mathbf{1}$}; \node at (2.25,0) {$0$};
 \node at (2.5,0) {$\color{color1}\mathbf{1}$};
 \end{scope}
 \begin{scope}[shift={(2.75,2.8)}]
 \colorlet{color2}{Red}
\colorlet{color1}{Blue}
\draw [draw=lightgrey,fill=lightgrey] (-.12,.09) rectangle (2.63,2.54);
\foreach \x in {0,...,6} {\node at (2.95,.35*\x) {$\cdots$};}
\foreach \x in {0,...,10} {\node at (.25*\x,2.45) {$\vdots$};}
\node at (2.95,2.45) {$\iddots$};
  \node at (0,2.1) {$0$}; \node at (.25,2.1) {$0$};
 \node at (.5,2.1) {$0$}; \node at (.75,2.1) {$0$};
 \node at (1,2.1) {$\color{color1}\mathbf{1}$}; \node at (1.25,2.1) {$0$};
 \node at (1.5,2.1) {$\color{color1}\mathbf{1}$}; \node at (1.75,2.1) {$0$};
 \node at (2,2.1) {$0$}; \node at (2.25,2.1) {$0$};
 \node at (2.5,2.1) {$0$};
   \node at (0,1.75) {$\color{color2}\mathbf{1}$}; \node at (.25,1.75) {$0$};
 \node at (.5,1.75) {$\color{color2}\mathbf{1}$}; \node at (.75,1.75) {$0$};
 \node at (1,1.75) {$0$}; \node at (1.25,1.75) {$0$};
 \node at (1.5,1.75) {$0$}; \node at (1.75,1.75) {$\color{color1}\mathbf{1}$};
 \node at (2,1.75) {$0$}; \node at (2.25,1.75) {$\color{color1}\mathbf{1}$};
 \node at (2.5,1.75) {$0$};
   \node at (0,1.4) {$0$}; \node at (.25,1.4) {$0$};
 \node at (.5,1.4) {$0$}; \node at (.75,1.4) {$\color{color2}\mathbf{1}$};
 \node at (1,1.4) {$0$}; \node at (1.25,1.4) {$\color{color2}\mathbf{1}$};
 \node at (1.5,1.4) {$0$}; \node at (1.75,1.4) {$0$};
 \node at (2,1.4) {$0$}; \node at (2.25,1.4) {$0$};
 \node at (2.5,1.4) {$\color{color1}\mathbf{1}$};
   \node at (0,1.05) {$0$}; \node at (.25,1.05) {$\color{color1}\mathbf{1}$};
 \node at (.5,1.05) {$0$}; \node at (.75,1.05) {$0$};
 \node at (1,1.05) {$0$}; \node at (1.25,1.05) {$0$};
 \node at (1.5,1.05) {$\color{color2}\mathbf{1}$}; \node at (1.75,1.05) {$0$};
 \node at (2,1.05) {$\color{color2}\mathbf{1}$}; \node at (2.25,1.05) {$0$};
 \node at (2.5,1.05) {$0$};
 \node at (0,.7) {$0$}; \node at (.25,.7) {$0$};
 \node at (.5,.7) {$\color{color1}\mathbf{1}$}; \node at (.75,.7) {$0$};
 \node at (1,.7) {$\color{color1}\mathbf{1}$}; \node at (1.25,.7) {$0$};
 \node at (1.5,.7) {$0$}; \node at (1.75,.7) {$0$};
 \node at (2,.7) {$0$}; \node at (2.25,.7) {$\color{color2}\mathbf{1}$};
 \node at (2.5,.7) {$0$};
 \node at (0,.35) {$\color{color2}\mathbf{1}$}; \node at (.25,.35) {$0$};
 \node at (.5,.35) {$0$}; \node at (.75,.35) {$0$};
 \node at (1,.35) {$0$}; \node at (1.25,.35) {$\color{color1}\mathbf{1}$};
 \node at (1.5,.35) {$0$}; \node at (1.75,.35) {$\color{color1}\mathbf{1}$};
 \node at (2,.35) {$0$}; \node at (2.25,.35) {$0$};
 \node at (2.5,.35) {$0$};
  \node at (0,0) {$0$}; \node at (.25,0) {$\color{color2}\mathbf{1}$};
 \node at (.5,0) {$0$}; \node at (.75,0) {$\color{color2}\mathbf{1}$};
 \node at (1,0) {$0$}; \node at (1.25,0) {$0$};
 \node at (1.5,0) {$0$}; \node at (1.75,0) {$0$};
 \node at (2,0) {$\color{color1}\mathbf{1}$}; \node at (2.25,0) {$0$};
 \node at (2.5,0) {$\color{color1}\mathbf{1}$};
 \end{scope}
 \begin{scope}[shift={(-2.75,-2.8)}]
 \colorlet{color2}{Red}
\colorlet{color1}{Blue}
\draw [draw=lightgrey,fill=lightgrey] (-.12,.09) rectangle (2.63,2.54);
\foreach \x in {0,...,6} {\node at (-.35,.35*\x) {$\cdots$};}
\foreach \x in {0,...,10} {\node at (.25*\x,-.45) {$\vdots$};}
\node at (-.35,-.45) {$\iddots$};
  \node at (0,2.1) {$0$}; \node at (.25,2.1) {$0$};
 \node at (.5,2.1) {$0$}; \node at (.75,2.1) {$0$};
 \node at (1,2.1) {$\color{color1}\mathbf{1}$}; \node at (1.25,2.1) {$0$};
 \node at (1.5,2.1) {$\color{color1}\mathbf{1}$}; \node at (1.75,2.1) {$0$};
 \node at (2,2.1) {$0$}; \node at (2.25,2.1) {$0$};
 \node at (2.5,2.1) {$0$};
   \node at (0,1.75) {$\color{color2}\mathbf{1}$}; \node at (.25,1.75) {$0$};
 \node at (.5,1.75) {$\color{color2}\mathbf{1}$}; \node at (.75,1.75) {$0$};
 \node at (1,1.75) {$0$}; \node at (1.25,1.75) {$0$};
 \node at (1.5,1.75) {$0$}; \node at (1.75,1.75) {$\color{color1}\mathbf{1}$};
 \node at (2,1.75) {$0$}; \node at (2.25,1.75) {$\color{color1}\mathbf{1}$};
 \node at (2.5,1.75) {$0$};
   \node at (0,1.4) {$0$}; \node at (.25,1.4) {$0$};
 \node at (.5,1.4) {$0$}; \node at (.75,1.4) {$\color{color2}\mathbf{1}$};
 \node at (1,1.4) {$0$}; \node at (1.25,1.4) {$\color{color2}\mathbf{1}$};
 \node at (1.5,1.4) {$0$}; \node at (1.75,1.4) {$0$};
 \node at (2,1.4) {$0$}; \node at (2.25,1.4) {$0$};
 \node at (2.5,1.4) {$\color{color1}\mathbf{1}$};
   \node at (0,1.05) {$0$}; \node at (.25,1.05) {$\color{color1}\mathbf{1}$};
 \node at (.5,1.05) {$0$}; \node at (.75,1.05) {$0$};
 \node at (1,1.05) {$0$}; \node at (1.25,1.05) {$0$};
 \node at (1.5,1.05) {$\color{color2}\mathbf{1}$}; \node at (1.75,1.05) {$0$};
 \node at (2,1.05) {$\color{color2}\mathbf{1}$}; \node at (2.25,1.05) {$0$};
 \node at (2.5,1.05) {$0$};
 \node at (0,.7) {$0$}; \node at (.25,.7) {$0$};
 \node at (.5,.7) {$\color{color1}\mathbf{1}$}; \node at (.75,.7) {$0$};
 \node at (1,.7) {$\color{color1}\mathbf{1}$}; \node at (1.25,.7) {$0$};
 \node at (1.5,.7) {$0$}; \node at (1.75,.7) {$0$};
 \node at (2,.7) {$0$}; \node at (2.25,.7) {$\color{color2}\mathbf{1}$};
 \node at (2.5,.7) {$0$};
 \node at (0,.35) {$\color{color2}\mathbf{1}$}; \node at (.25,.35) {$0$};
 \node at (.5,.35) {$0$}; \node at (.75,.35) {$0$};
 \node at (1,.35) {$0$}; \node at (1.25,.35) {$\color{color1}\mathbf{1}$};
 \node at (1.5,.35) {$0$}; \node at (1.75,.35) {$\color{color1}\mathbf{1}$};
 \node at (2,.35) {$0$}; \node at (2.25,.35) {$0$};
 \node at (2.5,.35) {$0$};
  \node at (0,0) {$0$}; \node at (.25,0) {$\color{color2}\mathbf{1}$};
 \node at (.5,0) {$0$}; \node at (.75,0) {$\color{color2}\mathbf{1}$};
 \node at (1,0) {$0$}; \node at (1.25,0) {$0$};
 \node at (1.5,0) {$0$}; \node at (1.75,0) {$0$};
 \node at (2,0) {$\color{color1}\mathbf{1}$}; \node at (2.25,0) {$0$};
 \node at (2.5,0) {$\color{color1}\mathbf{1}$};
 \end{scope}
\begin{scope}[shift={(0,-2.45)}]
\colorlet{color2}{Red}
\colorlet{color1}{Blue}
\foreach \x in {0,...,10} {\node at (.25*\x,-.45) {$\vdots$};}
  \node at (0,2.1) {$0$}; \node at (.25,2.1) {$0$};
 \node at (.5,2.1) {$0$}; \node at (.75,2.1) {$0$};
 \node at (1,2.1) {$\color{color1}\mathbf{1}$}; \node at (1.25,2.1) {$0$};
 \node at (1.5,2.1) {$\color{color1}\mathbf{1}$}; \node at (1.75,2.1) {$0$};
 \node at (2,2.1) {$0$}; \node at (2.25,2.1) {$0$};
 \node at (2.5,2.1) {$0$};
   \node at (0,1.75) {$\color{color2}\mathbf{1}$}; \node at (.25,1.75) {$0$};
 \node at (.5,1.75) {$\color{color2}\mathbf{1}$}; \node at (.75,1.75) {$0$};
 \node at (1,1.75) {$0$}; \node at (1.25,1.75) {$0$};
 \node at (1.5,1.75) {$0$}; \node at (1.75,1.75) {$\color{color1}\mathbf{1}$};
 \node at (2,1.75) {$0$}; \node at (2.25,1.75) {$\color{color1}\mathbf{1}$};
 \node at (2.5,1.75) {$0$};
   \node at (0,1.4) {$0$}; \node at (.25,1.4) {$0$};
 \node at (.5,1.4) {$0$}; \node at (.75,1.4) {$\color{color2}\mathbf{1}$};
 \node at (1,1.4) {$0$}; \node at (1.25,1.4) {$\color{color2}\mathbf{1}$};
 \node at (1.5,1.4) {$0$}; \node at (1.75,1.4) {$0$};
 \node at (2,1.4) {$0$}; \node at (2.25,1.4) {$0$};
 \node at (2.5,1.4) {$\color{color1}\mathbf{1}$};
   \node at (0,1.05) {$0$}; \node at (.25,1.05) {$\color{color1}\mathbf{1}$};
 \node at (.5,1.05) {$0$}; \node at (.75,1.05) {$0$};
 \node at (1,1.05) {$0$}; \node at (1.25,1.05) {$0$};
 \node at (1.5,1.05) {$\color{color2}\mathbf{1}$}; \node at (1.75,1.05) {$0$};
 \node at (2,1.05) {$\color{color2}\mathbf{1}$}; \node at (2.25,1.05) {$0$};
 \node at (2.5,1.05) {$0$};
 \node at (0,.7) {$0$}; \node at (.25,.7) {$0$};
 \node at (.5,.7) {$\color{color1}\mathbf{1}$}; \node at (.75,.7) {$0$};
 \node at (1,.7) {$\color{color1}\mathbf{1}$}; \node at (1.25,.7) {$0$};
 \node at (1.5,.7) {$0$}; \node at (1.75,.7) {$0$};
 \node at (2,.7) {$0$}; \node at (2.25,.7) {$\color{color2}\mathbf{1}$};
 \node at (2.5,.7) {$0$};
 \node at (0,.35) {$\color{color2}\mathbf{1}$}; \node at (.25,.35) {$0$};
 \node at (.5,.35) {$0$}; \node at (.75,.35) {$0$};
 \node at (1,.35) {$0$}; \node at (1.25,.35) {$\color{color1}\mathbf{1}$};
 \node at (1.5,.35) {$0$}; \node at (1.75,.35) {$\color{color1}\mathbf{1}$};
 \node at (2,.35) {$0$}; \node at (2.25,.35) {$0$};
 \node at (2.5,.35) {$0$};
  \node at (0,0) {$0$}; \node at (.25,0) {$\color{color2}\mathbf{1}$};
 \node at (.5,0) {$0$}; \node at (.75,0) {$\color{color2}\mathbf{1}$};
 \node at (1,0) {$0$}; \node at (1.25,0) {$0$};
 \node at (1.5,0) {$0$}; \node at (1.75,0) {$0$};
 \node at (2,0) {$\color{color1}\mathbf{1}$}; \node at (2.25,0) {$0$};
 \node at (2.5,0) {$\color{color1}\mathbf{1}$};
 \end{scope}
 \begin{scope}[shift={(2.75,-2.1)}]
 \colorlet{color2}{Red}
\colorlet{color1}{Blue}
\draw [draw=lightgrey,fill=lightgrey] (-.12,.09) rectangle (2.63,2.54);
\foreach \x in {0,...,6} {\node at (2.95,.35*\x) {$\cdots$};}
\foreach \x in {0,...,10} {\node at (.25*\x,-.45) {$\vdots$};}
\node at (2.95,-.45) {$\ddots$};
  \node at (0,2.1) {$0$}; \node at (.25,2.1) {$0$};
 \node at (.5,2.1) {$0$}; \node at (.75,2.1) {$0$};
 \node at (1,2.1) {$\color{color1}\mathbf{1}$}; \node at (1.25,2.1) {$0$};
 \node at (1.5,2.1) {$\color{color1}\mathbf{1}$}; \node at (1.75,2.1) {$0$};
 \node at (2,2.1) {$0$}; \node at (2.25,2.1) {$0$};
 \node at (2.5,2.1) {$0$};
   \node at (0,1.75) {$\color{color2}\mathbf{1}$}; \node at (.25,1.75) {$0$};
 \node at (.5,1.75) {$\color{color2}\mathbf{1}$}; \node at (.75,1.75) {$0$};
 \node at (1,1.75) {$0$}; \node at (1.25,1.75) {$0$};
 \node at (1.5,1.75) {$0$}; \node at (1.75,1.75) {$\color{color1}\mathbf{1}$};
 \node at (2,1.75) {$0$}; \node at (2.25,1.75) {$\color{color1}\mathbf{1}$};
 \node at (2.5,1.75) {$0$};
   \node at (0,1.4) {$0$}; \node at (.25,1.4) {$0$};
 \node at (.5,1.4) {$0$}; \node at (.75,1.4) {$\color{color2}\mathbf{1}$};
 \node at (1,1.4) {$0$}; \node at (1.25,1.4) {$\color{color2}\mathbf{1}$};
 \node at (1.5,1.4) {$0$}; \node at (1.75,1.4) {$0$};
 \node at (2,1.4) {$0$}; \node at (2.25,1.4) {$0$};
 \node at (2.5,1.4) {$\color{color1}\mathbf{1}$};
   \node at (0,1.05) {$0$}; \node at (.25,1.05) {$\color{color1}\mathbf{1}$};
 \node at (.5,1.05) {$0$}; \node at (.75,1.05) {$0$};
 \node at (1,1.05) {$0$}; \node at (1.25,1.05) {$0$};
 \node at (1.5,1.05) {$\color{color2}\mathbf{1}$}; \node at (1.75,1.05) {$0$};
 \node at (2,1.05) {$\color{color2}\mathbf{1}$}; \node at (2.25,1.05) {$0$};
 \node at (2.5,1.05) {$0$};
 \node at (0,.7) {$0$}; \node at (.25,.7) {$0$};
 \node at (.5,.7) {$\color{color1}\mathbf{1}$}; \node at (.75,.7) {$0$};
 \node at (1,.7) {$\color{color1}\mathbf{1}$}; \node at (1.25,.7) {$0$};
 \node at (1.5,.7) {$0$}; \node at (1.75,.7) {$0$};
 \node at (2,.7) {$0$}; \node at (2.25,.7) {$\color{color2}\mathbf{1}$};
 \node at (2.5,.7) {$0$};
 \node at (0,.35) {$\color{color2}\mathbf{1}$}; \node at (.25,.35) {$0$};
 \node at (.5,.35) {$0$}; \node at (.75,.35) {$0$};
 \node at (1,.35) {$0$}; \node at (1.25,.35) {$\color{color1}\mathbf{1}$};
 \node at (1.5,.35) {$0$}; \node at (1.75,.35) {$\color{color1}\mathbf{1}$};
 \node at (2,.35) {$0$}; \node at (2.25,.35) {$0$};
 \node at (2.5,.35) {$0$};
  \node at (0,0) {$0$}; \node at (.25,0) {$\color{color2}\mathbf{1}$};
 \node at (.5,0) {$0$}; \node at (.75,0) {$\color{color2}\mathbf{1}$};
 \node at (1,0) {$0$}; \node at (1.25,0) {$0$};
 \node at (1.5,0) {$0$}; \node at (1.75,0) {$0$};
 \node at (2,0) {$\color{color1}\mathbf{1}$}; \node at (2.25,0) {$0$};
 \node at (2.5,0) {$\color{color1}\mathbf{1}$};
 \end{scope}
 \end{tikzpicture}
 \caption{The universal scroll $\widehat\cals$ of our running example $\cals=\Scroll(x)$ from Figure~\ref{fig:example}. Each $7\times 11$ block highlighted by the checkerboard shading corresponds to an identical copy of a $\tau$-orbit. The offset of the blocks arises because the covering map in Eq.~\eqref{eqn:cd-universal} is not just the standard  ``modulo $n$'' reduction, since in the scroll, the end of a row wraps around to the beginning of the next one.}\label{fig:universal-scroll}
\end{figure}
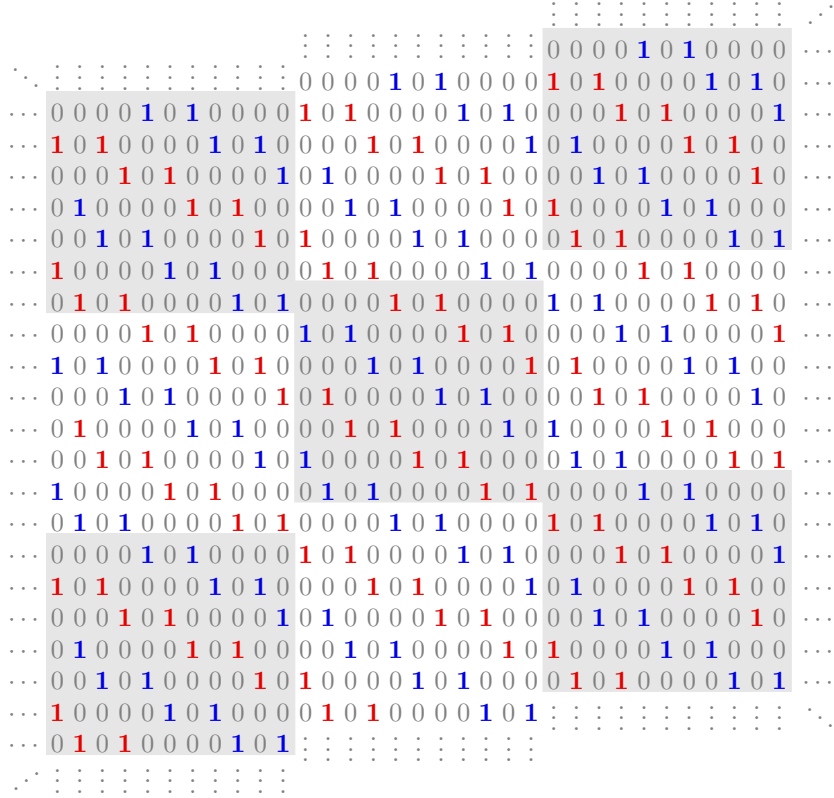

In both a scroll and a ticker tape, positions that have a value of $1$ are said to be \dfn{live}. Formally, the sets of live entries, in both formats, are
\[
\Live(\cals)=\big\{(i,j)\in\zz\times\zz_n\mid X_{i,j}=1\big\},\qquad
\Live(\calx)=\big\{k\in\zz\mid X_k=1\big\}.
\]
We can also define the set of live entries in the universal scroll as
\[
\Live(\widehat\cals)=\big\{(i,j)\in\zz\times\zz\mid X_{i,j}=1\big\}=\q^{-1}(\Live(\cals)).
\]

\subsection{The successor and co-successor functions}\label{subsec:successor}

In this subsection, we will explore the patterns of the live entries within the scrolls. Given an arbitrary live entry $(i,j)$,  we can draw some easy conclusions about its surrounding entries. Recall that since scrolls are naturally embedded on a cylinder, we can always speak of the entry immediately to the left or to the right, even if we are in the first or the last column, by setting $X_{i,k+n}=X_{i+1,k}$. 

It is clear that for any live entry, the four entries that are immediately adjacent to it must be 0. Similarly, the diagonal entries in the upper-right and lower-left directions also must be 0. 

\begin{lemma}\label{lem:nbhd}
If $(i,j)\in\Live(\cals)$, then 
\[
X_{i-1,j}=X_{i-1,j+1}=X_{i,j-1}=X_{i,j+1}
=X_{i+1,j-1}=X_{i+1,j}=0.
\]
\end{lemma}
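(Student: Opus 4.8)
The plan is to derive the entire lemma from a single structural observation about one sweep of $\tau=\tau_n\circ\cdots\circ\tau_1$, namely the ``incoming'' half of the statement: if $X_{i,j}=1$, then the three entries $X_{i-1,j}$ (above), $X_{i,j-1}$ (left), and $X_{i-1,j+1}$ (upper-right) all vanish. First I would prove this by tracking the intermediate states as $\tau_1,\dots,\tau_n$ are applied in turn to $x^{(i-1)}$ to produce $x^{(i)}$. Since each $\tau_k$ alters only position $k$, at the instant $\tau_j$ is applied, position $j$ still holds its old value $X_{i-1,j}$, every position to its left already holds its new (row-$i$) value, and every position to its right still holds its old (row-$(i-1)$) value.

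The key bookkeeping step is to identify the two cyclic neighbors of position $j$ in $\calc_n$, evaluated at that instant, with scroll entries: the left neighbor's current value should be exactly the entry immediately to the left of $(i,j)$, and the right neighbor's current value should be exactly the entry immediately to the upper-right of $(i,j)$. For interior columns this is immediate; the actual content is at the boundary columns $j=1$ and $j=n$, where the within-row wrap of $\calc_n$ meets the between-row wrap of the cylinder. Here the convention $X_{i,k+n}=X_{i+1,k}$ does the work: the left cyclic neighbor of position $1$ is position $n$ of the previous row, which is precisely the entry left of $(i,1)$, while the right cyclic neighbor of position $n$ is position $1$ of the same row, which is precisely the entry upper-right of $(i,n)$. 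With this identification in hand, the definition of $\tau_j$ finishes the incoming half: because $\tau_j$ always sends a $1$ to a $0$, the equality $X_{i,j}=1$ forces $\tau_j$ to have switched position $j$ from $0$ to $1$, so its previous value $X_{i-1,j}$ was $0$, and a $0$ is switched on only when both cyclic neighbors are $0$, giving $X_{i,j-1}=0$ and $X_{i-1,j+1}=0$.

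The three ``outgoing'' equalities, $X_{i+1,j}$ (below), $X_{i,j+1}$ (right), and $X_{i+1,j-1}$ (lower-left), then come for free as contrapositives of the incoming half, with no further boundary analysis. The point is that $(i,j)$ is itself the above-neighbor of $(i+1,j)$, the left-neighbor of $(i,j+1)$, and the upper-right-neighbor of $(i+1,j-1)$. So if any of these three cells were live, the already-proved incoming half, applied to that cell, would force the corresponding neighbor, which is $(i,j)$, to be $0$, contradicting $X_{i,j}=1$. Hence all three are $0$. This is exactly the central ($180^\circ$) symmetry of the six-cell neighborhood, which interchanges $\{\text{above},\text{left},\text{upper-right}\}$ with $\{\text{below},\text{right},\text{lower-left}\}$.

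I expect the only genuine obstacle to be the boundary bookkeeping in the second paragraph: verifying that ``cyclic neighbor at processing time'' and ``adjacent scroll entry'' truly coincide at $j=1$ and $j=n$, since this is the one place where the two distinct wrap-arounds interact. Everything else is a direct reading of the definition of the toggle, and once the incoming half is established the remaining three equalities are purely formal.
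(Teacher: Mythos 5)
Your proof is correct. Note that the paper gives no proof of this lemma at all---it is stated as evident immediately after an informal remark---so there is no official argument to compare against; your write-up supplies exactly the verification the paper omits. Both pillars of your argument check out: (i) at the moment $\tau_j$ acts in the sweep $\tau_n\circ\cdots\circ\tau_1$, positions $1,\dots,j-1$ already hold their row-$i$ values while positions $j,\dots,n$ still hold row-$(i-1)$ values, so $X_{i,j}=1$ forces the old value $X_{i-1,j}$ to be $0$ (a toggle always sends $1$ to $0$) and forces both cyclic neighbors---whose values at that instant are precisely $X_{i,j-1}$ and $X_{i-1,j+1}$, including at the wrap columns $j=1$ and $j=n$ thanks to the convention $X_{i,k+n}=X_{i+1,k}$---to be $0$; and (ii) the remaining three equalities follow by applying (i) to $(i+1,j)$, $(i,j+1)$, and $(i+1,j-1)$, each of which has $(i,j)$ as its above, left, or upper-right neighbor, respectively. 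The incoming-half-plus-contrapositive organization, with the boundary identifications isolated as the only nontrivial bookkeeping, is a clean way to make rigorous what the paper treats as clear.
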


It is also elementary to see that exactly one of $X_{i,j+2}$ and $X_{i+1,j+1}$ must be $1$; we will call the location of whichever entry is live the \emph{successor} of $(i,j)$. Similarly, exactly one of $X_{i+2,j-2}$ and $X_{i+2,j-1}$ must be $1$; we will call the location of the live entry the \emph{co-successor} of $(i,j)$. The formal statement of this is given in Lemma~\ref{lem:successor-co-successor}, and a visual interpretation is shown in Figure~\ref{fig:successor-co-successor}. Notice how these two images also show the nearby surrounding entries that must be $0$, as guaranteed by Lemma~\ref{lem:nbhd}.

\begin{lemma}\label{lem:successor-co-successor} 
If $(i,j)\in\Live(\cals)$ then
\[
X_{i,j+2}+X_{i+1,j+1}=1,\quad\text{and}\quad X_{i+2,j-2}+X_{i+2,j-1}=1.
\]
\end{lemma}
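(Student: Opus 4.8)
The plan is to first establish a single clean \emph{one-step local update rule} describing how each row of the scroll is produced from the previous one, and then to read off both identities by substituting the vanishing entries supplied by Lemma~\ref{lem:nbhd}.

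First I would unwind the definition of $\tau=\tau_n\circ\cdots\circ\tau_1$ as a left-to-right sweep. When the sweep reaches position $k$, positions $1,\dots,k-1$ already carry their row-$(i+1)$ values while positions $k,\dots,n$ still carry their row-$i$ values. Thus the toggle $\tau_k$ sees its left neighbor in the new row and its right neighbor in the old row, and it sets position $k$ to $1$ exactly when $X_{i,k}=0$ and both of those neighbors are $0$. The key point is that the scroll's wrapping convention $X_{i,k+n}=X_{i+1,k}$ is designed precisely so that this becomes uniform across all columns: for $k=1$ the ``old'' left neighbor is $X_{i,n}=X_{i+1,0}$, and for $k=n$ the ``already updated'' right neighbor is $X_{i+1,1}=X_{i,n+1}$. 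Recording this, I would prove the update rule
\[
X_{i+1,j}=1 \quad\Longleftrightarrow\quad X_{i,j}=0,\ X_{i+1,j-1}=0,\ \text{and}\ X_{i,j+1}=0,
\]
valid for every $(i,j)$ under the wrapping convention.

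With the rule in hand, both identities become one-line substitutions. For the first, apply the rule at the cell $(i+1,j+1)$: its three relevant predecessors are $X_{i,j+1}$, $X_{i+1,j}$, and $X_{i,j+2}$, and Lemma~\ref{lem:nbhd} gives $X_{i,j+1}=X_{i+1,j}=0$. Hence $X_{i+1,j+1}=1$ if and only if $X_{i,j+2}=0$, so the two entries are complementary and $X_{i,j+2}+X_{i+1,j+1}=1$. For the second, apply the rule at $(i+2,j-1)$, whose relevant predecessors are $X_{i+1,j-1}$, $X_{i+2,j-2}$, and $X_{i+1,j}$; again Lemma~\ref{lem:nbhd} supplies $X_{i+1,j-1}=X_{i+1,j}=0$, leaving $X_{i+2,j-1}=1$ if and only if $X_{i+2,j-2}=0$, i.e.\ $X_{i+2,j-2}+X_{i+2,j-1}=1$.

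The main obstacle is the careful derivation of the update rule, specifically verifying the two boundary columns $k=1$ and $k=n$, where the sequential dependence on ``already toggled'' versus ``not yet toggled'' neighbors interacts with the wrap-around and could easily be mis-indexed. Once that bookkeeping is done correctly---showing that the wrapping convention absorbs exactly these boundary cases---the rest is immediate. I do not expect to need a separate ``at most one'' argument: each identity arises as a genuine biconditional (the two candidate entries take opposite values), so ``exactly one is live'' falls out automatically rather than requiring independence of the rows as a second ingredient.
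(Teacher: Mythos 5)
Your proof is correct. The paper itself offers no argument for this lemma---it is stated as an elementary observation, with Figure~\ref{fig:successor-co-successor} as the only justification---so your write-up supplies a formalization rather than paralleling an existing one. The key step, the uniform one-row update rule $X_{i+1,k}=1 \iff X_{i,k}=X_{i+1,k-1}=X_{i,k+1}=0$, is exactly the sequential left-to-right NOR sweep that the paper only makes explicit much later (in Section~\ref{sec:conclusions}, where $\tau$ is identified with ECA rule~1), and your bookkeeping at the boundary columns is right: at column $1$ the not-yet-updated left neighbor is $X_{i,n}=X_{i+1,0}$, and at column $n$ the already-updated right neighbor is $X_{i+1,1}=X_{i,n+1}$, so the wrapping convention $X_{i,k+n}=X_{i+1,k}$ makes the rule hold verbatim in every column. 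Given the rule, the substitutions at $(i+1,j+1)$ and $(i+2,j-1)$ together with the vanishing entries from Lemma~\ref{lem:nbhd} do produce genuine biconditionals ($X_{i+1,j+1}=1 \iff X_{i,j+2}=0$, and $X_{i+2,j-1}=1 \iff X_{i+2,j-2}=0$), so ``exactly one is live'' falls out with no separate exclusivity argument, as you say. A side benefit of your route is that it proves slightly more than the stated sum identities: each of the two candidate successor (resp.\ co-successor) entries is \emph{determined} by the other via the update rule, which is the mechanism the paper relies on informally throughout Section~\ref{subsec:successor}.
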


\begin{figure}[!ht]
\begin{tikzpicture}
\setlength{\tabcolsep}{2.5pt}
\renewcommand{\arraystretch}{.4}
\begin{scope}[shift={(0,0)},scale=.4]
\tikzstyle{every node}=[font=\normalsize,anchor=south]
    \node at (1.75,4.75) {$\cdots$};
    \node at (3,4.75) {$0$}; 
    \node at (4,4.75) {$0$};
    \node at (5.5,4.75) {$\cdots$};
    \node at (2,3.5) {$0$};
    \node at (3,3.5) {$\color{Red}\mathbf{1}$}; 
    \node at (4,3.5) {$0$};
    \node at (5,3.5) {$\color{Red}\mathbf{a}$};
    \node at (6.5,3.5) {$\cdots$};
    \node at (.75,2.25) {$\cdots$};
    \node at (2,2.25) {$0$};
    \node at (3,2.25) {$0$}; 
    \node at (4,2.25) {$\color{Red}\overline{\mathbf{a}}$};
    \node at (5,2.25) {$0$};
    \node at (6.5,2.25) {$\cdots$};
    \node at (3.5,.5) {``\emph{successor of $(i,j)$}''};
\end{scope}
\begin{scope}[shift={(6,.45)},scale=.4]
\tikzstyle{every node}=[font=\normalsize,anchor=south]
    \node at (1.75,4.75) {$\cdots$};
    \node at (3,4.75) {$0$}; 
    \node at (4,4.75) {$0$};
    \node at (5.5,4.75) {$\cdots$};
    \node at (.75,4.75) {$\cdots$};
    \node at (4,3.5) {$0$};
    \node at (3,3.5) {$\color{orange}\mathbf{1}$}; 
    \node at (5.5,3.5) {$\cdots$};
    \node at (.75,3.5) {$\cdots$};
    \node at (3,2.25) {$0$}; 
    \node at (2,3.5) {$0$};
    \node at (2,2.25) {$0$};
    \node at (4.5,2.25) {$\cdots$};
    \node at (.75,2.25) {$\cdots$};
    \node at (-.25,1) {$\cdots$};
    \node at (2,1) {$\color{orange}\overline{\mathbf{b}}$};
    \node at (1,1) {$\color{orange}\mathbf{b}$};
    \node at (3.5,1) {$\cdots$};
    \node at (3.5,-.75) {``\emph{co-successor of $(i,j)$}''};
\end{scope}
\end{tikzpicture}
\caption{A picture illustrating Lemmas~\ref{lem:nbhd} and~\ref{lem:successor-co-successor}, where $\overline{a}=1+a$ and $\overline{b}=1+b$ denote complementary entries in $\ff_2=\{0,1\}$.}\label{fig:successor-co-successor}
\end{figure}

It will be convenient to think of the successor and co-successor as functions on the live entries. 

\begin{definition}\label{def:successor-co-successor}
Given a scroll $\cals$, the \dfn{successor} is the function $s\colon\Live(\cals)\longto\Live(\cals)$ that sends $(i,j)$ to the unique element of
$\big\{(i,j+2),(i+1,j+1)\big\}\cap \Live(\cals)$.
The \dfn{co-successor} is the function $c\colon\Live(\cals)\longto\Live(\cals)$ that sends  $(i,j)$ to the unique element of $\big\{(i+2,j-2),(i+2,j-1)\big\}\cap\Live(\cals)$.
\end{definition}

Naturally, Definition~\ref{def:successor-co-successor} is easily translated into ticker tape notation, where the subscripts are indices rather than ordered pairs. Specifically, if $X_k=1$, then the successor and co-successor functions $s,c\colon\Live(\calx)\to\Live(\calx)$ are defined by sending $k$ to the unique element of
\[
\big\{k+2,k+n+1\big\}\cap\Live(\calx)
\quad\text{and}\quad
\big\{k+2n-2,k+2n-1\big\}\cap\Live(\calx),
\]
respectively. 

\begin{lemma}\label{lem:bijections}
The successor and co-successor functions are bijections on $\Live(\cals)$.
\end{lemma}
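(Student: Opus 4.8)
The plan is to deduce bijectivity from injectivity by exploiting the vertical periodicity of the scroll. Since $\tau$ is a bijection of the finite set $\cali_n$, every orbit is finite; if $m$ denotes the size of the orbit of $x$, then $x^{(i+m)}=x^{(i)}$ for all $i$, so the scroll satisfies $X_{i+m,j}=X_{i,j}$. Consequently the vertical shift $\sigma\colon(i,j)\mapsto(i+m,j)$ restricts to a bijection of $\Live(\cals)$, and the quotient $\Live(\cals)/\langle\sigma\rangle$ is finite (it is represented by the live entries in any $m$ consecutive rows, of which there are at most $mn$). Both $s$ and $c$ are defined by purely local rules that inspect only entries in a bounded neighborhood, so the $m$-periodicity of the scroll gives $s\circ\sigma=\sigma\circ s$ and $c\circ\sigma=\sigma\circ c$. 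Hence $s$ and $c$ descend to self-maps $\bar s,\bar c$ of the finite set $\Live(\cals)/\langle\sigma\rangle$, and it suffices to prove that $\bar s$ and $\bar c$ are injective, since an injective self-map of a finite set is automatically a bijection.

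First I would establish injectivity of $s$, which then forces injectivity of $\bar s$. Given a live entry $(a,b)$, the only possible preimages under $s$ are $(a,b-2)$ and $(a-1,b-1)$, because $s(i,j)\in\{(i,j+2),(i+1,j+1)\}$. I claim at most one of these is live. Indeed, if $(a-1,b-1)\in\Live(\cals)$, then applying Lemma~\ref{lem:nbhd} to $(a-1,b-1)$ forces $X_{a,b-2}=0$ (this is the entry ``$X_{i+1,j-1}$'' of the lemma), so $(a,b-2)\notin\Live(\cals)$. Thus no two distinct live entries can share an $s$-image, giving injectivity of $s$. The argument for $c$ is identical in spirit: the only possible preimages of $(a,b)$ under $c$ are $(a-2,b+2)$ and $(a-2,b+1)$, which lie in the same row and in adjacent columns, so they cannot both be live (equivalently, Lemma~\ref{lem:nbhd} applied to $(a-2,b+1)$ forces $X_{a-2,b+2}=0$).

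With injectivity in hand, the finite-quotient argument finishes the proof: $\bar s$ and $\bar c$ are injective self-maps of a finite set, hence bijections, and this lifts back to bijectivity on $\Live(\cals)$. Concretely, surjectivity of $s$ is recovered as follows: given live $(a,b)$, choose $(i,j)$ with $\bar s[(i,j)]=[(a,b)]$, so that $s(i,j)=\sigma^k(a,b)=(a+km,b)$ for some $k\in\zz$; then $\sigma^{-k}(i,j)$ is a genuine $s$-preimage of $(a,b)$ since $s\bigl(\sigma^{-k}(i,j)\bigr)=\sigma^{-k}\bigl(s(i,j)\bigr)=(a,b)$. The analogous statement holds for $c$.

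The one genuinely nontrivial point is surjectivity---that every live entry actually arises as a successor (resp.\ co-successor). A direct attack would require a ``predecessor'' analogue of Lemma~\ref{lem:successor-co-successor}, namely that \emph{exactly} one of $(a,b-2),(a-1,b-1)$ is live, and the existence half of this does not follow from Lemma~\ref{lem:nbhd} alone. The periodicity argument sidesteps this entirely by trading the infinite set $\Live(\cals)$ for a finite quotient, on which injectivity and surjectivity coincide; this is where I expect the only real subtlety to lie, and it rests on the elementary but essential observation that $s$ and $c$ commute with the period-$m$ vertical shift $\sigma$.
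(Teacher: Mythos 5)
Your proposal is correct, and it takes a genuinely different route from the paper's proof on the key point of surjectivity. The injectivity half is identical: the paper also observes that two $s$-preimages would have to be $(i-1,j-1)$ and $(i,j-2)$, which is impossible by Lemma~\ref{lem:nbhd}, and that two $c$-preimages would be horizontally adjacent; this is exactly your argument. Where you diverge is in how bijectivity is completed. The paper's proof stops after ruling out double preimages (its accompanying figure even phrases this as the failure of ``bijectivity''), leaving the existence of a preimage implicit---presumably regarded as elementary in the same spirit as the unproved Lemma~\ref{lem:successor-co-successor}, since a direct proof of the ``predecessor'' statement requires going back to the toggle dynamics rather than following from Lemma~\ref{lem:nbhd}. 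Your finite-quotient argument---using that $\tau$ has finite order on $\cali_n$, so the vertical shift $\sigma$ by the orbit length commutes with $s$ and $c$ and has finite quotient on $\Live(\cals)$, whence injective self-maps of the quotient are bijections that lift---rigorously closes this gap by pigeonhole, at the modest cost of invoking the periodicity of the scroll. What the paper's approach buys is brevity and a purely local, picture-based argument (together with a direct predecessor lemma it would also work verbatim on non-periodic configurations); what yours buys is a complete proof of surjectivity from injectivity alone, with no need for a predecessor analogue of Lemma~\ref{lem:successor-co-successor}, and your closing diagnosis of exactly where the subtlety lies is accurate.
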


\begin{proof}
If $(i,j)\in\Live(\cals)$ were to have two $s$-preimages, then they would have to be $(i-1,j-1)$ and $(i,j-2)$. However, $X_{i-1,j-1}=1$ would force $X_{i,j-2}=0$ by Lemma~\ref{lem:nbhd}.

Similarly, if $(i,j)\in\Live(\cals)$ were to have two $c$-preimages, then they would have to be $(i-2,j+1)$ and $(i-2,j+2)$, and this is clearly not allowed because they are adjacent. Figure~\ref{fig:bijective} provides a visual for why these two scenarios are impossible.
\end{proof}

\begin{figure}[!ht]
\begin{tikzpicture}
\setlength{\tabcolsep}{2.5pt}
\renewcommand{\arraystretch}{.4}
\begin{scope}[shift={(0,0)},scale=.4]
\tikzstyle{every node}=[font=\normalsize,anchor=south]
    \node at (.75,4.75) {$\cdots$};
    \node at (2,4.75) {$\0$}; 
    \node at (3,4.75) {$\mathbf{1}$}; 
    \node at (4,4.75) {$\0$};
    \node at (5,4.75) {$\0$};
    \node at (6.5,4.75) {$\cdots$};
    \node at (.75,3.5) {$\cdots$};
    \node at (2,3.5) {$\mathbf{1}$};
    \node at (3,3.5) {$\0$}; 
    \node at (4,3.5) {$\color{Red}\mathbf{1}$};
    \node at (5,3.5) {$\0$};
    \node at (6.5,3.5) {$\cdots$};
    \node at (1.75,2.25) {$\cdots$};
    \node at (3,2.25) {$\0$}; 
    \node at (4,2.25) {$\0$};
    \node at (5.5,2.25) {$\cdots$};
\end{scope}
\begin{scope}[shift={(5,0)},scale=.4]
\tikzstyle{every node}=[font=\normalsize,anchor=south]
    \node at (3.75,6) {$\cdots$};
    \node at (5,6) {$\mathbf{1}$};
    \node at (6,6) {$\mathbf{1}$};
    \node at (7.5,6) {$\cdots$};
    \node at (2.75,4.75) {$\cdots$};
    \node at (4,4.75) {$\0$};
    \node at (5,4.75) {$\0$};
    \node at (6.5,4.75) {$\cdots$};
    \node at (1.75,3.5) {$\cdots$};
    \node at (3,3.5) {$\0$}; 
    \node at (4,3.5) {$\color{Red}\mathbf{1}$};
    \node at (5,3.5) {$\0$};
    \node at (6.5,3.5) {$\cdots$};
    \node at (1.75,2.25) {$\cdots$};
    \node at (3,2.25) {$\0$}; 
    \node at (4,2.25) {$\0$};
    \node at (5.5,2.25) {$\cdots$};
\end{scope}
\end{tikzpicture}
\caption{If the successor (respectively, co-successor) function were not bijective, then the impossible configuration on the left (respectively, right) would occur in the scroll.}\label{fig:bijective}
\end{figure}

Since the successor and co-successor functions are bijections on $\Live(\cals)$, each of them defines an equivalence relation. The color scheme back in Figure~\ref{fig:example} highlights the equivalence classes, which is why we often prefer to use scrolls over ticker tapes. We will explore these notions more in the next subsection. 

A fundamental property of the successor and co-successor functions is the simple observation that they commute. 

\begin{proposition}\label{prop:successor-of-co-successor}
The successor of the co-successor is the co-successor of the successor. That is, for any live entry $(i,j)$, we have $s(c(i,j))=c(s(i,j))$. 
\end{proposition}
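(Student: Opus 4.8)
The plan is to show that both $s(c(i,j))$ and $c(s(i,j))$ are forced to equal the unique live entry inside a fixed four-element window, so they have no choice but to coincide. This avoids any case split on which alternative the successor or co-successor selects.

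First I would pin down the possible landing positions. By Definition~\ref{def:successor-co-successor}, the co-successor displaces a live entry by $(+2,-2)$ or $(+2,-1)$, and the successor displaces it by $(0,+2)$ or $(+1,+1)$. Composing one displacement of each type, in either order, a short bookkeeping check shows that both $s(c(i,j))$ and $c(s(i,j))$ land in the four-element set
\[
Q=\{A,B,C,D\},\qquad A=(i+2,j),\ B=(i+3,j-1),\ C=(i+2,j+1),\ D=(i+3,j).
\]
Since $s$ and $c$ map $\Live(\cals)$ into itself, both composites are \emph{live} entries of $Q$. Hence it suffices to prove that $Q$ contains at most one live entry; that immediately yields $s(c(i,j))=c(s(i,j))$.

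Five of the six pairs of points in $Q$ are excluded directly by Lemma~\ref{lem:nbhd}: the pairs $\{A,C\}$ and $\{B,D\}$ are horizontally adjacent, $\{A,D\}$ is vertically adjacent, $\{A,B\}$ is a lower-left diagonal pair, and $\{C,D\}$ is an upper-right diagonal pair, so in each of these five cases the two entries cannot both be live. The only real obstacle is the remaining pair $\{B,C\}=\{(i+3,j-1),(i+2,j+1)\}$, whose offset $(-1,+2)$ is not one of the adjacencies controlled by Lemma~\ref{lem:nbhd}. To handle it I would invoke the bijectivity of the successor (Lemma~\ref{lem:bijections}): if $C=(i+2,j+1)$ were live, it would have a unique $s$-preimage, which by Definition~\ref{def:successor-co-successor} must be either $(i+2,j-1)$ or $(i+1,j)$. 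The entry $(i+1,j)$ lies directly below the live entry $(i,j)$, hence is dead by Lemma~\ref{lem:nbhd}; so the preimage is $(i+2,j-1)$, which is therefore live. Applying Lemma~\ref{lem:nbhd} once more to $(i+2,j-1)$ forces the entry directly below it, namely $B=(i+3,j-1)$, to be dead. Thus $B$ and $C$ are never simultaneously live.

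Combining the six pairwise exclusions shows that at most one element of $Q$ is live, and since both composites are live entries of $Q$, they must be that common element. I expect the two-step preimage argument for the pair $\{B,C\}$ to be the only substantive step; the displacement-vector computation and the other five exclusions are routine appeals to Lemma~\ref{lem:nbhd}. Before writing this up I would verify the conclusion against the color-coded snakes and co-snakes in Figure~\ref{fig:example} to make sure the window $Q$ and its unique live entry behave as claimed.
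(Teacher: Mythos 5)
Your proof is correct, and it takes a genuinely different route from the paper's. The paper proves the proposition by a two-case analysis on the type of the step $(i,j)\to c(i,j)$: in each case (the two diagrams of Figure~\ref{fig:successor-of-co-successor}) it checks that whichever candidate for $s(i,j)$ is live forces the corresponding candidate for the successor of $c(i,j)$ to be live, so the two composite paths close up at the same entry. You instead exploit the fact that displacements add commutatively: both $s(c(i,j))$ and $c(s(i,j))$ are live entries lying in the common window $Q=\{(i+2,j),\,(i+3,j-1),\,(i+2,j+1),\,(i+3,j)\}$, so it suffices that $Q$ contain at most one live entry. Your bookkeeping checks out: five of the six pairs in $Q$ are excluded by Lemma~\ref{lem:nbhd} alone, and your handling of the knight-move pair $\{(i+3,j-1),(i+2,j+1)\}$ is sound --- if $(i+2,j+1)$ is live, its $s$-preimage exists by Lemma~\ref{lem:bijections} (which is proved before this proposition, so there is no circularity), must be $(i+2,j-1)$ because $(i+1,j)$ is dead beneath the live entry $(i,j)$, and then Lemma~\ref{lem:nbhd} applied at $(i+2,j-1)$ kills $(i+3,j-1)$. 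As for what each approach buys: yours removes the case split and replaces figure-checking with a single uniform exclusion principle, at the modest cost of invoking bijectivity of $s$. The paper's case check is more pedestrian but proves slightly more in passing, namely that the two composite paths traverse the same \emph{types} of steps ($D$/$\E$ and $S$/$L$) in opposite order; that step-type statement is the germ of the parallelogram lemma (Lemma~\ref{lem:parallelogram}) and Corollary~\ref{cor:path-type}, whereas your argument establishes only equality of the endpoints, which is all that Proposition~\ref{prop:successor-of-co-successor} asserts.
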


\begin{proof}
 Starting with a live entry ${\color{Red}(i,j)}$, the two possibilities for its co-successor {\color{Blue}$c(i,j)$} are shown in Figure~\ref{fig:successor-of-co-successor}.  In each case, the successor of the co-successor is the position of either $b$ or $\overline{b}$, whichever is live. The successor of $(i,j)$ is the position of either $a$ or $\overline{a}$, whichever is live. It is easy to check that in both cases above, $a=1$ forces $b=1$, and $\overline{a}=1$ forces $\overline{b}=1$. In both cases, the position of whichever $b$ or $\overline{b}$ is live, by definition, is the co-successor of the successor. 
\end{proof}
 
\begin{figure}[!ht]
\begin{tikzpicture}
\tikzstyle{every node}=[font=\normalsize,anchor=south]
\begin{scope}[shift={(0,0)},scale=.4]
    \node at (1.75,4.75) {$\cdots$};
    \node at (3,4.75) {$\0$}; 
    \node at (4,4.75) {$\0$};
    \node at (5.5,4.75) {$\cdots$};
    \node at (.75,3.5) {$\cdots$};
    \node at (2,3.5) {$\0$};
    \node at (3,3.5) {$\color{Red}\mathbf{1}$};
    \node at (4,3.5) {$\0$};
    \node at (5,3.5) {$\color{Red}\mathbf{a}$};
    \node at (6.5,3.5) {$\cdots$};
    \node at (-.25,2.25) {$\cdots$};
    \node at (2,2.25) {$\0$};
    \node at (3,2.25) {$\0$}; 
    \node at (4,2.25) {$\color{Red}\mathbf{\overline{a}}$};
    \node at (5,2.25) {$\0$};
    \node at (6.5,2.25) {$\cdots$};
    \node at (-1.25,1) {$\cdots$};
    \node at (2,1) {$\0$};
    \node at (0,1) {$\0$};
    \node at (0,-.25) {$\0$};
    \node at (1,-.25) {$\0$};
    \node at (1,2.25){$\0$}; 
    \node at (1,1){$\color{Blue}\mathbf{1}$}; 
    \node at (3,1) {$\color{Blue}\mathbf{b}$};
    \node at (4.5,1) {$\cdots$};
    \node at (-1.25,-.25) {$\cdots$};
    \node at (2,-.25) {$\color{Blue}\mathbf{\overline{b}}$};
    \node at (3,-.25) {$\0$};
    \node at (4.5,-.25) {$\cdots$};
\end{scope}
\begin{scope}[shift={(6,0)},scale=.4]
    \node at (1.75,4.75) {$\cdots$};
    \node at (3,4.75) {$\0$}; 
    \node at (4,4.75) {$\0$};
    \node at (5.5,4.75) {$\cdots$};
    \node at (0.75,3.5) {$\cdots$};
    \node at (4,3.5) {$\0$};
    \node at (3,3.5) {$\color{Red}\mathbf{1}$}; 
    \node at (2,3.5) {$\0$};
    \node at (5,3.5) {$\color{Red}\mathbf{a}$};
    \node at (6.5,3.5) {$\cdots$};
    \node at (0.75,2.25) {$\cdots$};
    \node at (3,2.25) {$\0$}; 
    \node at (2,2.25) {$\0$};
    \node at (4,2.25) {$\color{Red}\mathbf{\overline{a}}$};
    \node at (5,2.25) {$\0$};
    \node at (6.5,2.25) {$\cdots$};
    \node at (-.25,1) {$\cdots$};
    \node at (1,1) {$\0$};
    \node at (2,1) {$\color{Blue}\mathbf{1}$};
    \node at (3,1) {$\0$};
    \node at (4,1) {$\color{Blue}\mathbf{b}$};
    \node at (5.5,1) {$\cdots$};
    \node at (-.25,-.25) {$\cdots$};
    \node at (2,-.25) {$\0$};
    \node at (1,-.25) {$\0$};
    \node at (3,-.25) {$\color{Blue}\mathbf{\overline{b}}$};
    \node at (4,-.25) {$\0$};
    \node at (5.5,-.25) {$\cdots$};
\end{scope}
\end{tikzpicture}
\caption{A picture of the argument in Proposition~\ref{prop:successor-of-co-successor} of why the successor and co-successor functions commute.}\label{fig:successor-of-co-successor} 
 \end{figure}

At times, it will be more convenient to work in the universal scroll $\widehat{\cals}$ than in $\cals$ itself. The successor and co-successor functions lift to the commuting \dfn{universal successor} and \dfn{universal co-successor} functions $\widehat{s},\widehat{c}\colon\Live(\widehat\cals)\to \Live(\widehat\cals)$, which are defined by replacing each $\cals$ with $\widehat\cals$ in Definition~\ref{def:successor-co-successor}.

\subsection{Snakes and co-snakes}\label{subsec:snakes}

Recall that by Lemma~\ref{lem:bijections}, the successor and co-successor functions are bijections on $\Live(\cals)$. Thus, they define a group, which we will denote by $G(\cals)=\<s,c\>$ or by $G(\calx)=\<s,c\>$, depending on whether we are using the notation of scrolls or ticker tapes. Clearly, the generators have infinite order, and by Proposition~\ref{prop:successor-of-co-successor}, this group is abelian. We will call $G(\cals)$ the \dfn{snake group}. Our general approach for understanding the structure of the scrolls, and hence understanding toggling independent sets of the cycle graph, is to study this group and its actions. In this section, we will investigate the snake group's relations and derive a presentation.

Given a group $G$ acting on a set $X$, we say $X$ is a \dfn{torsor} for $G$ (or a \dfn{$G$-torsor}) if the action is simply transitive, i.e., it is both transitive and free. When this is the case, there is a bijection between $G$ and $X$, and for a fixed generating set $S\subseteq G$, this action defines a Cayley graph structure on $X$. Specifically, the (left) Cayley graph for $G=\<S\mid R\>$ has vertex set $X$, and for each $x\in X$ and generator $g\in S$, there is a directed edge $x\to g.x$, annotated with $g$ (often by color). In our setting, there is a canonical action of the snake group $G(\cals)$ on the set $\Live(\cals)$ of live entries, and we will prove that this makes $\Live(\cals)$ into a $G(\cals)$-torsor. 

At times, it will be helpful to lift up to the universal scroll and work with the action of the \dfn{affine snake group} $G(\widehat\cals):=\left<\widehat{s},\widehat{c}\right>$
on $\Live(\widehat\cals)$. The actions of these two snake groups on the corresponding sets of live entries are described by the following commutative diagram, which relates the successor functions $s$ and $\widehat{s}$. Here, $\q$ is the quotient map from Eq.~\eqref{eqn:cd-universal}:
\[
\xymatrix{
\Live(\widehat\cals)\ar[d]_{\q}\ar[r]^{\widehat{s}} & \Live(\widehat\cals)\ar[d]^\q \\
\Live(\cals)\ar[r]^{s} & \Live(\cals)}\hspace{15mm}
\xymatrix{
(i+k,j+kn)\ar@{|->}[d]_{\q}\ar@{|->}^{\widehat{s}}[r] & \widehat{s}(i+k,j+kn) \ar^\q@{|->}[d] \\
(i,j)\ar^{s}@{|->}[r] & s(i,j)}
\]
Naturally, there is an analogous diagram relating the co-successor functions $c$ and $\widehat{c}$. 
  
\begin{definition}\hypersetup{hidelinks}
Given a live entry $(i,j)$ in a scroll, the \dfn{snake} and the \dfn{co-snake} containing it are the following subsets of $\zz\times\zz_n$:
\[
\Snake(i,j)=\big\{s^k(i,j)\mid k\in\zz\big\},\qquad\qquad
\Cosnake(i,j)=\big\{c^k(i,j)\mid k\in\zz\big\}.
\]
The \dfn{affine snake} and \dfn{affine co-snake} are defined similarly, but for $\widehat{s}$ and $\widehat{c}$ in the universal scroll. We will denote these by $\Affsnake(i,j)$ 
and $\Affcosnake(i,j)$, respectively\href{http://youtube.com/watch?v=amYzBQMT4VI}{.}
\end{definition} 

Returning to our running example, the two snakes in the scroll shown in Figure~\ref{fig:example} are highlighted by color in the table on the left. The six co-snakes are not as visually prominent in the scroll, but the live entries in the table on the right are colored to distinguish them. There are always infinitely many affine snakes and co-snakes. In the example in Figure~\ref{fig:universal-scroll}, each affine snake is colored according to the snake to which the quotient map $\q$ (from Eq.~\eqref{eqn:cd-universal}) sends it. 
\begin{remark}
The term \emph{snake} is borrowed from a paper by the third author and Roby  about toggling independent sets of a path graph~\cite{joseph2018toggling}. It was chosen because of the visual interpretation resulting from iterating the successor function from a given entry in a scroll. In \cite{haddadan2021some}, Haddadan studied snakes in \emph{tuple boards} to analyze the dynamics of \emph{comotion} on order ideals, which is also defined via toggles.
\end{remark}

To understand the action of the snake group on the live entries in the scroll, it is easiest to consider the action of the affine snake group in the universal scroll and then project downwards.\footnote{We will continue to write snake groups multiplicatively due to their definitions in terms of function composition, and because most snakes are not adders.}

\begin{lemma}
The affine snake group is free abelian, i.e., it has presentation 
\[
G(\widehat\cals)=\left<\widehat{s},\widehat{c}\mid \widehat{s}\,\widehat{c}=\widehat{c}\,\widehat{s}\,\right>.
\]
\end{lemma}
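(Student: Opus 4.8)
The plan is to identify $G(\widehat\cals)$ with the free abelian group $\zz^2$ by exhibiting an explicit isomorphism. Since $\widehat{s}$ and $\widehat{c}$ commute, every element of $G(\widehat\cals)$ can be written as $\widehat{s}^a\widehat{c}^b$ with $a,b\in\zz$, so there is a surjective homomorphism $\Phi\colon\zz^2\to G(\widehat\cals)$ given by $(a,b)\mapsto\widehat{s}^a\widehat{c}^b$; its mere existence already shows that $G(\widehat\cals)$ is a quotient of the group with presentation $\left<\widehat{s},\widehat{c}\mid\widehat{s}\,\widehat{c}=\widehat{c}\,\widehat{s}\right>$. Thus the entire content of the lemma is the injectivity of $\Phi$: I must show that if $\widehat{s}^a\widehat{c}^b$ acts as the identity on $\Live(\widehat\cals)$, then $a=b=0$.

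To detect the exponents, I would introduce two linear functionals on $\zz\times\zz$, namely the row coordinate $\rho(i,j)=i$ and the antidiagonal coordinate $\sigma(i,j)=i+j$, and track how each changes along the two generators. Reading off Definition~\ref{def:successor-co-successor} in the universal scroll, a single application of $\widehat{s}$ sends $(i,j)$ to either $(i,j+2)$ or $(i+1,j+1)$, so it changes $\sigma$ by exactly $+2$ while changing $\rho$ by either $0$ or $1$; dually, a single application of $\widehat{c}$ sends $(i,j)$ to either $(i+2,j-2)$ or $(i+2,j-1)$, so it changes $\rho$ by exactly $+2$ while changing $\sigma$ by either $0$ or $1$. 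The key point is that each generator admits one of $\rho,\sigma$ as an \emph{exact} linear invariant even though its own displacement is branch-dependent.

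Now fix any live entry $p\in\Live(\widehat\cals)$ (such $p$ exists since the set is nonempty) and apply $w=\widehat{s}^a\widehat{c}^b$ to it. Computing $\rho(w(p))-\rho(p)$ by applying $\widehat{c}^b$ first (which changes $\rho$ by exactly $2b$, using that $\widehat{c}^{-1}$ reverses the row shift) and then $\widehat{s}^a$ (which changes $\rho$ by a quantity $\delta$ of the same sign as $a$ with $|\delta|\le|a|$, since each $\widehat{s}^{\pm1}$ step shifts $\rho$ by $0$ or $\pm1$) gives $\rho(w(p))-\rho(p)=2b+\delta$. If $w=\id$ this vanishes, so $\delta=-2b$ and hence $2|b|=|\delta|\le|a|$. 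The symmetric computation for $\sigma$—in which $\widehat{s}^a$ contributes exactly $2a$ and $\widehat{c}^b$ contributes a quantity of absolute value at most $|b|$—yields $2|a|\le|b|$. Combining $2|b|\le|a|$ and $2|a|\le|b|$ forces $4|a|\le|a|$, so $a=0$, and then $b=0$, proving $\Phi$ injective and therefore an isomorphism onto $G(\widehat\cals)$.

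The main obstacle is conceptual rather than computational: one must resist hoping for a single $G(\widehat\cals)$-invariant linear functional (there is none, because \emph{both} generators branch) and instead notice that the branching of $\widehat{s}$ and of $\widehat{c}$ occurs in \emph{complementary} coordinates, so that $\rho$ pins down the $\widehat{c}$-exponent up to a controlled error governed by $|a|$, while $\sigma$ pins down the $\widehat{s}$-exponent up to an error governed by $|b|$; the two crude bounds then interlock to force both exponents to vanish. The only routine care required is the sign bookkeeping for negative $a$ or $b$, and once injectivity is established the infinite order of the generators asserted earlier follows automatically.
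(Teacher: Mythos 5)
Your proof is correct. It shares the paper's overall strategy---reduce the presentation claim to showing that if $\widehat{s}^a\widehat{c}^b$ fixes a live entry of the universal scroll then $a=b=0$, by tracking integer coordinates in $\zz\times\zz$---but the bookkeeping is genuinely different. The paper uses the raw coordinates with a sign argument: $\widehat{s}$ strictly increases the column coordinate while $\widehat{c}$ strictly decreases it, so $a$ and $b$ cannot have opposite signs; after a WLOG reduction to $a,b\geq 0$, the row coordinate (increased by exactly $2$ per $\widehat{c}$-step, never decreased by $\widehat{s}$) forces $b=0$, and then $a=0$. You instead observe that each generator admits one \emph{exact} linear invariant ($i+j$ for $\widehat{s}$, $i$ for $\widehat{c}$), with the other generator contributing only a branch-dependent error bounded by its exponent; this yields the interlocking inequalities $2|b|\le|a|$ and $2|a|\le|b|$, which force $a=b=0$ with no case analysis. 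The paper's argument is shorter; yours is symmetric in the two generators, avoids the WLOG sign split, and the quantitative ``exact invariant plus controlled error'' technique is somewhat more robust. Both arguments correctly use that $\Live(\widehat\cals)$ is nonempty, and both establish the slightly stronger fact that the action is free, which is what the presentation claim needs.
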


\begin{proof}
It suffices to show that the action of the abelian group $G(\widehat\cals)$ on $\Live(\widehat\cals)\subset\zz\times\zz$ is free. Consider an element $\widehat{s}^k\widehat{c}^\ell$ that fixes $(i,j)$. Since the action of $\widehat{s}$ increases the second coordinate and the action of $\widehat{c}$ decreases it, $k$ and $\ell$ cannot have opposite signs. Without loss of generality, assume $k,\ell\geq 0$. Since $c$ increases the first coordinate, we have $\ell=0$. Then $\widehat{s}^k(i,j)=(i,j)$, so $k=0$. 
\end{proof}

The next lemma tells us that the live entries in the universal scroll form a $G(\widehat\cals)$-torsor.

\begin{lemma}\label{lem:affine-transitivity}
The affine snake group acts simply transitively on $\Live(\widehat\cals)$. 
\end{lemma}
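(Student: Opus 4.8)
The group $G(\widehat\cals)$ is abelian and, by the previous lemma, acts freely, so it remains only to prove transitivity; since simple transitivity is exactly transitivity together with freeness, this will finish the lemma. Transitivity is equivalent to the assertion that the orbit $G(\widehat\cals)\cdot(i_0,j_0)$ of a single live entry is all of $\Live(\widehat\cals)$, or equivalently that the graph on $\Live(\widehat\cals)$ whose edges join each live entry to its images under $\widehat{s}$ and $\widehat{c}$ is connected. The plan is to fix a base live entry and show that its orbit first exhausts every row and then exhausts each individual row.

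The engine is a pair of exact linear functionals. Inspecting Definition~\ref{def:successor-co-successor} shows that $\widehat{s}$ increases $i+j$ by exactly $2$ (both candidate images $(i,j+2)$ and $(i+1,j+1)$ lie on the antidiagonal $i+j+2$), while $\widehat{c}$ increases $i$ by exactly $2$. Hence the row coordinate is weakly monotone along any affine snake, and it changes by exactly $2$ along any affine co-snake. The first key step is to prove that every affine snake meets every row. Since $i+j\to\pm\infty$ along a snake, its row coordinate is unbounded unless the snake is eventually horizontal, i.e.\ some universal-scroll row is live at $j,j+2,j+4,\dots$ indefinitely. Reading that universal row as the concatenation of the cylinder rows $x^{(i)},x^{(i+1)},\dots$, eventual horizontality would force all sufficiently late rows of the orbit to be the alternating independent set in one fixed phase; but $x^{(i+1)}=\tau(x^{(i)})$ and $\tau$ does not fix the alternating set, so two consecutive rows cannot both be alternating and in phase. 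This contradiction rules out eventually-horizontal snakes, so the row coordinate runs through all of $\zz$ and each snake meets every nonempty row.

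With this in hand, transitivity reduces to a single statement: any two live entries in the same row lie in the same orbit. Indeed, given live entries $p$ and $q$, the snake through $q$ meets the (nonempty) row of $p$, producing a point of the orbit of $q$ in that row; connecting it to $p$ then identifies the two orbits. The base case of the row statement is immediate: if $p=(i,a)$ and $q=(i,a+2)$ are both live, then $(i,a+1)$ is dead by Lemma~\ref{lem:nbhd}, so $\widehat{s}(p)=(i,a+2)=q$. For a larger gap I would argue by descent: if $p=(i,a)$ and $q=(i,a')$ are consecutive live entries of their row with $a'-a\geq 3$, then the successor of $p$ drops to row $i+1$ and the predecessor of $q$ rises to row $i-1$, and I would use the co-successor (which moves reliably between rows two apart and hence hops between distinct snakes) together with the commuting relation $\widehat{s}\,\widehat{c}=\widehat{c}\,\widehat{s}$ to exhibit, in a neighboring row, a pair of same-orbit live entries with strictly smaller gap.

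I expect this last step---ruling out two distinct orbits that share a row---to be the main obstacle. The difficulty is precisely the ``wobble'' in the functionals: although $\widehat{s}$ controls $i+j$ and $\widehat{c}$ controls $i$ perfectly, the complementary coordinate shifts by an \emph{a priori} uncontrolled amount each step, so naive coordinate bookkeeping does not pin down images exactly. To finish cleanly I would either push the extremal (minimal inter-orbit gap) argument above to a contradiction, or invoke periodicity: the universal scroll is invariant under the translation lattice generated by $(1,n)$ and $(m,0)$, where $m$ is the orbit size, and comparing the number of live entries in a fundamental domain with the density of a single free $G(\widehat\cals)$-orbit forces the number of orbits to be one. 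I would present whichever is shorter after the local configuration analysis around a gap is carried out in full.
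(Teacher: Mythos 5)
Your reduction to transitivity matches the paper's, and your first step---every affine snake meets every row of the universal scroll, since the row coordinate is non-decreasing along a snake, moves by at most $1$ per step, and an eventually-horizontal snake would force consecutive $\tau$-iterates to equal a fixed alternating independent set, which $\tau$ does not fix---is correct in outline (for odd $n$ the alternating configuration is not even independent, and one must also treat the backward direction, but these are fixable details). The genuine gap is exactly where you say it is: you never prove that two live entries in the same row lie in the same $G(\widehat\cals)$-orbit. The base case of gap $2$ is trivial, but for larger gaps you only gesture at a descent and then at a density/counting alternative, ending with ``I would present whichever is shorter.'' Neither is carried out, and this step is the entire content of the lemma: freeness together with ``every snake meets every row'' does not rule out two interleaved systems of affine snakes forming two distinct orbits. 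Moreover, your descent as sketched is not obviously monotone: applying $\widehat{c}$ to a consecutive same-row pair in row $i$ produces entries in row $i+2$ whose gap can grow as well as shrink by $1$, and which need not be consecutive there; and the counting alternative requires knowing the density of a single orbit, which presupposes essentially the structure being proved.

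For comparison, the paper's proof avoids within-row analysis entirely by stacking the snakes vertically: since $\widehat{s}$ and $\widehat{c}$ commute, $\widehat{c}$ carries $\Affsnake(i,j)$ onto $\Affsnake(\widehat{c}(i,j))$, the adjacent snake two rows below (each point shifted left by $1$ or $2$), and likewise $\widehat{c}^{-1}$ gives the adjacent snake above; Lemmas~\ref{lem:nbhd} and~\ref{lem:successor-co-successor} leave no room for a live entry strictly between a snake and its $\widehat{c}$-image, so the family $\widehat{c}^{\,\ell}\bigl(\Affsnake(i,j)\bigr)$, $\ell\in\zz$, exhausts $\Live(\widehat\cals)$, and every live entry is reached as $\widehat{s}^{\,k}\widehat{c}^{\,\ell}(i,j)$. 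If you prefer to keep your row-based framework, the missing piece is precisely the local configuration analysis recorded later in the paper as Lemma~\ref{lem:subslithers}: from a live entry $(i,a)$ followed in its row by a $0$-block of length $z\geq 2$ ending at the live entry $q$, the successor path necessarily has shape $D\E^{\lfloor z/2\rfloor-1}D$ and lands exactly on $\widehat{c}(q)$, so consecutive same-row live entries lie in one orbit in a single stroke, with no descent needed. But you would have to establish that analysis from scratch, since in the paper it is proved only after---and using---the torsor structure this lemma is meant to provide.
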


\begin{proof}
Since $G(\widehat{S})$ is free abelian, it suffices to show that the action is transitive. Consider the affine snake containing $(i,j)\in\Live(\widehat\cals)$. There is another affine snake below it containing $\widehat{c}(i,j)$ that differs by a translation of $(-1,2)$ or $(-2,2)$. Similarly, there is one above it containing $\widehat{c}^{-1}(i,j)$, which differs by a translation of $(1,-2)$ or $(2,-2)$. Clearly, there is no room for live entries between any two such consecutive snakes. In particular, this means that we can get from any live entry $(i,j)$ to another $(i',j')$ in $\Live(\widehat\cals)$ by first applying $\widehat{c}^\ell$ for some $\ell\in\zz$, to traverse from $\Affsnake(i,j)$ to $\Affsnake(i',j')$, and then applying $\widehat{s}^k$ for some appropriate $k\in\zz$ to move within the affine snake. 
\end{proof}

Since $\Live(\widehat\cals)$ is a $G(\widehat\cals)$-torsor, the affine snakes are in bijection with the cosets of $\left<\widehat{s}\right>$, and the affine co-snakes are in bijection with the cosets of $\left<\widehat{c}\right>$. Moreover, there are bijections between the elements of these cosets and those in the (co)-snakes. The quotient $\q\colon\Live(\widehat{\cals})\to\Live(\cals)$ is a topological covering map, so it induces a group homomorphism $\q^*\colon G(\widehat{\cals})\to G(\cals)$ with $\q^*(\widehat{s})=s$ and $\q^*(\widehat{c})=c$. The snake group is the quotient
\[
G(\cals)\cong G(\widehat{\cals})/\ker\q^*,
\]
and it acts simply transitively on $\Live(\widehat{\cals})/\ker\q$, which can be canonically identified with $\Live(\cals)$.

\begin{proposition}\label{prop:snake-present}
The set $\Live(\cals)$ is a torsor for the snake group, which has presentation
\[
G(\cals)=\big<s,c\mid sc=cs,\;s^\beta=c^\alpha\big>,
\]
where $\cals$ has $\alpha$ snakes and $\beta$ co-snakes. 
\end{proposition}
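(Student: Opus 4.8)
The plan is to read off the extra relation directly from $\ker\q^*$, since the discussion preceding the statement already supplies the isomorphism $G(\cals)\cong G(\widehat\cals)/\ker\q^*$ and the free abelian presentation of $G(\widehat\cals)$; the single relation $s^\beta=c^\alpha$ should be exactly the image of a generator of this kernel. Let $T\colon\zz\times\zz\to\zz\times\zz$ be the deck transformation $T(i,j)=(i+1,j+n)$ of the covering $\q$ from Eq.~\eqref{eqn:cd-universal}, so that $\Live(\cals)$ is identified with $\Live(\widehat\cals)/\langle T\rangle$. Because $\widehat\cals=\cals\circ\q$ is $T$-invariant and the universal successor and co-successor are defined by a purely local rule, $T$ commutes with $\widehat s$ and $\widehat c$; that is, $T$ is a $G(\widehat\cals)$-equivariant bijection of $\Live(\widehat\cals)$. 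Since $\Live(\widehat\cals)$ is a torsor for the \emph{abelian} group $G(\widehat\cals)$ (Lemma~\ref{lem:affine-transitivity}), every equivariant bijection is translation by a unique group element, so there is a unique $g_0\in G(\widehat\cals)$ realizing $T$.

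First I would check that $\ker\q^*=\langle g_0\rangle$. Indeed, $\q^*(g)=\id$ if and only if $g\cdot\widehat p$ lies in the fiber $\langle T\rangle\widehat p=\langle g_0\rangle\widehat p$ for every $\widehat p$, and by freeness of the $G(\widehat\cals)$-action this happens exactly when $g\in\langle g_0\rangle$. Hence $G(\cals)=\langle s,c\mid sc=cs,\ g_0=1\rangle$, and everything reduces to expressing $g_0$ in the generators. The torsor structure lets me identify $\Live(\widehat\cals)\cong G(\widehat\cals)\cong\zz^2$ with $\widehat s\mapsto(1,0)$ and $\widehat c\mapsto(0,1)$; writing $g_0=\widehat s^{\,a}\widehat c^{\,b}$, this presents $\Live(\cals)\cong\zz^2/\langle(a,b)\rangle$ as a set carrying commuting $s$- and $c$-actions by the two coordinate translations.

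Next I would count the (co-)snakes as orbit spaces of these translations. The snakes are the $\langle s\rangle$-orbits, so their number is $\bigl|\zz^2/\langle(1,0),(a,b)\rangle\bigr|=|b|$, giving $\alpha=|b|$; symmetrically the co-snakes are the $\langle c\rangle$-orbits, giving $\beta=|a|$ (and $\alpha,\beta\ge 1$ whenever $\Live(\cals)\neq\varnothing$). Thus $|a|=\beta$ and $|b|=\alpha$, and it remains to fix signs. For this I would use that $T$ raises the first coordinate by exactly $1$, whereas each $\widehat c^{\pm1}$ changes it by exactly $\pm2$ and each $\widehat s^{\pm1}$ by $0$ or $\pm1$; hence any element $\widehat s^{\pm\beta}\widehat c^{\pm\alpha}$ whose two exponents share a sign moves the first coordinate by at least $2\alpha\ge 2$ in absolute value and cannot equal $T$. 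Therefore the exponents of $\widehat s$ and $\widehat c$ in $g_0$ have opposite signs, so $\langle g_0\rangle=\langle\widehat s^{\,\beta}\widehat c^{-\alpha}\rangle$ and the additional relation is $s^\beta c^{-\alpha}=1$, i.e.\ $s^\beta=c^\alpha$.

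Finally, for the torsor claim, transitivity of $G(\cals)$ on $\Live(\cals)$ descends from the transitivity of $G(\widehat\cals)$ on $\Live(\widehat\cals)$ through the surjection $\q$, and freeness is precisely the computation above: any $\q^*(g)$ fixing a point forces $g\in\ker\q^*$ and hence $\q^*(g)=\id$. I expect the main obstacle to be pinning down the extra relation exactly—both the orbit-counting identifications $\alpha=|b|$, $\beta=|a|$ and the sign bookkeeping that distinguishes $s^\beta=c^\alpha$ from the spurious $s^\beta=c^{-\alpha}$; the surrounding covering-space and torsor formalism is otherwise routine.
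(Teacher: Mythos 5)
Your proof is correct, and while it operates in the same covering-space framework as the paper's (both reduce the problem to identifying $\ker\q^*$ inside the free abelian group $G(\widehat\cals)$), the mechanism you use to pin down that kernel is genuinely different. The paper never invokes the deck transformation: it first proves the relation inside $G(\cals)$ itself, using that $\alpha=[G(\cals):\langle s\rangle]$ and $\beta=[G(\cals):\langle c\rangle]$ are the smallest positive integers with $c^{\alpha}\in\langle s\rangle$ and $s^{\beta}\in\langle c\rangle$ (so $s^{\beta}=c^{\pm\alpha}$), resolves the sign by monotonicity of the row coordinate (showing $s^{\beta}c^{\alpha}\neq\id$), and only then argues that $\ker\q^*$ must equal $\langle\widehat s^{\,\beta}\widehat c^{\,-\alpha}\rangle$, using the infinite order of $s$ and $c$ together with the same minimality. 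You proceed in the reverse order: you identify $\ker\q^*=\langle g_0\rangle$ first, via the observation that the deck transformation is a $G(\widehat\cals)$-equivariant bijection of a torsor for an abelian group and hence translation by a unique $g_0$, and only afterwards compute the exponents of $g_0$ by the lattice-index count ($\alpha=|b|$, $\beta=|a|$) plus a displacement comparison with $T$ (the same monotonicity idea, applied to $T$ rather than to the identity). Your route buys a more transparent reason why the kernel is cyclic on a generator with both exponents nonzero---the paper's corresponding sentence is quite terse---and the determinant computation delivers the snake and co-snake counts as a byproduct; the paper's route is shorter, establishes the relation without leaving $G(\cals)$, and needs no deck-transformation formalism. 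One robustness point in your favor: your sign argument uses only that $T$ displaces the first coordinate by exactly $\pm 1$ while any same-sign word $\widehat s^{\,\pm\beta}\widehat c^{\,\pm\alpha}$ displaces it by at least $2\alpha\geq 2$, so it is insensitive to the orientation convention of the covering in Eq.~\eqref{eqn:cd-universal}; this matters because the formula written there is arguably off by a sign from the convention under which each row of $\widehat\cals$ is a shifted copy of the ticker tape and the universal successor rule is well defined, but under either convention your argument goes through verbatim.
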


\begin{proof}
We have already established the first statement. It follows that there is a bijective correspondence between snakes and cosets of $\<s\>$ and between co-snakes and cosets of $\<c\>$. That is,
\[
\alpha=[G(\cals):\<s\>]\quad\text{and}\quad
\beta=[G(\cals):\<c\>]
\]
are the smallest positive integers for which $s^\beta\in\<c\>$ and $c^\alpha\in\<s\>$. It follows that $s^\beta=c^{\pm\alpha}$. To resolve the sign ambiguity, we notice that applying $c$ will always increase the second coordinate of a live entry, while applying $s$ cannot possibly decrease this coordinate. 

Any relation in $G(\cals)$ other than $sc=cs$ arises from an element of $\ker\q^*$, and these all have the form $\widehat{s}^{\,b}\widehat{c}^{\,a}$ for some $a,b\in\zz$. Since both generators $s=\q^*(\widehat{s})$ and $c=\q^*(\widehat{c})$ have infinite order, we may thus assume that $\ker\q^*=\left<\widehat{s}^{\,b}\widehat{c}^{\,a}\right>$ for some $a,b\neq 0$. By minimality of $\alpha$ and $\beta$, we may take $a=-\alpha$ and $b=\beta$.
\end{proof}

\subsection{Slithers and co-slithers}\label{subsec:slithers}

Since $G(\cals)$ endows $\Live(\cals)$ with the structure of a Cayley graph, if we fix $(i,j)\in\Live(\cals)$, then every word in $\{s,s^{-1},c,c^{-1}\}$ corresponds to a \dfn{path} from $(i,j)$. The snakes and co-snakes correspond to the cosets of $\<s\>$ and $\<c\>$, respectively. In particular, this means that all snakes have the same algebraic structure, as do all co-snakes. In this section, we will prove a stronger result: the embeddings of all snakes and co-snakes in a given scroll additionally have the same ``shape.'' As before, we will let $\alpha=[G(\cals):\<s\>]$ be the number of snakes and $\beta=[G(\cals):\<c\>]$ be the number of co-snakes. Though we will work with scrolls, all of our definitions and results can also be translated into ticker tape notation. 

From a fixed $(i,j)\in\Live(\cals)$, consider the next live entry reached when applying the successor or co-successor function. There are two cases for each, as was shown back in Figure~\ref{fig:successor-co-successor}. We will annotate a step of $s(i,j)=(i+1,j+1)$ by ``$D$'' for ``diagonal'' and a step of $s(i,j)=(i,j+2)$ by ``$\E$'' for ``east.''\footnote{Earlier conference papers involving this work, such as AUTOMATA \cite{david2021toggling} and FPSAC \cite{defant2022torsors} used `$2$' instead of `$E$.'} Similarly, we will annotate a step of $c(i,j)=(i+2,j-1)$ by ``$S$'' for ``short,'' and $c(i,j)=(i+2,j-2)$ by ``$L$'' for ``long.'' Allowing inverses, it is straightforward to annotate any path in the Cayley graph of $G(\cals)$. We will call this the \dfn{shape} of a path. If a path has length $1$, then we will refer to it as a \dfn{step} and refer to its shape as its \dfn{type}. For example, the step from $(i,j)$ to $s(i,j)$ is either of type $D$ or of type $\E$. At times, it will be convenient to speak of a $D$-step or $\E$-step (these are ``\dfn{$s$-steps}''), or of an $S$-step or $L$-step (these are ``\dfn{$c$-steps}''). An \dfn{$s$-path} is a sequence of $s$-steps (inverses allowed); a \dfn{$c$-path} is defined similarly.

Since the snake group is abelian, we have $s(c(i,j))=c(s(i,j))$ for all $(i,j)\in\Live(\cals)$. Thus, if we start from $(i,j)$, then applying an $s$-step and then a $c$-step results in the same endpoint as applying a $c$-step and then an $s$-step. A simple but useful observation is that we also take the same \emph{types} of steps along these two paths, but in the opposite order. Geometrically, this means that paths formed by applying $s^{-1}c^{-1}sc$ (and hence $c^{-1}s^{-1}cs$) always trace out parallelograms, and any scroll is tiled by these parallelograms. There are four such parallelograms, as shown in Figure~\ref{fig:parallelogram}. Specifically, in each of the four neighborhoods, there is a parallelogram formed by both $1$s and both $a$'s, and another one formed by both $1$s and both $\overline{a}$'s. This double-counts each parallelogram, leaving four distinct 1s. In other words, every possible scroll is described by a periodic tiling of parallelograms on a cylinder. However, there are restrictions to which of these tilings are possible, which we will explore further in Section~\ref{sec:classification}.

\begin{lemma}[Parallelogram lemma]\label{lem:parallelogram}
Starting from any $(i,j)\in\Live(\cals)$, the paths $(i,j)\to c(i,j)\to s(c(i,j))$ and $(i,j)\to s(i,j)\to c(s(i,j))$ have the same types of steps but in the opposite order. 
\end{lemma}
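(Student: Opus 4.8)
The plan is to reduce the claim to a statement about the total displacement vector of each path, exploiting the fact recorded in Proposition~\ref{prop:successor-of-co-successor} that the two paths share a common endpoint. To make displacements honest vectors in $\zz^2$ rather than classes subject to wrap-around on the cylinder, I would first lift everything to the universal scroll $\widehat\cals$ and work with $\widehat{s}$ and $\widehat{c}$. Since the four step types and the commutativity relation $\widehat{s}\,\widehat{c}=\widehat{c}\,\widehat{s}$ both lift verbatim, establishing the statement upstairs immediately descends to $\cals$ under the quotient map $\q$.

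Next I would record the displacement associated to each step type: a $D$-step adds $(1,1)$, an $\E$-step adds $(0,2)$, an $S$-step adds $(2,-1)$, and an $L$-step adds $(2,-2)$. Each of the two paths in question consists of exactly one $s$-step together with exactly one $c$-step, so its total displacement is the sum of one vector from $\{(1,1),(0,2)\}$ and one from $\{(2,-1),(2,-2)\}$. The four possible sums are $(3,0)$, $(3,-1)$, $(2,1)$, and $(2,0)$, corresponding respectively to the unordered type pairs $\{D,S\}$, $\{D,L\}$, $\{\E,S\}$, and $\{\E,L\}$. The crucial observation is that these four vectors are pairwise distinct, so the total displacement uniquely determines both the $s$-step type and the $c$-step type used along a path.

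Finally, since $\widehat{s}$ and $\widehat{c}$ commute, the endpoints $\widehat{s}(\widehat{c}(i,j))$ and $\widehat{c}(\widehat{s}(i,j))$ coincide, so the two paths have identical total displacement from $(i,j)$. By the distinctness just noted, they therefore use the same $c$-step type and the same $s$-step type; because one path performs its $c$-step first and the other performs its $c$-step second, these shared types occur in the opposite order, which is exactly the assertion. I do not anticipate a genuine obstacle: the only point requiring care is the well-definedness of displacement vectors near the column boundary, which is precisely why I pass to the universal scroll at the outset, and the remaining content is the elementary verification that the four displacement sums are distinct.
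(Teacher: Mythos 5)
Your proof is correct, and it takes a genuinely different route from the paper's. The paper proves Lemma~\ref{lem:parallelogram} by exhaustive inspection of local configurations: Figure~\ref{fig:parallelogram} lists the possible neighborhoods obtained by applying $s^{-1}c^{-1}sc$ (starting with an $S$ or an $L$) and $c^{-1}s^{-1}cs$ (starting with a $D$ or an $\E$), and the forced patterns of $0$s and $1$s show that the types agree in opposite order. You instead argue via displacements on the universal cover: the step types $D,\E,S,L$ have displacement vectors $(1,1),(0,2),(2,-1),(2,-2)$ in $\zz\times\zz$; the four sums of one $s$-vector with one $c$-vector, namely $(3,0),(3,-1),(2,1),(2,0)$, are pairwise distinct; and commutativity (Proposition~\ref{prop:successor-of-co-successor}, which is proved before the lemma by its own local argument, so there is no circularity) forces the two paths to have equal total displacement, hence the same $s$-type and the same $c$-type, necessarily traversed in opposite order. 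Your approach is shorter, avoids all case analysis of scroll entries, and isolates a transferable mechanism: any pair of commuting bijections whose step displacements have pairwise distinct sums satisfies such a parallelogram lemma. Note also that your insistence on lifting to $\widehat\cals$ does real work rather than being mere caution: if one instead tracked displacements along the ticker tape, the four sums would be $3n$, $3n-1$, $2n+1$, $2n$, and distinctness fails when $n=2$, so the two-dimensional bookkeeping is what makes the argument unconditional. What the paper's case check buys in exchange is the explicit list of the four parallelogram tiles, which the surrounding text reuses in describing scrolls as tilings of the cylinder by parallelograms; your argument proves the lemma without producing those pictures.
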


\begin{proof}
It suffices to check all possible cases, which are shown in Figure~\ref{fig:parallelogram}. The two diagrams on the left show the possible parallelograms formed by applying $s^{-1}c^{-1}sc$ to ${\color{Red}(i,j)}$, starting with an $L$ and an $S$, respectively. The diagrams on the right show the possible parallelograms formed by applying $c^{-1}s^{-1}cs$ to ${\color{Green}(i,j)}$, starting with a $\E$ and a $D$, respectively.
\end{proof}

\begin{figure}[!ht]
\begin{center}
\begin{tikzpicture}
\tikzstyle{every node}=[font=\small,anchor=south]
\begin{scope}[shift={(0,0)},scale=.36]
    \node at (1.75,4.75) {$\cdots$};
    \node at (3,4.75) {$\0$}; 
    \node at (4,4.75) {$\0$};
    \node at (5.5,4.75) {$\cdots$};
    \node at (.75,3.5) {$\cdots$};
    \node at (2,3.5) {$\0$};
    \node at (3,3.5) {$\color{Red}\mathbf{1}$};
    \node at (4,3.5) {$\0$};
    \node at (5,3.5) {$\color{Red}\mathbf{a}$};
    \node at (6.5,3.5) {$\cdots$};
    \node at (-.25,2.25) {$\cdots$};
    \node at (1,2.25) {$\0$}; 
    \node at (2,2.25) {$\0$};
    \node at (3,2.25) {$\0$}; 
    \node at (4,2.25) {$\color{red}\overline{\mathbf{a}}$};
    \node at (5,2.25) {$\0$};
    \node at (6.5,2.25) {$\cdots$};
    \node at (-1.25,1) {$\cdots$};
    \node at (0,1) {$\0$};
    \node at (1,1){$\color{blue}\mathbf{1}$}; 
    \node at (2,1) {$\0$};
    \node at (3,1) {$\color{blue}\mathbf{a}$};
    \node at (4,1) {$\0$};
    \node at (5.5,1) {$\cdots$};
    \node at (0,-.25) {$\0$};
    \node at (1,-.25) {$\0$};
    \node at (-1.25,-.25) {$\cdots$};
    \node at (2,-.25) {$\color{blue}\overline{\mathbf{a}}$};
    \node at (3,-.25) {$\0$};
    \node at (4.5,-.25) {$\cdots$};
\end{scope}
\begin{scope}[shift={(3.75,0)},scale=.36]
    \node at (1.75,4.75) {$\cdots$};
    \node at (3,4.75) {$\0$}; 
    \node at (4,4.75) {$\0$};
    \node at (5.5,4.75) {$\cdots$};
    \node at (0.75,3.5) {$\cdots$};
    \node at (4,3.5) {$\0$};
    \node at (3,3.5) {$\color{Red}\mathbf{1}$}; 
    \node at (2,3.5) {$\0$};
    \node at (5,3.5) {$\color{Red}\mathbf{a}$};
    \node at (6.5,3.5) {$\cdots$};
    \node at (0.75,2.25) {$\cdots$};
    \node at (3,2.25) {$\0$}; 
    \node at (2,2.25) {$\0$};
    \node at (4,2.25) {$\color{Red}\overline{\mathbf{a}}$};
    \node at (5,2.25) {$\0$};
    \node at (6.5,2.25) {$\cdots$};
    \node at (-.25,1) {$\cdots$};
    \node at (1,1) {$\0$};
    \node at (2,1) {$\color{blue}\mathbf{1}$};
    \node at (3,1) {$\0$};
    \node at (4,1) {$\color{blue}\mathbf{a}$};
    \node at (5.5,1) {$\cdots$};
    \node at (-.25,-.25) {$\cdots$};
    \node at (2,-.25) {$\0$};
    \node at (1,-.25) {$\0$};
    \node at (3,-.25) {$\color{blue}\overline{\mathbf{a}}$};
    \node at (4,-.25) {$\0$};
    \node at (5.5,-.25) {$\cdots$};
\end{scope}
\begin{scope}[shift={(9,0)},scale=.36]
    \node at (1.75,4.75) {$\cdots$};
    \node at (3,4.75) {$\0$}; 
    \node at (4,4.75) {$\0$};
    \node at (5,4.75) {$\0$};
    \node at (6,4.75) {$\0$};
    \node at (7.5,4.75) {$\cdots$};
    \node at (.75,3.5) {$\cdots$};
    \node at (2,3.5) {$\0$};
    \node at (3,3.5) {$\color{Green}\mathbf{1}$};
    \node at (4,3.5) {$\0$};
    \node at (5,3.5) {$\color{orange}\mathbf{1}$};
    \node at (6,3.5) {$\0$};
    \node at (7.5,3.5) {$\cdots$};
    \node at (.75,2.25) {$\cdots$};
    \node at (2,2.25) {$\0$};
    \node at (3,2.25) {$\0$}; 
    \node at (4,2.25) {$\0$};
    \node at (5,2.25) {$\0$};
    \node at (6.5,2.25) {$\cdots$};
    \node at (-.25,1) {$\cdots$};
    \node at (1,1){$\color{Green}\mathbf{a}$}; 
    \node at (2,1) {$\color{Green}\overline{\mathbf{a}}$};
    \node at (3,1) {$\color{orange}\mathbf{a}$};
    \node at (4,1) {$\color{orange}\overline{\mathbf{a}}$};
    \node at (5.5,1) {$\cdots$};
    \node at (1,-.25) {$\0$};
    \node at (-.25,-.25) {$\cdots$};
    \node at (2,-.25) {$\0$};
    \node at (3,-.25) {$\0$};
    \node at (4.5,-.25) {$\cdots$};
\end{scope}
\begin{scope}[shift={(12.75,0)},scale=.36]
    \node at (1.75,4.75) {$\cdots$};
    \node at (3,4.75) {$\0$}; 
    \node at (4,4.75) {$\0$};
    \node at (5.5,4.75) {$\cdots$};
    \node at (0.75,3.5) {$\cdots$};
    \node at (4,3.5) {$\0$};
    \node at (3,3.5) {$\color{Green}\mathbf{1}$}; 
    \node at (2,3.5) {$\0$};
    \node at (5,3.5) {$\0$};
    \node at (6.5,3.5) {$\cdots$};
    \node at (0.75,2.25) {$\cdots$};
    \node at (3,2.25) {$\0$}; 
    \node at (2,2.25) {$\0$};
    \node at (4,2.25) {$\color{orange}\mathbf{1}$};
    \node at (5,2.25) {$\0$};
    \node at (6.5,2.25) {$\cdots$};
    \node at (-.25,1) {$\cdots$};
    \node at (1,1) {$\color{Green}\mathbf{a}$};
    \node at (2,1) {$\color{Green}\overline{\mathbf{a}}$};
    \node at (3,1) {$\0$};
    \node at (4,1) {$\0$};
    \node at (5.5,1) {$\cdots$};
    \node at (-.25,-.25) {$\cdots$};
    \node at (2,-.25) {$\color{orange}\mathbf{a}$};
    \node at (1,-.25) {$\0$};
    \node at (3,-.25) {$\color{orange}\overline{\mathbf{a}}$};
    \node at (4.5,-.25) {$\cdots$};
    \node at (0.75,-1.5) {$\cdots$};
    \node at (2,-1.5) {$\0$};
    \node at (3.5,-1.5) {$\cdots$};
\end{scope}
\end{tikzpicture}
\caption{Scrolls are tiled by up to four types of parallelograms; see the proof of Lemma~\ref{lem:parallelogram}. Each of the four diagrams here depicts two: one for each choice of $a\in\{0,1\}$. Each parallelogram appears exactly twice.}\label{fig:parallelogram}
\end{center}
\end{figure}
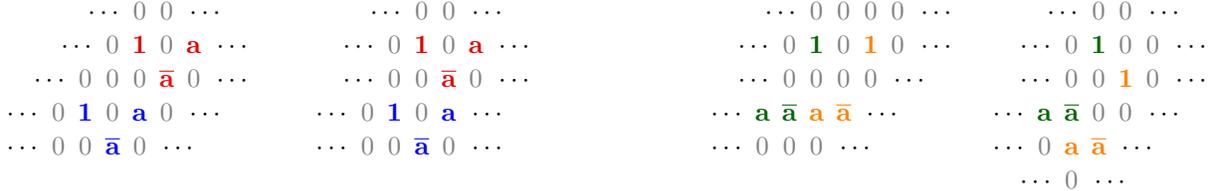

The parallelogram lemma guarantees that the notion of the path shape is well-defined on snakes and co-snakes. In other words, applying the successor (resp., co-successor) function from any two live entries in the same co-snake (resp., snake) yields the same type of step. 

\begin{corollary}\label{cor:path-type}
Suppose $(i',j')\in\Snake(i,j)$ and $(i'',j'')\in\Cosnake(i,j)$. Then the steps $(i,j)\to c(i,j)$ and $(i',j')\to c(i',j')$ have the same type (either both $S$ or both $L$), and the steps $(i,j)\to s(i,j)$ and $(i'',j'')\to s(i'',j'')$ have the same type (either both $D$ or both $\E$). 
\end{corollary}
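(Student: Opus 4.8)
The plan is to deduce both assertions simultaneously from a single application of the Parallelogram Lemma (Lemma~\ref{lem:parallelogram}), followed by a routine induction. Since the snake $\Snake(i,j)$ is the orbit $\{s^k(i,j)\mid k\in\zz\}$ and the co-snake $\Cosnake(i,j)$ is $\{c^k(i,j)\mid k\in\zz\}$, it suffices to prove the two single-step statements: the $c$-step from $(i,j)$ has the same type as the $c$-step from $s(i,j)$, and the $s$-step from $(i,j)$ has the same type as the $s$-step from $c(i,j)$. The general case then follows by iterating, and the backward direction (with $s^{-1}$ or $c^{-1}$) follows by applying these identities at $s^{-1}(i,j)$ or $c^{-1}(i,j)$.

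For the single-step statements I would apply Lemma~\ref{lem:parallelogram} at $(i,j)$. It compares the two length-$2$ paths $(i,j)\to c(i,j)\to s(c(i,j))$ and $(i,j)\to s(i,j)\to c(s(i,j))$---which end at the same live entry because $G(\cals)$ is abelian---and asserts that they use the same types of steps in the opposite order. Reading off the steps, the first path is a $c$-step followed by an $s$-step, and the second is an $s$-step followed by a $c$-step. Since every $s$-step has type in $\{D,\E\}$ and every $c$-step has type in $\{S,L\}$, these alphabets are disjoint; hence the only way the two ordered type sequences can be reverses of one another is for the two $c$-steps to share a type and the two $s$-steps to share a type. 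Matching the $c$-step of the first path (from $(i,j)$) with the $c$-step of the second path (from $s(i,j)$) gives the first single-step statement, and matching the $s$-step of the second path (from $(i,j)$) with the $s$-step of the first path (from $c(i,j)$) gives the second.

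I do not expect a real obstacle: all of the combinatorial work is already done in Lemma~\ref{lem:parallelogram}, and this corollary merely extracts the per-step consequence of its ``opposite order'' conclusion. The only point needing care is the bookkeeping of the correspondence---ensuring one matches $c$-steps with $c$-steps and $s$-steps with $s$-steps---but this pairing is forced by the disjointness of $\{D,\E\}$ and $\{S,L\}$. This well-definedness is exactly what licenses attaching a single step type to an entire snake or co-snake, as needed for the notions of slither and co-slither developed next.
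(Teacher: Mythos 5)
Your proposal is correct and follows essentially the same route as the paper: reduce to the single-step case $(i',j')=s(i,j)$ (resp.\ $(i'',j'')=c(i,j)$) using the orbit structure of snakes and co-snakes, then read the conclusion off the parallelogram lemma. Your extra remarks---that one application of the lemma yields both single-step statements, and that the disjointness of the alphabets $\{D,\E\}$ and $\{S,L\}$ forces the matching of $c$-steps with $c$-steps and $s$-steps with $s$-steps---merely make explicit what the paper leaves implicit.
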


\begin{proof}
We will show that $(i,j)\to c(i,j)$ and $(i',j')\to c(i',j')$ have the same type; the proof of the analogous statement for $(i,j)\to s(i,j)$ and $(i'',j'')\to s(i'',j'')$ is very similar. Since $\Snake(i,j)$ is the orbit of $(i,j)$ under $s$, it suffices to prove the desired result when $(i',j')=s(i,j)$. However, this is immediate from the parallelogram lemma.  
\end{proof}

The elements of the cyclic quotient group $G(\cals)/\<c\>\cong\zz_\beta$ correspond to the co-snakes in $\cals$. Thus, starting at any live entry $(i,j)$ and iterating the successor function $\beta$ times defines an ordering of the co-snakes. We call the shape of this length-$\beta$ path the \dfn{slither from $(i,j)$}, denoted $\Slither(i,j)$. For ease of notation, we can use exponents to write a slither. In our running example back in Figure~\ref{fig:example}, we have $\Slither(1,1)=
\E D\E D\E D=(\E D)^3$ and $\Slither(3,2)=D\E D\E D\E=(D\E)^3$.

There is a similar construction for co-snakes. The elements of the cyclic quotient group $G(\cals)/\<s\>\cong\zz_\alpha$ are the snakes in $\cals$. Starting at any live entry $(i,j)$ and iterating the co-successor function $\alpha$ times defines an ordering of the snakes. We call the shape of this length-$\alpha$ path the \dfn{co-slither from $(i,j)$}, denoted $\Coslither(i,j)$. In our running example from Figure~\ref{fig:example}, we have $\Coslither(i,j)=SS=S^2$ for every live entry $(i,j)$. The following is immediate from Corollary~\ref{cor:path-type}. 

\begin{lemma}\label{lem:shiftslither}
Let $(i,j) \in \Live(\cals)$ and $(i',j') = s^ac^b(i,j)$ for some $a,b \in \mathbb Z$. 
\begin{itemize}
    \item The slither from $(i',j')$ is the slither from $(i,j)$, cyclically shifted $a$ positions. 
    \item The co-slither from $(i',j')$ is the co-slither from $(i,j)$, cyclically shifted $b$ positions. $\hfill\blacksquare$
\end{itemize}
\end{lemma}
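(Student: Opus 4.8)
The plan is to encode each slither and co-slither as a word of step types and then to show that these words are controlled entirely by \emph{which} (co-)snake a live entry lies in, which is the content of Corollary~\ref{cor:path-type}. For a live entry $p$, let $\sigma(p)\in\{D,\E\}$ denote the type of the $s$-step $p\to s(p)$, and let $\gamma(p)\in\{S,L\}$ denote the type of the $c$-step $p\to c(p)$. With this notation, the slither from $(i,j)$ is the length-$\beta$ word whose $t^{\rm th}$ letter is $\sigma(s^t(i,j))$ for $t=0,\dots,\beta-1$, and the co-slither from $(i,j)$ is the length-$\alpha$ word whose $t^{\rm th}$ letter is $\gamma(c^t(i,j))$. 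The key observation, which is exactly Corollary~\ref{cor:path-type}, is that $\sigma$ is constant on each co-snake and $\gamma$ is constant on each snake; that is, $\sigma(c^b(p))=\sigma(p)$ and $\gamma(s^a(p))=\gamma(p)$ for all $a,b\in\zz$.

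I would prove the slither claim first. Writing $(i',j')=s^ac^b(i,j)$ and using that $G(\cals)$ is abelian (Proposition~\ref{prop:successor-of-co-successor}), the $t^{\rm th}$ letter of $\Slither(i',j')$ is
\[
\sigma\!\left(s^t(i',j')\right)=\sigma\!\left(s^{t+a}c^b(i,j)\right)=\sigma\!\left(c^b\,s^{t+a}(i,j)\right)=\sigma\!\left(s^{t+a}(i,j)\right),
\]
where the last equality uses that $\sigma$ is constant on co-snakes. This is precisely the $(t+a)^{\rm th}$ letter of $\Slither(i,j)$, so the two words differ by a cyclic shift of $a$ positions. The indices should be read modulo $\beta$, which is justified because the relation $s^\beta=c^\alpha$ of Proposition~\ref{prop:snake-present} places $s^\beta(i,j)$ in the same co-snake as $(i,j)$, making the assignment $t\mapsto\sigma(s^t(i,j))$ periodic of period $\beta$.

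The co-slither claim is entirely symmetric: the $t^{\rm th}$ letter of $\Coslither(i',j')$ is $\gamma(c^t(i',j'))=\gamma(s^ac^{t+b}(i,j))=\gamma(c^{t+b}(i,j))$, again using commutativity together with the fact that $\gamma$ is constant on snakes, and this is the $(t+b)^{\rm th}$ letter of $\Coslither(i,j)$. I do not anticipate a genuine obstacle here; once the step type is recognized as a (co-)snake invariant, both statements collapse to the bookkeeping of commuting the central factor $c^b$ (respectively $s^a$) past the iterate and then absorbing it. The one point that warrants explicit care is the reading of indices modulo $\beta$ (respectively $\alpha$), which is exactly what the $s^\beta=c^\alpha$ relation supplies and what makes ``cyclically shifted'' the correct description rather than merely ``shifted.''
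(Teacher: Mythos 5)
Your proposal is correct and follows exactly the route the paper takes: the paper dispenses with the proof by declaring the lemma ``immediate from Corollary~\ref{cor:path-type},'' and your argument is precisely the spelled-out version of that immediacy---step types as co-snake/snake invariants, commutativity of $s$ and $c$, and the relation $s^\beta=c^\alpha$ to justify reading indices cyclically. No gaps; your explicit treatment of the modular indexing is a detail the paper leaves implicit.
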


By Lemma~\ref{lem:shiftslither}, it is well-defined to speak of the \dfn{slither of $\cals$}, written $\Slither(\cals)$, as the slither from any live entry of $\cals$ up to cyclic shift. The \dfn{co-slither of $\cals$}, written $\Coslither(\cals)$, is defined similarly.

\subsection{Scales and periods}

Here we formalize the notion of the ``exponent'' that we used to write slithers and co-slithers, and we use this notion to calculate properties of scrolls and the ticker tapes. By Lemma~\ref{lem:shiftslither}, these exponents are inherent properties of a scroll. Not only are they notationally convenient, but they underlie a fundamental property that will be useful to us later in this paper. In Section~\ref{sec:tables}, we will relate these to the periods of both the scrolls and ticker tapes. We will also use a slightly different terminology: the word ``\emph{degree}'' will both suggest the fact that it appears as an exponent and indicate its relation to the degree of a certain covering map that we will introduce in Section~\ref{sec:tables}.

\begin{definition}\label{def:exponent}
The \dfn{degree} of a scroll, denoted $\deg(\cals)$, is the length of $\Slither(\cals)$ divided by its period as a cyclic word. The \dfn{co-degree} of a scroll, denoted $\codeg(\cals)$, is the length of $\Coslither(\cals)$ divided by its period as a cyclic word. 
\end{definition}

The scroll in our running example from Figure~\ref{fig:example} has degree $3$ because its slither is $(D\E)^3$. Note that this must divide $\beta$, the number of co-snakes. Similarly, this scroll has co-degree $2$ because its co-slither is $S^2$. This number divides $\alpha$, the number of snakes.

The definitions of slithers, co-slithers, degrees, and co-degrees can be defined analogously on ticker tapes. We write $\Slither(k)$ (resp.,  $\Coslither(k)$) for the slither (resp., co-slither) from the $k^{\text{th}}$ entry of $\calx$. We also define $\deg(\calx)=\deg(\cals)$ and $\codeg(\calx)=\codeg(\cals)$. While it is usually easy to visualize results in the scroll, sometimes it is notationally simpler to work with ticker tapes. 

Going forward, it will be fruitful to write the slither as $\deg(\calx)$ copies of a word $P$ over $\{D,\E\}$ and the co-slither as $\codeg(\calx)$ copies of a word $Q$ over $\{S,L\}$. Notationally, for any fixed $k \in \Live(\calx)$, we will write
\begin{equation}\label{eqn:P-and-Q}
\Slither(k) = P^{\deg(\calx)},\quad
\beta=|P| \cdot \deg(\calx),\quad\quad
\Coslither(k)=Q^{\codeg(\calx)},\quad \alpha=|Q|\cdot \codeg(\calx).
\end{equation}
We will also want to speak of the number of spaces that we advance in the ticker tape upon applying $P$ and $Q$; we will denote these as
\begin{equation}\label{eqn:p-and-q}
    p=s^{|P|}(k)-k,\quad\text{and}\quad q=c^{|Q|}(k)-k.
\end{equation}
By Lemma~\ref{lem:shiftslither}, these do not depend on $k$, though the actual words $P$ and $Q$ do. Note that $p$ and $q$ are the minimal shifts of the ticker tape that leave invariant the snakes and co-snakes, respectively. Going forward, we will refer to $p$ as the \dfn{snake scale} and $q$ as the \dfn{co-snake scale}.

In our running example from Figure~\ref{fig:example}, if we take $k=12$, which is the live entry $(i,j)=(1,1)$ one row beneath the non-live entry in the upper-left corner, then
\[
\Slither(k)=(\E D)^3,\qquad 
\beta=6,\qquad
P=\E D,\qquad \deg(\calx)=3,\qquad 
p=14,
\]
and
\[
\Coslither(k)=S^2,\qquad 
\alpha=2,\qquad
Q=S,\qquad\codeg(\calx)=2,\qquad q=21.
\]
Starting at an arbitrary live entry $k$, each time we traverse the path corresponding to $P$, we go forward in the ticker tape $p$ entries. For any $r\in\nn$, traversing the path $P^r$ advances us $rp$ entries. Though this is intuitive, we will formalize and prove it below, as it will be a useful technical lemma for a number of results.

\begin{lemma}\label{lem:log-like}
  For any $r \in \zz_{\geq 0}$ and any $k \in \Live(\calx)$, we have
  \[
  s^{r|P|}(k)-k=r\big(s^{|P|}(k)-k\big)=rp\qquad \text{ and }\qquad c^{r|Q|}(k)-k=r\big(c^{|Q|}(k)-k\big)=rq.
  \]
\end{lemma}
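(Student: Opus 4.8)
The engine of this lemma is already recorded: in the discussion following Eq.~\eqref{eqn:p-and-q}, Lemma~\ref{lem:shiftslither} is used to conclude that the per-block advance $p=s^{|P|}(k')-k'$ is independent of the live entry $k'\in\Live(\calx)$, and likewise that $q=c^{|Q|}(k')-k'$ does not depend on $k'$. Granting this, both displayed identities collapse to a telescoping sum, so the plan is to prove the statement for $s$, $P$, and $p$; the argument for $c$, $Q$, and $q$ is word-for-word identical.

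The plan is to set $k_i=s^{i|P|}(k)$ for $i=0,1,\dots,r$, so that $k_0=k$ and $k_r=s^{r|P|}(k)$, each $k_i$ being a live entry of $\calx$, and then to telescope:
\[
s^{r|P|}(k)-k \;=\; k_r-k_0 \;=\; \sum_{i=0}^{r-1}\big(k_{i+1}-k_i\big) \;=\; \sum_{i=0}^{r-1}\big(s^{|P|}(k_i)-k_i\big).
\]
The crux is to show that each summand equals $p$. For this I would apply Lemma~\ref{lem:shiftslither} at the live entry $k_i=s^{i|P|}(k)$: taking $a=i|P|$ and $b=0$, it identifies $\Slither(k_i)$ with $\Slither(k)=P^{\deg(\calx)}$ cyclically shifted by $i|P|$ positions. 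Since a cyclic shift by a multiple of $|P|$ merely permutes whole copies of the identical block $P$, we obtain $\Slither(k_i)=P^{\deg(\calx)}$ as well. Consequently the first $|P|$ steps emanating from $k_i$ again spell the word $P$, so they advance the ticker tape by exactly the same amount as the first $|P|$ steps from $k$, namely $p$. Hence $k_{i+1}-k_i=s^{|P|}(k_i)-k_i=p$ for every $i$, the sum equals $rp$, and since $rp=r\big(s^{|P|}(k)-k\big)$ by the definition of $p$, both equalities in the claim follow at once.

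I do not expect a genuine obstacle here: the only point requiring care is the verification that the displacement contributed by each block of $|P|$ successor-steps is constantly $p$, and this is immediate from the fact that $\Slither(k_i)$ is literally $P^{\deg(\calx)}$ again (not merely a cyclic rotation of it). An equally clean alternative would be a one-line induction on $r$: the base case $r=0$ is $s^{0}(k)-k=0$, and the inductive step writes $s^{(r+1)|P|}(k)-k=\big(s^{|P|}(k')-k'\big)+\big(s^{r|P|}(k)-k\big)$ with $k'=s^{r|P|}(k)$, where the first bracket is $p$ by $k$-independence and the second is $rp$ by hypothesis. Either route reduces the lemma entirely to the already-established invariance of $p$ and $q$, so the remaining work is purely formal.
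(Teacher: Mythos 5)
Your proposal is correct and takes essentially the same route as the paper: the paper proves the lemma by induction on $r$, with the inductive step $s^{(r+1)|P|}(k)=s^{|P|}\big(s^{r|P|}(k)\big)=rp+k+p$ resting on the $k$-independence of $p$ and $q$ asserted just after Eq.~\eqref{eqn:p-and-q} via Lemma~\ref{lem:shiftslither}, which is exactly the engine of your telescoping sum and of the inductive alternative you sketch yourself. The only differences are cosmetic (telescoping versus unrolled induction) and that you spell out, slightly more carefully than the paper, why each block of $|P|$ successor-steps advances by exactly $p$ (namely, a cyclic shift of $\Slither(k)$ by a multiple of $|P|$ is again $P^{\deg(\calx)}$).
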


\begin{proof}
For the first statement, it suffices to show that $s^{r|P|}(k)=rp+k$, and we will do this by induction. The cases when $r=0$ or $r=1$ are immediate. Assuming the hypothesis holds for $r$, we have
\[
s^{(r+1)|P|}(k)=s^{|P|}(s^{r|P|}(k))=
s^{|P|}(rp+k)=rp+k+p
=(r+1)p+k.
\]
The proof of the second equality is completely analogous; just replace $s$ with $c$, $|P|$ with $|Q|$, and $p$ with $q$.
\end{proof}

Recall that $\alpha$ and $\beta$ are the number of snakes and co-snakes respectively of $S$. 
\begin{lemma}\label{lem:co-prime-new}
For any ticker tape $\calx$, the values of $\deg(\calx)$ and $\codeg(\calx)$ are relatively prime. 
\end{lemma}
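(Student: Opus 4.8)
The plan is to argue by contradiction, exploiting the minimality built into the definition of $\beta$. Recall from the proof of Proposition~\ref{prop:snake-present} that $\beta$ is characterized as the \emph{smallest} positive integer for which $s^\beta\in\langle c\rangle$. So if I can produce a positive integer strictly smaller than $\beta$ whose $s$-power already lies in $\langle c\rangle$, I will be done. The engine that converts group identities into shift data is Lemma~\ref{lem:log-like}, together with the fact that the action of $G(\cals)$ on $\Live(\calx)$ is simply transitive, hence faithful.

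First I would record the single numerical identity that comes from the defining relation $s^\beta=c^\alpha$ of the snake group. Since $\beta=|P|\cdot\deg(\calx)$ and $\alpha=|Q|\cdot\codeg(\calx)$ by Eq.~\eqref{eqn:P-and-Q}, Lemma~\ref{lem:log-like} shows that for every $k\in\Live(\calx)$ we have $s^\beta(k)=s^{|P|\deg(\calx)}(k)=k+\deg(\calx)\,p$ and $c^\alpha(k)=c^{|Q|\codeg(\calx)}(k)=k+\codeg(\calx)\,q$. Thus both $s^\beta$ and $c^\alpha$ act on the ticker tape as honest translations, and because they are equal as elements of $G(\cals)$ (this is precisely the relation $s^\beta=c^\alpha$), comparing the two translation lengths yields $\deg(\calx)\,p=\codeg(\calx)\,q$.

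Now set $e=\gcd(\deg(\calx),\codeg(\calx))$ and suppose toward a contradiction that $e>1$. Write $\deg(\calx)=e\,d_1$ and $\codeg(\calx)=e\,d_1'$. The crucial point is that $\beta/e=|P|\,d_1$ and $\alpha/e=|Q|\,d_1'$ remain integer multiples of $|P|$ and $|Q|$, respectively, so Lemma~\ref{lem:log-like} applies again: $s^{\beta/e}$ acts as translation by $d_1 p$ and $c^{\alpha/e}$ acts as translation by $d_1' q$. Cancelling $e$ from $\deg(\calx)\,p=\codeg(\calx)\,q$ gives $d_1 p=d_1' q$, so $s^{\beta/e}$ and $c^{\alpha/e}$ induce the \emph{same} translation of $\Live(\calx)$. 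Faithfulness of the torsor action then forces $s^{\beta/e}=c^{\alpha/e}$ in $G(\cals)$. In particular $s^{\beta/e}\in\langle c\rangle$ with $1\le\beta/e<\beta$, contradicting the minimality of $\beta$. Hence $e=1$, as desired.

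The computations here are all routine once Lemma~\ref{lem:log-like} is in hand; I expect the only subtle step to be the observation that makes everything work, namely that dividing the exponents by $e$ keeps $\beta/e$ and $\alpha/e$ as integer multiples of $|P|$ and $|Q|$, so that Lemma~\ref{lem:log-like} can be reapplied to the reduced exponents and the candidate relation $s^{\beta/e}=c^{\alpha/e}$ is genuinely shorter than the defining one. Symmetrically, the same argument with the roles of $s$ and $c$ interchanged would instead contradict the minimality of $\alpha$, so either minimality statement suffices to close the proof.
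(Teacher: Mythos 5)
Your proposal is correct and follows essentially the same route as the paper's proof: both use Lemma~\ref{lem:log-like} to show the relation $s^\beta=c^\alpha$ forces $s^{\beta/d}(k)=c^{\alpha/d}(k)$ for $d=\gcd(\deg(\calx),\codeg(\calx))$, and then invoke the minimality of $\beta$ encoded in Proposition~\ref{prop:snake-present} to force $d=1$. The only differences are presentational (contradiction versus direct argument, and your explicit appeal to faithfulness, which the paper leaves implicit since $G(\cals)$ is by definition a group of permutations of the live entries).
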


\begin{proof}
Let $d=\gcd(\deg(\calx),\codeg(\calx))$. By Lemma~\ref{lem:log-like}, 
\[ 
s^{\beta}(k)-k
=s^{\deg(\calx)|P|}(k)-k
=d(s^{\beta/d}(k)-k).
\]
Since $s^\beta(k)=c^\alpha(k)$, the quantity above is equal to 
\[ 
c^{\alpha}(k)-k
=c^{\codeg(\calx)|Q|}(k)-k
=d(c^{\alpha/d}(k)-k),
\]
and thus $s^{\beta/d}(k) = c^{\alpha/d}(k)$. It now follows from the presentation of the snake group (Proposition~\ref{prop:snake-present}) that $d = 1$. 
\end{proof}

\begin{lemma}\label{lem:lcmdeg}
For any ticker tape $\calx$,
\[
\lcm(p,q)=\deg(\calx)p=\codeg(\calx)q.
\]
\end{lemma}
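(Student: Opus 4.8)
The plan is to first establish the middle equality $\deg(\calx)\,p=\codeg(\calx)\,q$, and then to identify this common value with $\lcm(p,q)$ through a short number-theoretic argument that exploits the coprimality guaranteed by Lemma~\ref{lem:co-prime-new}.

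First I would prove $\deg(\calx)\,p=\codeg(\calx)\,q$. By Eq.~\eqref{eqn:P-and-Q} we have $\beta=|P|\cdot\deg(\calx)$ and $\alpha=|Q|\cdot\codeg(\calx)$. Applying Lemma~\ref{lem:log-like} with $r=\deg(\calx)$ gives $s^{\beta}(k)-k=\deg(\calx)\,p$, and applying it with $r=\codeg(\calx)$ gives $c^{\alpha}(k)-k=\codeg(\calx)\,q$. Since the relation $s^{\beta}=c^{\alpha}$ holds in the snake group (Proposition~\ref{prop:snake-present}), these two functions agree at $k$, so $s^{\beta}(k)=c^{\alpha}(k)$ and hence $\deg(\calx)\,p=\codeg(\calx)\,q$. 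Denote this common value by $N$. Because applying either $s$ or $c$ strictly increases the ticker-tape index (the successor of $k$ is $k+2$ or $k+n+1$, and the co-successor is $k+2n-2$ or $k+2n-1$, all larger than $k$ since $n\geq 2$), we have $p,q>0$, and therefore $N>0$ as well; this positivity is what allows the $\gcd$/$\lcm$ manipulations below.

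Next I would identify $N$ with $\lcm(p,q)$. Write $a=\deg(\calx)$ and $b=\codeg(\calx)$, so that $ap=bq=N$ with $\gcd(a,b)=1$ by Lemma~\ref{lem:co-prime-new}. From $ap=bq$ and $\gcd(a,b)=1$ we get $b\mid p$; writing $p=bm$ and substituting into $ap=bq$ yields $q=am$ with the same integer $m$. Then $\gcd(p,q)=\gcd(bm,am)=m\cdot\gcd(a,b)=m$, so $\lcm(p,q)=\dfrac{pq}{\gcd(p,q)}=\dfrac{(bm)(am)}{m}=abm=a(bm)=ap=N$. This establishes $\lcm(p,q)=\deg(\calx)\,p=\codeg(\calx)\,q$.

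I do not anticipate a serious obstacle here; the argument is essentially a bookkeeping assembly of earlier results. The two points requiring care are (i) invoking the relation $s^{\beta}=c^{\alpha}$ correctly, as it is exactly the nontrivial relation in the snake-group presentation of Proposition~\ref{prop:snake-present}, and (ii) recording the positivity of $p$ and $q$ so that the passage to $\gcd$ and $\lcm$ is legitimate.
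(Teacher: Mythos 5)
Your proof is correct, and while its first half coincides with the paper's, the second half takes a genuinely different route. Like the paper, you obtain the middle equality from the chain $\deg(\calx)p=s^{\beta}(k)-k=c^{\alpha}(k)-k=\codeg(\calx)q$, using Lemma~\ref{lem:log-like} together with the relation $s^{\beta}=c^{\alpha}$ from Proposition~\ref{prop:snake-present}. The divergence is in identifying this common value with $\lcm(p,q)$: the paper argues by minimality, noting that $\alpha$ and $\beta$ are the \emph{smallest} positive integers for which $s^{\beta}=c^{\alpha}$ holds, so that $\deg(\calx)p$ is the smallest multiple of $p$ that is also a multiple of $q$; you instead invoke the coprimality $\gcd(\deg(\calx),\codeg(\calx))=1$ of Lemma~\ref{lem:co-prime-new} and finish with elementary arithmetic (Euclid's lemma plus $pq=\gcd(p,q)\lcm(p,q)$, for which your remark that $p,q>0$ is the needed hypothesis). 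There is no circularity: Lemma~\ref{lem:co-prime-new} precedes this lemma and its proof uses only Lemma~\ref{lem:log-like} and Proposition~\ref{prop:snake-present}. The trade-off is that the paper appeals to the group-theoretic minimality directly and needs nothing else, whereas you factor that same content through the coprimality lemma and reduce the rest to pure number theory; your version is arguably more modular, and it has a pleasant byproduct, namely the explicit formula $\gcd(p,q)=\deg(\calx)p/\bigl(\deg(\calx)\codeg(\calx)\bigr)$, which is exactly the second equality later recorded in Theorem~\ref{thm:ttperiod} (via Corollary~\ref{cor:scale-lcm}).
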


\begin{proof}
By definition, we have the following chain of equalities. 
\[
\deg(\calx)p=\deg(\calx)\left(s^{|P|}(k)-k\right)
=s^{\beta}(k)-k=c^{\alpha}(k)-k
=\codeg(\calx)\left(c^{|Q|}(k)-k\right)
=\codeg(\calx)q.
\]
From the presentation of the snake group, $\alpha$ and $\beta$ are the smallest positive integers for which the middle equality holds. Thus, $\deg(\calx)p$ is the smallest multiple of $p$ that is a multiple of $q$, and $\codeg(\calx)q$ is the smallest multiple of $q$ that is a multiple of $p$. The lemma follows. 
\end{proof}

The quantity in Lemma~\ref{lem:lcmdeg} is significant enough that we will give it its own name. 

\begin{definition}\label{def:scale} 
The \dfn{scale} of a ticker tape $\calx$ is
\[
\Scale(\calx):=s^\beta(k)-k=c^\alpha(k)-k,
\]
where $\alpha$ is the number of snakes, $\beta$ is the number of co-snakes, and $k$ is an arbitrary integer. If $\cals$ is the corresponding scroll, then we define the \dfn{scale} of $\cals$ to be $\Scale(\cals):=\Scale(\calx)$.
\end{definition}

\begin{remark}
The snake scale $p$ and co-snake scale $q$ of $\calx$ (or $\cals$) are
\[
p=\frac{\Scale(\calx)}{\codeg(\calx)},\quad\text{and}\quad q=\frac{\Scale(\calx)}{\deg(\calx)}.
\]
\end{remark}

Since the scale is the minimal positive integer $\sigma$ for which $k$ and $k+\sigma$ always lie on the same snake and same co-snake, we have the following algebraic interpretation of the scale in terms of cosets.

\begin{cor}\label{cor:scale-lcm}
The scale of a ticker tape is
\[
\Scale(\calx)=\lcm(p,q).
\]
\end{cor}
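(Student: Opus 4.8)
The plan is to read the identity directly off the two results immediately preceding it, since the corollary is essentially a repackaging of Lemma~\ref{lem:lcmdeg}. By Definition~\ref{def:scale}, the scale equals $s^\beta(k)-k$ for any base point $k\in\Live(\calx)$. First I would recall the opening equality of the proof of Lemma~\ref{lem:lcmdeg}, namely that
\[
s^\beta(k)-k=\deg(\calx)\,p,
\]
which itself follows by applying Lemma~\ref{lem:log-like} with $r=\deg(\calx)$ and using the relation $\beta=|P|\cdot\deg(\calx)$ from Eq.~\eqref{eqn:P-and-Q}. Lemma~\ref{lem:lcmdeg} then asserts precisely that $\deg(\calx)\,p=\lcm(p,q)$. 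Chaining these gives
\[
\Scale(\calx)=s^\beta(k)-k=\deg(\calx)\,p=\lcm(p,q),
\]
as desired. (Equivalently, one could run the same chain through $c^\alpha(k)-k=\codeg(\calx)\,q$, which Lemma~\ref{lem:lcmdeg} also equates to $\lcm(p,q)$.)

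It is worth recording the conceptual content underlying this computation, as flagged in the sentence preceding the corollary. Since $p$ and $q$ are by construction the minimal shifts of the ticker tape that leave the snakes and co-snakes invariant, a shift by $\sigma$ preserves all snakes exactly when $p\mid\sigma$ and preserves all co-snakes exactly when $q\mid\sigma$. A shift preserving both families is therefore a common multiple of $p$ and $q$, and the least positive such shift is $\lcm(p,q)$. As the scale is characterized as precisely this minimal simultaneously snake- and co-snake-preserving shift, this gives a second, more structural route to the same equality.

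I anticipate no genuine obstacle here: the statement is a formal corollary of Lemma~\ref{lem:lcmdeg} together with the definition of the scale, and no new calculation is required. The only points warranting care are bookkeeping ones—ensuring a single base point $k$ is used consistently across Definition~\ref{def:scale}, Eq.~\eqref{eqn:p-and-q}, and Lemma~\ref{lem:log-like}, and invoking Lemma~\ref{lem:log-like} (rather than recomputing) to justify $s^\beta(k)-k=\deg(\calx)\,p$. If one prefers the structural argument of the second paragraph instead, the mild additional step to verify is that a shift preserves the snakes (respectively co-snakes) \emph{if and only if} it is a multiple of $p$ (respectively $q$), which again is immediate from Lemma~\ref{lem:log-like} and the minimality built into the definitions of $p$ and $q$.
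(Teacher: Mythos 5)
Your proposal is correct and follows the paper's own route: the paper's proof is literally ``this follows immediately from the proof of Lemma~\ref{lem:lcmdeg},'' and your first paragraph simply unpacks that chain, $\Scale(\calx)=s^{\beta}(k)-k=\deg(\calx)\,p=\lcm(p,q)$, using Definition~\ref{def:scale}, Lemma~\ref{lem:log-like}, and Lemma~\ref{lem:lcmdeg} exactly as the authors intend. The structural second paragraph is a fine supplementary reading but adds nothing needed for the proof.
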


\begin{proof}
This follows immediately from the proof of Lemma~\ref{lem:lcmdeg}. 
\end{proof}

\begin{definition}
The \dfn{fibers} of a scroll $\cals$ (or ticker tape $\calx$) are the equivalence classes of live entries defined by intersecting the snakes and co-snakes. In both notations, the fiber of a live entry is denoted
\begin{equation}\label{eq:fiber}
\Fiber(i,j)=\Snake(i,j)\cap\Cosnake(i,j),\qquad\Fiber(k)=\Snake(k)\cap\Cosnake(k).
\end{equation}
\end{definition}

Algebraically, the fibers are just the orbits under the action of the cyclic group
\[
\<s\>\cap\<c\>=\big\langle s^\beta\big\rangle=\<c^\alpha\>\leq G(\cals), 
\]
and the scale is the smallest positive integer $\sigma$ for which two live entries in $\calx$ are in the same fiber if and only if they differ by a multiple of $\sigma$ in the ticker tape.

We will now explore the periodicity of the scroll and ticker tape. Though the scroll and ticker tape are both periodic, we measure their periods in different ways.

\begin{definition}\label{def:periods}
The \dfn{period} $T(\cals)$ of a scroll $\cals=(X_{i,j})$ is the smallest $m>0$ such that $X_{i+m,j}=X_{i,j}$ for all $i,j$. 

The \dfn{period} $T(\calx)$ of a ticker tape $\calx=(X_k)$ is the smallest $\ell>0$ such that $X_{k+\ell}=X_k$ for all $k$.
\end{definition}

In our running example from Figure~\ref{fig:example}, the period of the ticker tape and scroll are both $7$. In particular, the ticker tape is generated by the subsequence $1010000$, and the scroll consists of $7$ repeating rows.  For the example back in Figure~\ref{fig:motivating-example}, the period of the ticker tape is 45, while the period of the scroll is 15.

\begin{lemma}\label{lem:when-equiv}
Suppose that $k \in \Live(\calx)$ and $\ell\in \mathbb Z_{\ge 0}$. Then $T(\calx)$ divides $\ell$ if and only if 
$\Slither(k) = \Slither(k+\ell)$ and $\Coslither(k) = \Coslither(k+\ell)$
\end{lemma}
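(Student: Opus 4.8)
The plan is to prove both implications by translating the combinatorial shift $k\mapsto k+\ell$ on the ticker tape into the algebraic action of the snake group $G(\calx)=\langle s,c\rangle$ on $\Live(\calx)$, which acts simply transitively by Proposition~\ref{prop:snake-present} (transported to ticker-tape notation). The governing dictionary is that, since $\calx$ is $\{0,1\}$-valued, the divisibility $T(\calx)\mid\ell$ is \emph{equivalent} to the set equality $\Live(\calx)+\ell=\Live(\calx)$. So the whole problem becomes: when is translation by $\ell$ a symmetry of the live set?

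For the forward implication I would assume $T(\calx)\mid\ell$, so $\Live(\calx)+\ell=\Live(\calx)$. Because $s(m)$ is by definition the unique live element of $\{m+2,\,m+n+1\}$ and translation by $\ell$ preserves liveness, this forces $s(m+\ell)=s(m)+\ell$ for every live $m$, and likewise $c(m+\ell)=c(m)+\ell$. Iterating gives $s^{j}(k+\ell)=s^{j}(k)+\ell$ for all $j$, so the $s$-orbit of $k+\ell$ is exactly the $\ell$-translate of the $s$-orbit of $k$; as liveness of the two successor candidates is preserved under the shift, the step type ($D$ versus $\E$) recorded at each stage is identical. Hence $\Slither(k)=\Slither(k+\ell)$, and the same argument for $c$ gives $\Coslither(k)=\Coslither(k+\ell)$.

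The converse is the substantive direction. Here I assume the two slithers and the two co-slithers agree (which presupposes $k+\ell\in\Live(\calx)$). The first step, for $b=0$, is to show $s^{a}(k+\ell)=s^{a}(k)+\ell$ for all $a\in\zz$: the equality $\Slither(k)=\Slither(k+\ell)$ forces the step type of $s^{j}\to s^{j+1}$ to agree along the two orbits for $0\le j<\beta$, and this propagates to all $j\in\zz$ because the step-type sequence is $\beta$-periodic (by Lemma~\ref{lem:shiftslither}, the slither from $s^{\beta}(k)$ is the slither from $k$ shifted $\beta$ places, i.e.\ unchanged). Telescoping the equal increments then yields $s^{a}(k+\ell)-s^{a}(k)=\ell$. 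The identical argument on co-slithers gives $c^{b}(k+\ell)=c^{b}(k)+\ell$ for all $b$.

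The crucial move is to upgrade this to the mixed statement $s^{a}c^{b}(k+\ell)=s^{a}c^{b}(k)+\ell$ for all $a,b\in\zz$, and I expect this to be the main obstacle, since a priori the slither hypothesis only controls the successor dynamics based at $k$ itself. The fix is again Lemma~\ref{lem:shiftslither}: putting $m=c^{b}(k)$, that lemma (with $a=0$) gives $\Slither(m)=\Slither(k)=\Slither(k+\ell)=\Slither(c^{b}(k+\ell))$, and since $c^{b}(k+\ell)=c^{b}(k)+\ell=m+\ell$, I can rerun the $b=0$ argument based at $m$ to get $s^{a}(m+\ell)=s^{a}(m)+\ell$, which is precisely $s^{a}c^{b}(k+\ell)=s^{a}c^{b}(k)+\ell$. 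Finally, invoking simple transitivity twice — once to write $\Live(\calx)=\{s^{a}c^{b}(k)\}$ and once to recognize $\{s^{a}c^{b}(k+\ell)\}=\Live(\calx)$ — I obtain
\[
\Live(\calx)+\ell=\{s^{a}c^{b}(k)+\ell\}=\{s^{a}c^{b}(k+\ell)\}=\Live(\calx),
\]
and hence $T(\calx)\mid\ell$. Notably this needs no density or counting argument: transitivity of the torsor action is exactly what converts the orbitwise slither/co-slither information into a global translational symmetry of $\Live(\calx)$.
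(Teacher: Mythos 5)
Your proof is correct and follows essentially the same route as the paper: both arguments rest on the fact that the slither and co-slither from a single live entry determine the entire ticker tape, so their agreement at $k$ and $k+\ell$ is equivalent to invariance of the tape under the shift by $\ell$, i.e., to $T(\calx)\mid\ell$. The paper asserts this determination claim in one line, whereas you prove it via simple transitivity of the snake group together with Lemma~\ref{lem:shiftslither}; this is a fleshed-out version of the same idea rather than a different approach.
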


\begin{proof}
The slither and co-slither from any live entry completely determine $\calx$. In particular, when the ticker tape is shifted $\ell$ positions, it remains unchanged. conversely, if $\Slither(k) \not= \Slither(k+\ell)$ or $\Coslither(k) \not= \Coslither(k+\ell)$, then the ticker tape changes after shifting by $\ell$. 
\end{proof}

As a corollary, we get an analogous characterization of the period of a scroll, which will be useful later.

\begin{corollary}\label{cor:when-equiv}
For any $(i,j)\in\Live(\cals)$, the period $T(\cals)$ is the minimal $\ell>0$ such that the following three conditions hold: 
\begin{enumerate}
\item $(i+\ell,j)\in\Live(\cals)$,
\item $\Slither(i,j)=\Slither(i+\ell,j)$,
\item $\Coslither(i,j)=\Coslither(i+\ell,j)$.
\end{enumerate}
\end{corollary}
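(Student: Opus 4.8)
The plan is to reduce the statement to Lemma~\ref{lem:when-equiv} through the dictionary between scroll shifts and ticker-tape shifts. The first step is to record the arithmetic relationship between the two notions of period. Writing $k=in+j$ for the ticker-tape index of the entry $(i,j)$, the identity $X_{in+j}=X_{i,j}$ shows that shifting the scroll down by $m$ rows, i.e.\ replacing each $(i,j)$ by $(i+m,j)$, corresponds exactly to shifting the ticker tape by $mn$ positions, since $(i+m)n+j=k+mn$. Hence the defining condition $X_{i+m,j}=X_{i,j}$ for all $i,j$ is equivalent to $X_{k+mn}=X_k$ for all $k\in\zz$, which in turn is equivalent to $T(\calx)\mid mn$. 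In particular,
\[
T(\cals)=\min\{\,m>0 : T(\calx)\mid mn\,\}.
\]

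Next I would translate the three listed conditions into this ticker-tape language. Fix $(i,j)\in\Live(\cals)$ and set $k=in+j$, so that $(i+\ell,j)$ has ticker-tape index $k+\ell n$. Condition (1) says precisely that $k+\ell n\in\Live(\calx)$, which is exactly what is needed for the slither and co-slither at $k+\ell n$ to be defined. Under the correspondence, condition (2) becomes $\Slither(k)=\Slither(k+\ell n)$ and condition (3) becomes $\Coslither(k)=\Coslither(k+\ell n)$. Thus conditions (1)--(3) hold together if and only if $k+\ell n$ is live and its slither and co-slither agree with those at $k$.

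At this point I would invoke Lemma~\ref{lem:when-equiv} with the shift $\ell n$ in place of $\ell$: it gives that $\Slither(k)=\Slither(k+\ell n)$ and $\Coslither(k)=\Coslither(k+\ell n)$ if and only if $T(\calx)\mid \ell n$, and the liveness demanded in condition (1) is automatically subsumed, since equality of slithers presupposes that both endpoints are live. Consequently, the set of $\ell>0$ satisfying (1)--(3) is exactly $\{\,\ell>0 : T(\calx)\mid \ell n\,\}$, whose minimum is $T(\cals)$ by the displayed formula above. This proves the corollary.

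The only genuinely non-routine point is the factor of $n$ relating the two periods; once the bijection $(i,j)\mapsto in+j$ is used to see that a one-row scroll shift is an $n$-position ticker-tape shift, the statement becomes a direct specialization of Lemma~\ref{lem:when-equiv}. I would take care to verify the edge handling of $\ell n$ versus $\ell$ in the divisibility, so that the minimum is genuinely taken over the correct set and matches the minimal $m$ in the formula for $T(\cals)$.
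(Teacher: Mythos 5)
Your proposal is correct and takes essentially the same route as the paper, which states this corollary without proof as an immediate consequence of Lemma~\ref{lem:when-equiv}: a downward shift by $\ell$ rows in the scroll is exactly a shift by $\ell n$ positions in the ticker tape, so conditions (1)--(3) hold if and only if $T(\calx)\mid \ell n$, and the minimal such $\ell$ is $T(\cals)=\min\{m>0: T(\calx)\mid mn\}$. Your two added points of care---that liveness of $(i+\ell,j)$ is subsumed in the slither equalities, and that the relevant divisibility is $T(\calx)\mid mn$ (equivalently $T(\cals)=\lcm(T(\calx),n)/n$, as in Corollary~\ref{cor:scroll-period})---are exactly the details the paper leaves implicit.
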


\begin{theorem}\label{thm:ttperiod}
The period of the ticker tape $\calx$ is
\[
T(\calx)=\gcd(p,q) =\frac{\Scale(\calx)}{\deg(\calx)\codeg(\calx)}.
\]
\end{theorem}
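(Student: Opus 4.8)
The plan is to prove the two displayed equalities separately. The second equality, $\gcd(p,q)=\Scale(\calx)/(\deg(\calx)\codeg(\calx))$, is purely arithmetic: Corollary~\ref{cor:scale-lcm} gives $\Scale(\calx)=\lcm(p,q)$, and Lemma~\ref{lem:lcmdeg} gives $\Scale(\calx)=\deg(\calx)\,p=\codeg(\calx)\,q$, so $p=\Scale(\calx)/\deg(\calx)$ and $q=\Scale(\calx)/\codeg(\calx)$. Substituting these into $\gcd(p,q)=pq/\lcm(p,q)=pq/\Scale(\calx)$ yields the formula. The real content is the first equality $T(\calx)=\gcd(p,q)$, which I will obtain from the following claim: for every $\ell\ge 0$, the shift $k\mapsto k+\ell$ leaves $\calx$ invariant (that is, $T(\calx)\mid\ell$) if and only if $\gcd(p,q)\mid\ell$. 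Granting this, applying it to $\ell=\gcd(p,q)$ gives $T(\calx)\mid\gcd(p,q)$, and applying it to $\ell=T(\calx)$ gives $\gcd(p,q)\mid T(\calx)$, whence $T(\calx)=\gcd(p,q)$.

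The recurring tool is the identity ``translate by $p$'' $=s^{|P|}$ on $\Live(\calx)$: since $p=s^{|P|}(k)-k$ is independent of $k$, we have $s^{|P|}(k)=k+p$ for every live $k$, so $s^{r|P|}$ equals translation by $rp$ on $\Live(\calx)$ for all $r\in\zz$, and likewise $c^{r|Q|}$ equals translation by $rq$. For the ``if'' direction, suppose $\gcd(p,q)\mid\ell$. Since $p\zz+q\zz=\gcd(p,q)\zz$, we may write $\ell=a'p+b'q$ for some $a',b'\in\zz$; then the bijection $g:=s^{a'|P|}c^{b'|Q|}$ of $\Live(\calx)$ satisfies $g(k)=k+\ell$ for every live $k$ by the translation identities. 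Because $g$ maps $\Live(\calx)$ bijectively onto itself and coincides there with the $\zz$-translation by $\ell$, that translation must carry non-live entries to non-live entries as well, hence is a symmetry of $\calx$, giving $T(\calx)\mid\ell$.

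For the ``only if'' direction I combine Lemma~\ref{lem:when-equiv} with Lemma~\ref{lem:shiftslither}. If $T(\calx)\mid\ell$, then $k+\ell$ is again live and $\Slither(k)=\Slither(k+\ell)$, $\Coslither(k)=\Coslither(k+\ell)$. Since $\Live(\calx)$ is a $G(\calx)$-torsor (Proposition~\ref{prop:snake-present}), write $k+\ell=s^ac^b(k)$ for some $a,b\in\zz$. By Lemma~\ref{lem:shiftslither}, $\Slither(k+\ell)$ is $\Slither(k)=P^{\deg(\calx)}$ cyclically shifted by $a$, and $\Coslither(k+\ell)$ is $\Coslither(k)=Q^{\codeg(\calx)}$ cyclically shifted by $b$. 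By Definition~\ref{def:exponent}, $\deg(\calx)$ is the length $\beta$ of $\Slither(k)$ divided by its cyclic period, so $|P|$ is the minimal period of the cyclic word $\Slither(k)$; hence a cyclic shift by $a$ fixes it exactly when $|P|\mid a$ (well-defined modulo $\beta$ since $|P|\mid\beta$). The equalities of (co-)slithers therefore force $|P|\mid a$ and $|Q|\mid b$, say $a=a'|P|$ and $b=b'|Q|$. Applying the translation identities to $s^ac^b(k)=s^{a'|P|}c^{b'|Q|}(k)$ gives $\ell=a'p+b'q$, so $\gcd(p,q)\mid\ell$, completing the claim.

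The step needing the most care is this ``only if'' direction, namely the passage from equality of the (co-)slithers to the divisibilities $|P|\mid a$ and $|Q|\mid b$. This relies on $P$ and $Q$ being \emph{primitive}, so that $|P|$ and $|Q|$ are the genuine minimal periods of the cyclic words $\Slither(k)$ and $\Coslither(k)$ (which is exactly how $\deg(\calx)$ and $\codeg(\calx)$ were defined), and on the shift amounts being well-defined modulo $\beta$ and $\alpha$: changing the representative $s^ac^b$ of the torsor element alters $(a,b)$ only by a multiple of $(\beta,-\alpha)$, which is invisible modulo $\beta$ and modulo $\alpha$. Once these are in place, everything else reduces to the translation identities furnished by Lemma~\ref{lem:log-like}.
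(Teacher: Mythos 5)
Your proof is correct, and it takes a genuinely different---and in fact sounder---route than the paper's. For the first equality, the paper asserts that $\Slither(k)=\Slither(k+\ell)$ if and only if $p\mid\ell$, and that $\Coslither(k)=\Coslither(k+\ell)$ if and only if $q\mid\ell$, and then cites Lemma~\ref{lem:when-equiv}. Those two assertions are false as stated: taking $\ell=q$, the entry $k+q=c^{|Q|}(k)$ is live and, by Lemma~\ref{lem:shiftslither}, has the same slither as $k$ (a cyclic shift by $0$ positions), even though $p\nmid q$ in general (in the running example $p=14$, $q=21$); moreover, if those two assertions were true, Lemma~\ref{lem:when-equiv} would yield $T(\calx)=\lcm(p,q)$ rather than $\gcd(p,q)$, contradicting the theorem itself. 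The correct statement is the one you prove: the slither and co-slither conditions hold \emph{jointly} if and only if $\ell\in p\zz+q\zz=\gcd(p,q)\zz$. Your ``only if'' direction---writing $k+\ell=s^ac^b(k)$ via the torsor structure, using primitivity of $P$ and $Q$ (which is exactly how $\deg(\calx)$ and $\codeg(\calx)$ are defined) to force $|P|\mid a$ and $|Q|\mid b$, and checking that these divisibilities are well defined modulo the relation $s^\beta=c^\alpha$---together with your ``if'' direction, where $s^{a'|P|}c^{b'|Q|}$ realizes the translation by $\ell$ on live entries and hence on the whole tape, supplies precisely the content that the paper's proof elides. Your second-equality computation is the same arithmetic as the paper's. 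Two small points: Lemma~\ref{lem:log-like} is stated only for $r\ge 0$, so you should remark that it extends to negative exponents (immediate, since $s$ and $c$ are bijections on $\Live(\calx)$), which you need because B\'ezout coefficients have mixed signs; and in the ``if'' direction you could equally well finish by citing Lemma~\ref{lem:when-equiv}, since your computation also shows that the (co-)slithers at $k$ and $k+\ell$ agree.
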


\begin{proof}
For the first equality, it is immediate from the definition of $p$ that $\Slither(k) = \Slither(k+\ell)$ if and only if $\ell$ is a multiple of $p$. Similarly, $\Coslither(k) = \Coslither(k+\ell)$ if and only if $\ell$ is a multiple of $q$. The equality follows from Lemma~\ref{lem:when-equiv}.

For the second equality, by Lemma~\ref{lem:lcmdeg}, as well as the fact that $pq=\lcm(p,q)\gcd(p,q)$, we have
\[
\gcd(p,q)\deg(\calx)\codeg(\calx)
=\gcd(p,q)\frac{\lcm(p,q)}{p}\cdot\frac{\lcm(p,q)}{q}
=\lcm(p,q)=\Scale(\calx).
\]
by Corollary~\ref{cor:scale-lcm}.
\end{proof}

\begin{corollary}\label{cor:scroll-period}
The period of a scroll $\cals$ is 
\[
T(\cals)=\frac{T(\calx)}{\gcd(T(\calx),n)}= \frac{\lcm(T(\calx),n)}{n}
=\frac{\Scale(\calx)}{\deg(\calx)\codeg(\calx)\gcd(T(\calx),n)}.
\]
\end{corollary}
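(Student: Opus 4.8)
The plan is to reduce the whole statement to a single index-translation between the two notions of period and then finish with elementary number theory. First I would rewrite the scroll period in ticker-tape language. Using the dictionary $X_{in+j}=X_{i,j}$ together with the wrap-around convention $X_{i,k+n}=X_{i+1,k}$, I would observe that the defining condition $X_{i+m,j}=X_{i,j}$ for a scroll period is equivalent to $X_{k+mn}=X_k$ for all $k\in\zz$: for the forward direction, write any $k$ as $in+j$ and compute
\[
X_{k+mn}=X_{(i+m)n+j}=X_{i+m,j}=X_{i,j}=X_{in+j}=X_k,
\]
and the reverse direction is the same chain read backwards. Hence shifting the scroll down by one row is exactly a shift of the ticker tape by $n$ positions, so $X_{i+m,j}=X_{i,j}$ holds for all $i,j$ if and only if $T(\calx)\mid mn$.

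Next I would translate this equivalence into a minimization. By Definition~\ref{def:periods}, $T(\cals)$ is the least positive $m$ with $X_{i+m,j}=X_{i,j}$ for all $i,j$, so the previous paragraph gives
\[
T(\cals)=\min\{\,m>0 : T(\calx)\mid mn\,\}.
\]
As $m$ ranges over positive integers, $mn$ ranges over the positive multiples of $n$, and the smallest such multiple divisible by $T(\calx)$ is $\lcm(T(\calx),n)$. Dividing by $n$ yields $T(\cals)=\lcm(T(\calx),n)/n$, the middle expression in the statement.

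The remaining two equalities are purely formal. Applying $\lcm(a,b)\gcd(a,b)=ab$ with $a=T(\calx)$ and $b=n$ rewrites $\lcm(T(\calx),n)/n$ as $T(\calx)/\gcd(T(\calx),n)$, giving the first expression. Finally, substituting the value $T(\calx)=\Scale(\calx)/(\deg(\calx)\codeg(\calx))$ from Theorem~\ref{thm:ttperiod} into the numerator produces
\[
T(\cals)=\frac{\Scale(\calx)}{\deg(\calx)\codeg(\calx)\,\gcd(T(\calx),n)},
\]
the last expression.

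I expect the only step requiring genuine care is the index bookkeeping in the first paragraph: one must apply the wrap-around convention $X_{i,k+n}=X_{i+1,k}$ consistently so that a scroll shift by $m$ rows really is a ticker-tape shift by exactly $mn$, and one must verify \emph{both} directions of the equivalence ``$X_{i+m,j}=X_{i,j}$ for all $i,j$'' $\Longleftrightarrow$ ``$T(\calx)\mid mn$.'' Once that equivalence is pinned down, everything else is the elementary identity $\lcm(T(\calx),n)=T(\calx)n/\gcd(T(\calx),n)$ and a single substitution from Theorem~\ref{thm:ttperiod}.
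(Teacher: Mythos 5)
Your proof is correct and takes essentially the same approach as the paper's: both reduce $T(\cals)$ to the least positive $m$ for which a divisibility relation between $T(\calx)$ and $mn$ holds, then finish with the identity $\lcm(T(\calx),n)\gcd(T(\calx),n)=T(\calx)\,n$ and a substitution from Theorem~\ref{thm:ttperiod}. One bonus of your careful index bookkeeping: the correct condition is $T(\calx)\mid mn$, as you derive, whereas the paper's proof misstates it as ``$n$ divides $mT(\calx)$'' (which would give $n/\gcd(n,T(\calx))$ rather than $T(\calx)/\gcd(T(\calx),n)$), so your write-up silently corrects that slip while reaching the same conclusion.
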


\begin{proof}
It follows from the definition that the period $T(\cals)$ is the minimum positive integer $m$ such that $n$ divides $mT(\calx)$. Therefore, $T(\cals)$ must be $T(\calx)/\gcd(T(\calx),n)$. The result now follows from Theorem~\ref{thm:ttperiod}.
\end{proof}

\section{Combinatorial characterization and enumeration of dynamics}\label{sec:classification}

In this section, we will characterize when a given pair of a potential slither (a sequence of $D$s and $\E$s) and co-slither (a sequence of $S$s and $L$s) defines a ticker tape. This will allow us to characterize which infinite sequences can arise as ticker tapes and to enumerate them.

\subsection{Computing slithers and subslithers}\label{subsec:classification}

Define a \dfn{substring} of a ticker tape to be a finite sequence $X_i,\ldots,X_{i+r}$ of consecutive entries.
We will consider a substring of a scroll to be any substring in the corresponding ticker tape. It is clear that a scroll or ticker tape can be reconstructed from any of its length-$n$ substrings. In this section, we will give an algorithm to directly calculate the slither and co-slither based on the gaps between live entries of a length-$n$ substring that begins with a $1$. Each gap corresponds to a substring of the slither, which we will call a \emph{subslither}. The slither is simply the concatenation of subslithers, though with the last step omitted. After establishing this construction, we will use it to combinatorially characterize all possible ticker tapes for a given $n$. 

Since our construction depends on gaps between live entries, it will be necessary to speak of the live entries that appear immediately before and after a given live entry. We will formally define this using ticker tape notation, but it is easy to translate it back into the language of scrolls, if desired. 

\begin{definition}\label{defn:prevnext}
Given a live entry $k\in\Live(\calx)$, its \dfn{previous live entry} $k^-$ and \dfn{next live entry} $k^+$ are
\[
k^-=\max\big\{j<k\mid j\in\Live(\calx)\big\},\qquad
k^+=\min\big\{\ell>k\mid \ell\in\Live(\calx)\big\}.
\]
In scroll notation, we will write these as $(i,j)^-$ and $(i,j)^+$. Two live entries are \dfn{consecutive} if one of them is the next live entry of the other. A \dfn{$0$-block} is a maximal substring of $0$s between consecutive live entries.
\end{definition}

Clearly, any two consecutive live entries are separated by a $0$-block of some length $z\in\{1,\dots,n\}$, and we will canonically assign a sequence of $D$s and $\E$s to each such block. The simplest case is when this $0$-block has length $z=1$, which occurs when $(i,j)^+=(i,j+2)=s(i,j)$. In this case, we just take the path that is a single $s$-step, i.e., a length-1 sequence $\E$.

If the $0$-block has size $z\in\{2,\dots,n\}$, then our path must begin with a $D$. When this happens, we will iteratively apply the successor function until we reach $c\big((i,j)^+\big)$. Recall that the \emph{shape} of this path, starting at $(i,j)$ and ending at $c\big((i,j)^+\big)$, is the resulting sequence of $D$s and $\E$s. The following observation is straightforward but useful, and it is also necessary for the definition of a subslither.

\begin{lemma}\label{lem:row-cosnakes}
If $k,\ell\in\Live(\calx)$ and $|k-\ell|<n$, then $\Cosnake(k)\neq\Cosnake(\ell)$. In particular, any two live entries on the same row of a scroll are contained in different co-snakes. $\hfill\Box$
\end{lemma}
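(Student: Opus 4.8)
The plan is to work entirely in ticker-tape notation and to exploit the explicit description of the co-successor given just before Lemma~\ref{lem:bijections}: for $k\in\Live(\calx)$, the value $c(k)$ is the unique live entry in $\{k+2n-2,\,k+2n-1\}$. The crucial quantitative observation is that a single application of $c$ advances the ticker-tape index by either $2n-2$ or $2n-1$, and since $n\geq 2$ we have $2n-2\geq n$. Thus every co-successor step moves us strictly forward by at least $n$ positions, and the lemma should fall out of a contradiction argument comparing this jump size against the bound $|k-\ell|<n$.

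First I would dispose of the trivial case $k=\ell$ (there is nothing to prove) and assume without loss of generality that $k<\ell$, so that $0<\ell-k<n$. Suppose toward a contradiction that $\Cosnake(k)=\Cosnake(\ell)$. Since $\Cosnake(k)$ is the orbit $\{c^m(k)\mid m\in\zz\}$ and $c$ is a bijection on $\Live(\calx)$ by Lemma~\ref{lem:bijections}, there is a nonzero integer $m$ with $\ell=c^m(k)$. The goal is then to show $|c^m(k)-k|\geq n$ for every $m\neq 0$, which directly contradicts $0<\ell-k<n$.

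For $m\geq 1$ I would use the telescoping estimate
\[
c^m(k)-k=\sum_{i=0}^{m-1}\bigl(c^{i+1}(k)-c^{i}(k)\bigr)\geq m(2n-2)\geq 2n-2\geq n,
\]
where each summand is $2n-2$ or $2n-1$ and the final inequality is exactly where the standing hypothesis $n\geq 2$ is used. For $m\leq -1$ the same bound applies after rewriting $\ell=c^m(k)$ as $k=c^{-m}(\ell)$ with $-m\geq 1$, which yields $k-\ell\geq n$ and again contradicts $|k-\ell|<n$. Either way the contradiction is reached, establishing that distinct live entries within ticker-tape distance $n$ lie on different co-snakes.

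I do not anticipate a genuine obstacle: the entire content is the inequality $2n-2\geq n$ combined with the monotonicity of iterated co-successor steps, so the proof is a one-line contradiction once the jump size is identified. The only points requiring care are the bookkeeping for negative $m$ (handled by the symmetric argument for $c^{-1}$) and noting that the boundary case $n=2$, where $2n-2=n$, still works because the hypothesis is the \emph{strict} inequality $|k-\ell|<n$. The final ``in particular'' clause is then immediate: two distinct live entries in row $i$ of the scroll have ticker-tape indices $in+a$ and $in+b$ with $a,b\in\{1,\dots,n\}$ distinct, so they differ by $|a-b|<n$, and the main statement applies.
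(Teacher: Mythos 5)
Your proof is correct, and it formalizes exactly the reasoning the paper treats as immediate: the lemma is stated with no proof (the $\Box$ follows the statement directly), because each co-successor step advances the ticker-tape index by $2n-2$ or $2n-1$, which is at least $n$ when $n\geq 2$, so no two live entries within distance less than $n$ can lie on the same $c$-orbit. Your handling of the negative-exponent case and the boundary case $n=2$ is careful and sound, so this is a valid write-up of the paper's implicit argument rather than a different route.
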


\begin{definition}
  Let $(i,j)\in\Live(\cals)$ be followed by a length-$z$ 0-block. If $z=1$, then the \dfn{subslither} from $(i,j)$ is $\E$; otherwise, it is the shape of the minimal $s$-path from $(i,j)$ to $c\big((i,j)^+\big)$.
\end{definition}

Suppose we start at any live entry $(i,j)$. The slither describes the minimal path of $D$s and $\E$s that returns us to the same co-snake. This path touches every co-snake exactly once. Therefore, the subslither from $(i,j)$ is an initial sequence of the slither from $(i,j)$. It begins with an $\E$ if and only if $(i,j+2)\in\Live(\cals)$, in which case that length-$1$ word is the entire subslither. Otherwise, the subslither must begin with a $D$. To understand where it terminates, we may, without loss of generality, assume that $(i,j)=(i,1)$. There are now two subcases. In the first, the next live entry is $(i,j)^+=(i+1,j+1)$, which occurs precisely when $(i,j)$ is followed by a $0$-block of length $n$. When this happens, the entries in row $i+1$ alternate $0,1,0,1,\dots$, and then all but possibly the last entry in row $i+2$ are 0. Two examples of opposite parity are shown in Figure~\ref{fig:subslithers-case1}.

\begin{figure}[!ht]
\begin{tikzpicture}
\tikzstyle{every node}=[font=\normalsize,anchor=south,color=gray]
\begin{scope}[shift={(0,0)},scale=.5]
    \node at (1,2) {$\color{Red}\mathbf{1}$}; 
    \node at (2,2) {$0$};
    \node at (3,2) {$0$};
    \node at (4,2) {$0$};
    \node at (5,2) {$0$};
    \node at (6,2) {$0$};
    \node at (7,2) {$0$};
    \node at (8,2) {$0$};
    \node at (9,2) {$0$};
    \node at (10,2) {$0$};
    \node at (11,2) {$0$};
    \node at (12,2) {$0$};
    \node at (13,2) {$0$};
    \node at (1,1) {$0$}; 
    \node at (2,1) {$\color{Blue}\mathbf{1}$};
    \node at (3,1) {$0$};
    \node at (4,1) {$\color{black}\mathbf{1}$};
    \node at (5,1) {$0$};
    \node at (6,1) {$\color{black}\mathbf{1}$};
    \node at (7,1) {$0$};
    \node at (8,1) {$\color{black}\mathbf{1}$};
    \node at (9,1) {$0$};
    \node at (10,1) {$\color{black}\mathbf{1}$};
    \node at (11,1) {$0$};
    \node at (12,1) {$\color{Red}\mathbf{1}$};
    \node at (13,1) {$0$};
    \node at (1,0) {$0$}; 
    \node at (2,0) {$0$};
    \node at (3,0) {$0$};
    \node at (4,0) {$0$};
    \node at (5,0) {$0$};
    \node at (6,0) {$0$};
    \node at (7,0) {$0$};
    \node at (8,0) {$0$};
    \node at (9,0) {$0$};
    \node at (10,0) {$0$};
    \node at (11,0) {$0$};
    \node at (12,0) {$0$};
    \node at (13,0) {$\color{Blue}\mathbf{1}$};
    \node at (1,-1) {$0$}; 
    \node at (2,-1) {$\color{black}\mathbf{1}$};
    \node at (3,-1) {$0$};
    \node at (4,-1) {$\color{black}\mathbf{1}$};
    \node at (5,-1) {$0$};
    \node at (6,-1) {$\color{black}\mathbf{1}$};
    \node at (7,-1) {$0$};
    \node at (8,-1) {$\color{black}\mathbf{1}$};
    \node at (9,-1) {$0$};
    \node at (10,-1) {$\color{black}\mathbf{1}$};
    \node at (11,-1) {$0$};
    \node at (12,-1) {$0$};
    \node at (13,-1) {$0$};
\end{scope}
\begin{scope}[shift={(8,0)},scale=.5]
    \node at (1,2) {$\color{Red}\mathbf{1}$}; 
    \node at (2,2) {$0$};
    \node at (3,2) {$0$};
    \node at (4,2) {$0$};
    \node at (5,2) {$0$};
    \node at (6,2) {$0$};
    \node at (7,2) {$0$};
    \node at (8,2) {$0$};
    \node at (9,2) {$0$};
    \node at (10,2) {$0$};
    \node at (11,2) {$0$};
    \node at (12,2) {$0$};    
    \node at (1,1) {$0$}; 
    \node at (2,1) {$\color{Blue}\mathbf{1}$};
    \node at (3,1) {$0$};
    \node at (4,1) {$\color{black}\mathbf{1}$};
    \node at (5,1) {$0$};
    \node at (6,1) {$\color{black}\mathbf{1}$};
    \node at (7,1) {$0$};
    \node at (8,1) {$\color{black}\mathbf{1}$};
    \node at (9,1) {$0$};
    \node at (10,1) {$\color{black}\mathbf{1}$};
    \node at (11,1) {$0$};
    \node at (12,1) {$\color{Red}\mathbf{1}$};
    \node at (1,0) {$0$}; 
    \node at (2,0) {$0$};
    \node at (3,0) {$0$};
    \node at (4,0) {$0$};
    \node at (5,0) {$0$};
    \node at (6,0) {$0$};
    \node at (7,0) {$0$};
    \node at (8,0) {$0$};
    \node at (9,0) {$0$};
    \node at (10,0) {$0$};
    \node at (11,0) {$0$};
    \node at (12,0) {$0$};
    \node at (1,-1) {$\color{Blue}\mathbf{1}$};
    \node at (2,-1) {$0$};
    \node at (3,-1) {$\color{black}\mathbf{1}$};
    \node at (4,-1) {$0$};
    \node at (5,-1) {$\color{black}\mathbf{1}$};
    \node at (6,-1) {$0$};
    \node at (7,-1) {$\color{black}\mathbf{1}$};
    \node at (8,-1) {$0$};
    \node at (9,-1) {$\color{black}\mathbf{1}$};
    \node at (10,-1) {$0$};
    \node at (11,-1) {$\color{black}\mathbf{1}$};
    \node at (12,-1) {$0$};
\end{scope}
\end{tikzpicture}
\caption{Starting from the {\color{Red}live entry} $(i,j)$ in the upper-left, the next live entry $(i,j)^+$ is on $\Snake(i,j)$ if and only if $(i,j)$ is followed by exactly $n$ 0s. In this case, the successor of the last live entry in the second row is $c\big((i,j)^+\big)$, so the subslither from $(i,j)$ is $D\E^{\left\lfloor n/2-1 \right\rfloor}D$.} 
\label{fig:subslithers-case1}
\end{figure}

In both subcases shown in Figure~\ref{fig:subslithers-case1}, the last live entry in row $i+1$ is the co-successor of $(i,j)=(i,1)$; this is $(i+1,n-1)$ if $n$ is odd and is $(i+1,n)$ if $n$ is even. Because the snake group is abelian, the next live entry is $c(i+1,j+1)=c((i,j)^+)$. By construction, this is where the subslither from $(i,j)$ stops: at $(i+2,n)$ if $n$ is odd or $(i+3,1)$ if $n$ is even.
It is straightforward to see that the subslither from $(i,j)$ is thus $D\E^{\left\lfloor n/2-1 \right\rfloor}D$.

So far, we have covered the two extreme cases: if the $0$-block that follows $(i,j)$ has (minimal) length $z=1$, then the subslither from $(i,j)$ is $\E$. If the $0$-block has (maximal) length $z=n$, then the subslither is $D\E^{\left\lfloor n/2-1 \right\rfloor}D$. The final case is when the $0$-block has length $z\in\{2,\dots,n-1\}$. An example of this appears in Figure~\ref{fig:subslithers-case2}. We claim that in this case, the subslither is also $D\E^{\left\lfloor z/2-1 \right\rfloor}D$, which happens to match the case of $z=n$.

\begin{lemma}\label{lem:subslithers}
The subslither from $(i,j)\in\Live(\cals)$ is
\[
\begin{cases}
\E & \qquad \mbox{if } z=1 \\
D\E^{\left\lfloor z/2 \right\rfloor-1} D & \qquad \mbox{if } z\in\{2,\dots,n\},
\end{cases}
\]
where $z$ is the length of the $0$-block that follows $(i,j)$.
\end{lemma}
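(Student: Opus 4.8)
The plan is to work in the universal scroll $\widehat\cals$, where rows are genuinely bi-infinite and no wrapping occurs; since the shape of an $s$-path is a local property preserved by the covering map $\q$ and is translation-invariant, it suffices to compute the subslither from one conveniently normalized live entry. I would place the starting entry at $(0,0)$, so that the following $0$-block means $X_{0,1}=\cdots=X_{0,z}=0$ while $(0,0)$ and $(0,z+1)$ are live and consecutive in universal-row $0$. The case $z=1$ is immediate: then $(0,0)^+=(0,2)=\widehat s(0,0)$, so the subslither is the single step $\E$. For $z\ge 2$ we have $X_{0,2}=0$, so Lemma~\ref{lem:successor-co-successor} forces $X_{1,1}=1$ and $\widehat s(0,0)=(1,1)$; thus the subslither begins with a $D$, and the whole problem reduces to tracing the forced $s$-path from $(1,1)$ until it reaches $\widehat c\big((0,z+1)\big)$, which lies in universal-row $2$.

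The crux is a \emph{zero region} claim: every entry of universal-row $2$ in columns $0,1,\dots,z-2$ is dead. I would prove this using that $\widehat c$ restricts to an order-preserving bijection from the live entries of universal-row $0$ onto those of universal-row $2$: it is a bijection on live entries by Lemma~\ref{lem:bijections}, it sends row $i$ to row $i+2$, and since consecutive live entries of a row differ in column by at least $2$ while $\widehat c$ shifts the column by only $-1$ or $-2$, it preserves left-to-right order. Because $(0,0)$ and $(0,z+1)$ are consecutive live entries of row $0$, their images $\widehat c(0,0)$ and $\widehat c(0,z+1)$ are consecutive live entries of row $2$, with columns in $\{-2,-1\}$ and $\{z-1,z\}$ respectively; hence every column strictly between them, in particular $0,1,\dots,z-2$, is dead in row $2$. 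This is the step I expect to be the main obstacle, precisely because the naive approach of reading off zeros column by column from Lemma~\ref{lem:nbhd} is circular: confirming $X_{1,2t+3}=1$ needs $X_{2,2t+2}=0$, but Lemma~\ref{lem:nbhd} only yields that zero from the not-yet-established liveness of $(1,2t+3)$. The co-successor bijection breaks this circularity in one stroke.

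With the zero region in hand, the eastward run is forced by a short induction: if $(1,2t+1)$ is live and its down-right candidate $(2,2t+2)$ has $2t+2\le z-2$, then $X_{2,2t+2}=0$ lies in the zero region, so Lemma~\ref{lem:successor-co-successor} forces $X_{1,2t+3}=1$ and the step is an $\E$. This continues until the run reaches its last live entry $(1,c^\ast)$ in row $1$, where $c^\ast=z-1$ if $z$ is even and $c^\ast=z-2$ if $z$ is odd, using $X_{1,z}=X_{1,z+1}=0$ (which Lemma~\ref{lem:nbhd} supplies from the liveness of $(0,z+1)$). At $(1,c^\ast)$ the eastward candidate $(1,c^\ast+2)$ equals $(1,z)$ or $(1,z+1)$, both dead, so Lemma~\ref{lem:successor-co-successor} forces the final step to be a $D$ into $(2,c^\ast+1)$.

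It then remains to identify the landing point and count steps. The final descent lands at column $c^\ast+1$, which is $z$ for even $z$ and $z-1$ for odd $z$; in either case the co-successor relation $X_{2,z-1}+X_{2,z}=1$ for $(0,z+1)$ pins down $\widehat c\big((0,z+1)\big)=(2,c^\ast+1)$, so the deterministic $s$-path terminates exactly where the subslither is defined to stop. Counting the intervening $\E$ steps from $(1,1)$ to $(1,c^\ast)$ gives $(c^\ast-1)/2=\lfloor z/2\rfloor-1$ of them, so the subslither is $D\,\E^{\lfloor z/2\rfloor-1}\,D$ uniformly for all $z\in\{2,\dots,n\}$, including the degenerate values $z\in\{2,3\}$ where there are no $\E$ steps and the word is $DD$. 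I would close with a consistency check against the two cases $z=1$ and $z=n$ already worked out in the surrounding text.
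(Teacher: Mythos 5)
Your proposal is correct, but it takes a genuinely different route from the paper's own proof. The paper's argument is a short parity computation: it asserts as ``elementary'' that the subslither has the form $D\E^{r}D$ for some $r\geq 0$, notes that traversing such a path from $(i,j)$ lands at $(i+2,j+2r+2)$, and uses the fact that by definition this endpoint is $c\big((i,j)^{+}\big)$, whose column can only be $j+z-1$ or $j+z$; since exactly one of $2r+2=z-1$ and $2r+2=z$ is consistent with $r$ being an integer, it concludes $r=\left\lfloor z/2\right\rfloor-1$ with no case analysis on the parity of $z$ or on whether the co-successor step is of type $S$ or $L$. You instead prove everything from the ground up: you establish the $D\E^{r}D$ form (precisely the step the paper leaves unproved) via your zero-region claim, which you derive from the observation that $\widehat{c}$ restricts to an order-preserving bijection from the live entries of row $0$ onto those of row $2$ (bijectivity from Lemma~\ref{lem:bijections}, order preservation because consecutive live entries in a row are at column distance at least $2$ while $\widehat{c}$ shifts columns by only $1$ or $2$), and you then count the $\E$ steps directly from the forced run, handling the parities of $z$ separately. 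What the paper's argument buys is brevity: integrality of $r$ absorbs the two-case ambiguity in the co-successor's position, so no configuration needs to be traced. What your argument buys is completeness and a reusable tool: you correctly diagnose that the naive entry-by-entry use of Lemma~\ref{lem:nbhd} is circular, and the order-preserving-bijection device breaks that circularity cleanly while also determining the full pattern of live entries in rows $1$ and $2$, not merely the step count. In effect, your proof is a rigorous expansion of the paper's ``it is elementary to show'' clause combined with a more constructive replacement of its endpoint-parity argument, and both proofs hinge on the same anchor: the subslither terminates at $c\big((i,j)^{+}\big)$, which has only two candidate positions.
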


\begin{proof}
We have already verified the cases when $z=1$ and when $z=n$. Now suppose $z\geq 2$. It is elementary to show that the subslither must be of the form $D\E^rD$ for some integer $r\geq 0$. Note that $(i,j)^+=(i,j+z+1)$. After starting at $(i,j)$ and traversing the subslither, we reach the live entry $(i+2,j+2r+2)$, which must also be $c((i,j)^+)$. Now, $c((i,j)^+)$ is either $(i+2,j+z)$ or $(i+2,j+z-1)$ (depending on whether the step from $(i,j)^+=(i,j+z+1)$ to its co-successor is of type $S$ or $L$). It follows that $j+2r+2$ is either $j+z$ or $j+z-1$, so $r=\left\lfloor z/2\right\rfloor-1$.
\end{proof}

\begin{figure}[!ht]
\begin{tikzpicture}
\tikzstyle{every node}=[font=\normalsize,anchor=south,color=gray]
\begin{scope}[shift={(0,0)},scale=.5]
    \node at (1,2) {$\color{Red}\mathbf{1}$}; 
    \node at (2,2) {$0$};
    \node at (3,2) {$0$};
    \node at (4,2) {$0$};
    \node at (5,2) {$0$};
    \node at (6,2) {$0$};
    \node at (7,2) {$0$};
    \node at (8,2) {$\color{Blue}\mathbf{1}$};
    \node at (9,2) {$0$};
    \node at (10,2) {$\color{Blue}\mathbf{1}$};
    \node at (11,2) {$0$};
    \node at (12,2) {$0$};
    \node at (13,2) {$\color{orange}\mathbf{1}$};
    \node at (14,2) {$0$};
    \node at (15,2) {$0$};
    \node at (16,2) {$0$};
    \node at (17,2) {$0$};
    \node at (18,2) {$0$};
    \node at (19,2) {$\color{green}\mathbf{1}$};
    \node at (20,2) {$0$};
    \node at (21,2) {$0$};
    \node at (22,2) {$0$};
    \node at (23,2) {$0$};
    \node at (24,2) {$0$};    
    \node at (1,1) {$0$}; 
    \node at (2,1) {$\color{Red}\mathbf{1}$};
    \node at (3,1) {$0$};
    \node at (4,1) {$\color{Red}\mathbf{1}$};
    \node at (5,1) {$0$};
    \node at (6,1) {$\color{Red}\mathbf{1}$};
    \node at (7,1) {$0$};
    \node at (8,1) {$0$};
    \node at (9,1) {$0$};
    \node at (10,1) {$0$};
    \node at (11,1) {$\color{Blue}\mathbf{1}$};
    \node at (12,1) {$0$};
    \node at (13,1) {$0$};
    \node at (14,1) {$\color{orange}\mathbf{1}$};
    \node at (15,1) {$0$};
    \node at (16,1) {$\color{orange}\mathbf{1}$};
    \node at (17,1) {$0$};
    \node at (18,1) {$0$}; 
    \node at (19,1) {$0$};
    \node at (20,1) {$\color{green}\mathbf{1}$};
    \node at (21,1) {$0$};
    \node at (22,1) {$\color{green}\mathbf{1}$};
    \node at (23,1) {$0$};
    \node at (24,1) {$\color{green}\mathbf{1}$};
    \node at (1,0) {$0$}; 
    \node at (2,0) {$0$};
    \node at (3,0) {$0$};
    \node at (4,0) {$0$};
    \node at (5,0) {$0$};
    \node at (6,0) {$0$};
    \node at (7,0) {$\color{Red}\mathbf{1}$};
    \node at (8,0) {$0$};
    \node at (9,0) {$1$};
    \node at (10,0) {$0$};
    \node at (11,0) {$0$};
    \node at (12,0) {$\color{Blue}\mathbf{1}$};
    \node at (13,0) {$0$};
    \node at (14,0) {$0$};
    \node at (15,0) {$0$};
    \node at (16,0) {$0$};
    \node at (17,0) {$\color{orange}\mathbf{1}$};
    \node at (18,0) {$0$};
    \node at (19,0) {$0$};
    \node at (20,0) {$0$};
    \node at (21,0) {$0$};
    \node at (22,0) {$0$};
    \node at (23,0) {$0$};
    \node at (24,0) {$0$};
\end{scope}
\end{tikzpicture}
\caption{Starting from the {\color{Red}live entry} in the upper-left, the steps {\color{Red}$D\E\E D$} reach the co-snake containing its {\color{Blue}next live entry}. Then, starting at that {\color{Blue}live entry}, we use a single {\color{Blue}$\E$} to reach the co-snake of its {\color{Blue}next live entry}. Then a {\color{Blue} $DD$} takes us to the co-snake of the {\color{orange}next live entry}.
From here, the steps {\color{orange}$D\E D$} reach the co-snake containing the {\color{green}next live entry}. Finally, the steps {\color{green}$D\E\E$} get us back to our {\color{Red}starting co-snake}. The slither of the scroll is simply the concatenation of these steps $D\E\E D\E DDD\E DD\E \E$. The co-slither ${\color{Red}S}{\color{green}L}{\color{orange}S}{\color{Blue}S}$ is obtained from the co-successor functions of live entries in the first row, but taken in opposite cyclic order (right to left) and ignoring each live entry whose previous live entry is two spaces to its left.} 
\label{fig:subslithers-case2}
\end{figure}

In constructing the slither from $(i,j)=(i,1)$ from subslithers, there are two cases to handle separately: (i) traversing between consecutive live entries within a row, and then (ii) leaving the final live entry in the row, in which case we prematurely reach 
$c\big((i,j)^+\big)$ before finishing the subslither and our algorithm will terminate. The subslither between consecutive live entries is completely determined by the sizes of the $0$-blocks.

Clearly, we can construct the slither of a scroll by starting at any live entry $(i,j)$ and successively computing subslithers; the only question is when to stop. Assume once again that $(i,j)=(i,1)$ is in the first column, and let $(i,j')$ be the last live entry in row $i$. It is straightforward to see that $j'<n$ and that the subslither from $(i,j')$ must start with a $D$. This step takes us to row $i+1$, and the subslither continues with $\E$s until either the entry $(i+1,n-1)$ or $(i+1,n)$ is reached. Whichever of these is live is the co-successor of $(i,1)$, so this takes us back to $c\big((i,j)^+\big)$  before we finish the subslither from $(i,j')$. Notice that an $L$-step from $(i,1)$ would take us to $(i+1,n-1)$, and an $S$-step would take us to $(i+1,n)$. This is stated formally as the following lemma.

\begin{lemma}\label{lem:partial-subslither}
Suppose $(i,1)\in\Live(\cals)$ and the last live entry $(i,j')$ in row $i$ is followed by exactly $z$ consecutive $0$s. If we start at $(i,j')$ and apply the initial segment of the subslither consisting of $D$ followed by $\left\lfloor\frac{z-1}{2} \right\rfloor$ instances of $\E$, then we end up at $c(i,1)$. Moreover, the step $(i,1)\to c(i,1)$ is of type $S$ if $z$ is odd and is of type $L$ if $z$ is even. $\hfill\Box$
\end{lemma}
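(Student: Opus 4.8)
The plan is to work entirely within rows $i,i+1,i+2$ of the scroll and to track the successor path one step at a time. First I would record the two elementary consequences of the hypotheses. Since $(i,j')$ is the last live entry of row $i$, every entry $(i,j'+1),\dots,(i,n)$ equals $0$; as these are exactly the zeros that complete row $i$ after $(i,j')$, we have $z=n-j'$. Since $(i,1)\in\Live(\cals)$, Lemma~\ref{lem:nbhd} gives $X_{i+1,1}=0$. Now $X_{i,j'+2}=0$ in either case $j'+2\le n$ or $j'+2=n+1$ (in the latter case $(i,j'+2)=(i+1,1)$), so Lemma~\ref{lem:successor-co-successor} applied to $(i,j')$ forces $X_{i+1,j'+1}=1$. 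Hence the first successor step is the $D$-step $(i,j')\to(i+1,j'+1)$, matching the statement.

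The heart of the argument is to show that from $(i+1,j'+1)$ every further successor step is an $\E$-step, so the path runs straight across row $i+1$. By Lemma~\ref{lem:successor-co-successor}, the step out of a live $(i+1,C-1)$ is an $\E$-step precisely when $X_{i+2,C}=0$; thus it suffices to prove $X_{i+2,C}=0$ for all $C$ with $j'+1\le C\le n-1$. This is the step I expect to be the main obstacle, because it cannot be extracted from the local neighborhood Lemma~\ref{lem:nbhd} alone: knowing $(i+1,C-1)$ is live only kills entries to its lower-left, not $(i+2,C)$. Instead I would invoke bijectivity of the co-successor (Lemma~\ref{lem:bijections}): if $X_{i+2,C}=1$, then $(i+2,C)$ has a $c$-preimage, which must be $(i,C+1)$ or $(i,C+2)$. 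But for $j'+1\le C\le n-1$ both of these are $0$ --- when $C+2\le n$ they are trailing zeros of row $i$, and when $C=n-1$ they are $(i,n)=0$ and $(i,n+1)=(i+1,1)=0$ --- contradicting that every live entry has a co-predecessor. So $X_{i+2,C}=0$ throughout the range, and each $\E$-step therefore lands on a live entry (again by Lemma~\ref{lem:successor-co-successor}).

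It then remains to run the path and read off the conclusion. Applying $\E$-steps from $(i+1,j'+1)$, the path visits the live entries $(i+1,j'+1),(i+1,j'+3),\dots$ and reaches $(i+1,w)$, where $w$ is the largest column of the same parity as $j'+1$ with $w\le n$; explicitly $w=n$ when $n-j'$ is odd and $w=n-1$ when $n-j'$ is even. Since $(i+1,w)$ is live with $w\in\{n-1,n\}$, it is the unique live element of $\{(i+1,n-1),(i+1,n)\}$, which is exactly $c(i,1)$. The number of $\E$-steps used is $(w-(j'+1))/2=\lfloor(z-1)/2\rfloor$, producing the path $D\E^{\lfloor(z-1)/2\rfloor}$; and because $c(i,1)=(i+2,0)$ when $w=n$ and $c(i,1)=(i+2,-1)$ when $w=n-1$, the step $(i,1)\to c(i,1)$ is of type $S$ when $z=n-j'$ is odd and of type $L$ when $z$ is even. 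As a closing consistency check I would note that $\lfloor(z-1)/2\rfloor\le\lfloor Z/2\rfloor-1$ for the full $0$-block length $Z\ge z+1$, confirming that this truncated path is genuinely an initial segment of the subslither from $(i,j')$ computed in Lemma~\ref{lem:subslithers}.
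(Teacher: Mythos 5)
Your proof is correct. Note that the paper gives no formal proof of this lemma at all: it is stated (with a $\Box$) as the formalization of the informal discussion preceding it, which asserts that the subslither from $(i,j')$ starts with $D$ and ``continues with $\E$s until either the entry $(i+1,n-1)$ or $(i+1,n)$ is reached''; in the paper's development that assertion rests on Lemma~\ref{lem:subslithers}, whose counting argument shows the subslither is $D\E^{\lfloor Z/2\rfloor-1}D$ for the full $0$-block length $Z\geq z+1$, so its $\E$-run is long enough to carry the path to column $n-1$ or $n$. Your argument has the same skeleton (one $D$-step into row $i+1$, a straight $\E$-run landing on $c(i,1)$, and the parity of $z$ deciding between column $n$, type $S$, and column $n-1$, type $L$), but you certify the key ``all $\E$s'' claim by a different mechanism: a mid-row $D$-step would force a live entry $(i+2,C)$, which by bijectivity of the co-successor (Lemma~\ref{lem:bijections}) would need a live $c$-preimage at $(i,C+1)$ or $(i,C+2)$, and for $j'+1\leq C\leq n-1$ both of those positions are trailing zeros of row $i$ (with the wrap case $C=n-1$, where $(i,C+2)=(i+1,1)$ sits dead below the live $(i,1)$, handled correctly). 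This makes your proof self-contained, resting only on Lemmas~\ref{lem:nbhd}, \ref{lem:successor-co-successor}, and~\ref{lem:bijections} rather than on Lemma~\ref{lem:subslithers}; what the paper's route buys is brevity. Your closing verification that $\lfloor(z-1)/2\rfloor\leq\lfloor Z/2\rfloor-1$ is also worthwhile, since the lemma's phrasing (``initial segment of the subslither'') tacitly presupposes it.
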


Still assuming that $(i,j)=(i,1)$, Lemmas~\ref{lem:subslithers} and~\ref{lem:partial-subslither} completely characterize how to construct the slither of a scroll, given any length-$n$ substring that starts with a $1$. Specifically, we traverse the gaps of 0s between live entries from left to right and append each subslither ($\E$ or $D\E^rD$) as described by Lemma~\ref{lem:subslithers}. Then we append the partial subslither $D\E^r$ per Lemma~\ref{lem:partial-subslither}.

\begin{proposition}\label{prop:getslither}
To construct the slither from a row with $(i,1)\in\Live(\cals)$, traverse the $0$-blocks entirely contained in row $i$ from left to right. Do the following for each $0$-block (where $z$ is the size of the $0$-block):
\begin{itemize}
    \item If $z=1$, append $\E$.
    \item If $z>1$, append $D\E^{\left\lfloor z/2 \right\rfloor-1} D$.
\end{itemize}
Finally, append $D\E^{\left\lfloor(z-1)/2\right\rfloor}$, where $z\geq 1$ is the number of $0$s in row $i$ after the rightmost live entry in row $i$.
\end{proposition}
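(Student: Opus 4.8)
\textit{Proof proposal.}
The plan is to realize $\Slither(i,1)$ as a concatenation of the subslithers attached to the live entries of row $i$, read left to right, and then to identify each piece using the formulas already proved in Lemmas~\ref{lem:subslithers} and~\ref{lem:partial-subslither}. Recall that $\Slither(i,1)$ is the shape of the $s$-path $(i,1), s(i,1),\dots,s^\beta(i,1)$, which visits each of the $\beta$ co-snakes exactly once before returning to $\Cosnake(i,1)$. Writing $(i,1)=(i,c_1),(i,c_2),\dots,(i,c_L)$ for the live entries of row $i$ in order, I would cut this path at the moments it first lands on $\Cosnake(i,c_2),\Cosnake(i,c_3),\dots$; by definition, the segment carrying the path onto $\Cosnake((i,c_m)^+)=\Cosnake(i,c_{m+1})$ is the subslither attached to the $m$-th $0$-block of row $i$, and concatenating these segments rebuilds the whole slither.

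The one genuine technical point — and the step I expect to be the main obstacle — is that these successive segments really do reproduce the subslithers of the row-$i$ entries, even though the $s$-path leaves row $i$ almost immediately: after the first subslither the path sits at $c((i,c_2))$ (or at $(i,c_2)$ when that $0$-block has size $1$), and in general at $c^{k}((i,c_{m+1}))$ for some $k\geq 0$. To handle this I would prove the shift-invariance statement that the subslither from a live entry $w$ equals, as a word, the subslither from $c(w)$. This follows from Corollary~\ref{cor:path-type}, which says the type of an $s$-step is constant along a co-snake, together with the fact that $c$ is order-preserving on $\Live(\calx)$: since $c(w)-w\in\{2n-2,2n-1\}$ and consecutive live entries differ by at least $2$, applying $c$ to two consecutive live entries keeps them consecutive, so $(c(w))^+=c(w^+)$. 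Hence the minimal $s$-path realizing the subslither from $c(w)$ is exactly the $c$-translate of the one from $w$, and Corollary~\ref{cor:path-type} forces the two to have identical shape. Iterating, the subslither traced at the $m$-th checkpoint agrees with the subslither from $(i,c_{m+1})$, which Lemma~\ref{lem:subslithers} evaluates to $\E$ when the $m$-th $0$-block has size $1$ and to $D\E^{\lfloor z/2\rfloor-1}D$ when it has size $z\geq 2$.

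Finally I would pin down where the concatenation stops. For $m\leq L-1$ the next live entry $(i,c_m)^+=(i,c_{m+1})$ still lies in row $i$, so these subslithers are traversed in full. For the last entry $(i,c_L)=(i,j')$ the next live entry lies in row $i+1$, and Lemma~\ref{lem:partial-subslither} shows that the initial segment $D\E^{\lfloor (z-1)/2\rfloor}$ of its subslither already returns the path to $c(i,1)\in\Cosnake(i,1)$, where $z$ is the number of trailing $0$s of row $i$. Because the entries $(i,c_1),\dots,(i,c_L)$ lie on pairwise distinct co-snakes by Lemma~\ref{lem:row-cosnakes}, the path does not meet $\Cosnake(i,1)$ at any earlier checkpoint, and since the slither is traced only up to its first return to $\Cosnake(i,1)$, this partial subslither is precisely the final segment, completing the path after exactly $\beta$ steps. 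Stitching the internal subslithers together with this partial one yields the stated algorithm, and I would close by checking the resulting word against the running example of Figure~\ref{fig:example} as a sanity check.
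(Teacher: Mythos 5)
Your proposal is correct and follows essentially the same route as the paper's proof: both realize the slither as the concatenation of the subslithers attached to the $0$-blocks of row $i$ (Lemma~\ref{lem:subslithers}), terminated by the partial subslither of Lemma~\ref{lem:partial-subslither}, with Lemma~\ref{lem:row-cosnakes} governing when the path first returns to $\Cosnake(i,1)$. The only difference is one of rigor rather than of route: you explicitly justify, via Corollary~\ref{cor:path-type} together with the fact that $c$ is an order-preserving bijection on live entries (so $(c(w))^+=c(w^+)$), why the portions of the slither path traced from $c$-translates of the row-$i$ entries have the same shapes as the corresponding subslithers --- a point the paper's two-sentence proof treats as self-evident.
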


\begin{proof}
The first step is the result of iteratively appending subslithers, which, by Lemma~\ref{lem:subslithers}, are either $D\E^{\left\lfloor z/2-1 \right\rfloor} D$ or $\E$. The last step of appending $D\E^{\left\lfloor(z-1)/2\right\rfloor}$ is due to Lemma~\ref{lem:partial-subslither}.
\end{proof}

An example of the construction of a subslither from a row beginning with a live entry is shown in Figure~\ref{fig:one-line}. 
Here, we construct the subslither of the scroll shown in Figure~\ref{fig:subslithers-case2} from just the first row, using only the sizes of the $0$-blocks. That figure also provides a summary of how to construct the co-slither, which we will derive next.

\begin{figure}[!ht]
\begin{tikzpicture}
\draw[thick,decorate,decoration={brace,amplitude=6pt}]
        (.25,.25) -- (2.6,.25); 
        \node at (1.425,.75) {\color{Blue}$S$};
\draw[thick,decorate,decoration={brace,amplitude=6pt}]
        (3.4,.25) -- (4.35,.25);
        \node at (3.825,.75) {\color{orange}$S$};
\draw[thick,decorate,decoration={brace,amplitude=6pt}]
        (4.45,.25) -- (6.45,.25);
        \node at (5.45,.75) {\color{green}$L$};
\draw[thick,decorate,decoration={brace,amplitude=6pt}]
        (6.55,.25) -- (8.2,.25);
        \node at (7.45,.75) {\color{Red}$S$};
\node at (0,0) {\color{Red}\textbf{1}};
\node at (.35,0) {0};
\node at (.7,0) {0};
\node at (1.05,0) {0};
\node at (1.4,0) {0};
\node at (1.75,0) {0};
\node at (2.1,0) {0};
\node at (2.45,0) {\color{Blue}\textbf{1}};
\node at (2.8,0) {0};
\node at (3.15,0) {\color{Blue}\textbf{1}};
\node at (3.5,0) {0};
\node at (3.85,0) {0};
\node at (4.2,0) {\color{orange}\textbf{1}};
\node at (4.55,0) {0};
\node at (4.9,0) {0};
\node at (5.25,0) {0};
\node at (5.6,0) {0};
\node at (5.95,0) {0};
\node at (6.3,0) {\color{green}\textbf{1}};
\node at (6.65,0) {0};
\node at (7,0) {0};
\node at (7.35,0) {0};
\node at (7.7,0) {0};
\node at (8.05,0) {0};
\draw[thick,decorate,decoration={brace,amplitude=6pt,mirror}]
        (-.1,-.25) -- (2.25,-.25); 
        \node at (1.075,-.75) {\color{Red}$D\E\E D$};
\draw[thick,decorate,decoration={brace,amplitude=6pt,mirror}]
        (2.35,-.25) -- (2.95,-.25);
        \node at (2.65,-.75) {\color{Blue}$\E$};
\draw[thick,decorate,decoration={brace,amplitude=6pt,mirror}]
        (3.05,-.25) -- (4,-.25);
        \node at (3.4755,-.75) {\color{Blue}$DD$};
\draw[thick,decorate,decoration={brace,amplitude=6pt,mirror}]
        (4.1,-.25) -- (6.1,-.25);
        \node at (5.1,-.75) {\color{orange}$D\E D$};
\draw[thick,decorate,decoration={brace,amplitude=6pt,mirror}]
        (6.2,-.25) -- (8.2,-.25);
        \node at (7.1,-.75) {\color{green}$D\E\E$};
\node[anchor=west] at (-6,0.0) {\small Independent set (row in $\cals$):};
\node[anchor=west] at (-6,-.75) {\small Slither (read left-to-right):};
\node[anchor=west] at (-6,.65) {\small Co-slither (read right-to-left):};

\end{tikzpicture}
\caption{The construction of the slither and co-slither from Figure~\ref{fig:subslithers-case2}. The slither is constructed by iteratively appending subslithers, as described in Proposition~\ref{prop:getslither}. The co-slither is constructed from the parity of the gaps from right-to-left using Proposition~\ref{prop:getco-slither}.}\label{fig:one-line}
\end{figure}
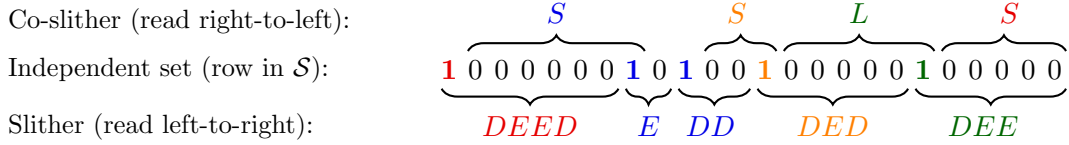

To construct the co-slither, start from $(i,j)=(i,1)$, and apply the co-successor function to reach either $(i+1,n)$ or $(i+1,n-1)$. Applying the inverse-successor function from here takes us to $(i,j')$, the last live entry in row $i$, which lies on $\Snake((i,j'))$. Applying the co-successor function from $(i,j')$ takes us to another snake, and we can apply $s^{-1}$ until we get back to row $i$ and repeat this process until we return to $\Snake(i,j)$. Note that there could be several choices at each step, because some snake might have at least two consecutive live entries in row $i$, like the blue snake does in Figure~\ref{fig:subslithers-case2}. However, this does not matter, because applying the co-successor function from any of these lands us on the same snake.

\begin{proposition}\label{prop:getco-slither}
To construct the co-slither from a row with $(i,1)\in\Live(\cals)$, start with an $L$ if it ends in an even number of $0$s and an $S$ otherwise. Next, for each gap of $z>1$ consecutive $0$s, going from right to left:
\begin{itemize}
    \item if $z$ is even, append $S$;
    \item if $z$ is odd, append $L$.
\end{itemize}
\end{proposition}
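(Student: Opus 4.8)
The plan is to iterate the co-successor $c$ from $(i,1)$ and record the type ($S$ or $L$) of each of the $\alpha$ steps, exploiting Corollary~\ref{cor:path-type}: the type of a $c$-step depends only on the snake containing the entry being moved. Thus the whole computation reduces to understanding how the snakes sit inside row $i$. The structural fact I would establish first is that each snake meets row $i$ in exactly one maximal run of the form $1\,0\,1\,0\cdots0\,1$. Indeed, neither an $\E$-step nor a $D$-step decreases the row coordinate, so as one follows a snake the row index is monotonically non-decreasing; hence the set of snake entries lying in row $i$ is a contiguous block of $\E$-steps, i.e.\ a single run. Consequently distinct runs lie on distinct snakes, a $0$-block of length $1$ sits strictly inside a run (so it induces no change of snake), and a $0$-block of length $>1$ separates two distinct runs.

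Let $L_0,L_1,\dots,L_{m-1}$ be these runs listed left to right, so that $L_0$ begins at $(i,1)$ and $L_{m-1}$ ends at the last live entry $(i,j')$ of row $i$; write $g_k$ for the length of the $0$-block between $L_k$ and $L_{k+1}$, and $z$ for the number of trailing $0$s of row $i$. For the first step I would quote Lemma~\ref{lem:partial-subslither} directly: it places $c(i,1)$ on $\Snake(i,j')$, the snake of the rightmost run $L_{m-1}$, and declares the step $(i,1)\to c(i,1)$ to be $S$ when $z$ is odd and $L$ when $z$ is even---exactly the opening clause of the proposition. For an internal step, take the leftmost entry $(i,a)$ of a run $L_{k+1}$ and the rightmost entry $(i,a')$ of the preceding run $L_k$, so that $a=a'+g_k+1$ with $g_k\geq 2$. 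The subslither from $(i,a')$ is an $s$-path ending at $c(i,a)$, and by Lemma~\ref{lem:subslithers} it equals $D\E^{\lfloor g_k/2\rfloor-1}D$; tracing this path locates $c(i,a)=(i+2,\,a'+2\lfloor g_k/2\rfloor)$. Comparing with the two possibilities $(i+2,a-1)$ and $(i+2,a-2)$ for the $S$- and $L$-co-successor shows the step is of type $S$ when $g_k$ is even and of type $L$ when $g_k$ is odd, while the endpoint lands on $\Snake(i,a')$, the snake of $L_k$. By Corollary~\ref{cor:path-type} this is the co-step type for all of the snake of $L_{k+1}$.

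Assembling these computations, the co-successor walk starting at $(i,1)$ visits the snakes of the runs in the right-to-left order $L_0\to L_{m-1}\to L_{m-2}\to\cdots\to L_1\to L_0$, recording the trailing-gap type first and then the internal-gap types $g_{m-2},\dots,g_0$ in turn; this is precisely the recipe in the statement, the length-$1$ gaps being silently skipped because they do not change the snake. To see that this list is the entire co-slither and that nothing is over- or under-counted, I would invoke minimality: the walk first returns to $\Snake(i,1)$ after exactly $m$ steps, and since $\alpha$ is the order of $c$ in $G(\cals)/\<s\>\cong\zz_\alpha$ (Proposition~\ref{prop:snake-present}), this forces $\alpha=m$, so in particular every snake meets row $i$. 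The main obstacle is the parity bookkeeping: one must get the floor computations locating $c(i,a)$ exactly right and, crucially, notice that the trailing gap obeys the opposite parity convention ($z$ odd $\mapsto S$) from the internal gaps ($g_k$ even $\mapsto S$). Keeping these two conventions straight, and justifying that the walk indeed exhausts all $\alpha$ snakes, is the delicate part; the rest is a direct transcription of Lemmas~\ref{lem:subslithers} and~\ref{lem:partial-subslither}.
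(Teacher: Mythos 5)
Your proof is correct and follows essentially the same route as the paper: the opening step is handled by Lemma~\ref{lem:partial-subslither}, each internal step by the parity of the gap preceding the leftmost entry of a run, with Corollary~\ref{cor:path-type} propagating the step type across the whole snake. The only difference is that you supply the details the paper dismisses as ``straightforward to see''---the explicit location of $c(i,a)$ via the subslither formula of Lemma~\ref{lem:subslithers}, and the counting argument $m=\alpha$ (via the coset structure from Proposition~\ref{prop:snake-present}) certifying that the recorded word is the entire co-slither.
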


\begin{proof}
The first step is due to the characterization of $c(i,1)$ in Lemma~\ref{lem:partial-subslither}. Next, it is straightforward to see (e.g., in Figure~\ref{fig:subslithers-case2}) how for each live entry in row $i$ other than $(i,1)$, the step taking that live entry to its co-successor is of type $S$ if the number of $0$s preceding it is even and is of type $L$ if the number of $0$s preceding it is odd. 
\end{proof}

An example of using Proposition~\ref{prop:getco-slither} to construct the co-slither of the scroll shown in Figure~\ref{fig:subslithers-case2} from just the first line is shown in Figure~\ref{fig:one-line}.

\subsection{Characterizing ticker tapes}
Propositions~\ref{prop:getslither} and ~\ref{prop:getco-slither} can be used to construct all possible pairs of slithers and co-slithers for a given $n$. Given a ticker tape $\calx$, write $\beta_{\E}$ and $\beta_D$ for the number of $\E$s and $D$s in $\Slither(\calx)$ and $\alpha_S$ and $\alpha_L$ for the number of $S$s and $L$s in $\Coslither(\calx)$. Recall that the \emph{scale} of a ticker tape is defined in Definition~\ref{def:scale}. 

\begin{lemma}\label{lem:scale}
The scale of a ticker tape (or scroll) is
\begin{equation*}
\sigma=2\beta_{\E}+(n+1)\beta_D=(2n-1)\alpha_S+(2n-2)\alpha_L. 
\end{equation*}
\end{lemma}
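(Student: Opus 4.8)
The plan is to evaluate the two expressions $s^\beta(k)-k$ and $c^\alpha(k)-k$ — both of which equal $\Scale(\calx)$ by Definition~\ref{def:scale} — by reading off their net displacement in the ticker tape one step at a time, and matching each step's contribution against its type.

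First I would record the ticker-tape displacement of each of the four step types. Using the ticker-tape description of $s$ and $c$ given just after Definition~\ref{def:successor-co-successor}, an $\E$-step sends $k\mapsto k+2$ and a $D$-step sends $k\mapsto k+(n+1)$, while an $S$-step sends $k\mapsto k+(2n-1)$ and an $L$-step sends $k\mapsto k+(2n-2)$. (Equivalently, in scroll coordinates $\E\colon(i,j)\mapsto(i,j+2)$, $D\colon(i,j)\mapsto(i+1,j+1)$, $S\colon(i,j)\mapsto(i+2,j-1)$, $L\colon(i,j)\mapsto(i+2,j-2)$, and one converts via $k=in+j$ to read off the displacements $2$, $n+1$, $2n-1$, $2n-2$.)

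Next I would fix a live entry $k$ and write the orbit $k=k_0,\,k_1=s(k_0),\dots,k_\beta=s^\beta(k_0)$. The net displacement telescopes as $s^\beta(k)-k=\sum_{t=1}^{\beta}(k_t-k_{t-1})$, and each summand is either $2$ or $n+1$ according to whether the $t$-th step of $\Slither(\calx)$ is an $\E$ or a $D$. Since $\Slither(\calx)$ has exactly $\beta_{\E}$ letters $\E$ and $\beta_D$ letters $D$ by definition, this sum is $2\beta_{\E}+(n+1)\beta_D$, giving the first equality. Running the identical argument for $c^\alpha(k)-k$ along $\Coslither(\calx)$, whose $\alpha$ letters comprise $\alpha_S$ copies of $S$ and $\alpha_L$ copies of $L$, yields $(2n-1)\alpha_S+(2n-2)\alpha_L$, the second equality. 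Finally, both quantities equal $\sigma=\Scale(\calx)$ because $s^\beta(k)=c^\alpha(k)$, which is the defining relation of the snake group from Proposition~\ref{prop:snake-present} and is built into Definition~\ref{def:scale}.

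There is essentially no serious obstacle here; the entire content is the correct bookkeeping of per-step displacements. The one point that warrants care is the justification that these displacements simply add up over the whole slither — that is, that the net shift depends only on the multiset of step types and not on their order. For this I would lean on the linearity already packaged in Lemma~\ref{lem:log-like} (or, more directly, the elementary telescoping above), together with Lemma~\ref{lem:shiftslither} to confirm that $\beta_{\E},\beta_D,\alpha_S,\alpha_L$ are genuine invariants of $\calx$, independent of the chosen base entry $k$.
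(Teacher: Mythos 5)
Your proposal is correct and is essentially the paper's own proof: the paper likewise computes $\Scale(\calx)=s^\beta(k)-k=c^\alpha(k)-k$ by noting that each $\E$-, $D$-, $S$-, and $L$-step advances the ticker tape by $2$, $n+1$, $2n-1$, and $2n-2$ positions respectively, and then counts the letters in $\Slither(\calx)$ and $\Coslither(\calx)$. Your explicit telescoping and the appeal to Lemma~\ref{lem:shiftslither} for well-definedness merely spell out bookkeeping the paper leaves implicit.
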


\begin{proof}
From some fixed $k\in\Live(\calx)$, we can compute the scale in two ways: (i) by applying the successor function $\beta=\beta_{\E}+\beta_D$ times, or (ii) applying the co-successor function $\alpha=\alpha_S+\alpha_L$ times. In the first case, we advance $2\beta_{\E}+(n+1)\beta_D$ positions in the ticker tape, and in the latter case, we advance $(2n-1)\alpha_S+(2n-2)\alpha_L$ positions.
\end{proof}

\begin{corollary}\label{cor:2a+3b+4c=n+1}
For any ticker tape (or scroll), we have \[\beta_D = 2(\alpha_S+\alpha_L)-1\quad \text{and}\quad 
2\beta_{\E}+3\alpha_S+4\alpha_L=n+1.
\]
\end{corollary}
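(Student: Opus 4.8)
The plan is to prove the two identities in sequence, deriving the second from the first together with the scale formula of Lemma~\ref{lem:scale}. The crux is that both $\beta_D$ and the total $\alpha=\alpha_S+\alpha_L$ can be read off from the \emph{same} combinatorial data---the sizes of the $0$-blocks in a single row---via Propositions~\ref{prop:getslither} and~\ref{prop:getco-slither}. Since $\Slither(\cals)$ and $\Coslither(\cals)$ are well-defined cyclic words (Lemma~\ref{lem:shiftslither}), the counts $\beta_D,\beta_{\E},\alpha_S,\alpha_L$ do not depend on the chosen representative, so I may cut the cylinder so that some row begins with a live entry $(i,1)\in\Live(\cals)$ and apply both propositions to that row.

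First I would establish $\beta_D=2(\alpha_S+\alpha_L)-1$. List the $0$-blocks lying strictly between consecutive live entries of row $i$ (the ``internal'' gaps), and let $a$ be the number of them whose length exceeds $1$. Reading off the slither via Proposition~\ref{prop:getslither}: an internal gap of size $1$ contributes $\E$ and hence no $D$; an internal gap of size $z>1$ contributes $D\E^{\lfloor z/2\rfloor-1}D$ and hence exactly two $D$s; and the trailing block contributes the final $D\E^{\lfloor(z-1)/2\rfloor}$, i.e.\ exactly one $D$. Summing gives $\beta_D=2a+1$. On the co-slither side, Proposition~\ref{prop:getco-slither} produces one initial co-step from $(i,1)$ together with exactly one co-step for each internal gap of size $>1$; since the co-slither has length $\alpha_S+\alpha_L$, this yields $\alpha_S+\alpha_L=1+a$. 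Eliminating $a$ gives $\beta_D=2a+1=2(\alpha_S+\alpha_L)-1$.

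With the first identity in hand, the second is pure algebra: substitute $\beta_D=2\alpha_S+2\alpha_L-1$ into the equality $2\beta_{\E}+(n+1)\beta_D=(2n-1)\alpha_S+(2n-2)\alpha_L$ of Lemma~\ref{lem:scale}, expand, and collect terms. The coefficients of $\alpha_S$ and $\alpha_L$ on the two sides differ by exactly $3$ and $4$, and the constant $-(n+1)$ moves across, producing $2\beta_{\E}+3\alpha_S+4\alpha_L=n+1$.

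The main obstacle is bookkeeping rather than any deep idea: I must make sure the ``initial'' co-step and the ``final'' partial subslither---the two terms that break the otherwise clean correspondence between an internal gap, its two $D$s, and its one co-step---are each counted exactly once and never conflated with the internal gaps. Concretely, I want to confirm (i) that gaps of size $1$ correspond precisely to the $\E$-steps joining live entries of a common snake within the row, so that they are correctly \emph{excluded} from the co-slither count, and (ii) that the trailing block is accounted for by the single initial co-step of Proposition~\ref{prop:getco-slither} and the single trailing $D$ of Proposition~\ref{prop:getslither}, with no double counting. Once this $+1$ is tracked carefully, the identity $\alpha_S+\alpha_L=1+a$ drops out and the remainder is immediate.
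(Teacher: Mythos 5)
Your proposal is correct and takes essentially the same route as the paper's proof: fix a row beginning with a live entry, count $D$s and co-slither letters gap-by-gap via Propositions~\ref{prop:getslither} and~\ref{prop:getco-slither} to get $\beta_D=2(\alpha_S+\alpha_L)-1$, then substitute into Lemma~\ref{lem:scale} and simplify. The only difference is cosmetic: you handle the size-$1$ gaps (which contribute an $\E$ but no $D$ and no co-slither letter) explicitly, a bookkeeping point the paper's wording glosses over.
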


\begin{proof}
Without loss of generality, assume that $(0,1)\in\Live(\cals)$. There are $\alpha_S+\alpha_L$ letters in the co-slither: one for each gap of zeros between live entries and one more for the final string of 0s. Each gap contributes exactly two $D$s, except the final string, which contributes one. It follows that the slither contains $2(\alpha_S+\alpha_L)-1$ instances of $D$. The result now follows from substituting $\beta_D=2(\alpha_S+\alpha_L)-1$ into the equation in Lemma~\ref{lem:scale} and simplifying. 
\end{proof}

The equation $2\beta_{\E}+3\alpha_S+4\alpha_L=n+1$ from Corollary~\ref{cor:2a+3b+4c=n+1} gives a necessary condition for the slithers and co-slithers that exist for a given $n$. Theorem~\ref{thm:2a+3b+4c=n+1} will show that this condition is also sufficient. In particular, any solution to this equation with $\beta_{\E}, \alpha_S, \alpha_L\ge 0$ and $\alpha_S+\alpha_L > 0$ gives a set of potential slithers and co-slithers that only differ by rearrangement.\footnote{Note that if we had $\alpha_S=\alpha_L = 0$, then our co-slither would be empty, which is impossible.} From each of these slither and co-slither combinations, we can construct a scroll. Without loss of generality, we will assume that $(0,1)$ is live. 

\begin{definition}\label{def:2a+3b+4c=n+1}
Fix a positive integer $n$. Let $\alpha_S$, $\alpha_L$, $\beta_{\E}$, and $\beta_D=2(\alpha_S+\alpha_L)-1$ be nonnegative integers. Suppose $W_s$ (the ``slither'') is a word over the alphabet $\{D,\E\}$ with $\beta_D$ instances of $D$ and $\beta_{\E}$ instances of $\E$, and suppose $W_c$ (the ``co-slither'') is a word over the alphabet $\{S,L\}$ with $\alpha_S$ instances of $S$ and $\alpha_L$ instances of $L$. We say the pair $(W_s,W_c)$ is \dfn{feasible} if $2\beta_{\E}+3\alpha_S+4\alpha_L=n+1$.  
\end{definition}

Built into the definition of a feasible pair is the assumption that $\alpha_S+\alpha_L>0$, and hence that $\beta_D>0$. In particular, both words must be nonempty.

\begin{figure}
\begin{center}
\begin{tabular}{ c c c c c c}
 $\beta_{\E}$  &  $\alpha_S$  & $\alpha_L$&  $\beta_D =2(\alpha_S + \alpha_L)-1$ & Slither & Co-slither\\ 
 \hline
       5 & 0 & 1 & 1 & $\E\E\E\E\E D$ & $L$\\
       3 & 0 & 2 & 3 & $\E\E\E DDD$ & $LL$\\
       3 & 0 & 2 & 3 & $\E\E D\E DD$ & $LL$\\
       3 & 0 & 2 & 3 & $\E\E DD\E D$ & $LL$\\
       3 & 0 & 2 & 3 & $\E D\E D\E D$ & $LL$\\
       1 & 0 & 3 & 5 & $\E DDDDD$ & $LLL$\\
       4 & 2 & 0 & 3 & $\E\E\E\E DDD$ & $SS$\\
       4 & 2 & 0 & 3 & $\E\E\E D\E DD$ & $SS$\\
       4 & 2 & 0 & 3 & $\E\E\E DD\E D$ & $SS$\\
       4 & 2 & 0 & 3 & $\E\E D\E\E DD$ & $SS$\\
       4 & 2 & 0 & 3 & $\E\E D\E D\E D$ & $SS$\\
       2 & 2 & 1 & 5 & $\E\E DDDDD$ & $SSL$\\
       2 & 2 & 1 & 5 & $\E D\E DDDD$ & $SSL$\\
       2 & 2 & 1 & 5 & $\E DD\E DDD$ & $SSL$\\
       0 & 2 & 2 & 7 & $DDDDDDD$ & $SSLL$\\
       0 & 2 & 2 & 7 & $DDDDDDD$ & $SLSL$\\
       1 & 4 & 0 & 7 & $EDDDDDDD$ & $SSSS$\\
\end{tabular}
\caption{This table classifies all possible slithers and co-slithers (up to cyclic shift) for ticker tapes on $n=13$ vertices.  }\label{fig:SlitherCoslitherClassification}
\end{center}
\end{figure}

\begin{thm}\label{thm:2a+3b+4c=n+1}
Every feasible pair defines a ticker tape.
\end{thm}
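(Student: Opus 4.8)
The plan is to prove the theorem constructively: given a feasible pair $(W_s, W_c)$, I will build an explicit cyclic binary word $x$ (a single row, with a live entry in column $1$) and then invoke Propositions~\ref{prop:getslither} and~\ref{prop:getco-slither} to confirm that the scroll $\Scroll(x)$ has slither $W_s$ and co-slither $W_c$. The key point is that \emph{any} nonempty independent set generates a scroll, and those two propositions compute its slither and co-slither from a single row; so it suffices to arrange the $0$-blocks of $x$ so that the two algorithms output $W_s$ and $W_c$. All the content lies in reading the correct gap data off of $(W_s, W_c)$ and in checking that the resulting word has length exactly $n$.

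First I would parse $W_s$. Writing $\alpha = \alpha_S + \alpha_L$, the word $W_s$ has $\beta_D = 2\alpha - 1$ letters $D$. Reading left to right, I declare the odd-indexed $D$'s to be ``openers'' and the even-indexed $D$'s to be ``closers''; since $\beta_D$ is odd, this yields $\alpha - 1$ matched pairs $D\E^{r_k}D$ (for $k = 1, \dots, \alpha-1$, where $r_k \ge 0$ counts the $\E$'s strictly between the pair), followed by a single unmatched final opener $D\E^{r'}$, with the remaining $\E$'s occurring as isolated letters between blocks. This is exactly the subslither decomposition of Proposition~\ref{prop:getslither}: each matched block $D\E^{r_k}D$ is the subslither of an interior $0$-block, each isolated $\E$ is the subslither of a length-$1$ block, and $D\E^{r'}$ is the partial subslither of the trailing block. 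Each piece determines its $0$-block up to parity, namely length $1$ for an isolated $\E$, length $2r_k + 2$ or $2r_k + 3$ for $D\E^{r_k}D$, and trailing length $2r' + 1$ or $2r' + 2$ for $D\E^{r'}$.

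Next I would use $W_c = w_1 \cdots w_\alpha$ to resolve the parities, following Proposition~\ref{prop:getco-slither}: the first letter $w_1$ fixes the trailing block (odd if $w_1 = S$, even if $w_1 = L$), while the remaining letters, taken right to left, fix the parities of the interior large blocks ($w_{\alpha - k + 1}$ governs the $k$th such block, even if $S$ and odd if $L$). Since both alternatives for each block are legitimate positive lengths, every parity assignment is realizable, so this pins down a well-defined sequence of $0$-block lengths. Laying these out from a live entry in column $1$ produces a cyclic binary word $x$ whose consecutive live entries are separated by the prescribed $0$-blocks; as every $0$-block has length $\ge 1$, $x$ has no two cyclically adjacent $1$'s, so once its length is confirmed to be $n$ it is a genuine element of $\cali_n$.

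The crux is the length count, and this is where feasibility enters. Summing the live entries and the $0$-blocks, a direct computation (the bookkeeping of Lemma~\ref{lem:scale} and Corollary~\ref{cor:2a+3b+4c=n+1}, run in reverse) shows that the contributions of the isolated $\E$'s and of the trailing parity both cancel, leaving the length of $x$ equal to $2\beta_{\E} + 3\alpha_S + 4\alpha_L - 1$ independently of the arrangement. Feasibility asserts precisely that this equals $n$, so $x \in \cali_n$. Finally, the $0$-block lengths of $x$ were chosen so that the algorithm of Proposition~\ref{prop:getslither} returns $W_s$ and that of Proposition~\ref{prop:getco-slither} returns $W_c$; hence $\Scroll(x)$ realizes the feasible pair. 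I expect the main obstacle to be organizational rather than deep: correctly matching the left-to-right parse of $W_s$ against the right-to-left reading of $W_c$ together with the reversed trailing-block parity convention, and then verifying the length identity as the reverse of the earlier counting arguments.
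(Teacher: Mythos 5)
Your proposal is correct and follows essentially the same route as the paper's proof: both construct the first row explicitly by reading $W_s$ left-to-right (your opener/closer parse of the $D$'s is just a reorganization of the paper's emission of $0$-blocks), resolve each block's parity from $W_c$ read right-to-left with the same $S$/$L$ conventions, and use the feasibility identity $2\beta_{\E}+3\alpha_S+4\alpha_L=n+1$ to verify the row has length exactly $n$, after which Propositions~\ref{prop:getslither} and~\ref{prop:getco-slither} confirm the scroll realizes the pair. Your length bookkeeping is in fact spelled out more carefully than the paper's rather terse justification of the trailing-block count.
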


\begin{proof}
We can explicitly construct the ticker tape using Propositions~\ref{prop:getslither} and~\ref{prop:getco-slither}. Begin with a live entry in $(0,1)$, and read off the slither. If we read an $\E$ and there have been an even number of $D$s so far, we add a single 0 and then a live entry. If we read a $D$ that is not the final $D$, then let $t$ be the number of $\E$s between this $D$ and the next $D$. We add either $2t + 2$ or $2t + 3$ zeros and then another live entry. To determine which, we look at the next entry in the co-slither, reading right to left. If the entry is $S$, we add $2t+2$ zeros, while if it is $L$, we add $2t + 3$ zeros. When we reach the final $D$ in the slither, we simply fill out the rest of the row with zeros. 

By the condition that $\beta_D = 2(\alpha_S + \alpha_L) - 1$, we know that upon reaching the rightmost $D$ in the slither, we also reach the leftmost letter in the co-slither. Let $t$ be the number of $\E$s at the end of the slither. We claim that the number of $0$s at the end of the first row of the scroll is precisely $2t+1$ if the leftmost entry in the co-slither is $S$ and is $2t+2$ if it is $L$. This follows from the fact that $2\beta_{\E} + 3\alpha_S + 4\alpha_L = n+1$. In particular, when writing entries in the first row, we moved right two positions for each $T$, three positions for each $S$, and four positions for each $L$. 

Now that we have constructed the first row, the remainder of the table is fully determined. By construction, the set of live entries forms an independent set, and the theorem follows. 
\end{proof}

By construction, each feasible pair corresponds to a unique ticker tape. The example in Figures~\ref{fig:subslithers-case2} and \ref{fig:one-line} corresponds to the solution 
$\beta_{\E}=6$, $\alpha_S=3$, $\alpha_L=1$ to the equation $2\beta_{\E} + 3\alpha_S + 4\alpha_L=25$.

\begin{remark}\label{rem:genfunc}
Using generating functions, it is straightforward to calculate the number of solutions to the equation $2\beta_E + 3\alpha_S + 4\alpha_L = n + 1$  over the nonnegative integers that satisfy $\alpha_S+\alpha_L>0 $. In particular, this quantity is given by the coefficient of $x^{n+1}$ in the generating function
\[
\frac{1}{1-x^2}\left(\frac{1}{\left(1-x^3\right)\left(1-x^4\right)}-1\right).
\]
Calculating the total number of feasible pairs is more complicated because each solution corresponds to a collection of slithers and co-slithers made up of the same multiset of letters. 
\end{remark}

\begin{example}Figure~\ref{fig:SlitherCoslitherClassification} gives a list of all of the ticker tapes for $n=13$, which we computed using Theorem~\ref{thm:2a+3b+4c=n+1}. There are a total of $17$ ticker tapes (up to cyclic shift). Furthermore, one can see that there are $7$ possible quadruples $(\alpha_S,\alpha_L,\beta_E,\beta_D)$. This is the coefficient of $x^{14}$ in the generating function from Remark~\ref{rem:genfunc}. 
\end{example}

\section{Dynamics and actions on finite quotient spaces}\label{sec:tables}

\subsection{Orbit tables and ouroboroi}\label{subsec:tables}

Thus far, we have viewed the dynamics generated by toggling independent sets using infinite scrolls and ticker tapes. However, sometimes it will be convenient to restrict our attention to a repeating sequence of rows in a scroll. If we identify two identical rows by a quotient map of the scroll (a cylinder) to get a torus, the snakes and co-snakes ``wrap around'' from bottom to top. Inspired by the ancient symbol of a snake swallowing its tail, as shown on the left in Figure~\ref{fig:ouroboros}, we will call such a finite circular snake an \emph{ouroboros}. 

\colorlet{color1}{Red}
\colorlet{color2}{Red}
\colorlet{color3}{orange}
\colorlet{color4}{green!95!black}
\colorlet{color5}{orange}
\colorlet{color6}{green!95!black}
\colorlet{color7}{orange}
\colorlet{color8}{green!95!black}
\begin{figure}[!ht]
\begin{tikzpicture}
\begin{scope}
\setlength{\tabcolsep}{2.5pt}
\renewcommand{\arraystretch}{.4}
\node at (-8.5,-1.85){\includegraphics[width=.45\textwidth]{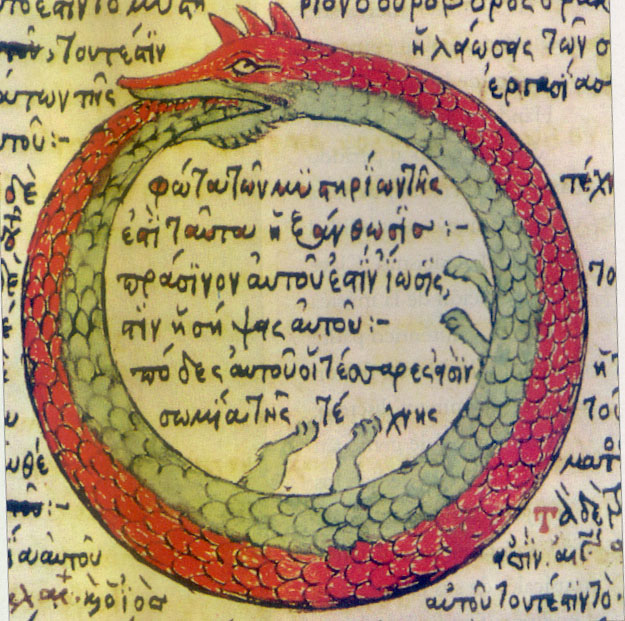}};
\node at (0,0) {
\begin{tabular}{cccccccccccc}
$x$ & $v_1$ & $v_2$ & $v_3$ & $v_4$ & $v_5$ & $v_6$ &
$v_7$ & $v_8$ & $v_9$ & $v_{1\!0}$ & $v_{1\!1}$ \Bstrut
\\\hline
$x^{(0)}$ & \0 & \0 & \0 & \0 & {\bf\color{color1}1} & \0 & {\bf\color{color1}1} & \0 & \0 & \0 & \0 \\
$x^{(1)}$ & {\bf\color{color2}1} & \0 & {\bf\color{color2}1} & \0 & \0 & \0 & \0 & {\bf\color{color1}1} & \0 & {\bf\color{color1}1} & \0 \\
$x^{(2)}$ & \0 & \0 & \0 & {\bf\color{color2}1} & \0 & {\bf\color{color2}1} & \0 & \0 & \0 & \0 & {\bf\color{color1}1} \\
$x^{(3)}$ & \0 & {\bf\color{color1}1} & \0 & \0 & \0 & \0 & {\bf\color{color2}1} & \0 & {\bf\color{color2}1} & \0 & \0 \\
$x^{(4)}$ & \0 & \0 & {\bf\color{color1}1} & \0 & {\bf\color{color1}1} & \0 & \0 & \0 & \0 & {\bf\color{color2}1} & \0 \\
$x^{(5)}$ & {\bf\color{color2}1} & \0 & \0 & \0 & \0 & {\bf\color{color1}1} & \0 & {\bf\color{color1}1} & \0 & \0 & \0 \\
$x^{(6)}$ & \0 & {\bf\color{color2}1} & \0 & {\bf\color{color2}1} & \0 & \0 & \0 & \0 & {\bf\color{color1}1} & \0 & {\bf\color{color1}1}\Bstrut \\ \hline \Tstrut
\end{tabular}};
\node at (0,-4) {
\begin{tabular}{ccccccccccccc}
$x$ & $v_1$ & $v_2$ & $v_3$ & $v_4$ & $v_5$ & $v_6$ &
$v_7$ & $v_8$ & $v_9$ & $v_{1\!0}$ & $v_{1\!1}$ & \Bstrut
\\\hline
$x^{(0)}$ & \0 & \0 & \0 & \0 & {\bf\color{color5}1} & \0 & {\bf\color{color6}1} & \0 & \0 & \0 & \0 \\
$x^{(1)}$ & {\bf\color{color3}1} & \0 & {\bf\color{color4}1} & \0 & \0 & \0 & \0 & {\bf\color{color7}1} & \0 & {\bf\color{color8}1} & \0 \\
$x^{(2)}$ & \0 & \0 & \0 & {\bf\color{color5}1} & \0 & {\bf\color{color6}1} & \0 & \0 & \0 & \0 & {\bf\color{color3}1} \\
$x^{(3)}$ & \0 & {\bf\color{color4}1} & \0 & \0 & \0 & \0 & {\bf\color{color7}1} & \0 & {\bf\color{color8}1} & \0 & \0 \\
$x^{(4)}$ & \0 & \0 & {\bf\color{color5}1} & \0 & {\bf\color{color6}1} & \0 & \0 & \0 & \0 & {\bf\color{color3}1} & \0 \\
$x^{(5)}$ & {\bf\color{color4}1} & \0 & \0 & \0 & \0 & {\bf\color{color7}1} & \0 & {\bf\color{color8}1} & \0 & \0 & \0 \\
$x^{(6)}$ & \0 & {\bf\color{color5}1} & \0 & {\bf\color{color6}1} & \0 & \0 & \0 & \0 & {\bf\color{color3}1} & \0 & {\bf\color{color4}1}\Bstrut \\ \hline \Tstrut
\end{tabular}};
\end{scope}
\end{tikzpicture}
\caption{On the left is a drawing of an ouroboros from a 1478 book on medieval alchemy by Theodoros Pelecanos; this image is from Wikipedia. On the right is the fundamental orbit table $\calt_1$ from our running example in Figure~\ref{fig:example}. When we allow snakes and co-snakes to wrap from bottom to top, the two snakes merge into one ouroboros  with slither $\bar{D}\,\bar{\E}$ (top), and the six co-snakes merge into two co-ouroboroi, with co-slither $\bar{S}$  (bottom).}\label{fig:ouroboros}
\end{figure}

Let $x\in\ff_2^n$, and suppose $r$ is a positive integer such that $x^{(r)}=x^{(0)}=x$. Then $r$ must be a multiple of the period $T(\cals)$ of the scroll defined by $x$ (as defined in Section~\ref{subsec:scrolls}). Hence, $r=\omega T(\cals)$ for some positive integer $\omega$ that we call a \dfn{frequency}. Define the \dfn{$\omega$-fold orbit table} of $x$ to be the $r\times n$ table whose rows are $x^{(0)},\ldots,x^{(r-1)}$. We denote this table by $\calt_\omega=\Table_\omega(x)$ or as $\calt=\Table(x)$, depending on whether $\omega$ is understood.

It will at times be useful to work with a finite version of the ticker tape. If $r=\omega T(\cals)$ as above, then define the \dfn{$\omega$-fold orbit vector} of $x$ to be the length-$rn$ subsequence of the ticker tape that has $x$ as an initial sequence---the result of reading the $\omega$-fold orbit table across each column, downward row-by-row. We denote this as
\begin{equation}\label{eqn:tape}
\calv_\omega=\Vector_\omega(x)=\big(X_{0,1},\dots,X_{0,n},X_{1,1},\dots,X_{1,n},\dots,X_{r-1,1},\dots,X_{r-1,n}\big)\in\ff_2^{rn},
\end{equation}
or $\calv=\Vector(x)$ if $\omega$ is understood or unimportant to the context.
We refer to the $1$-fold orbit table (resp., vector) as the \dfn{fundamental orbit table (resp., vector)}.

If $\calt$ is an orbit table and $\calv$ an orbit vector, we define their sets of live entries to be
\[
\Live(\calt)=\big\{(i,j)\in\zz_r\times\zz_n\mid X_{i,j}=1\big\},\qquad
\Live(\calv)=\big\{k\in\zz_{rn}\mid X_k=1\big\}.
\]
Though it makes no difference either way, we will continue with the convention that the columns are numbered $1,\dots,n$ and the rows are numbered $0,\dots, r-1$. As such, we harmlessly take $\zz_n=\{1,\dots,n\}$ and $\zz_r=\{0,\dots,r-1\}$ in the orbit table and $\zz_{rn}=\{1,\dots,rn\}$ in the orbit vector.

The live entries in an orbit table are simply the images of the live entries in the corresponding scroll under the natural quotient map $\p_\omega\colon\Live(\cals)\to\Live(\calt_\omega)$ that reduces the first coordinate of each entry modulo $r$. Under this map, the successor and co-successor functions descend to bijections on $\Live(\calt_\omega)$ that we call the \dfn{$\omega$-successor function} $\bar{s}_\omega$ and \dfn{$\omega$-co-successor function} $\bar{c}_\omega$. The relationship between the successor and its $\omega$-counterpart is illustrated by the following commutative diagrams. 
\[
\xymatrix{
\Live(\cals)\ar[d]_{\p_\omega}\ar[r]^s & \Live(\cals)\ar[d]^{\p_\omega} \\
\Live(\calt_\omega)\ar[r]^{\bar{s}_\omega} & \Live(\calt_\omega)}\hspace{15mm}
\xymatrix{
(i+kr,j)\ar@{|->}[d]_{\p_\omega}\ar@{|->}^s[r] & s(i+kr,j) \ar^{\p_\omega}@{|->}[d] \\
(i,j)\ar^{\bar{s}_\omega}@{|->}[r] & \bar{s}_\omega(i,j)}
\]
Naturally, there is an analogous diagram relating $c$ and $\bar{c}_\omega$. The functions $\bar{s}_\omega$ and $\bar{c}_\omega$ generate a finite abelian group $G(\calt_\omega):=\left<\bar{s}_\omega,\bar{c}_\omega\right>$ that we call the \dfn{ouroboros group} of $\calt_\omega$, or the \dfn{$\omega$-fold ouroboros group} of $\cals$. Since $\p_\omega$ is a topological covering map, there is an induced homomorphism $\p_\omega^*\colon G(\cals)\to G(\calt_\omega)$ sending $s\mapsto\bar{s}_\omega$ and $c\mapsto\bar{c}_\omega$. 
The ouroboros group is the quotient
\[
G(\calt_\omega)\cong G(\cals)/\ker \p_\omega^*,
\]
and it acts simply transitively on $\Live(\cals)/\ker \p_\omega$, which can be canonically identified with $\Live(\calt_\omega)$. We get a bijective correspondence between the orbits under $\bar{s}_\omega$ and $\bar{c}_\omega$ and the cosets of $\left<\bar{s}_\omega\right>$ and $\left<\bar{c}_\omega\right>$. These are  the images of the snakes and the co-snakes under the quotient map $\p_\omega$, so we call them \emph{ouroboroi} and \emph{co-ouroboroi}, respectively. 

\begin{definition}
Given a live entry $(i,j)$ in an orbit table $\calt_\omega$, the \dfn{ouroboros} and \dfn{co-ouroboros} containing it are the sets 
\[
\Ouroboros_\omega(i,j)=\big\{\bar{s}^{k}_\omega(i,j)\mid k\in\zz\big\},\qquad\qquad 
\Coouroboros_\omega(i,j)=\big\{\bar{c}^{k}_\omega(i,j)\mid k\in\zz\big\}.
\]
\end{definition} 

Throughout the rest of this paper, we will continue to assume that a scroll $\cals$ has $\alpha$ snakes and $\beta$ co-snakes. Recall that we denote the number of $S$s and $L$s in any co-slither by $\alpha_S$ and $\alpha_L$, and the number of $D$s and $\E$s in any slither by $\beta_D$ and $\beta_{\E}$, respectively. Naturally, we have
\[
\alpha=\alpha_S+\alpha_L\quad\text{and}\quad\beta=\beta_D+\beta_{\E}.
\]
We say the $\omega$-fold orbit table $\calt=\calt_\omega$ has $\bar{\alpha}_\omega$ ouroboroi and $\bar{\beta}_\omega$ co-ouroboroi. (If $\omega$ is clear from the context, which it usually will be, then we will typically drop it as a subscript.) Similarly, we will often write $\bar{s}$ and $\bar{c}$ rather than $\bar{s}_\omega$ and $\bar{c}_\omega$ because $\omega$ will usually be unambiguous when we speak of these functions.

\begin{theorem}\label{thm:ouroboros-torsor}
For any orbit table $\calt$, the set $\Live(\calt)$ is a torsor of the ouroboros group, which has presentation
\[
G(\calt)=\left<\bar{s},\bar{c}\;\left|\; \bar{s}\,\bar{c}=\bar{c}\,\bar{s},\; \bar{s}^\beta=\bar{c}^\alpha,\;\bar{s}^{\,\eta/\bar{\alpha}}=\bar{c}^{\,\eta/\bar{\beta}}=1\right.\right>
\cong \zz_{\bar\alpha}\times\zz_{\eta/\bar\alpha}
\cong \zz_{\bar\beta}\times\zz_{\eta/\bar\beta},
\]
where $\eta$ is the number of live entries.
\end{theorem}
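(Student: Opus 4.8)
The plan is to build on the torsor property established immediately before the statement: since $\Live(\calt)$ is a torsor for $G(\calt)$, the group $G(\calt)$ is finite abelian of order $\eta$, and fixing a base live entry identifies $G(\calt)$ with $\Live(\calt)$ so that the regular action matches the action on live entries. Under this identification the $\langle\bar s\rangle$-orbits are exactly the ouroboroi and the $\langle\bar c\rangle$-orbits the co-ouroboroi, so $[G(\calt):\langle\bar s\rangle]=\bar\alpha$ and $[G(\calt):\langle\bar c\rangle]=\bar\beta$, giving $\ord(\bar s)=\eta/\bar\alpha$ and $\ord(\bar c)=\eta/\bar\beta$. These are the two ``order'' relations. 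The relation $\bar s^\beta=\bar c^\alpha$ descends from $G(\cals)$ (Proposition~\ref{prop:snake-present}) through the surjection $\p_\omega^\ast$, and commutativity is inherited from $G(\cals)$. I also record that $G(\calt)/\langle\bar s\rangle$ is a cyclic quotient of $G(\cals)/\langle s\rangle\cong\zz_\alpha$ generated by the image of $\bar c$, hence $\cong\zz_{\bar\alpha}$ with $\bar\alpha\mid\alpha$ (and dually $\bar\beta\mid\beta$).

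First I would prove the presentation. Let $H=\langle\bar s,\bar c\mid \bar s\bar c=\bar c\bar s,\ \bar s^\beta=\bar c^\alpha,\ \bar s^{\eta/\bar\alpha}=\bar c^{\eta/\bar\beta}=1\rangle$. Since $G(\calt)$ satisfies all four relations and is generated by $\bar s,\bar c$, there is a canonical surjection $H\twoheadrightarrow G(\calt)$; it suffices to show $|H|\le\eta$. Writing $H$ additively as $\zz^2/M$ with $M$ spanned by the rows of $\bigl(\begin{smallmatrix}\beta&-\alpha\\ \eta/\bar\alpha&0\\ 0&\eta/\bar\beta\end{smallmatrix}\bigr)$, the index $[\zz^2:M]$ equals the $\gcd$ of the three $2\times2$ minors, namely $\gcd\!\big(\alpha\eta/\bar\alpha,\ \beta\eta/\bar\beta,\ \eta^2/(\bar\alpha\bar\beta)\big)=\eta\cdot\gcd(\alpha/\bar\alpha,\beta/\bar\beta,\eta/(\bar\alpha\bar\beta))$. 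I would show this equals $\eta$. Here $\bar\alpha\mid\alpha$ and $\bar\beta\mid\beta$ are the divisibilities just noted, and $\bar\alpha\bar\beta\mid\eta$ follows from $|\langle\bar s\rangle\cap\langle\bar c\rangle|=\eta/(\bar\alpha\bar\beta)$ (inclusion–exclusion for the internal sum $G(\calt)=\langle\bar s\rangle+\langle\bar c\rangle$). The remaining point, $\gcd(\alpha/\bar\alpha,\beta/\bar\beta,\eta/(\bar\alpha\bar\beta))=1$, is where the genuine arithmetic enters and ultimately rests on $\gcd(\deg(\calx),\codeg(\calx))=1$ (Lemma~\ref{lem:co-prime-new}) together with the scale identities of Lemma~\ref{lem:lcmdeg} and Theorem~\ref{thm:ttperiod}.

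Next I would extract the two direct–product decompositions. Set $\gamma=\eta/(\bar\alpha\bar\beta)=|\langle\bar s\rangle\cap\langle\bar c\rangle|$; since $\bar s^\beta=\bar c^\alpha$ the intersection is the fiber subgroup $\langle\bar s^\beta\rangle$, of order $\gamma$, so $\langle\bar s\rangle\cong\zz_{\bar\beta\gamma}$ and $\langle\bar c\rangle\cong\zz_{\bar\alpha\gamma}$. Because $G(\calt)/\langle\bar s\rangle\cong\zz_{\bar\alpha}$ is cyclic generated by the image of $\bar c$, the element $\bar c^{\,\gamma}$ has order $\bar\alpha$ and maps to a generator of this quotient; provided $\gcd(\bar\alpha,\gamma)=1$, one checks $\langle\bar c^{\,\gamma}\rangle\cap\langle\bar s\rangle=1$, whence $G(\calt)=\langle\bar s\rangle\oplus\langle\bar c^{\,\gamma}\rangle\cong\zz_{\eta/\bar\alpha}\times\zz_{\bar\alpha}$. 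The symmetric argument, interchanging the roles of $\bar s$ and $\bar c$ (equivalently $\alpha$ and $\beta$), gives $G(\calt)\cong\zz_{\eta/\bar\beta}\times\zz_{\bar\beta}$ once $\gcd(\bar\beta,\gamma)=1$.

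The main obstacle is precisely the coprimality inputs $\gcd(\bar\alpha,\gamma)=\gcd(\bar\beta,\gamma)=1$ (equivalently the exact minor computation $\gcd=\eta$ in the presentation step): a hypothetical configuration in which $\bar\alpha$ and $\gamma$ shared a factor would force $G(\calt)$ to be, say, cyclic of order $\eta$ rather than $\zz_{\bar\alpha}\times\zz_{\eta/\bar\alpha}$, so these coprimalities are indispensable and carry all the content. I would establish them by translating $\bar\alpha,\bar\beta,\gamma$ into the scroll invariants $\deg(\calx),\codeg(\calx),\Scale(\calx),n$ through the covering $\p_\omega$—so that $\gamma$ is read off from how $\langle\bar s^\beta\rangle$ (the shift by $\Scale(\calx)$) closes up modulo the $r\times n$ torus—and then invoking $\gcd(\deg,\codeg)=1$. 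A more uniform alternative I would keep in reserve is to work in the universal scroll: there $G(\widehat\cals)\cong\zz^2$ acts simply transitively on $\Live(\widehat\cals)$, the composite covering $\Live(\widehat\cals)\to\Live(\calt)$ realizes $G(\calt)\cong\zz^2/L$ for the rank-two deck lattice $L=\langle\widehat s^\beta\widehat c^{-\alpha},\,w_2\rangle$, and the Smith normal form of $L$ yields the invariant factors at once; the price of this route is computing the second deck generator $w_2$ (the row-period translation by $r$) in the $\widehat s,\widehat c$ basis, which is itself the delicate step.
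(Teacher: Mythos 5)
Your setup and your presentation step are sound, and they are in fact more careful than the paper's own proof, which stops after recording the coset correspondences and the cyclic quotients $G(\calt)/\langle\bar s\rangle\cong\zz_{\bar\alpha}$, $G(\calt)/\langle\bar c\rangle\cong\zz_{\bar\beta}$ and declares that the result follows. In particular, your reduction of the presentation to $\gcd\bigl(\deg(\p_\omega),\codeg(\p_\omega),\gamma\bigr)=1$ (with $\gamma=\eta/(\bar\alpha\bar\beta)$) is correct, and that gcd is $1$ for the simple reason that already $\gcd\bigl(\deg(\p_\omega),\codeg(\p_\omega)\bigr)=1$: one has $\deg(\p_\omega)\mid\deg(\calx)$ and $\codeg(\p_\omega)\mid\codeg(\calx)$ (Propositions~\ref{prop:degscale} and~\ref{prop:ouroboroi}, neither of which uses this theorem, so there is no circularity), and these are coprime by Lemma~\ref{lem:co-prime-new}; this is exactly how the paper later proves Corollary~\ref{cor:relprime}.

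The genuine gap is in your splitting step, and it is fatal as written: the coprimalities $\gcd(\bar\alpha,\gamma)=\gcd(\bar\beta,\gamma)=1$ that you call indispensable are false in general. Take the paper's running example (Figure~\ref{fig:example}) with $\omega=4$: the paper records $\deg(\p_1)=2$, $\codeg(\p_1)=3$, $\bar\alpha_\omega=2$ for even $\omega$, and $\bar\beta_\omega=2$ when $3\nmid\omega$, so $\bar\alpha_4=\bar\beta_4=2$; the fundamental table has $22$ live entries, so $\eta=4\cdot 22=88$ and $\gamma=88/4=22$, giving $\gcd(\bar\alpha_4,\gamma)=\gcd(\bar\beta_4,\gamma)=2$. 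Your complement construction then actually breaks: since the image of $\bar c$ generates $G(\calt_4)/\langle\bar s\rangle\cong\zz_2$, we have $\bar c^{\,2}\in\langle\bar s\rangle$, hence $\bar c^{\,\gamma}=(\bar c^{\,2})^{11}\in\langle\bar s\rangle$ while $\bar c^{\,\gamma}\neq 1$ (as $\ord(\bar c)=\eta/\bar\beta=44$), so $\langle\bar c^{\,\gamma}\rangle\cap\langle\bar s\rangle\neq\{1\}$ and $G(\calt_4)\neq\langle\bar s\rangle\oplus\langle\bar c^{\,\gamma}\rangle$. Your necessity claim fails in the same example: $G(\calt_4)$ has order $88$ and exponent $\lcm(44,44)=44$, so $G(\calt_4)\cong\zz_2\times\zz_{44}=\zz_{\bar\alpha}\times\zz_{\eta/\bar\alpha}$ holds anyway. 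What the theorem actually needs is weaker and different: writing $\ord(\bar s)=\bar\beta\gamma$ and $\ord(\bar c)=\bar\alpha\gamma$, any abelian group generated by two such elements satisfies $G(\calt)\cong\zz_{d_1}\times\zz_{d_2}$ with $d_2=\exp\bigl(G(\calt)\bigr)=\gamma\,\lcm(\bar\alpha,\bar\beta)$ and $d_1=\gcd(\bar\alpha,\bar\beta)$, so the two claimed decompositions are together equivalent to the identity $\gcd(\bar\alpha,\bar\beta\gamma)=\gcd(\bar\alpha,\bar\beta)=\gcd(\bar\beta,\bar\alpha\gamma)$, i.e., every prime dividing $\gamma$ divides $\bar\alpha$ and $\bar\beta$ to the same power. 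That identity is the real content; neither your complement argument nor the paper's one-line conclusion establishes it, and proving it from the covering arithmetic (or actually executing your universal-cover/Smith-normal-form fallback, including the deck generator you deferred) is what remains to be done.
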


\begin{proof}
We have already established the first statement. It follows that there are bijective correspondences between ouroboroi and cosets of $\<\bar{s}\>$ and between co-ouroboroi and cosets of $\<\bar{c}\>$. Since $G(\calt)=\<\bar{s},\bar{c}\>\cong G(\cals)/\ker \p^*$ is a finite abelian group of order $\eta$, the first two relations hold, and we have
\begin{equation}\label{eqn:ouroboroi-cosets}
\bar{\alpha}=[G(\calt):\<\bar{s}\>]=\eta/\bar\beta\quad\text{and}\quad
\bar{\beta}=[G(\calt):\<\bar{c}\>]=\eta/\bar\alpha.
\end{equation}
In other words, $G(\calt)/\<\bar{s}\>\cong\zz_{\bar\alpha}$ and  $G(\calt)/\<\bar{c}\>\cong\zz_{\bar\beta}$. The result now follows. 
\end{proof}

\colorlet{color1}{Red}
\colorlet{color2}{Blue}
\colorlet{color3}{Red}
\colorlet{color4}{Blue}
\colorlet{color5}{purple}
\colorlet{color6}{Green}
\colorlet{color7}{orange}
\colorlet{color8}{amethyst}

\colorlet{color1}{Red}
\colorlet{color2}{Blue}
\colorlet{color3}{orange}
\colorlet{color4}{green!95!black}
\colorlet{color5}{orange}
\colorlet{color6}{green!95!black}
\colorlet{color7}{orange}
\colorlet{color8}{green!95!black}
\begin{figure}[!ht]
\begin{tikzpicture}[scale=.8]
\tikzstyle{every node}=[font=\small,anchor=south]
\begin{scope}
\setlength{\tabcolsep}{2.5pt}
\renewcommand{\arraystretch}{.4}
\node at (0,0) {
\begin{tabular}{cccccccccccc}
$x$ & $v_1$ & $v_2$ & $v_3$ & $v_4$ & $v_5$ & $v_6$ &
$v_7$ & $v_8$ & $v_9$ & $v_{1\!0}$ & $v_{1\!1}$ \Bstrut
\\\hline
$x^{(0)}$ & \0 & \0 & \0 & \0 & {\bf\color{color1}1} & \0 & {\bf\color{color1}1} & \0 & \0 & \0 & \0 \\
$x^{(1)}$ & {\bf\color{color2}1} & \0 & {\bf\color{color2}1} & \0 & \0 & \0 & \0 & {\bf\color{color1}1} & \0 & {\bf\color{color1}1} & \0 \\
$x^{(2)}$ & \0 & \0 & \0 & {\bf\color{color2}1} & \0 & {\bf\color{color2}1} & \0 & \0 & \0 & \0 & {\bf\color{color1}1} \\
$x^{(3)}$ & \0 & {\bf\color{color1}1} & \0 & \0 & \0 & \0 & {\bf\color{color2}1} & \0 & {\bf\color{color2}1} & \0 & \0 \\
$x^{(4)}$ & \0 & \0 & {\bf\color{color1}1} & \0 & {\bf\color{color1}1} & \0 & \0 & \0 & \0 & {\bf\color{color2}1} & \0 \\
$x^{(5)}$ & {\bf\color{color2}1} & \0 & \0 & \0 & \0 & {\bf\color{color1}1} & \0 & {\bf\color{color1}1} & \0 & \0 & \0 \\
$x^{(6)}$ & \0 & {\bf\color{color2}1} & \0 & {\bf\color{color2}1} & \0 & \0 & \0 & \0 & {\bf\color{color1}1} & \0 & {\bf\color{color1}1} \\ 
$x^{(7)}$ & \0 & \0 & \0 & \0 & {\bf\color{color2}1} & \0 & {\bf\color{color2}1} & \0 & \0 & \0 & \0 \\
$x^{(8)}$ & {\bf\color{color1}1} & \0 & {\bf\color{color1}1} & \0 & \0 & \0 & \0 & {\bf\color{color2}1} & \0 & {\bf\color{color2}1} & \0 \\
$x^{(9)}$ & \0 & \0 & \0 & {\bf\color{color1}1} & \0 & {\bf\color{color1}1} & \0 & \0 & \0 & \0 & {\bf\color{color2}1} \\
$x^{(10)}$ & \0 & {\bf\color{color2}1} & \0 & \0 & \0 & \0 & {\bf\color{color1}1} & \0 & {\bf\color{color1}1} & \0 & \0 \\
$x^{(11)}$ & \0 & \0 & {\bf\color{color2}1} & \0 & {\bf\color{color2}1} & \0 & \0 & \0 & \0 & {\bf\color{color1}1} & \0 \\
$x^{(12)}$ & {\bf\color{color1}1} & \0 & \0 & \0 & \0 & {\bf\color{color2}1} & \0 & {\bf\color{color2}1} & \0 & \0 & \0 \\
$x^{(13)}$ & \0 & {\bf\color{color1}1} & \0 & {\bf\color{color1}1} & \0 & \0 & \0 & \0 & {\bf\color{color2}1} & \0 & {\bf\color{color2}1}\Bstrut \\ \hline \Tstrut
\end{tabular}};
\node at (10,0) {
\begin{tabular}{ccccccccccccc}
$x$ & $v_1$ & $v_2$ & $v_3$ & $v_4$ & $v_5$ & $v_6$ &
$v_7$ & $v_8$ & $v_9$ & $v_{1\!0}$ & $v_{1\!1}$ & \Bstrut
\\\hline
$x^{(0)}$ & \0 & \0 & \0 & \0 & {\bf\color{color5}1} & \0 & {\bf\color{color6}1} & \0 & \0 & \0 & \0 \\
$x^{(1)}$ & {\bf\color{color3}1} & \0 & {\bf\color{color4}1} & \0 & \0 & \0 & \0 & {\bf\color{color7}1} & \0 & {\bf\color{color8}1} & \0 \\
$x^{(2)}$ & \0 & \0 & \0 & {\bf\color{color5}1} & \0 & {\bf\color{color6}1} & \0 & \0 & \0 & \0 & {\bf\color{color3}1} \\
$x^{(3)}$ & \0 & {\bf\color{color4}1} & \0 & \0 & \0 & \0 & {\bf\color{color7}1} & \0 & {\bf\color{color8}1} & \0 & \0 \\
$x^{(4)}$ & \0 & \0 & {\bf\color{color5}1} & \0 & {\bf\color{color6}1} & \0 & \0 & \0 & \0 & {\bf\color{color3}1} & \0 \\
$x^{(5)}$ & {\bf\color{color4}1} & \0 & \0 & \0 & \0 & {\bf\color{color7}1} & \0 & {\bf\color{color8}1} & \0 & \0 & \0 \\
$x^{(6)}$ & \0 & {\bf\color{color5}1} & \0 & {\bf\color{color6}1} & \0 & \0 & \0 & \0 & {\bf\color{color3}1} & \0 & {\bf\color{color4}1} \\
$x^{(7)}$ & \0 & \0 & \0 & \0 & {\bf\color{color5}1} & \0 & {\bf\color{color6}1} & \0 & \0 & \0 & \0 \\
$x^{(8)}$ & {\bf\color{color3}1} & \0 & {\bf\color{color4}1} & \0 & \0 & \0 & \0 & {\bf\color{color7}1} & \0 & {\bf\color{color8}1} & \0 \\
$x^{(9)}$ & \0 & \0 & \0 & {\bf\color{color5}1} & \0 & {\bf\color{color6}1} & \0 & \0 & \0 & \0 & {\bf\color{color3}1} \\
$x^{(10)}$ & \0 & {\bf\color{color4}1} & \0 & \0 & \0 & \0 & {\bf\color{color7}1} & \0 & {\bf\color{color8}1} & \0 & \0 \\
$x^{(11)}$ & \0 & \0 & {\bf\color{color5}1} & \0 & {\bf\color{color6}1} & \0 & \0 & \0 & \0 & {\bf\color{color3}1} & \0 \\
$x^{(12)}$ & {\bf\color{color4}1} & \0 & \0 & \0 & \0 & {\bf\color{color7}1} & \0 & {\bf\color{color8}1} & \0 & \0 & \0 \\
$x^{(13)}$ & \0 & {\bf\color{color5}1} & \0 & {\bf\color{color6}1} & \0 & \0 & \0 & \0 & {\bf\color{color3}1} & \0 & {\bf\color{color4}1}\Bstrut \\ \hline \Tstrut
\end{tabular}};
\end{scope}
\end{tikzpicture}
\caption{In the $2$-fold orbit table $\calt_2$ from our running example in Figure~\ref{fig:example}, there are $\bar\alpha_2=2$ ouroboroi with slither $\bar{D}\,\bar{\E}$ and $\bar\beta_2=2$ co-ouroboroi with co-slither $\bar{S}^2$.}\label{fig:2-fold-table}
\end{figure}

\begin{definition}
The \dfn{(co-)ouroboros degree} of an orbit table $\calt_\omega$ is the number of (co-)snakes in the $\p_\omega$-preimage of each (co-)ouroboros. We denote these as 
\[
\deg(\p_\omega):=\frac{[G(\cals):\<s\>]}{[G(\calt_\omega):\<\bar{s}_\omega\>]}=\alpha/\bar\alpha_\omega
\quad\text{and}\quad
\codeg(\p_\omega):=\frac{[G(\cals):\<c\>]}{[G(\calt_\omega):\<\bar{c}_\omega\>]}=\beta/\bar\beta_\omega.
\]
We call $\deg(\p_1)$ and $\codeg(\p_1)$ the \dfn{fundamental ouroboros degree} and \dfn{fundamental co-ouroboros degree}, respectively.
\end{definition}

Returning back to our running example, the fundamental orbit table (i.e., $\omega=1$) is shown in Figure~\ref{fig:ouroboros}. The $\alpha=2$ snakes in $\cals$ merge into $\bar\alpha_1=1$ ouroboros, and the $\beta=6$ co-snakes merge into $\bar\beta_1=2$ co-ouroboroi. Therefore, the fundamental ouroboros and co-ouroboros degrees are
\[
\deg(\p_1)=\alpha/\bar\alpha_1=2/1=2\quad\text{and}\quad
\codeg(\p_1)=\beta/\bar\beta_1=6/2=3.
\]
In contrast, in the $2$-fold orbit table, shown in Figure~\ref{fig:2-fold-table}, the $\alpha=2$ snakes in $\cals$ remain $\bar\alpha_2=2$ separate ouroboroi, and the $\beta=6$ co-snakes become $\bar\beta_2=2$ co-ouroboroi. The ouroboros and co-ouroboros degrees are thus
\[
\deg(\p_2)=\alpha/\bar\alpha_2=2/2=1\quad\text{and}\quad \codeg(\p_2)=\beta/\bar\beta_2=6/2=3. 
\]

Slithers and co-slithers naturally descend to orbit tables via the quotient $\p_\omega\colon\Live(\cals)\to\Live(\calt_\omega)$. 
The slither of $\cals$ is a length-$\beta$ sequence of $D$s and $\E$s, and it defines a cyclic ordering $\<c\>,s\!\<c\>,\dots,s^{\beta-1}\!\<c\>$ of co-snakes. If we apply the quotient map $\p_\omega^*\colon G(\cals)\to G(\calt_\omega)$, then we get a cyclic ordering of the $\bar{\beta}_\omega$ co-ouroboroi. Each co-ouroboros appears in the sequence $\<\bar{c}_\omega\>,\bar{s}_\omega\!\<\bar{c}_\omega\>,\dots,\bar{s}_\omega^{\beta-1}\!\<\bar{c}_\omega\>$ exactly $\codeg(\p_\omega)=\beta/\bar{\beta}_\omega$ times, and this must be a divisor of the co-degree of $\cals$ (the exponent that appears in the slither).
Define the \dfn{slither} of $\calt_\omega$, denoted $\Slither(\calt_\omega)$, to be any length-$\bar{\beta}$ subsequence of a slither of $\cals$ (so $\Slither(\calt_\omega)$ is defined up to cyclic shift). We will also refer to $\Slither(\calt_\omega)$ as the \dfn{$\omega$-slither} of $\cals$.

The preceding notions all have straightforward analogues for co-slithers. More precisely, a co-slither of $\cals$ is a length-$\alpha$ sequence of $S$s and $L$s that defines a cyclic ordering $\<s\>,c\<s\>,\dots,c^{\alpha-1}\<s\>$ of snakes. Via the quotient map $\p_\omega^*\colon G(\cals)\to G(\calt_\omega)$, we get a cyclic ordering of the $\bar{\alpha}_\omega$ ouroboroi. Each ouroboros appears in the sequence 
$\<\bar{s}\>,\bar{c}_\omega\<\bar{s}\>,\dots,\bar{c}_\omega^{\alpha-1}\<\bar{s}\>$ exactly $\deg(\p_\omega)=\alpha/\bar{\alpha}_\omega$ times, and this must be a divisor of the degree of $\cals$ (the exponent that appears in the co-slither). We define the \dfn{co-slither} of $\calt_\omega$, denoted $\Coslither(\calt_\omega)$, to be any length-$\bar{\alpha}$ subsequence (defined up to cyclic shift) of a co-slither of $\cals$; we also call this the \dfn{$\omega$-co-slither} of $\cals$. 
The preceding two paragraphs have established the following. 

\begin{lemma}\label{lem:slither(T)}
For any scroll $\cals$, we have
\[
\big(\Slither(\calt_\omega)\big)^{\codeg(\p_\omega)}=\Slither(\cals)\quad\text{\emph{and}}\quad
\big(\Coslither(\calt_\omega)\big)^{\deg(\p_\omega)}=\Coslither(\cals).
\]
\end{lemma}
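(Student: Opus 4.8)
The plan is to reduce everything to a single periodicity statement: that $\Slither(\cals)$, regarded as a cyclic word of length $\beta$, is $\bar\beta_\omega$-periodic, with one period equal to $\Slither(\calt_\omega)$. Granting this, the first identity is immediate, since $\beta=\bar\beta_\omega\codeg(\p_\omega)$ forces $\Slither(\cals)$ to be exactly $\codeg(\p_\omega)$ consecutive copies of its length-$\bar\beta_\omega$ period. The co-slither identity is proved by the symmetric argument, interchanging the roles of $s$ and $c$, of $D,\E$ and $S,L$, of snakes and co-snakes, and of $(\alpha,\bar\alpha_\omega)$ and $(\beta,\bar\beta_\omega)$; I would prove only the slither statement in detail and remark on the symmetry.

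To set up the periodicity argument, I first record that by Corollary~\ref{cor:path-type} the type of the step $(i,j)\to s(i,j)$ depends only on $\Cosnake(i,j)$. Using the cyclic ordering $\<c\>,s\<c\>,\dots,s^{\beta-1}\<c\>$ of co-snakes induced by $s$ on $G(\cals)/\<c\>\cong\zz_\beta$, this means $\Slither(\cals)=f(\<c\>)\,f(s\<c\>)\cdots f(s^{\beta-1}\<c\>)$ for a well-defined function $f$ sending each co-snake to its common $s$-step type in $\{D,\E\}$. The same reasoning descends to $\calt_\omega$: the $\bar s_\omega$-step type is constant on each co-ouroboros, giving the corresponding function on co-ouroboroi whose values spell out $\Slither(\calt_\omega)$.

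The key point is that $\p_\omega$ preserves step types, because $\p_\omega$ only reduces the row coordinate modulo $r$ and leaves the column coordinate alone, while a $D$-step is exactly the one advancing the column by $1$ and an $\E$-step the one advancing it by $2$. Hence the value of $f$ on a co-snake equals the step-type value of its image co-ouroboros under $\p_\omega^*$, so $f$ is constant on each fiber of $\p_\omega^*$ over a co-ouroboros. Now $\p_\omega^*(s^i\<c\>)=\bar s_\omega^{\,i}\<\bar c_\omega\>$, and since $\bar s_\omega$ generates $G(\calt_\omega)/\<\bar c_\omega\>\cong\zz_{\bar\beta_\omega}$, the co-ouroboros $\bar s_\omega^{\,i}\<\bar c_\omega\>$ depends only on $i\bmod\bar\beta_\omega$. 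Therefore $s^i\<c\>$ and $s^{i+\bar\beta_\omega}\<c\>$ lie in a common fiber of $\p_\omega^*$, so $f(s^i\<c\>)=f(s^{i+\bar\beta_\omega}\<c\>)$ for all $i$. This is precisely the asserted $\bar\beta_\omega$-periodicity, and one period $f(\<c\>)\cdots f(s^{\bar\beta_\omega-1}\<c\>)$ is $\Slither(\calt_\omega)$, completing the reduction.

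I expect the main obstacle to be making the notion of ``step type'' and its transfer through the covering map $\p_\omega$ fully rigorous---namely, confirming that the intrinsic $\bar s_\omega$-step type used to define $\Slither(\calt_\omega)$ coincides with the pushforward of the $s$-step type, and that Corollary~\ref{cor:path-type} genuinely descends to $\calt_\omega$. Once this identification is pinned down, the periodicity follows formally from $\bar s_\omega$ generating the order-$\bar\beta_\omega$ cyclic quotient $G(\calt_\omega)/\<\bar c_\omega\>$, and no further computation is needed.
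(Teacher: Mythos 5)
Your proof is correct and follows essentially the same route as the paper, whose own justification is the discussion preceding the lemma: project the cyclic ordering $\<c\>, s\<c\>, \dots, s^{\beta-1}\<c\>$ of co-snakes through $\p_\omega^*$ onto the cyclic ordering of co-ouroboroi, note that each co-ouroboros occurs $\codeg(\p_\omega)$ times at positions congruent modulo $\bar\beta_\omega$, and conclude the slither of $\cals$ is $\bar\beta_\omega$-periodic with period word $\Slither(\calt_\omega)$ (and symmetrically for co-slithers). Your write-up merely makes explicit two points the paper leaves implicit---that step types are constant on co-snakes/co-ouroboroi by Corollary~\ref{cor:path-type}, and that $\p_\omega$ preserves step types because it fixes column displacements---which is a welcome tightening rather than a different argument.
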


We will refer to the (co-)slither of the fundamental orbit table (i.e., $\omega=1$) as the \dfn{fundamental (co-)slither}. To emphasize that we are taking the slither in an orbit table rather than in the scroll, we will sometimes write $\bar{\E}$ and $\bar{D}$ rather than $\E$ and $D$, and we will similarly use $\bar{S}$ and $\bar{T}$ in co-slithers. 

Let us return to our running example from Figure~\ref{fig:example} and its fundamental and $2$-fold orbit tables shown in Figures~\ref{fig:ouroboros} and \ref{fig:2-fold-table}, respectively. The length of the fundamental slither $\bar{D}\,\bar{\E}$ is  $\bar\beta_1=\beta/\codeg(\p_1)$, the number of co-ouroboroi, and the length of the fundamental co-slither $\bar{S}$ is $\bar\alpha_1=\alpha/\deg(\p_1)$, the number of ouroboroi. As guaranteed by Lemma~\ref{lem:slither(T)}, the (co-)slithers of $\cals$ and $\calt$ are related by
\[
(D\E)^{\codeg(\p_1)}=(D\E)^3\quad\text{and}\quad
S^{\deg(\p_1)}=S^2.
\]

The $2$-fold orbit table of our running example, shown in Figure~\ref{fig:2-fold-table}, has two ouroboroi with slither $\bar{D}\,\bar{\E}$ and two co-ouroboroi with co-slither $\bar{S}^2$. The ouroboros degree is thus $\deg(\p_2)=2/2=1$, and the co-ouroboros degree is $\codeg(\p_2)=6/2=3$.
As predicted by Lemma~\ref{lem:slither(T)}, we have
\[
(D\E)^{\codeg(\p_2)}=(D\E)^3\quad\text{and}\quad
\left(S^2\right)^{\deg(\p_2)}=S^2.
\]

\subsection{Swallow and co-swallow functions}\label{subsec:swallow}

In this subsection, we will look at how the snakes and co-snakes merge in various $\omega$-fold orbit tables under the quotient maps $\p_\omega$. This will allow us to derive formulas relating $\bar{\alpha}_\omega$ to $\bar{\alpha}=\bar{\alpha}_1$ and likewise relating $\bar{\beta}_\omega$ to $\bar{\beta}=\bar{\beta}_1$. 

We will write $\ss(\cals)$ for the set of snakes of $\cals$ and $\cc(\cals)$ for the set of co-snakes of $\cals$. Given $\mathfrak{s}\in \ss(\cals)$, define its \dfn{tail} to be the smallest positive coordinate $t$ of the ticker tape that is contained in $\mathfrak{s}$. It is easy to translate this back into orbit table notation if desired, and we will denote this as $\Tail(\mathfrak{s})$, regardless of the setting. Next, for any $\omega$, with $r=\omega T(\cals)$, define the \dfn{$\omega$-head} of $\mathfrak{s}$ to be the largest coordinate $h\in[rn]$ of the orbit vector $\calv_\omega$ contained in $\mathfrak{s}$. Applying the $\omega$-successor function from the $\omega$-head wraps past the end of the orbit vector to back near the beginning, landing on some live entry whose $\p_\omega$-preimage lies in some snake $\mathfrak{s}'\in\ss(\cals)$, possibly $\mathfrak{s}$ itself. This defines a bijection on the snakes of $\cals$, and motivated by the idea of an ouroboros swallowing its tail, we will call this bijection the \dfn{$\omega$-swallow function}:
\[
\Swallow_\omega\colon\ss(\cals)\longto\ss(\cals).
\]
As a permutation, $\Swallow_\omega$ contains $\bar\alpha_\omega$ disjoint cycles, each containing exactly $\deg(\p_\omega)$ snakes. The next result guarantees that if the snakes of $\cals$ are canonically cyclically ordered, then this bijection cyclically shifts this ordering by the same amount.

\begin{prop}\label{prop:swallow-function}
If we cyclically order the snakes in $\cals$ by $\mathfrak{s}_1,\dots,\mathfrak{s}_\alpha$ so that the co-successor of any live entry in $\mathfrak{s}_i$ lies in $\mathfrak{s}_{i+1}$, then for some constant $k$,
\[
\Swallow_\omega(\mathfrak{s}_i)=\mathfrak{s}_{i+k}\qquad
\text{for all $1\leq i\leq\alpha$},
\]
where all indices are taken modulo $\alpha$. 
\end{prop}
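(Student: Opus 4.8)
The plan is to realize $\Swallow_\omega$ as the permutation of snakes induced by a single shift of the ticker tape, and then to exploit the fact that this shift commutes with the co-successor function. I would work in ticker tape notation and write $\sigma$ for the shift $k \mapsto k + rn$, where $r = \omega T(\cals)$. First I would record that $rn$ is a multiple of the ticker tape period: by Corollary~\ref{cor:scroll-period} we have $T(\cals) = \lcm(T(\calx),n)/n$, so $rn = \omega\,\lcm(T(\calx),n)$ is a multiple of $T(\calx)$. Consequently $X_{k+rn} = X_k$ for all $k$, so $\sigma$ maps $\Live(\calx)$ bijectively to itself and preserves the entire live-entry pattern. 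Because the successor and co-successor functions $s,c$ are defined purely in terms of this pattern, $\sigma$ commutes with both; in particular $\sigma$ carries snakes to snakes, inducing a permutation $\bar\sigma$ of the finite set $\ss(\cals)$ of $\alpha$ snakes.

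Next I would run the key commuting argument. By hypothesis the cyclic ordering $\mathfrak{s}_1, \dots, \mathfrak{s}_\alpha$ is the one in which the co-successor of any live entry of $\mathfrak{s}_i$ lies in $\mathfrak{s}_{i+1}$; equivalently, under the identification $\ss(\cals) \cong G(\cals)/\langle s\rangle \cong \zz_\alpha$ of Proposition~\ref{prop:snake-present}, the map on snakes induced by $c$ is the single $\alpha$-cycle $\mathfrak{s}_i \mapsto \mathfrak{s}_{i+1}$. Since $\sigma c = c\sigma$ on $\Live(\calx)$, the induced permutation $\bar\sigma$ commutes with this $\alpha$-cycle. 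The centralizer of an $\alpha$-cycle in the symmetric group is exactly the cyclic group it generates, so $\bar\sigma$ is a power of the $\alpha$-cycle; that is, $\bar\sigma(\mathfrak{s}_i) = \mathfrak{s}_{i+k_0}$ for a single constant $k_0$ and all $i$.

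Finally I would identify $\Swallow_\omega$ with $\bar\sigma^{-1}$, hence with a translation in the cyclic order. Let $h$ be the $\omega$-head of $\mathfrak{s}$, i.e.\ the largest coordinate in $[rn]$ lying in $\mathfrak{s}$. Since $h$ is maximal in the window, its successor satisfies $s(h) > rn$, so $s(h)$ is the smallest element of $\mathfrak{s}$ exceeding $rn$; wrapping back into the window replaces $s(h)$ by $s(h) - rn \in \sigma^{-1}(\mathfrak{s})$, which is the live entry on which $\Swallow_\omega$ lands. Thus $\Swallow_\omega(\mathfrak{s}) = \sigma^{-1}(\mathfrak{s})$ as snakes, and combining with the previous paragraph gives $\Swallow_\omega(\mathfrak{s}_i) = \mathfrak{s}_{i-k_0}$, as desired. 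The one point requiring care, and the main obstacle, is precisely this last identification: one must check that applying $\bar{s}_\omega$ to the head wraps back by exactly one window (so that the landing snake is $\sigma^{-1}(\mathfrak{s})$ rather than $\sigma^{-m}(\mathfrak{s})$), using $rn \ge n+1$ to guarantee $rn < s(h) \le 2rn$. Even if several windows were crossed, each crossing applies the translation $\bar\sigma$, so the conclusion that $\Swallow_\omega$ acts as a single cyclic shift survives regardless.
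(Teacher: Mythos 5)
Your proof takes a genuinely different route from the paper's. The paper argues locally: writing $h_i$ and $t_i$ for the $\omega$-heads and tails, it assumes $\bar{s}(h_\alpha)=t_k$ and uses commutativity of $\bar{s}$ and $\bar{c}$ in the ouroboros group to deduce $\bar{s}(h_1)=t_{k+1}$, so that consecutive snakes swallow into consecutive snakes. You instead globalize: the wrap-around is the deck transformation $\sigma\colon k\mapsto k+rn$ of the ticker tape, which preserves the live pattern because $T(\calx)\mid rn=\omega\lcm(T(\calx),n)$ (Corollary~\ref{cor:scroll-period}), hence commutes with $s$ and $c$, carries snakes to snakes, and induces a permutation $\bar\sigma$ of $\ss(\cals)$ commuting with the $\alpha$-cycle $\mathfrak{s}_i\mapsto\mathfrak{s}_{i+1}$; since the centralizer of an $\alpha$-cycle in the symmetric group is the cyclic group it generates, $\bar\sigma$ is a power of that cycle, i.e.\ a translation. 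This is more conceptual than the paper's head-and-tail chase, and it correctly reduces the whole statement to the identification $\Swallow_\omega=\bar\sigma^{-1}$.

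That identification is where there is a genuine gap. You need every head to cross exactly one window, i.e.\ $rn<s(h)\le 2rn$; since $s$ advances a position by at most $n+1$ and $h\le rn$, this follows from $rn\ge n+1$, which you assert but never justify. For integers, $rn\ge n+1$ is equivalent to $r=\omega T(\cals)\ge 2$, and when $\omega=1$ this is precisely the claim that $\tau$ fixes no independent set, i.e.\ $T(\cals)\ge 2$ --- true, but stated nowhere in the paper, so it must be proved. (It is quick: if $T(\cals)=1$, every row of $\cals$ equals row $0$; if row $0$ has a live entry $(0,j)$ then $(1,j)$ is live, contradicting Lemma~\ref{lem:nbhd}, and if row $0$ is empty then row $1=\tau(\emptyset)\neq\emptyset$, a contradiction as well.) More importantly, your fallback --- ``even if several windows were crossed, each crossing applies $\bar\sigma$, so the conclusion survives'' --- is not sound: the number of crossings $m(\mathfrak{s})$ is a priori a function of the snake, and $\Swallow_\omega(\mathfrak{s}_i)=\bar\sigma^{-m(i)}(\mathfrak{s}_i)=\mathfrak{s}_{i-m(i)k_0}$ (with $k_0$ your translation constant) is a uniform shift only if $m$ is constant. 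Indeed, in the hypothetical case $r=1$, a snake whose head sits at position $n$ and exits by a $D$-step would cross two windows while other snakes cross one, so the fallback by itself proves nothing. The repair is simply to insert the two-line proof of $T(\cals)\ge 2$ above: then $rn\ge 2n>n+1$, so $rn<s(h)\le rn+n+1<2rn$ for every snake, the crossing count is uniformly $1$, and your argument is complete.
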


\begin{proof}
Since the snakes are cyclically ordered, it suffices to show that this holds for consecutive entries. Specifically, we will show that if $\Swallow(\mathfrak{s}_\alpha)=\mathfrak{s}_k$, then $\Swallow(\mathfrak{s}_1)=\mathfrak{s}_{k+1}$. Denote the $\omega$-head and the tail of $\mathfrak{s}_i$ by $h_i=\Head_\omega(\mathfrak{s}_i)$ and $t_i=\Tail(\mathfrak{s}_i)$, respectively.\footnote{Because we want this argument to work with both orbit table and orbit vector notation, we are not specifying whether $h_i$ and $t_i$ are integers or ordered pairs.}

It suffices to show that if $\bar{s}(h_\alpha)=t_k$, then $\bar{s}(h_1)=t_{k+1}$. Assuming the former,
start at $h_1$, and apply $\bar{s}$ to get to the tail $t_j$ of some snake $\mathfrak{s}_j$. Alternatively, applying $\bar{c}^{-1}$ takes us to some live entry on $\mathfrak{s}_\alpha$. Now, let $i\in\nn$ be the minimal number of $\bar{s}$-steps needed to reach the $\alpha$-head, i.e., $\bar{s}^i\big(\bar{c}^{-1}(h_1)\big)=h_\alpha$. The next $\bar{s}$-step takes us to the tail $t_k\in\mathfrak{s}_k$. Finally, $\bar{c}(t_k)$ lies on snake $\mathfrak{s}_{k+1}$. Because the ouroboros group is abelian, we have
\[
\bar{c}\,\bar{s}^{i+1}\bar{c}^{-1}(h_1)=\bar{s}^{i+1}(h_1)\in\mathfrak{s}_{k+1}.
\]
We also know that $\bar{s}^i\,\bar{s}(h_1)=\bar{s}^i(t_j)$, and further applications of $\bar{s}$ from the tail will remain on the snake $\mathfrak{s}_j$ (until we reach the $\omega$-head $h_j$), so $j=k+1$.
\end{proof}

We can analogously define the tail and $\omega$-head of a co-snake $\mathfrak{c}$, which we will denote by $\Head_\omega(\mathfrak{c})$ and $\Tail(\mathfrak{c})$, respectively. Applying the $\omega$-co-successor function from the $\omega$-head of each co-snake defines a bijection
\[
\Coswallow_\omega\colon\cc(\cals)\longto\cc(\cals)
\]
that we call the \dfn{co-swallow function}. The basic properties of the swallow function carry over to the co-swallow function. For example, as a permutation, $\Coswallow_\omega$ contains $\bar\beta_\omega$ disjoint cycles, each containing exactly $\deg(\p_\omega)$ co-snakes. If the co-snakes are canonically cyclically ordered, then this bijection cyclically shifts this ordering by the same amount. The proof is analogous to that of Proposition~\ref{prop:swallow-function} and will be omitted. 

\begin{prop}\label{prop:coswallow-function}
If we cyclically order the co-snakes in $\cals$ by $\mathfrak{c}_1,\dots,\mathfrak{c}_\beta$ so that the co-successor of any live entry in $\mathfrak{c}_i$ lies in $\mathfrak{c}_{i+1}$, then for some constant $k$,
\[
\Coswallow_\omega(\mathfrak{c}_i)=\mathfrak{c}_{i+k},\qquad
\text{for all $i=1,\dots,\beta$},
\]
where all indices are taken modulo $\beta$.
\end{prop}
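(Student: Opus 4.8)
The plan is to transcribe the proof of Proposition~\ref{prop:swallow-function} almost verbatim, interchanging the roles of the successor and co-successor (equivalently, of snakes and co-snakes). The one subtlety to pin down at the outset is which generator induces the given cyclic order on the co-snakes. Since the co-snakes are the cosets of $\<c\>$ and $G(\cals)/\<c\>\cong\zz_\beta$ is generated by the image of $s$ (by the presentation in Proposition~\ref{prop:snake-present}), the ordering $\mathfrak{c}_1,\dots,\mathfrak{c}_\beta$ is the one for which applying the \emph{successor} advances the index by one: a live entry of $\mathfrak{c}_i$ is sent by $\bar{s}$ into $\mathfrak{c}_{i+1}$. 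This is the mirror of the fact that $\bar{c}$ advances the snake index in Proposition~\ref{prop:swallow-function}.

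With that convention fixed, I would first use cyclicity of the indices to reduce to consecutive co-snakes: it suffices to show that if $\Coswallow_\omega(\mathfrak{c}_\beta)=\mathfrak{c}_k$, then $\Coswallow_\omega(\mathfrak{c}_1)=\mathfrak{c}_{k+1}$. Writing $H_i=\Head_\omega(\mathfrak{c}_i)$ and $T_i=\Tail(\mathfrak{c}_i)$, this is equivalent to showing that $\bar{c}(H_\beta)=T_k$ forces $\bar{c}(H_1)=T_{k+1}$.

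Next I would run the same commuting-square computation as in the swallow proof. One $\bar{c}$-step from $H_1$ gives $\bar{c}(H_1)=T_j$, the tail of some $\mathfrak{c}_j$, so $\Coswallow_\omega(\mathfrak{c}_1)=\mathfrak{c}_j$ and the goal becomes $j=k+1$. On the other hand, $\bar{s}^{-1}(H_1)$ lies on $\mathfrak{c}_\beta$; let $i$ be minimal with $\bar{c}^{\,i}\big(\bar{s}^{-1}(H_1)\big)=H_\beta$, so that the next co-successor step gives $\bar{c}^{\,i+1}\big(\bar{s}^{-1}(H_1)\big)=\bar{c}(H_\beta)=T_k$, and applying $\bar{s}$ to $T_k$ lands on $\mathfrak{c}_{k+1}$. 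Using commutativity of the ouroboros group $G(\calt_\omega)$,
\[
\bar{s}\,\bar{c}^{\,i+1}\,\bar{s}^{-1}(H_1)=\bar{c}^{\,i+1}(H_1)\in\mathfrak{c}_{k+1}.
\]
Finally, $\bar{c}^{\,i+1}(H_1)=\bar{c}^{\,i}(T_j)$, and since iterating $\bar{c}$ from the tail $T_j$ remains on $\mathfrak{c}_j$ until the $\omega$-head is reached, $\bar{c}^{\,i}(T_j)$ still lies on $\mathfrak{c}_j$; comparing the two computations yields $j=k+1$, as desired.

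I expect the only real friction to be bookkeeping: verifying that it is $\bar{s}$ (not $\bar{c}$) that advances the co-snake index, that $\bar{s}^{-1}(H_1)\in\mathfrak{c}_\beta$, and that the "stay on the same co-snake until its $\omega$-head" observation used in the last step is legitimate (it is, since the $\omega$-head is by definition the last live entry of the co-snake reached before $\bar{c}$ wraps past the end of the orbit vector). Everything else is a mechanical mirror of Proposition~\ref{prop:swallow-function}, so no genuinely new idea is needed; the content of the proof is simply performing every $s\leftrightarrow c$, snake $\leftrightarrow$ co-snake, and head/tail substitution consistently.
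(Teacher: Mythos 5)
Your proof is correct and is precisely the argument the paper intends: the paper omits the proof of this proposition, stating only that it is analogous to that of Proposition~\ref{prop:swallow-function}, and your write-up is exactly that mirrored argument, with the $s\leftrightarrow c$, snake$\leftrightarrow$co-snake, and head/tail substitutions carried out consistently (including the commutativity step $\bar{s}\,\bar{c}^{\,i+1}\,\bar{s}^{-1}(H_1)=\bar{c}^{\,i+1}(H_1)$ and the ``stay on the co-snake until its $\omega$-head'' conclusion, which is invoked at the same level of rigor as in the paper's own swallow proof). Your opening observation is also the correct reading of the statement: the cyclic order on the co-snakes must be the one induced by the \emph{successor} (the statement's phrase ``co-successor'' is a typo, since each co-snake is closed under the co-successor), and fixing that convention is exactly what makes the mirrored proof go through.
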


Let us return to our running example. Since there are $\beta=6$ co-snakes but only $\alpha=2$ snakes, it is more illustrative to compute the co-swallow functions. The co-snakes are highlighted by color on the left in Figure~\ref{fig:co-swallow}. The entries marked with $\overline{1}$ indicate the tails of the co-snakes; these do not depend on $\omega$. The entries marked with $\underline{1}$ indicate the $1$-heads of the co-snakes; these do depend on the choice of $\omega=1$. 

\colorlet{color1}{Red}
\colorlet{color2}{Blue}
\colorlet{color3}{Red}
\colorlet{color4}{Blue}
\colorlet{color5}{purple}
\colorlet{color6}{Green}
\colorlet{color7}{orange}
\colorlet{color8}{amethyst}
\begin{figure}[!ht]
\begin{tikzpicture}
\setlength{\tabcolsep}{2.5pt}
\renewcommand{\arraystretch}{.4}
\begin{scope}
\tikzstyle{every node}=[font=\small,anchor=south]
\node [anchor=north] at (0,0) {
\begin{tabular}{ccccccccccccc}
& {\color{color3}$\mathfrak{c}_1$} & & {\color{color4}$\mathfrak{c}_2$} & & {\color{color5}$\mathfrak{c}_3$} & & {\color{color6}$\mathfrak{c}_4$} & {\color{color7}$\mathfrak{c}_5$} & & {\color{color8}$\mathfrak{c}_6$} & \\ \\
$x$ & $v_1$ & $v_2$ & $v_3$ & $v_4$ & $v_5$ & $v_6$ &
$v_7$ & $v_8$ & $v_9$ & $v_{1\!0}$ & $v_{1\!1}$ & \Bstrut
\\\hline
$x^{(0)}$ & \0 & \0 & \0 & \0 & {\bf\color{color5}$\overline{1}$} & \0 & {\bf\color{color6}$\overline{1}$} & \0 & \0 & \0 & \0 \\
$x^{(1)}$ & {\bf\color{color3}$\overline{1}$} & \0 & {\bf\color{color4}$\overline{1}$} & \0 & \0 & \0 & \0 & {\bf\color{color7}$\overline{1}$} & \0 & {\bf\color{color8}$\overline{1}$} & \0 \\
$x^{(2)}$ & \0 & \0 & \0 & {\bf\color{color5}1} & \0 & {\bf\color{color6}1} & \0 & \0 & \0 & \0 & {\bf\color{color3}1} \\
$x^{(3)}$ & \0 & {\bf\color{color4}1} & \0 & \0 & \0 & \0 & {\bf\color{color7}1} & \0 & {\bf\color{color8}1} & \0 & \0 \\
$x^{(4)}$ & \0 & \0 & {\bf\color{color5}1} & \0 & {\bf\color{color6}1} & \0 & \0 & \0 & \0 & {\bf\color{color3}1} & \0 \\
$x^{(5)}$ & {\bf\color{color4}\underline{1}} & \0 & \0 & \0 & \0 & {\bf\color{color7}\underline{1}} & \0 & {\bf\color{color8}\underline{1}} & \0 & \0 & \0 \\
$x^{(6)}$ & \0 & {\bf\color{color5}\underline{1}} & \0 & {\bf\color{color6}\underline{1}} & \0 & \0 & \0 & \0 & {\bf\color{color3}\underline{1}} & \0 & {\bf\color{color4}\underline{1}} \\ \vspace{-1mm}
$\vdots$ & $\vdots$ & $\vdots$ & $\vdots$ & $\vdots$ & $\vdots$ & $\vdots$ & $\vdots$
& $\vdots$ & $\vdots$  & $\vdots$ & $\vdots$
\Bstrut \\ \Tstrut
\end{tabular}};
\node at (8,-4.3) {$\Coswallow_1=(\mathfrak{c}_1\;\mathfrak{c}_5\;\mathfrak{c}_3)\,(\mathfrak{c}_2\;\mathfrak{c}_6\;\mathfrak{c}_4)$};
\colorlet{color1}{Red}
\colorlet{color2}{Red}
\colorlet{color3}{orange}
\colorlet{color4}{green!95!black}
\colorlet{color5}{orange}
\colorlet{color6}{green!95!black}
\colorlet{color7}{orange}
\colorlet{color8}{green!95!black}
\node [anchor=north] at (8,0) {
\begin{tabular}{cccccccccccc}
& {\color{color3}$\overline{\mathfrak{c}}_1$} & & {\color{color4}$\overline{\mathfrak{c}}_2$} & & {\color{color5}$\overline{\mathfrak{c}}_3$} & & {\color{color6}$\overline{\mathfrak{c}}_4$} &  {\color{color7}$\overline{\mathfrak{c}}_5$} & &  {\color{color8}$\overline{\mathfrak{c}}_6$} & \\ \\
$x$ & $v_1$ & $v_2$ & $v_3$ & $v_4$ & $v_5$ & $v_6$ &
$v_7$ & $v_8$ & $v_9$ & $v_{1\!0}$ & $v_{1\!1}$ \Bstrut
\\\hline
$x^{(0)}$ & \0 & \0 & \0 & \0 & {\bf\color{color5}$\overline{1}$} & \0 & {\bf\color{color6}$\overline{1}$} & \0 & \0 & \0 & \0 \\
$x^{(1)}$ & {\bf\color{color3}$\overline{1}$} & \0 & {\bf\color{color4}$\overline{1}$} & \0 & \0 & \0 & \0 & {\bf\color{color7}$\overline{1}$} & \0 & {\bf\color{color8}$\overline{1}$} & \0 \\
$x^{(2)}$ & \0 & \0 & \0 & {\bf\color{color5}1} & \0 & {\bf\color{color6}1} & \0 & \0 & \0 & \0 & {\bf\color{color3}1} \\
$x^{(3)}$ & \0 & {\bf\color{color4}1} & \0 & \0 & \0 & \0 & {\bf\color{color7}1} & \0 & {\bf\color{color8}1} & \0 & \0 \\
$x^{(4)}$ & \0 & \0 & {\bf\color{color5}1} & \0 & {\bf\color{color6}1} & \0 & \0 & \0 & \0 & {\bf\color{color3}1} & \0 \\
$x^{(5)}$ & {\bf\color{color4}\underline{1}} & \0 & \0 & \0 & \0 & {\bf\color{color7}\underline{1}} & \0 & {\bf\color{color8}\underline{1}} & \0 & \0 & \0 \\
$x^{(6)}$ & \0 & {\bf\color{color5}\underline{1}} & \0 & {\bf\color{color6}\underline{1}} & \0 & \0 & \0 & \0 & {\bf\color{color3}\underline{1}} & \0 & {\bf\color{color4}1}
\\ \vspace{-0.05 in} \\  \hline \Tstrut
\end{tabular}};
\end{scope}
\end{tikzpicture}
\caption{On the left is the repeating portion of the running example of our scroll. The tails of co-snakes are indicated by $\overline{1}$. On the right is the $\omega$-fold orbit table for $\omega=1$. The $\omega$-heads of the co-snakes are the positions of $\underline{1}$.}
\label{fig:co-swallow}
\end{figure}

\colorlet{color1}{Red}
\colorlet{color2}{Blue}
\colorlet{color3}{Red}
\colorlet{color4}{Blue}
\colorlet{color5}{purple}
\colorlet{color6}{Green}
\colorlet{color7}{orange}
\colorlet{color8}{amethyst}

From Figure~\ref{fig:co-swallow}, the co-swallow function is defined by
\[
\Coswallow_1({\color{color3}\mathfrak{c}_1})={\color{color7}\mathfrak{c}_5},\qquad
\Coswallow_1({\color{color7}\mathfrak{c}_5})={\color{color5}\mathfrak{c}_3},\qquad
\Coswallow_1({\color{color5}\mathfrak{c}_3})={\color{color3}\mathfrak{c}_1}
\]
and also 
\[
\Coswallow_1({\color{color4}\mathfrak{c}_2})={\color{color8}\mathfrak{c}_6},\qquad
\Coswallow_1({\color{color8}\mathfrak{c}_6})={\color{color6}\mathfrak{c}_4},\qquad
\Coswallow_1({\color{color6}\mathfrak{c}_4})={\color{color4}\mathfrak{c}_2}.\qquad
\]

In the language of Proposition~\ref{prop:swallow-function}, the swallow function is the mapping $\mathfrak{s}_i\mapsto\mathfrak{s}_{i+4}$, where the indices are taken modulo $6$. We can describe this by the permutation with cycle decomposition $(264)(153)$. 

By Proposition~\ref{prop:swallow-function}, the swallow function $\Swallow_\omega$ is the permutation sending $i\mapsto i+k$, so each cycle has length $\alpha/\gcd(\alpha,k)$, and there are $\gcd(\alpha,k)$ disjoint cycles. The different cycles correspond to the different ouroboroi.

\subsection{The $\omega$-fold vs.\ fundamental orbit table}

We are now ready to give explicit relationships between the properties of an $\omega$-fold orbit table, such as the (co-)degree and the number of (co-)snakes, and the corresponding properties of the fundamental (i.e., $1$-fold) orbit table. First, recall that $\deg(\cals)$ (resp., $\codeg(\cals)$) is the length of $\Slither(\cals)$ (resp., $\Coslither(\cals)$) divided by its length as a cyclic word, and that the (co-)snake scale is the minimal shift of the ticker tape that leaves the (co-)snakes invariant. These are denoted $p$ and $q$, respectively.

\begin{prop}\label{prop:degscale}
For any scroll $\cals$, 
\[
\deg(\p_1)\,\big|\,\deg(\cals)\quad\text{and}\quad\codeg(\p_1)\,\big|\,\codeg(\cals).
\]
\end{prop}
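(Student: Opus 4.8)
The plan is to derive both divisibilities from the single structural input already recorded in Lemma~\ref{lem:slither(T)}, which exhibits the fundamental (co-)slither as a power of the (co-)slither of the whole scroll. For $\omega=1$ that lemma gives $\Slither(\cals)=\big(\Slither(\calt_1)\big)^{\deg(\p_1)}$ and $\Coslither(\cals)=\big(\Coslither(\calt_1)\big)^{\codeg(\p_1)}$. The only remaining ingredient is an elementary word-combinatorics fact: if a nonempty cyclic word $w$ is an $m$-th power, then $m$ divides the maximal exponent $D$ for which $w=P^{D}$ with $P$ primitive. Since by Definition~\ref{def:exponent} this maximal exponent is exactly $\deg(\cals)$ for the slither and $\codeg(\cals)$ for the co-slither, feeding the two relations of Lemma~\ref{lem:slither(T)} into this fact yields $\deg(\p_1)\mid\deg(\cals)$ and $\codeg(\p_1)\mid\codeg(\cals)$.

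First I would treat the slither. Let $N=\beta$ be the length of $\Slither(\cals)$ and let $\pi$ be its period as a cyclic word, so that $\deg(\cals)=N/\pi$ by definition. The equality $\Slither(\cals)=\big(\Slither(\calt_1)\big)^{\deg(\p_1)}$ forces $|\Slither(\calt_1)|=N/\deg(\p_1)$ and shows that $\Slither(\cals)$ is fixed by the cyclic rotation through $N/\deg(\p_1)$ positions. The rotations fixing a necklace form a subgroup of $\zz_{N}$, hence a cyclic group generated by the minimal period $\pi$; therefore $\pi$ divides $N/\deg(\p_1)$. Writing $N/\deg(\p_1)=\pi k$ gives $\deg(\p_1)=N/(\pi k)=\deg(\cals)/k$, so $\deg(\p_1)\mid\deg(\cals)$.

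The co-slither case is identical after replacing $\Slither$ by $\Coslither$, $\beta$ by $\alpha$, and $\deg$ by $\codeg$: the relation $\Coslither(\cals)=\big(\Coslither(\calt_1)\big)^{\codeg(\p_1)}$ shows $\Coslither(\cals)$ is invariant under rotation by $\alpha/\codeg(\p_1)$, and the same subgroup argument gives $\codeg(\p_1)\mid\codeg(\cals)$. I would package the shared step as a one-line lemma, namely that the rotations fixing a necklace form a cyclic group generated by its minimal period, so that both halves of the proposition follow by a single citation.

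The main obstacle is making the word argument honest in the cyclic setting. Slithers and co-slithers are defined only up to cyclic shift, so the statement ``$\Slither(\cals)=u^{m}$'' must be read as an equality of necklaces, and I must confirm that the exponentiation in Lemma~\ref{lem:slither(T)} really exhibits a genuine rotational symmetry rather than a mere linear factorization; this holds because that lemma provides aligned base points, but it deserves one explicit sentence. Equivalently, I could invoke uniqueness of the primitive root (Lyndon--Schützenberger) to conclude that $\Slither(\calt_1)$ is itself a power of the primitive root $P$ of $\Slither(\cals)$ and then compare exponents. A minor secondary check is that $\deg(\p_1),\codeg(\p_1)\ge 1$, so the powers are genuine and the divisibilities are nonvacuous; this is immediate since every co-ouroboros (respectively ouroboros) is the image under $\p_1$ of at least one co-snake (respectively snake).
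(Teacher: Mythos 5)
Your overall strategy---deducing both divisibilities from Lemma~\ref{lem:slither(T)} together with uniqueness of the primitive root of a word---is sound machinery, and it is genuinely different from the paper's proof, which instead grinds through $\lcm/\gcd$ identities for periods via Corollaries~\ref{cor:when-equiv} and~\ref{cor:scroll-period}. The word-combinatorial core (rotations fixing a necklace form the cyclic group generated by the minimal period; any exponent of a word divides its maximal exponent) is correct, as are your remarks about alignment of base points and non-vacuity. The fatal problem is that you misquoted the key lemma. Lemma~\ref{lem:slither(T)} states
\[
\big(\Slither(\calt_\omega)\big)^{\codeg(\p_\omega)}=\Slither(\cals)
\qquad\text{and}\qquad
\big(\Coslither(\calt_\omega)\big)^{\deg(\p_\omega)}=\Coslither(\cals),
\]
whereas you wrote it with $\deg(\p_1)$ on the slither and $\codeg(\p_1)$ on the co-slither. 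The pairing in the lemma is forced by lengths: $\Slither(\cals)$ has length $\beta$ (the number of co-snakes), $\Slither(\calt_1)$ has length $\bar\beta_1$, and $\codeg(\p_1)=\beta/\bar\beta_1$; dually for co-slithers with $\alpha$, $\bar\alpha_1$, and $\deg(\p_1)=\alpha/\bar\alpha_1$. Feeding the correctly quoted lemma into your argument yields the \emph{crossed} divisibilities
\[
\codeg(\p_1)\,\big|\,\deg(\cals)
\qquad\text{and}\qquad
\deg(\p_1)\,\big|\,\codeg(\cals),
\]
not the ones printed in Proposition~\ref{prop:degscale}.

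This is not something you can patch, because the printed statement is actually inconsistent with the paper's own definitions: in the running example of Figure~\ref{fig:example} one has $\deg(\p_1)=\alpha/\bar\alpha_1=2$ and $\codeg(\p_1)=\beta/\bar\beta_1=3$, while $\Slither(\cals)=(D\E)^3$ and $\Coslither(\cals)=S^2$ give $\deg(\cals)=3$ and $\codeg(\cals)=2$; so $\deg(\p_1)\nmid\deg(\cals)$ and $\codeg(\p_1)\nmid\codeg(\cals)$, whereas the crossed divisibilities hold. (The paper's own proof reaches the printed statement only by using the identity $p=\Scale(\calx)/\codeg(\calx)$, which contradicts Lemma~\ref{lem:lcmdeg}: since $\Scale(\calx)=\deg(\calx)\,p$, the correct identity is $p=\Scale(\calx)/\deg(\calx)=T(\calx)\codeg(\calx)$, and running the same computation with it again produces $\deg(\p_1)\mid\codeg(\cals)$.) Note also that the crossed version is exactly what the sole downstream application, Corollary~\ref{cor:relprime}, requires: $\gcd(\deg(\cals),\codeg(\cals))=1$ from Lemma~\ref{lem:co-prime-new} still forces $\gcd(\deg(\p_1),\codeg(\p_1))=1$. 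So the constructive conclusion is: quote Lemma~\ref{lem:slither(T)} correctly, state the crossed divisibilities, and your argument becomes a complete proof that is tighter than the one in the paper; as written, however, it derives the printed statement only because the misremembered exponents exactly compensate for the error in that statement.
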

\begin{proof}
 Without loss of generality, suppose that $(0,1)\in\Live(\cals)$. By Corollary~\ref{cor:when-equiv}, the minimal $k>1$ such that
 \begin{equation}\label{eqn:when-equiv}
(k,1)\in\Live(\cals),\qquad \Slither(k,1)=\Slither(0,1),\quad\text{and}\quad \Coslither(k,1)=\Coslither(0,1)
\end{equation}
is the period $T(\cals)$ of the scroll. By definition, this is also the number of rows in the fundamental orbit table.  

Next, let $k'>0$ be minimal such that the three conditions in Eq.~\eqref{eqn:when-equiv} hold, with the additional condition that $(k',1)\in\Snake(0,1)$. Note that $k'/k$ is just the number of times the ouroboros wraps around from bottom to top before returning to the initial snake, so 
\[
k'=k\deg(\p_1)=\deg(\p_1)T(\cals)=\deg(\p_1)\frac{T(\calx)}{\gcd(T(\calx),n)}=\deg(\p_1)\frac{\lcm(T(\calx),n)}{n}.
\]
Next, we will express $k'$ in terms of $\deg(\cals)$.

Recall that $\scale(\calx)$ is the minimal $\sigma$ such that a live entry $h$ is on the same snake and co-snake as $h+\sigma$. It follows that $p=\scale(\calx)/\codeg(\calx)$ is the minimal $\ell$ such that $h$ is on the same snake as $h+\ell$ and $\Coslither(h) = \Coslither(h+\ell)$. Using the same line of reasoning as in the proof of Corollary~\ref{cor:scroll-period}, we find that 
\[
k'=\frac{1}{n}\lcm\left(\frac{\scale(\calx)}{\codeg(\calx)},n\right)=\frac{1}{n}\lcm\left(\deg(\calx)T(\calx),n\right).
\]
Applying Corollary~\ref{cor:scroll-period} shows that
\[
\deg(\p_1) = \frac{k'}{T(\cals)}
= \frac{\frac{1}{n}\lcm\left(\deg(\calx)T(\calx),n\right)}{\frac{1}{n}\lcm\left(T(\calx),n\right)}
=\deg(\calx)\,\frac{\gcd\left(T(\calx),n\right)}{\gcd\left(\deg(\calx)T(\calx),n\right)}.
\]
We now have the explicit formula
\[
\deg(\cals)=\deg(\calx)=\deg(\p_1)\,\frac{\gcd\left(\deg(\calx)T(\calx),n\right)}{\gcd\left(T(\calx),n\right)},
\]
which establishes that $\deg(\cals)$ divides $\deg(\p_1)$. The proof that $\codeg(\cals)$ divides $\codeg(\p_1)$ is analogous: we simply replace the ``additional condition'' with $(k',1)\in\Cosnake(0,1)$ and swap all instances of $\deg(\calx)$ and $\codeg(\calx)$.
\end{proof}

\begin{corollary}\label{cor:relprime}
The integers $\deg(\p_1)$ and $\codeg(\p_1)$ are relatively prime. 
\end{corollary}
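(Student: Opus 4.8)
The plan is to derive this immediately from the two facts already in hand: the divisibility statements in Proposition~\ref{prop:degscale} and the coprimality in Lemma~\ref{lem:co-prime-new}. First I would recall that Proposition~\ref{prop:degscale} gives $\deg(\p_1)\mid\deg(\cals)$ and $\codeg(\p_1)\mid\codeg(\cals)$. Next I would invoke the identifications $\deg(\cals)=\deg(\calx)$ and $\codeg(\cals)=\codeg(\calx)$, which are built into the definitions following Definition~\ref{def:exponent}, so that Lemma~\ref{lem:co-prime-new} applies directly to tell us $\gcd\big(\deg(\cals),\codeg(\cals)\big)=1$.

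The remaining step is the elementary number-theoretic observation that divisors of coprime integers are coprime: if $a\mid A$, $b\mid B$, and $\gcd(A,B)=1$, then any common divisor of $a$ and $b$ also divides $A$ and $B$, hence divides $\gcd(A,B)=1$, forcing $\gcd(a,b)=1$. Applying this with $a=\deg(\p_1)$, $b=\codeg(\p_1)$, $A=\deg(\cals)$, and $B=\codeg(\cals)$ yields $\gcd\big(\deg(\p_1),\codeg(\p_1)\big)=1$, which is exactly the claim.

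I do not anticipate any real obstacle here, since the corollary is a formal consequence of results proved just before it; the only point that warrants a word of care is making explicit that the scroll and ticker-tape (co-)degrees agree, so that Lemma~\ref{lem:co-prime-new} (phrased for $\calx$) can be fed into Proposition~\ref{prop:degscale} (phrased for $\cals$). The entire argument can be written in two or three sentences, and I would keep it that terse rather than reproving the ``divisors of coprimes are coprime'' fact in detail.

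\begin{proof}
By Proposition~\ref{prop:degscale}, we have $\deg(\p_1)\mid\deg(\cals)$ and $\codeg(\p_1)\mid\codeg(\cals)$. Since $\deg(\cals)=\deg(\calx)$ and $\codeg(\cals)=\codeg(\calx)$, Lemma~\ref{lem:co-prime-new} gives $\gcd\big(\deg(\cals),\codeg(\cals)\big)=1$. Because divisors of relatively prime integers are relatively prime, it follows that $\gcd\big(\deg(\p_1),\codeg(\p_1)\big)=1$.
\end{proof}
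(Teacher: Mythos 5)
Your proof is correct and follows exactly the paper's own argument: the paper likewise deduces the corollary from Lemma~\ref{lem:co-prime-new} (coprimality of $\deg(\cals)$ and $\codeg(\cals)$) together with the divisibility relations of Proposition~\ref{prop:degscale}. Your explicit mention of the identification $\deg(\cals)=\deg(\calx)$ and the ``divisors of coprimes are coprime'' step only makes precise what the paper leaves implicit.
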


\begin{proof}
Lemma~\ref{lem:co-prime-new} established that $\gcd(\deg(\cals),\codeg(\cals))=1$. The result now follows immediately from Proposition~\ref{prop:degscale}.
\end{proof}

Let $\mathfrak s_1,\dots, \mathfrak s_\alpha$ be the snakes in $\mathcal S$, cyclically ordered so that the co-successor of a live entry of $\mathfrak s_i$ lies in $\mathfrak s_{i+1}$. Suppose $\calt$ is an orbit table associated with $\cals$. For any $\mathfrak s_i$, the final live entry of $\mathfrak s_i$ in $\calt$ has an $\omega$-successor corresponding to an entry on the first row of $\calt$.

\begin{proposition}\label{prop:ouroboroi}
Suppose a scroll $\cals$ has $\alpha$ snakes and $\beta$ co-snakes and that its fundamental orbit table has $\bar\alpha$ ouroboroi and $\bar\beta$ co-ouroboroi. For $\omega>1$, the numbers $\bar\alpha_\omega$ 
 and $\bar\beta_\omega$ of ouroboroi and co-ouroboroi in its $\omega$-fold orbit table satisfy
\[
\bar\alpha_\omega=\bar\alpha\cdot\gcd(\deg(\p_1),\omega)\quad\text{and}\quad
\bar\beta_\omega=\bar\beta\cdot\gcd(\codeg(\p_1),\omega).
\]
\end{proposition}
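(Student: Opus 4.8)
The plan is to leverage Proposition~\ref{prop:swallow-function}, which tells us that $\Swallow_\omega$ is a cyclic shift $\mathfrak{s}_i\mapsto\mathfrak{s}_{i+k_\omega}$ of the $\alpha$ snakes for some $k_\omega\in\zz_\alpha$; consequently the permutation $\Swallow_\omega$ has $\bar\alpha_\omega=\gcd(\alpha,k_\omega)$ cycles and $\deg(\p_\omega)=\alpha/\gcd(\alpha,k_\omega)$, and in particular $\bar\alpha=\gcd(\alpha,k_1)$ with $\deg(\p_1)=\alpha/\bar\alpha$. Thus the whole statement reduces to the single identity $k_\omega\equiv\omega\,k_1\pmod{\alpha}$, equivalently $\Swallow_\omega=\Swallow_1^{\,\omega}$ as permutations of $\ss(\cals)$.

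To prove $\Swallow_\omega=\Swallow_1^{\,\omega}$, I would first reinterpret the swallow function as a translation of the ticker tape. Write $N_\omega=\omega\,T(\cals)\,n$ for the length of the orbit vector $\calv_\omega$, so that $N_\omega=\omega N_1$. Because the scroll has period $T(\cals)$, translating the ticker tape by $N_1=T(\cals)\,n$ positions (equivalently, shifting down $T(\cals)$ rows) preserves the $0/1$ pattern and commutes with $s$: from $s(k)\in\{k+2,\,k+n+1\}$ and periodicity one gets $s(k+N_1)=s(k)+N_1$. Hence translation by any multiple of $N_1$ is an automorphism of $(\Live(\calx),s)$ and therefore permutes the $s$-orbits, i.e.\ the snakes; write $\rho_m\in\mathrm{Sym}(\ss(\cals))$ for the permutation induced by translation by $mN_1$. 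Since translation is a group action, $m\mapsto\rho_m$ is a homomorphism, so $\rho_m=\rho_1^{\,m}$ for all $m\in\zz$.

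The key geometric step is to identify $\Swallow_\omega$ with $\rho_{-\omega}$. Let $h$ be the $\omega$-head of a snake $\mathfrak{s}$, namely the largest coordinate of $\calv_\omega$ lying in $\mathfrak{s}$. Maximality forces $s(h)>N_\omega$, so $\bar{s}_\omega(h)$ is represented near the start of the orbit vector by $s(h)-N_\omega$, and by definition $\Swallow_\omega(\mathfrak{s})$ is the snake of this representative. Since $s$ preserves snakes, the snake of $s(h)$ is again $\mathfrak{s}$, while translating by $-N_\omega=-\omega N_1$ sends it to $\rho_{-\omega}(\mathfrak{s})$. Therefore $\Swallow_\omega=\rho_{-\omega}=\rho_{-1}^{\,\omega}=\Swallow_1^{\,\omega}$, using $\Swallow_1=\rho_{-1}$ (the case $\omega=1$). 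I expect this identification to be the main obstacle: it requires care that the ``wrap-around'' representative of $\bar{s}_\omega(h)$ is chosen consistently in the first copy of the fundamental window, so that the assignment really is the homomorphic image of a single translation rather than an ambiguous choice among the many $\p_\omega$-preimages (which differ by multiples of $N_\omega$ and generally lie on different snakes).

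Finally, with $k_\omega\equiv\omega k_1\pmod\alpha$ in hand, the formula is a gcd computation. Writing $d=\bar\alpha=\gcd(\alpha,k_1)$, $\alpha=d\deg(\p_1)$, and $k_1=dK$ with $\gcd(\deg(\p_1),K)=1$, we obtain $\bar\alpha_\omega=\gcd(\alpha,\omega k_1)=d\,\gcd(\deg(\p_1),\omega K)=d\,\gcd(\deg(\p_1),\omega)=\bar\alpha\,\gcd(\deg(\p_1),\omega)$, as claimed. The co-ouroboros identity $\bar\beta_\omega=\bar\beta\,\gcd(\codeg(\p_1),\omega)$ follows by the identical argument with $\Coswallow_\omega$, $\cc(\cals)$, $\bar{c}_\omega$, and $\codeg$ in place of their snake counterparts, where Proposition~\ref{prop:coswallow-function} supplies the needed cyclic-shift structure.
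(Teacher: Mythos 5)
Your proposal is correct and takes essentially the same route as the paper: both rest on Proposition~\ref{prop:swallow-function}'s cyclic-shift structure, reduce the claim to $\Swallow_\omega=\Swallow_1^{\,\omega}$ (equivalently $k_\omega\equiv\omega k_1\pmod{\alpha}$), and finish with the same standard gcd manipulation. The only difference is that the paper asserts the identity $\Swallow_\omega=\Swallow_1^{\,\omega}$ in a single sentence (wrapping around the $\omega$-fold table is wrapping around the fundamental table $\omega$ times), whereas you rigorously justify it by realizing each swallow map as the translation $\rho_{-1}$ of the ticker tape by $-N_1$ and using that $m\mapsto\rho_m$ is a homomorphism---a worthwhile filling-in of the step the paper treats as immediate.
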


\begin{proof}
By Lemma~\ref{prop:swallow-function}, there is some constant $k$ such that each snake $\mathfrak s_i$ maps to $\mathfrak s_{i+k}$ when wrapping vertically around the fundamental orbit table. It follows that each snake $\mathfrak s_i$ maps to $\mathfrak s_{i+\omega k}$ when wrapping vertically around the $\omega$-fold orbit table. This means that the number of ouroboroi in the $\omega$-fold orbit table is $\gcd(\omega k,\alpha)$, so 
\begin{align*}
\bar\alpha_\omega&=\gcd(\omega k,\alpha)\\
&=\gcd(\alpha,\gcd(\omega k,\omega\alpha))\\
&=\gcd(\alpha,\omega\cdot \gcd(k,\alpha))\\
&= \gcd(k,\alpha) \cdot \gcd(\alpha/ \gcd(k,\alpha),\omega)\\
&=\bar\alpha\cdot\gcd(\deg(\p_1),\omega). 
\end{align*}

The second equality is analogous. 
\end{proof}

Our running example of a scroll has $\alpha=2$ snakes and $\beta=6$ co-snakes, which we distinguished with different colors. As we took quotients to construct $\omega$-fold orbit tables, these snakes and co-snakes merged into (co)-ouroboroi, so we needed fewer colors to represent them. However, there are special cases when taking the quotient preserves the snakes and co-snakes, and thus the colors as well. Going back to our running example, notice that if $\omega$ is a multiple of $\deg(\p_1)=2$, then there are $\bar\alpha_\omega=\alpha=2$ ouroboroi, and if $\omega$ is a multiple of $\codeg(\p_1)=3$, then there are $\bar\beta_\omega=\beta=6$ co-ouroboroi. Specifically, these quantities are
\[
\bar\alpha_\omega=\begin{cases}2, & \omega\equiv 0\pmod{2} \\ 1, & \omega\equiv 1\pmod{2} \end{cases}\qquad\text{and}\qquad  
\bar\beta_\omega=\begin{cases}6, & \omega\equiv 0\pmod{3} \\ 2, & \omega\equiv 1\pmod{3} \\ 2, & \omega\equiv 2\pmod{3}. \end{cases}
\]

We now consider when the $\omega$-fold orbit table $\calt_\omega$ has the same number of ouroboroi as the scroll has snakes and also has the same number of co-ouroboroi as the scroll has co-snakes.
This happens precisely when $\omega$ is a multiple of $\lcm(\deg(\p_1),\codeg(\p_1))=\deg(\p_1)\cdot\codeg(\p_1)$.
In this case, we say that the orbit table $\calt_\omega=\p_\omega(\cals)$ is \dfn{color-preserving}.

\begin{proposition}
Given an orbit table $\calt_\omega=\p_\omega(\cals)$, the following conditions are equivalent:
\begin{enumerate}
\item $\bar\alpha_\omega=\alpha$ and $\bar\beta_\omega=\beta$;
\item $\Swallow_\omega$ and $\Coswallow_\omega$ are the identity permutation;
\item there is a bijection between the set of (co-)snakes in the scroll and the set of (co-)ouroboroi in the orbit table;
\item the number $rn=\omega nT(\cals)$ of entries in $\calt_\omega$ is a multiple of $\Scale(\calx)$;
\item $\deg(\p_1)\cdot\codeg(\p_1)$ divides $\omega$. 
\end{enumerate}
\end{proposition}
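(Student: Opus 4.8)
My plan is to make condition (1)---that no two snakes and no two co-snakes merge under $\p_\omega$---the hub, and prove each of (2), (3), (4), (5) equivalent to it in turn. Every condition is symmetric under interchanging the ``snake'' and ``co-snake'' halves, so in each equivalence I will argue the snake statement ($\bar\alpha_\omega=\alpha$) in full and then invoke the verbatim co-snake analogue ($\bar\beta_\omega=\beta$); condition (1) is the conjunction of the two halves.

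The equivalences (1)$\iff$(2) and (1)$\iff$(3) are the soft ones. For (2), recall that $\Swallow_\omega$ is a permutation of $\ss(\cals)$ all of whose cycles have the common length $\deg(\p_\omega)=\alpha/\bar\alpha_\omega$; hence $\Swallow_\omega=\id$ exactly when $\deg(\p_\omega)=1$, i.e.\ when $\bar\alpha_\omega=\alpha$, and dually $\Coswallow_\omega=\id$ iff $\bar\beta_\omega=\beta$. For (3), the map $\p_\omega$ carries snakes onto ouroboroi with each ouroboros the image of exactly $\deg(\p_\omega)$ snakes; a surjection of finite sets is a bijection iff the sets are equinumerous, so the induced map $\ss(\cals)\to\{\text{ouroboroi}\}$ is a bijection iff $\alpha=\bar\alpha_\omega$, and similarly on the co-snake side. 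Both equivalences then drop out immediately.

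Next, (1)$\iff$(5) is a one-line computation. Proposition~\ref{prop:ouroboroi} gives $\bar\alpha_\omega=\bar\alpha\cdot\gcd(\deg(\p_1),\omega)$, and since $\deg(\p_1)=\alpha/\bar\alpha$ this equals $\alpha$ iff $\gcd(\deg(\p_1),\omega)=\deg(\p_1)$, i.e.\ iff $\deg(\p_1)\mid\omega$; dually $\bar\beta_\omega=\beta$ iff $\codeg(\p_1)\mid\omega$. Thus (1) holds iff $\omega$ is a common multiple of $\deg(\p_1)$ and $\codeg(\p_1)$, and as these are coprime by Corollary~\ref{cor:relprime}, this is exactly $\deg(\p_1)\codeg(\p_1)\mid\omega$, which is (5).

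The remaining equivalence (1)$\iff$(4) carries the real content, and I would prove it by reinterpreting ``no merging'' through the ticker-tape shift $T_{rn}\colon k\mapsto k+rn$. Since $rn=\omega n T(\cals)$ is a multiple of the ticker-tape period $T(\calx)$, the shift $T_{rn}$ preserves the live pattern and commutes with $s$ and $c$, so it carries the snake of $k$ onto the snake of $k+rn$. The orbit table $\calt_\omega$ is obtained by reading ticker-tape positions modulo $rn$, so two distinct snakes are identified in $\calt_\omega$ precisely when an element of one is congruent mod $rn$ to an element of the other; this occurs for some pair of distinct snakes iff $T_{rn}$ moves some live entry off its own snake. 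Hence $\bar\alpha_\omega=\alpha$ iff $T_{rn}$ fixes every snake, and dually $\bar\beta_\omega=\beta$ iff $T_{rn}$ fixes every co-snake. Now the snake-preserving shifts form a subgroup of $\zz$ whose least positive element is the snake scale $p$, so they are exactly the multiples of $p$, and likewise the co-snake-preserving shifts are the multiples of $q$. Therefore (1) holds iff $p\mid rn$ and $q\mid rn$, i.e.\ iff $\lcm(p,q)\mid rn$, which by Corollary~\ref{cor:scale-lcm} is precisely $\Scale(\calx)\mid rn$, condition (4). The step that most needs care---and the one I would isolate as a short lemma---is the bridge between the combinatorial notion of two snakes merging and the arithmetic statement that $T_{rn}$ preserves a snake; here one must use transitivity and commutativity of the snake-group action to promote ``$k+rn$ lies on the snake of $k$ for one $k$'' to the same relation for all live entries, so that the set of snake-preserving shifts is genuinely the subgroup $p\zz$ rather than merely containing $p$.
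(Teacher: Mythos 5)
Your proposal is correct, and all of its ingredients are legitimately available in the paper: the uniform cycle structure of $\Swallow_\omega$ and $\Coswallow_\omega$ (Propositions~\ref{prop:swallow-function} and~\ref{prop:coswallow-function}), Proposition~\ref{prop:ouroboroi}, Corollary~\ref{cor:relprime}, Corollary~\ref{cor:scale-lcm}, and the paper's remark that $p$ and $q$ are the minimal shifts of the ticker tape leaving the snakes and co-snakes invariant. Your handling of (1)$\Leftrightarrow$(2), (1)$\Leftrightarrow$(3), and (1)$\Leftrightarrow$(5) matches the paper's almost verbatim (the paper dismisses the first two as ``immediate from the above discussion'' and proves (1)$\Leftrightarrow$(5) exactly as you do). The genuine difference is how condition (4) gets attached. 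The paper proves (4)$\Leftrightarrow$(2): it fixes the smallest live entry $k$, with snake $\mathfrak{s}$ and co-snake $\mathfrak{c}$, takes the smallest $k'>rn$ with $k'\equiv k\pmod{\Scale(\calx)}$, observes that $k'$ is a live entry of $\Swallow_\omega(\mathfrak{s})\cap\Coswallow_\omega(\mathfrak{c})$, concludes that $\Scale(\calx)\mid rn$ iff these two particular classes are fixed, and then upgrades ``one fixed'' to ``all fixed'' via the equal cycle lengths of the swallow permutations. You instead prove (1)$\Leftrightarrow$(4) directly through the shift $T_{rn}$: merging of two distinct snakes in $\calt_\omega$ is equivalent to $T_{rn}$ moving some live entry off its snake; the snake-preserving shifts form the subgroup $p\zz$ and the co-snake-preserving shifts the subgroup $q\zz$; and $\lcm(p,q)=\Scale(\calx)$ by Corollary~\ref{cor:scale-lcm}. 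The underlying mechanism is the same in both arguments---wrapping around the table respects the (co-)snake partition precisely when the scale divides $rn$, with commutativity/transitivity of the snake-group action promoting one preserved snake to all of them---but your packaging via the subgroup of preserving shifts makes fully explicit the bridge between combinatorial merging and arithmetic divisibility, which the paper's proof leaves rather terse (its claim about $k'$ lying in $\Swallow_\omega(\mathfrak{s})\cap\Coswallow_\omega(\mathfrak{c})$ is asserted without detail), at the cost of restating facts the paper extracts from the head/tail formalism. Note also that your route leans on the paper's unproved ``Note'' characterizing $p$ and $q$ as minimal invariant shifts; you rightly flag this and sketch the commutativity lemma needed to justify that the preserving shifts form exactly $p\zz$ and $q\zz$, so no gap remains.
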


\begin{proof}
The equivalence of conditions (1), (2), (3) is immediate from the above discussion. 

Let $k$ be the smallest positive integer such that $X_k=1$. Let $\mathfrak s$ and $\mathfrak c$ be the snake and the co-snake, respectively, containing $k$. Let $k'$ be the smallest integer such that $k'>rn$ and $k'\equiv k\pmod{\Scale(\calx)}$. Then $k'$ is a live entry in $\Swallow_\omega(\mathfrak s)\cap\Coswallow_\omega(\mathfrak c)$. It follows that $\Scale(\calx)$ divides $rn$ if and only if $\Swallow(\mathfrak s)=\mathfrak s$ and $\Coswallow(\mathfrak c)=\mathfrak c$. But all cycles of $\Swallow(\mathfrak s)$ (respectively, $\Coswallow(\mathfrak c)$) have the same size, so (4) is equivalent to (2).

From Proposition~\ref{prop:ouroboroi}, we see that (1) holds if and only if $\lcm(\deg(\p_1),\codeg(\p_1)) $ divides $\omega$. Since $\deg(\p_1)$ and $\codeg(\p_1)$ are relatively prime by Corollary~\ref{cor:relprime}, conditions (1) and (5) are equivalent.
\end{proof}

\section{Periods of sum vectors}\label{sec:sum-vectors}

In this section, we will introduce the \emph{sum vector} of a scroll and fully classify the possible periods for the sum vector of a scroll on $n$ vertices, when viewed as a cyclic word. 

For an orbit table $\calt$, define the \dfn{sum vector} of $\calt$, written $\vec{\Sigma}(\calt)$, to be the vector in $\mathbb N^n$ given by the column sums of $\calt$. Clearly, for every $\omega\in \mathbb N$, the $\omega$-fold orbit table of a scroll $\cals$ has sum vector $\omega \cdot \vec{\Sigma}(\calt_1)$. Thus, we can define $\vec{\Sigma}(\cals)$ to be the sum vector of any orbit table of $\cals$, and this vector is well-defined up to scalar multiples. Note that $\vec{\Sigma}(\cals)$ indicates the relative frequency of live entries in each column of $\cals$. From the toggling perspective, this means that $\vec{\Sigma}(\cals)$ indicates the relative frequency with which each vertex of $\calc_n$ appears in the independent sets that we obtain from iteratively applying the toggling operation $\tau$. 

In many orbit tables, such as the one in our running example in Figure~\ref{fig:example}, the sum vector is constant, so its period is $1$. However, this is not always the case. For example, the sum vector of the orbit table in Figure~\ref{fig:sum-vector-period-three} has period $3$.

\begin{figure}[!ht]
    \centering
    \setlength{\tabcolsep}{4.5pt}
\renewcommand{\arraystretch}{.8}
\tikzstyle{every node}=[font=\small,anchor=south]
    \begin{tabular}{cccccccccccccc}
$x$ & $v_1$ & $v_2$ & $v_3$ & $v_4$ & $v_5$ & $v_6$ &
$v_7$ & $v_8$ & $v_9$ & $v_{1\!0}$ & $v_{1\!1}$ & $v_{1\!2}$
\\\hline
$x^{(0)}$ & {\bf\color{color3}1} & \0 & {\bf\color{color4}1} & \0 & \0 & \0 & \0 & \0 & {\bf\color{color7}1} & \0 & {\bf\color{color8}1} & \0 \\
$x^{(1)}$ & \0 & \0 & \0 & {\bf\color{color5}1} & \0 & {\bf\color{color6}1} & \0 & \0 & \0 & \0 & \0 & {\bf\color{color3}1} \\
$x^{(2)}$ & \0 & {\bf\color{color4}1} & \0 & \0 & \0 & \0 & {\bf\color{color7}1} & \0 & {\bf\color{color8}1} & \0 & \0 & \0 \\
$x^{(3)}$ & \0 & \0 & {\bf\color{color5}1} & \0 & {\bf\color{color6}1} & \0 & \0 & \0 & \0 & {\bf\color{color3}1} & \0 & {\bf\color{color4}1} \\
$x^{(4)}$ & \0 & \0 & \0 & \0 & \0 & {\bf\color{color7}1} & \0 & {\bf\color{color8}1} & \0 & \0 & \0 & \0 \\ \hline \Tstrut
{\bf Sum:} & \bf{1} & \bf{1} & \bf{2} & \bf{1} & \bf{1} & \bf{2} & \bf{1} & \bf{1} & \bf{2} & \bf{1} & \bf{1} & \bf{2}
\end{tabular}
    \caption{An orbit table whose sum vector $\vec{\Sigma}(\calt_1)=(1,1,2,1,1,2,1,1,2,1,1,2)$ has period 3.}
    \label{fig:sum-vector-period-three}
\end{figure}

The sum vector of an orbit table or scroll is closely related to the scale, the distance between two consecutive entries in any fiber. Specifically, we will want to keep track of the number of \emph{columns} between such entries, and this is just the scale modulo $n$. However, it is also helpful to explicitly define this as an integer in $\{0,\dots,n-1\}$ (i.e., a particular residue modulo $n$).

\begin{definition}\label{def:colscale}
    The \dfn{column scale} of $\cals$, denoted $\colscale(\cals)$, is the value in $\{0,1,\dots,n-1\}$ that is congruent to $\scale(\cals)$ modulo $n$. 
\end{definition}

By definition, if $(i,j)$ and $(i',j')$ are consecutive entries in the same fiber, then $j+\colscale(\cals)\equiv j'\pmod{n}$.
 
\begin{lemma}~\label{lem:colscale}
For any scroll $\mathcal S$, the following equality holds: 
\[
\colscale(\cals)=\beta_D+2\beta_{\E}=n-\alpha_S - 2\alpha_L.
\]
\end{lemma}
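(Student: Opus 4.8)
The plan is to obtain both equalities by reducing the two expressions for the scale in Lemma~\ref{lem:scale} modulo $n$ and then pinning down the exact representative using Corollary~\ref{cor:2a+3b+4c=n+1}. Recall that $\colscale(\cals)$ is by definition the unique representative of $\Scale(\cals)$ modulo $n$ lying in $\{0,1,\dots,n-1\}$, so it suffices to show that both $\beta_D+2\beta_{\E}$ and $n-\alpha_S-2\alpha_L$ are congruent to $\Scale(\cals)$ modulo $n$ and that one of them lies in this canonical range.

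First I would establish the congruences. By Lemma~\ref{lem:scale}, $\Scale(\cals)=2\beta_{\E}+(n+1)\beta_D$, and since $(n+1)\beta_D\equiv\beta_D\pmod n$, we get $\Scale(\cals)\equiv\beta_D+2\beta_{\E}\pmod n$. Symmetrically, the second expression $(2n-1)\alpha_S+(2n-2)\alpha_L$ from the same lemma reduces to $-\alpha_S-2\alpha_L\equiv n-\alpha_S-2\alpha_L\pmod n$. The fact that these two residues actually coincide as integers is exactly the content of Corollary~\ref{cor:2a+3b+4c=n+1}: substituting $\beta_D=2(\alpha_S+\alpha_L)-1$ and $2\beta_{\E}=n+1-3\alpha_S-4\alpha_L$ gives $\beta_D+2\beta_{\E}=n-\alpha_S-2\alpha_L$ on the nose, which already proves the second equality in the statement without any appeal to reduction modulo $n$.

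Next I would check the range, which upgrades the congruence $\Scale(\cals)\equiv\beta_D+2\beta_{\E}$ to the desired equality $\colscale(\cals)=\beta_D+2\beta_{\E}$. Since $\beta_D,\beta_{\E}\ge0$ we have $\beta_D+2\beta_{\E}\ge0$; in fact $\beta_D=2(\alpha_S+\alpha_L)-1\ge1$ because feasibility forces $\alpha_S+\alpha_L>0$, so the quantity is at least $1$. For the upper bound, the identity $\beta_D+2\beta_{\E}=n-\alpha_S-2\alpha_L$ together with $\alpha_S+2\alpha_L\ge1$ (again because $\alpha_S+\alpha_L>0$) yields $\beta_D+2\beta_{\E}\le n-1$. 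Thus $\beta_D+2\beta_{\E}\in\{0,\dots,n-1\}$, and being congruent to $\Scale(\cals)$ modulo $n$, it must equal the canonical representative $\colscale(\cals)$.

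I do not anticipate a genuine obstacle here: the result is essentially a modular reduction of Lemma~\ref{lem:scale}, and the only place requiring care is confirming that $\beta_D+2\beta_{\E}$ already lands in the residue range $\{0,\dots,n-1\}$, so that the congruence promotes to an equality rather than merely $\colscale(\cals)\equiv\beta_D+2\beta_{\E}\pmod n$. That bookkeeping follows cleanly from Corollary~\ref{cor:2a+3b+4c=n+1} and the standing feasibility condition $\alpha_S+\alpha_L>0$.
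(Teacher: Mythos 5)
Your proposal is correct and follows essentially the same route as the paper's proof: both reduce the expression for $\Scale(\cals)$ from Lemma~\ref{lem:scale} modulo $n$, use Corollary~\ref{cor:2a+3b+4c=n+1} to convert $\beta_D+2\beta_{\E}$ into $n-\alpha_S-2\alpha_L$, and use that identity (with $\alpha_S+\alpha_L>0$) to confirm the value lies in $\{0,\dots,n-1\}$ so the congruence becomes an equality. Your version is slightly more explicit about the lower bound and about which congruences are being promoted, but there is no substantive difference in the argument.
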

\begin{proof}
The first equality follows from Definition~\ref{def:colscale} and Lemma~\ref{lem:scale}, as well as the fact that $\beta_D + 2\beta_{\E} <n$, which follows from Corollary~\ref{cor:2a+3b+4c=n+1}. For the second equality, we again apply Corollary~\ref{cor:2a+3b+4c=n+1} to find that
\begin{align*}
    \colscale(\cals) &= \beta_D+2\beta_{\E} \\
    &= 2\alpha_S + 2\alpha_L + 2\beta_{\E}- 1\\
    & = (2\beta_{\E} + 3\alpha_S + 4\alpha_L) - \alpha_S - 2\alpha_L -1 \\
    & = n+1 - \alpha_S - 2\alpha_L -1\\
    &= n- \alpha_S - 2\alpha_L.\qedhere
\end{align*}
\end{proof}

\begin{lemma}\label{lem:sumvector-scale}
For any scroll $\mathcal S$, the period of $\vec{\Sigma}(\cals)$ must divide $\gcd(n,\Scale(\cals)) = \gcd(n,\colscale(\cals))$. 
\end{lemma}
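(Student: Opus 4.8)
The plan is to show that $\vec{\Sigma}(\cals)$, regarded as a cyclic word of length $n$, is invariant under the cyclic shift by $\colscale(\cals)$ positions; a standard stabilizer argument then forces its period to divide $\gcd(n,\colscale(\cals))$, which equals $\gcd(n,\Scale(\cals))$ since $\colscale(\cals)\equiv\Scale(\cals)\pmod n$.

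To produce this shift-invariance, I would work inside any orbit table $\calt=\calt_\omega$ of $\cals$ (the fundamental one $\calt_1$ suffices, since passing to $\calt_\omega$ only scales $\vec{\Sigma}$ by $\omega$ and hence leaves its period unchanged). Consider the map $\bar{s}^{\,\beta}=\bar{c}^{\,\alpha}$ on $\Live(\calt)$, which is a power of the bijection $\bar{s}$ and is therefore itself a bijection. I claim it advances the column index by exactly $\colscale(\cals)$. Indeed, by Lemma~\ref{lem:shiftslither} the slither read off from any live entry is the slither of $\cals$ up to cyclic shift, so applying $\bar{s}$ a total of $\beta$ times from any starting live entry always traverses $\beta_{\E}$ steps of type $\E$ and $\beta_D$ steps of type $D$. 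An $\E$-step increases the column by $2$ and a $D$-step increases it by $1$, so the net change in column (reduced modulo $n$) is $\beta_D+2\beta_{\E}$, which is precisely $\colscale(\cals)$ by Lemma~\ref{lem:colscale}. Consequently $\bar{s}^{\,\beta}$ restricts to a bijection from the live entries in column $j$ onto the live entries in column $j+\colscale(\cals)\pmod n$. Equating the cardinalities of these two sets shows that the $j$th and $(j+\colscale(\cals))$th entries of $\vec{\Sigma}(\calt)$ agree, for every $j$.

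Finally I would assemble the conclusion. The previous step says that the cyclic shift by $\colscale(\cals)$ fixes the length-$n$ cyclic word $\vec{\Sigma}(\cals)$. The set of cyclic shifts in $\zz_n$ fixing a given cyclic word is the subgroup generated by its period $P$, and $P$ divides $n$. Since $\colscale(\cals)$ lies in this subgroup, $P$ divides $\colscale(\cals)$; together with $P\mid n$ this gives $P\mid\gcd(n,\colscale(\cals))=\gcd(n,\Scale(\cals))$, as desired.

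The main obstacle is the middle step: one must verify that $\bar{s}^{\,\beta}$ really is a \emph{column-preserving-up-to-$\colscale(\cals)$} bijection of the finite set $\Live(\calt)$. This requires both that the slither's multiset of step types (and hence the total horizontal displacement) is independent of the starting live entry, and careful bookkeeping of how the column displacement interacts with row wrap-around, so that the net horizontal shift is exactly the residue $\colscale(\cals)\in\{0,\dots,n-1\}$ rather than merely congruent to $\Scale(\cals)$ modulo $n$. The closing gcd/stabilizer deduction is routine.
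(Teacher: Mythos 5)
Your proof is correct, but it takes a genuinely different route from the paper's. The paper's own proof is a two-line reduction: the sum vector, viewed as a cyclic word of length $n$, inherits invariance under the shift by the ticker-tape period $T(\calx)$ (translation invariance of the entire ticker tape), and $T(\calx)$ divides $\Scale(\calx)$ by Theorem~\ref{thm:ttperiod}; combined with the trivial fact that the period divides $n$, the claim follows. You instead manufacture the shift symmetry from the group action: $\bar{s}^{\,\beta}=\bar{c}^{\,\alpha}$ is a bijection of $\Live(\calt)$, and counting step types via Lemma~\ref{lem:shiftslither} shows that it advances every column index by $\beta_D+2\beta_{\E}$, which equals $\colscale(\cals)$ by Lemma~\ref{lem:colscale}; hence the column sums are invariant under that cyclic shift, and the stabilizer argument finishes. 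Both arguments are sound, and they buy different things. The paper's is shorter and actually yields a slightly stronger statement: the period divides $\gcd(n,T(\calx))$, where $T(\calx)=\Scale(\calx)/\big(\deg(\calx)\codeg(\calx)\big)$ can be a proper divisor of $\Scale(\calx)$. Yours stays entirely inside the torsor framework (a bijection on live entries plus slither bookkeeping), avoids invoking Theorem~\ref{thm:ttperiod}, and directly identifies the relevant shift as $\colscale(\cals)$, the quantity used in the subsequent results such as Corollary~\ref{cor:sum-vector} and Theorem~\ref{thm:at least 4r}. One remark: the ``main obstacle'' you flag at the end---pinning the net displacement to the exact residue $\colscale(\cals)\in\{0,\dots,n-1\}$ rather than merely to $\Scale(\cals)$ modulo $n$---is not actually an obstacle, since column indices live in $\zz_n$ and a cyclic shift of $\vec{\Sigma}(\cals)$ is determined by a residue class, so congruence modulo $n$ is all your argument needs.
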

\begin{proof}
Clearly, the period of $\vec{\Sigma}(\cals)$ divides the period of the ticker tape, which divides $\Scale(\cals)$ by Theorem~\ref{thm:ttperiod}.  Since
$\vec{\Sigma}(\cals)$ has length $n$, its period divides $n$.
Thus, the period of $\vec{\Sigma}(\cals)$ divides $\gcd(n,\Scale(\cals))$. 
\end{proof}

\begin{corollary}\label{cor:sum-vector}
For any scroll $\cals$, the sum vector $\vec{\Sigma}(\cals)$ has odd period. 
\end{corollary}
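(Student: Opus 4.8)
The plan is to leverage the two lemmas immediately preceding the statement and reduce everything to a parity observation. By Lemma~\ref{lem:sumvector-scale}, the period of $\vec{\Sigma}(\cals)$ divides $\gcd(n,\Scale(\cals))=\gcd(n,\colscale(\cals))$. Since any divisor of an odd number is odd, it suffices to prove that this greatest common divisor is odd. But a gcd divides each of its arguments, so $\gcd(n,\colscale(\cals))$ divides $\colscale(\cals)$; hence it is enough to show that $\colscale(\cals)$ itself is odd.

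To establish that $\colscale(\cals)$ is odd, I would invoke Lemma~\ref{lem:colscale}, which gives
\[
\colscale(\cals)=\beta_D+2\beta_{\E}.
\]
The term $2\beta_{\E}$ is even, so the parity of $\colscale(\cals)$ is governed entirely by $\beta_D$. Here the key input is Corollary~\ref{cor:2a+3b+4c=n+1}, which asserts that $\beta_D=2(\alpha_S+\alpha_L)-1$. This is one less than an even number, so $\beta_D$ is odd, and therefore $\colscale(\cals)=\beta_D+2\beta_{\E}$ is odd as well.

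Chaining these observations finishes the argument: $\colscale(\cals)$ is odd, hence $\gcd(n,\colscale(\cals))$ is odd, hence the period of $\vec{\Sigma}(\cals)$ — being a divisor of that gcd — is odd. There is really no serious obstacle to overcome; the entire content is the parity fact $\beta_D\equiv 1\pmod 2$, which is already packaged into Corollary~\ref{cor:2a+3b+4c=n+1}. The only thing to be careful about is the logical direction of divisibility: one uses that the period divides the gcd (from Lemma~\ref{lem:sumvector-scale}) and that the gcd divides $\colscale(\cals)$, so that oddness propagates downward from $\colscale(\cals)$ to the period, rather than the reverse.
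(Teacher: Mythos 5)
Your proof is correct and follows essentially the same route as the paper: both rest on Lemma~\ref{lem:sumvector-scale} to get the period dividing $\colscale(\cals)$, Lemma~\ref{lem:colscale} for the formula $\colscale(\cals)=\beta_D+2\beta_{\E}$, and Corollary~\ref{cor:2a+3b+4c=n+1} for the oddness of $\beta_D$. The only cosmetic difference is that you spell out the intermediate step through $\gcd(n,\colscale(\cals))$, while the paper states the divisibility $\lambda\mid\colscale(\cals)$ directly.
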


\begin{proof}
Let $\lambda$ be the period of $\vec{\Sigma}(\cals)$. Thus far, we have established that
\[
\lambda\mid\colscale(\cals)\hspace{.5 cm} \text{ and } \hspace{.5 cm}\colscale(\cals) = \beta_D+2\beta_{\E}
\]
where the divisibility is by Lemma~\ref{lem:sumvector-scale} and the equality by Lemma~\ref{lem:colscale}. The result is immediate from Corollary~\ref{cor:2a+3b+4c=n+1}, which tells us that $\beta_D$ is odd.
\end{proof}

\begin{thm}\label{thm:at least 4r}
If the sum vector of a scroll on $n\geq 4$ vertices has period $\lambda$, then $\lambda \mid n$ and $n \geq 4\lambda$. 
\end{thm}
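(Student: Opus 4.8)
\section*{Proof proposal for Theorem~\ref{thm:at least 4r}}

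The plan is to treat the two assertions separately: the divisibility $\lambda\mid n$ is immediate, while $n\geq 4\lambda$ reduces to a case analysis in which only one borderline case is delicate and is dispatched by a rigidity argument. Throughout I write $\colscale=\colscale(\cals)$, let $\Sigma_j$ denote the $j$th entry of $\vec{\Sigma}(\cals)$, and set $A:=\alpha_S+2\alpha_L$. First I would invoke Lemma~\ref{lem:sumvector-scale}: the period $\lambda$ divides $\gcd(n,\colscale)$, so in particular $\lambda\mid n$ (giving the first assertion) and $\lambda\mid\colscale$. By Lemma~\ref{lem:colscale} we have $A=n-\colscale$, and since $\alpha=\alpha_S+\alpha_L\geq 1$ we get $A\geq 1$; as $\lambda\mid n$ and $\lambda\mid\colscale$ force $\lambda\mid A$, this upgrades to $A\geq\lambda$. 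Moreover Corollary~\ref{cor:2a+3b+4c=n+1} gives $\colscale=\beta_D+2\beta_{\E}$ with $\beta_D$ odd, so $\colscale$ is odd and hence $u:=\colscale/\lambda$ is an odd positive integer. From $n=A+\colscale\geq\lambda+u\lambda=(1+u)\lambda$ I obtain $n\geq 4\lambda$ at once whenever $u\geq 3$.

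The only remaining case is $u=1$, i.e.\ $\colscale=\lambda$, where the crude estimate yields merely $n\geq 2\lambda$. If $\lambda=1$ the statement is just the hypothesis $n\geq 4$, so I assume $\lambda\geq 3$ (recall $\lambda$ is odd) and aim for a contradiction. Now $\colscale=\lambda\mid n$, so writing $n=\colscale\,t$ gives $A=\colscale(t-1)$. I would then combine this with the elementary bounds $A\leq 2\alpha$ (since $\alpha_S\geq 0$) and $2\alpha\leq\colscale+1$ (since $\colscale\geq\beta_D=2\alpha-1$) to get $\colscale(t-1)\leq\colscale+1$, hence $\colscale(t-2)\leq 1$. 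Because $\colscale\geq 3$ and $t\geq 2$ (as $A\geq\lambda=\colscale>0$), this forces $t=2$, so $n=2\colscale$ and $A=\colscale$. Tracing the equalities back, $\alpha_S=2\alpha-A=1-2\beta_{\E}$, which forces $\beta_{\E}=0$ and $\alpha_S=1$. Thus the configuration is completely rigid: the slither is $D^{\beta_D}$ with $\beta_D=\colscale$ (so $\deg(\cals)=\colscale$), and the co-slither contains exactly one $S$ (so $\codeg(\cals)=1$).

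The contradiction then comes from the ticker-tape period. Applying Lemma~\ref{lem:scale} with $\beta_{\E}=0$ gives $\Scale(\calx)=(n+1)\beta_D=(n+1)\colscale$, so Theorem~\ref{thm:ttperiod} yields $T(\calx)=\Scale(\calx)/(\deg(\cals)\codeg(\cals))=(n+1)\colscale/\colscale=n+1$. Since the set of live positions of $\calx$ is invariant under translation by $T(\calx)$, and such a translation shifts each column by $T(\calx)\equiv 1\pmod n$, I conclude $\Sigma_j=\Sigma_{j+1}$ for all $j$; that is, $\vec{\Sigma}(\cals)$ is constant and $\lambda=1$, contradicting $\lambda\geq 3$. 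This rules out $\colscale=\lambda$ with $\lambda\geq 3$ and finishes the argument.

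I expect the main obstacle to be exactly the borderline case $\colscale=\lambda$: the divisibility information alone is consistent with $n=2\lambda$, so one genuinely needs the rigidity step forcing $\beta_{\E}=0$, $\alpha_S=1$, and $n=2\colscale$, after which the clean identity $T(\calx)=n+1$ collapses the sum vector to a constant. The one point I would write out carefully is the invariance $\Sigma_j=\Sigma_{j+T(\calx)}$, which holds because the live positions repeat with period $T(\calx)$ along the ticker tape and the column of a position is its residue modulo $n$; the rest is bookkeeping with the already-established formulas for $\colscale$, $\Scale(\calx)$, $\deg(\cals)$, and $\codeg(\cals)$.
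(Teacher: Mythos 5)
Your proof is correct, and it reorganizes the argument in a way that genuinely differs from the paper's. The paper assumes $\lambda>1$ and rules out $n\in\{\lambda,2\lambda,3\lambda\}$ value by value: $n=\lambda$ because $\colscale(\cals)$ is a positive multiple of $\lambda$ less than $n$; $n=2\lambda$ by showing $\beta_{\E}=0$, $\alpha_S=1$ and asserting that an all-$D$ slither gives each snake the same number of live entries in every column; and $n=3\lambda$ by two short arithmetic contradictions. You instead split on $u=\colscale(\cals)/\lambda$: its oddness (from $\colscale(\cals)=\beta_D+2\beta_{\E}$ with $\beta_D$ odd) eliminates even $u$ --- the same parity fact the paper deploys in one $n=3\lambda$ sub-case --- and $u\geq 3$ yields $n=A+\colscale(\cals)\geq(1+u)\lambda\geq 4\lambda$ outright, while $u=1$ is funneled by $A\leq 2\alpha\leq\colscale(\cals)+1$ into exactly the paper's rigid $n=2\lambda$ configuration; this single inequality also covers the paper's remaining $n=3\lambda$ sub-case, whose displayed identity incidentally contains a slip (it should read $2=3\alpha_S+2\alpha_L+4\beta_{\E}$, and its unique solution is excluded only because $n\geq 4$). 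In the rigid case you also conclude differently: rather than the paper's unproved column-count claim for snakes, you compute $T(\calx)=\Scale(\calx)/(\deg(\calx)\codeg(\calx))=n+1$ via Theorem~\ref{thm:ttperiod} and deduce that the column sums are constant from invariance of the live set under translation by $n+1\equiv 1\pmod{n}$. That step is sound --- the fundamental orbit table has $nT(\cals)=\lcm(T(\calx),n)$ entries, a common multiple of $T(\calx)$ and $n$, so translation by $T(\calx)$ permutes its live entries modulo the window length while shifting each column index by one --- and it is arguably tighter than the paper's version, at the cost of invoking Theorem~\ref{thm:ttperiod} where the paper's final step needs only the definition of a snake.
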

\begin{proof}
We may assume $\lambda>1$ since the result is trivial otherwise. Since it follows from Lemma~\ref{lem:sumvector-scale} that $\lambda\mid n$, we just need to show that $n \not\in\{\lambda,2\lambda,3\lambda\}$. 

By Lemmas~\ref{lem:colscale} and~\ref{lem:sumvector-scale},
\[
\lambda \mid \colscale(\cals) \hspace{.5 cm} \text{and} \hspace{.5 cm} \colscale(\cals) = n - \alpha_S - 2 \alpha_L = \beta_D + 2\beta_{\E}<n.
\]
It follows that $\lambda\neq n$. Suppose by way of contradiction that $n = 2\lambda$. Then, we must have $\colscale(\cals)=\frac{n}{2}$. In this case, $\alpha_S + 2 \alpha_L = \beta_D + 2\beta_{\E} = \frac n2$. By Corollary~\ref{cor:2a+3b+4c=n+1}, this implies that $\alpha_S + 2 \alpha_L = 2\alpha_S + 2 \alpha_L + 2\beta_{\E} - 1$. It follows that $\beta_{\E} = 0$ and $\alpha_S = 1$. However, when $\beta_{\E} = 0$, the slither is made entirely of $D$s, and each snake has the same number of live entries in each column. This means that the period of $\vec{\Sigma}(\cals)$ must be $1$, which contradicts the assumption that $\lambda>1$. 

Now suppose $n = 3\lambda$. We must have $\alpha_S + 2 \alpha_L \in \big\{\frac n3,\frac{2n}3\big\}$. If $\alpha_S + 2 \alpha_L = \frac {2n}3$, then 
\[
\alpha_S + 2 \alpha_L = 2(\beta_D + 2\beta_{\E}) = 2(2\alpha_S + 2 \alpha_L + 2\beta_{\E} - 1),
\]
which implies that $1=3\alpha_S + 2\alpha_L + 4\beta_{\E}$. This is clearly impossible. On the other hand, if $\alpha_S + 2 \alpha_L = \frac n3$, then $2(\alpha_S + 2 \alpha_L) = 2\alpha_S + 2 \alpha_L + 2\beta_{\E} - 1$. This is impossible because the two sides of the equation have opposite parities. 
\end{proof}

Next, we will prove a partial converse to Theorem~\ref{thm:at least 4r}. In particular, we will show that if $\lambda \mid n$ and $n \ge 4\lambda$, then there must exist some scroll on $n$ vertices whose sum vector has period $\lambda$. Before proving this theorem, we introduce one more useful definition. 

Suppose a snake has a slither of length $\beta$. A \dfn{segment} of the snake is a subset of $\Live(\cals)$ of the form 
\[
\left\{k,s(k),s^2(k),\dots,s^{\beta-1}(k)\right\},
\]
where $k$ is any live entry.

\begin{theorem}\label{thm:snakeconstruction}
For any $\lambda,k \in \mathbb N$ such that $\lambda$ is odd and $k \ge 4$, there exists a scroll $\cals$ on $n = k\lambda$ vertices such that the period of $\vec{\Sigma}(\cals)$ is $\lambda$.
\end{theorem}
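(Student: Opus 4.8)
The plan is to realize the required scroll through the classification in Theorem~\ref{thm:2a+3b+4c=n+1}: I will exhibit a \emph{feasible pair} $(W_s,W_c)$ on $n=k\lambda$ vertices whose induced scroll $\cals$ satisfies $\gcd\big(n,\colscale(\cals)\big)=\lambda$, and then choose the \emph{arrangement} of the two words so that the column--sum vector $\vec{\Sigma}(\cals)$, restricted to a single block of $\lambda$ consecutive columns, is aperiodic as a cyclic word. The first property bounds the period of $\vec{\Sigma}(\cals)$ above by $\lambda$ via Lemma~\ref{lem:sumvector-scale}, while the second pins it down to exactly $\lambda$.

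For the upper bound I prescribe the letter counts explicitly, splitting on the parity of $k$ (recall $\lambda$ is odd, which is exactly what keeps these counts integral). If $k$ is even I take $\alpha_S=1$, $\alpha_L=(\lambda-1)/2$, $\beta_{\E}=(k-2)\lambda/2$, and $\beta_D=2(\alpha_S+\alpha_L)-1=\lambda$; if $k$ is odd (so $k\ge 5$) I take $\alpha_S=0$, $\alpha_L=\lambda$, $\beta_{\E}=\big((k-4)\lambda+1\big)/2$, and $\beta_D=2\lambda-1$. In both cases one verifies $2\beta_{\E}+3\alpha_S+4\alpha_L=n+1$, so by Definition~\ref{def:2a+3b+4c=n+1} the counts are feasible and any arrangement into words defines a scroll by Theorem~\ref{thm:2a+3b+4c=n+1}. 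By Lemma~\ref{lem:colscale}, $\colscale(\cals)=n-(\alpha_S+2\alpha_L)$ equals $(k-1)\lambda$ in the first case and $(k-2)\lambda$ in the second, so $\gcd\big(n,\colscale(\cals)\big)=\lambda\gcd(k,k-1)=\lambda$ and $\lambda\gcd(k,2)=\lambda$ respectively. Lemma~\ref{lem:sumvector-scale} then forces the period of $\vec{\Sigma}(\cals)$ to divide $\lambda$, and Corollary~\ref{cor:sum-vector} confirms it is odd.

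For the lower bound I must rule out every proper divisor of $\lambda$. Since $\gcd\big(\colscale(\cals),n\big)=\lambda$, each fiber lies in a single residue class modulo $\lambda$ and visits every column of that class equally often; hence $\Sigma_j$ depends only on $j \bmod \lambda$, and $\vec{\Sigma}(\cals)$ is $k$ copies of a length-$\lambda$ block $(N_0,\dots,N_{\lambda-1})$. A length-$\lambda$ cyclic word has period exactly $\lambda$ as soon as one of its entries is attained exactly once, so it suffices to arrange $W_s,W_c$ so that precisely one residue class carries an anomalous column sum. Concretely I will compute the column multiset contributed by one snake from its slither (using that a segment advances the column by $\colscale \bmod n$ and, after $k$ segments, returns to its starting column while sweeping an explicit pattern of unit steps $\E\mapsto 2$ and $D\mapsto 1$), and then sum these contributions over the $\alpha$ snakes, which are cyclically offset in column by the co-slither (an $S$ offsets by $-1$, an $L$ by $-2$). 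For $k$ even the single $S$ among the $L$'s is the lone offset of odd parity and isolates one class modulo $\lambda$, reproducing the behavior of the $n=12$ example in Figure~\ref{fig:sum-vector-period-three}; for $k$ odd I instead cluster the $\beta_{\E}$ east-steps of the slither so that the surplus of live entries lands in a single class. In each case $(N_0,\dots,N_{\lambda-1})$ acquires a uniquely attained value, is therefore primitive, and the period of $\vec{\Sigma}(\cals)$ is exactly $\lambda$.

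The main obstacle is this explicit evaluation of $\vec{\Sigma}(\cals)$: tracking how the column-visit pattern of a single snake (governed by $W_s$) interacts with the inter-snake column offsets (governed by $W_c$), and proving that the block $(N_c)_{c\in\zz_\lambda}$ genuinely contains a uniquely attained entry. This is a finite but delicate bookkeeping computation, and it is precisely where the chosen arrangements of $W_s$ and $W_c$, together with the parity split on $k$, are needed; everything else is handled by the divisibility results of Lemmas~\ref{lem:colscale} and~\ref{lem:sumvector-scale} and Corollary~\ref{cor:2a+3b+4c=n+1}.
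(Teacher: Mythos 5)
Your high-level architecture matches the paper's (realize a feasible pair via Theorem~\ref{thm:2a+3b+4c=n+1}, get the upper bound from Lemmas~\ref{lem:colscale} and~\ref{lem:sumvector-scale}, then force the period up to $\lambda$ by producing a uniquely attained entry in the length-$\lambda$ block), but there are two genuine problems, one of them fatal. For $k$ odd, your letter counts $\alpha_S=0$, $\alpha_L=\lambda$ cannot work no matter how the slither is arranged. With these counts the co-slither is forced to be $L^{\lambda}$, so there are $\alpha=\lambda$ snakes, each offset from the previous by $-2$ columns. All snakes share the same slither (Lemma~\ref{lem:shiftslither}), and since $\gcd\big(n,\colscale(\cals)\big)=\lambda$, the contribution of one snake to the column sums is a vector $m$ depending only on the column residue modulo $\lambda$; the total sum vector at residue $j$ is therefore $\sum_{i=0}^{\lambda-1}m_{j-2i}$. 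Because $\gcd(2,\lambda)=1$, the indices $j-2i$ sweep every residue class exactly once, so this sum is independent of $j$: the sum vector is constant, of period $1$, not $\lambda$ (for $\lambda\ge 3$). No ``clustering of east-steps'' can repair this, since the slither only determines $m$, and summing $m$ over all of its cyclic shifts always yields a constant. This is exactly why the paper takes co-slither $S^2L^{\lambda-1}$, i.e.\ $\alpha=\lambda+1$ snakes: $\lambda$ of them symmetrize to a constant, and the single leftover snake, offset by $1$, carries the period-$\lambda$ pattern.

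Second, even for $k$ even (where your counts agree with the paper's), deferring the ``delicate bookkeeping'' defers the entire content of the theorem, because with those very counts the conclusion is genuinely arrangement-dependent. Concretely, take $\lambda=3$, $k=4$, $n=12$, so $\beta_D=3$, $\beta_{\E}=3$, $\alpha_S=\alpha_L=1$. The arrangement $W_s=DDD\E\E\E$, $W_c=SL$ is feasible (it is the scroll of $x=100100000000$); a segment starting in column $0$ occupies columns $0,1,2,3,5,7$, hitting each residue class modulo $3$ exactly twice, so every snake's contribution is constant and $\vec{\Sigma}(\cals)=(4,4,\dots,4)$ has period $1$. The paper's arrangement $D\E DD\E\E$ instead gives segment columns $0,1,3,4,5,7$ with class multiplicities $(2,3,1)$, and the two snakes (offsets $0$ and $-1$) sum to $(5,4,3)$ up to rotation, which has period $3$. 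So exhibiting and analyzing a specific arrangement---the slither $D^{\lambda-2}\E D^{2}\E^{\lambda k/2-\lambda-1}$ with co-slither $SL^{(\lambda-1)/2}$, whose single-snake contribution is constant except for one $+1$ and one $-1$, followed by the offset analysis---is not a finishing touch; it is the proof, and your proposal does not contain it.
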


\begin{proof}
We prove this result via a construction that depends on the parity of $k$. Since the sum vectors of the $\omega$-fold orbit tables for different choices of $\omega$ always have the same period, we can work with the minimal color-preserving orbit table. 

First, suppose $k$ is even. We will use the slither
\[D^{\lambda-2}\E D^2\E^{\frac{\lambda k}2 - \lambda - 1}\]
and the co-slither
\[SL^{\frac{\lambda-1}2}.\]
Note that this slither and co-slither are a feasible pair, and thus define a ticker tape (and scroll) by Theorem~\ref{thm:2a+3b+4c=n+1}. Furthermore, by a straightforward calculation, we find that the column scale of the associated scroll is $\lambda(k-1)$. Because $\gcd(\lambda(k-1),\lambda k) = \lambda$, each snake can be divided into $n/\lambda = k$ disjoint segments.

We will start by finding the contribution to the sum vector coming from a single snake, and we will then consider the total sum vector. Consider the mod $n$ positions of live entries that form a single snake. For a single segment, we begin with $\lambda-1$ adjacent positions, skip one position, take two more adjacent positions, and then take every other position until we place a 0 in position $\lambda(k-1)$. The next slither has an equivalent effect on the sum, but it is shifted by $\lambda(k-1)$ positions. Figure~\ref{fig:snakeconstruction} gives an example of this construction for $\lambda = 7$ and $k=4$.

\colorlet{color1}{Red}
\colorlet{color2}{Blue}
\colorlet{color3}{orange}
\colorlet{color4}{green!95!black}
\colorlet{color5}{magenta}

\begin{figure}[!ht]
\begin{tikzpicture}
\node at (0,0) {\bf\color{color1}1};
\node at (.35,0) {\bf\color{color1}1};
\node at (.7,0) {\bf\color{color1}1};
\node at (1.05,0) {\bf\color{color1}1};
\node at (1.4,0) {\bf\color{color1}1};
\node at (1.75,0) {\bf\color{color1}1};
\node at (2.1,0) {\0};

\node at (2.8,0) {\bf\color{color1}1};
\node at (3.15,0) {\bf\color{color1}1};
\node at (3.5,0) {\bf\color{color1}1};
\node at (3.85,0) {\0};
\node at (4.2,0) {\bf\color{color1}1};
\node at (4.55,0) {\0};
\node at (4.9,0) {\bf\color{color1}1};

\node at (5.6,0) {\0};
\node at (5.95,0) {\bf\color{color1}1};
\node at (6.3,0) {\0};
\node at (6.65,0) {\bf\color{color1}1};
\node at (7,0) {\0};
\node at (7.35,0) {\bf\color{color1}1};
\node at (7.7,0) {\0};

\node at (8.4,0) {\bf\color{color2}1};
\node at (8.75,0) {\bf\color{color2}1};
\node at (9.1,0) {\bf\color{color2}1};
\node at (9.45,0) {\bf\color{color2}1};
\node at (9.8,0) {\bf\color{color2}1};
\node at (10.15,0) {\bf\color{color2}1};
\node at (10.5,0) {\0};

\begin{scope}[shift={(0,-.5)}]
\node at (0,0) {\bf\color{color2}1};
\node at (.35,0) {\bf\color{color2}1};
\node at (.7,0) {\bf\color{color2}1};
\node at (1.05,0) {\0};
\node at (1.4,0) {\bf\color{color2}1};
\node at (1.75,0) {\0};
\node at (2.1,0) {\bf\color{color2}1};

\node at (2.8,0) {\0};
\node at (3.15,0) {\bf\color{color2}1};
\node at (3.5,0) {\0};
\node at (3.85,0) {\bf\color{color2}1};
\node at (4.2,0) {\0};
\node at (4.55,0) {\bf\color{color2}1};
\node at (4.9,0) {\0};

\node at (5.6,0) {\bf\color{color3}1};
\node at (5.95,0) {\bf\color{color3}1};
\node at (6.3,0) {\bf\color{color3}1};
\node at (6.65,0) {\bf\color{color3}1};
\node at (7,0) {\bf\color{color3}1};
\node at (7.35,0) {\bf\color{color3}1};
\node at (7.7,0) {\0};

\node at (8.4,0) {\bf\color{color3}1};
\node at (8.75,0) {\bf\color{color3}1};
\node at (9.1,0) {\bf\color{color3}1};
\node at (9.45,0) {\0};
\node at (9.8,0) {\bf\color{color3}1};
\node at (10.15,0) {\0};
\node at (10.5,0) {\bf\color{color3}1};
\end{scope}

\begin{scope}[shift={(0,-1)}]
\node at (0,0) {\0};
\node at (.35,0) {\bf\color{color3}1};
\node at (.7,0) {\0};
\node at (1.05,0) {\bf\color{color3}1};
\node at (1.4,0) {\0};
\node at (1.75,0) {\bf\color{color3}1};
\node at (2.1,0) {\0};

\node at (2.8,0) {\bf\color{color4}1};
\node at (3.15,0) {\bf\color{color4}1};
\node at (3.5,0) {\bf\color{color4}1};
\node at (3.85,0) {\bf\color{color4}1};
\node at (4.2,0) {\bf\color{color4}1};
\node at (4.55,0) {\bf\color{color4}1};
\node at (4.9,0) {\0};

\node at (5.6,0) {\bf\color{color4}1};
\node at (5.95,0) {\bf\color{color4}1};
\node at (6.3,0) {\bf\color{color4}1};
\node at (6.65,0) {\0};
\node at (7,0) {\bf\color{color4}1};
\node at (7.35,0) {\0};
\node at (7.7,0) {\bf\color{color4}1};

\node at (8.4,0) {\0};
\node at (8.75,0) {\bf\color{color4}1};
\node at (9.1,0) {\0};
\node at (9.45,0) {\bf\color{color4}1};
\node at (9.8,0) {\0};
\node at (10.15,0) {\bf\color{color4}1};
\node at (10.5,0) {\0};

\end{scope}

\begin{scope}[shift={(0,-1.6)}]

\node at (-.8,0) {\bf {Sum:}};

\node at (0,0) {\bf 2};
\node at (.35,0) {\bf 3};
\node at (.7,0) {\bf 2};
\node at (1.05,0) {\bf 2};
\node at (1.4,0) {\bf 2};
\node at (1.75,0) {\bf 2};
\node at (2.1,0) {\bf 1};

\node at (2.8,0) {\bf 2};
\node at (3.15,0) {\bf 3};
\node at (3.5,0) {\bf 2};
\node at (3.85,0) {\bf 2};
\node at (4.2,0) {\bf 2};
\node at (4.55,0) {\bf 2};
\node at (4.9,0) {\bf 1};

\node at (5.6,0) {\bf 2};
\node at (5.95,0) {\bf 3};
\node at (6.3,0) {\bf 2};
\node at (6.65,0) {\bf 2};
\node at (7,0) {\bf 2};
\node at (7.35,0) {\bf 2};
\node at (7.7,0) {\bf 1};

\node at (8.4,0) {\bf 2};
\node at (8.75,0) {\bf 3};
\node at (9.1,0) {\bf 2};
\node at (9.45,0) {\bf 2};
\node at (9.8,0) {\bf 2};
\node at (10.15,0) {\bf 2};
\node at (10.5,0) {\bf 1};

\end{scope}
\begin{scope}[shift = {(0,-2.5)}]
\node at (-2,0) {\bf {Snake 1 Contrib.:}};

\node at (0,0) {2};
\node at (.35,0) {3};
\node at (.7,0) {2};
\node at (1.05,0) {2};
\node at (1.4,0) {2};
\node at (1.75,0) {2};
\node at (2.1,0) {1};

\node at (2.8,0) {2};
\node at (3.15,0) {3};
\node at (3.5,0) {2};
\node at (3.85,0) {2};
\node at (4.2,0) {2};
\node at (4.55,0) {2};
\node at (4.9,0) {1};

\node at (5.6,0) {2};
\node at (5.95,0) {3};
\node at (6.3,0) {2};
\node at (6.65,0) {2};
\node at (7,0) {2};
\node at (7.35,0) {2};
\node at (7.7,0) {1};

\node at (8.4,0) {2};
\node at (8.75,0) {3};
\node at (9.1,0) {2};
\node at (9.45,0) {2};
\node at (9.8,0) {2};
\node at (10.15,0) {2};
\node at (10.5,0) {1};

\begin{scope}[shift={(0,-.5)}]

\node at (-2,0) {\bf {Snake 2 Contrib.:}};

\node at (0,0) {3};
\node at (.35,0) {2};
\node at (.7,0) {2};
\node at (1.05,0) {2};
\node at (1.4,0) {2};
\node at (1.75,0) {1};
\node at (2.1,0) {2};

\node at (2.8,0) {3};
\node at (3.15,0) {2};
\node at (3.5,0) {2};
\node at (3.85,0) {2};
\node at (4.2,0) {2};
\node at (4.55,0) {1};
\node at (4.9,0) {2};

\node at (5.6,0) {3};
\node at (5.95,0) {2};
\node at (6.3,0) {2};
\node at (6.65,0) {2};
\node at (7,0) {2};
\node at (7.35,0) {1};
\node at (7.7,0) {2};

\node at (8.4,0) {3};
\node at (8.75,0) {2};
\node at (9.1,0) {2};
\node at (9.45,0) {2};
\node at (9.8,0) {2};
\node at (10.15,0) {1};
\node at (10.5,0) {2};

\end{scope}

\begin{scope}[shift={(0,-1)}]

\node at (-2,0) {\bf {Snake 3 Contrib.:}};

\node at (0,0) {2};
\node at (.35,0) {2};
\node at (.7,0) {2};
\node at (1.05,0) {1};
\node at (1.4,0) {2};
\node at (1.75,0) {3};
\node at (2.1,0) {2};

\node at (2.8,0) {2};
\node at (3.15,0) {2};
\node at (3.5,0) {2};
\node at (3.85,0) {1};
\node at (4.2,0) {2};
\node at (4.55,0) {3};
\node at (4.9,0) {2};

\node at (5.6,0) {2};
\node at (5.95,0) {2};
\node at (6.3,0) {2};
\node at (6.65,0) {1};
\node at (7,0) {2};
\node at (7.35,0) {3};
\node at (7.7,0) {2};

\node at (8.4,0) {2};
\node at (8.75,0) {2};
\node at (9.1,0) {2};
\node at (9.45,0) {1};
\node at (9.8,0) {2};
\node at (10.15,0) {3};
\node at (10.5,0) {2};

\end{scope}
\begin{scope}[shift={(0,-1.5)}]

\node at (-2,0) {\bf {Snake 4 Contrib.:}};

\node at (0,0) {2};
\node at (.35,0) {1};
\node at (.7,0) {2};
\node at (1.05,0) {3};
\node at (1.4,0) {2};
\node at (1.75,0) {2};
\node at (2.1,0) {2};

\node at (2.8,0) {2};
\node at (3.15,0) {1};
\node at (3.5,0) {2};
\node at (3.85,0) {3};
\node at (4.2,0) {2};
\node at (4.55,0) {2};
\node at (4.9,0) {2};

\node at (5.6,0) {2};
\node at (5.95,0) {1};
\node at (6.3,0) {2};
\node at (6.65,0) {3};
\node at (7,0) {2};
\node at (7.35,0) {2};
\node at (7.7,0) {2};

\node at (8.4,0) {2};
\node at (8.75,0) {1};
\node at (9.1,0) {2};
\node at (9.45,0) {3};
\node at (9.8,0) {2};
\node at (10.15,0) {2};
\node at (10.5,0) {2};

\end{scope}

\begin{scope}[shift={(0,-2.1)}]

\node at (-1.9,0) {\bf {Overall Sum:}};

\node at (0,0) {\bf 9};
\node at (.35,0) {\bf 8};
\node at (.7,0) {\bf 8};
\node at (1.05,0) {\bf 8};
\node at (1.4,0) {\bf 8};
\node at (1.75,0) {\bf 8};
\node at (2.1,0) {\bf 7};

\node at (2.8,0) {\bf 9};
\node at (3.15,0) {\bf 8};
\node at (3.5,0) {\bf 8};
\node at (3.85,0) {\bf 8};
\node at (4.2,0) {\bf 8};
\node at (4.55,0) {\bf 8};
\node at (4.9,0) {\bf 7};

\node at (5.6,0) {\bf 9};
\node at (5.95,0) {\bf 8};
\node at (6.3,0) {\bf 8};
\node at (6.65,0) {\bf 8};
\node at (7,0) {\bf 8};
\node at (7.35,0) {\bf 8};
\node at (7.7,0) {\bf 7};

\node at (8.4,0) {\bf 9};
\node at (8.75,0) {\bf 8};
\node at (9.1,0) {\bf 8};
\node at (9.45,0) {\bf 8};
\node at (9.8,0) {\bf 8};
\node at (10.15,0) {\bf 8};
\node at (10.5,0) {\bf 7};

\end{scope}
\end{scope}
\end{tikzpicture}
\caption{An example of the construction used for the proof of Theorem~\ref{thm:snakeconstruction} when $k=4$ and $\lambda=7$. The first 3 rows of the figure show the column position of the live entries from a single snake, where different colors represent different segments. Note that when two live entries are adjacent in this presentation, they appear on subsequent rows of the orbit table, but the row position of live entries does not affect the column sums. The fourth row is the column sum of the first 3 rows, which is the contribution to the sum vector from a single snake. In the remaining rows of the figure, we add the contributions to the sum vector from each of the four snakes. This overall sum is the sum vector of the orbit table when $k=4$ and $\lambda = 7$. We show in the proof that this sum vector always has period $\lambda$. }\label{fig:snakeconstruction}
\end{figure}

Notice from the structure of the slither that the contribution to the sum vector from a single snake is periodic with period $\lambda$. In particular, almost all of the first $\lambda$ entries have the same value $x$, except that the second entry is $x+1$ and the $\lambda^{\text{th}}$ entry is $x-1$. This pattern repeats for every set of $\lambda$ columns. 

Now, we need to consider the sum vectors of the other snakes. Notice that for every $S$ in the co-slither, we get a snake whose sum vector is shifted one position left from the previous snake, while for every $L$ in the co-slither, we get a snake whose sum vector is shifted two positions left from the previous snake. For our co-slither of $SL^{(\lambda-1)/2}$, we consider how many times each of the entries $x$, $x-1$ and $x+1$ appear in each column. Through a straightforward computation, we find that $x+1$ appears once in the first column as well as every column (of the first $\lambda$) of even index but does not appear in the other columns. Similarly, $x-1$ appears once in the $\lambda^{\text{th}}$ column as well as once in every even-indexed column, but it does not appear in the other columns. Overall, this means that the initial $\lambda$ entries in the sum vector are $a+1,a,a\ldots,a,a-1$ for some $a$ (see Figure~\ref{fig:snakeconstruction}). It follows that the period of the sum vector is $\lambda$.

In the case where $k$ is odd, we use the slither
\[D^{2\lambda+1}\E^{((k-4)\lambda-1)/2}\]
and the co-slither
\[S^2L^{\lambda-1}.\]
It is again easy to confirm that the slither and co-slither are a feasible pair. Furthermore, by a straightforward calculation, we find that the column scale of the associated scroll is $\lambda(k-2)$. The argument for this case is essentially analogous to the argument we used for the case where $k$ is even. Figure~\ref{fig:snakeconstruction2} gives an example of this construction for $\lambda = 7$ and $k=5$.

\begin{figure}[!ht]
\begin{tikzpicture}
\node at (0,0) {\bf\color{color1}1};
\node at (.35,0) {\bf\color{color1}1};
\node at (.7,0) {\bf\color{color1}1};
\node at (1.05,0) {\bf\color{color1}1};
\node at (1.4,0) {\bf\color{color1}1};
\node at (1.75,0) {\bf\color{color1}1};
\node at (2.1,0) {\bf\color{color1}1};

\node at (2.8,0) {\bf\color{color1}1};
\node at (3.15,0) {\bf\color{color1}1};
\node at (3.5,0) {\bf\color{color1}1};
\node at (3.85,0) {\bf\color{color1}1};
\node at (4.2,0) {\bf\color{color1}1};
\node at (4.55,0) {\bf\color{color1}1};
\node at (4.9,0) {\bf\color{color1}1};

\node at (5.6,0) {\bf\color{color1}1};
\node at (5.95,0) {\bf\color{color1}1};
\node at (6.3,0) {\0};
\node at (6.65,0) {\bf\color{color1}1};
\node at (7,0) {\0};
\node at (7.35,0) {\bf\color{color1}1};
\node at (7.7,0) {\0};

\node at (8.4,0) {\bf\color{color2}1};
\node at (8.75,0) {\bf\color{color2}1};
\node at (9.1,0) {\bf\color{color2}1};
\node at (9.45,0) {\bf\color{color2}1};
\node at (9.8,0) {\bf\color{color2}1};
\node at (10.15,0) {\bf\color{color2}1};
\node at (10.5,0) {\bf\color{color2}1};

\node at (11.2,0) {\bf\color{color2}1};
\node at (11.55,0) {\bf\color{color2}1};
\node at (11.9,0) {\bf\color{color2}1};
\node at (12.25,0) {\bf\color{color2}1};
\node at (12.6,0) {\bf\color{color2}1};
\node at (12.95,0) {\bf\color{color2}1};
\node at (13.3,0) {\bf\color{color2}1};

\begin{scope}[shift={(0,-.5)}]
\node at (0,0) {\bf\color{color2}1};
\node at (.35,0) {\bf\color{color2}1};
\node at (.7,0) {\0};
\node at (1.05,0) {\bf\color{color2}1};
\node at (1.4,0) {\0};
\node at (1.75,0) {\bf\color{color2}1};
\node at (2.1,0) {\0};

\node at (2.8,0) {\bf\color{color3}1};
\node at (3.15,0) {\bf\color{color3}1};
\node at (3.5,0) {\bf\color{color3}1};
\node at (3.85,0) {\bf\color{color3}1};
\node at (4.2,0) {\bf\color{color3}1};
\node at (4.55,0) {\bf\color{color3}1};
\node at (4.9,0) {\bf\color{color3}1};

\node at (5.6,0) {\bf\color{color3}1};
\node at (5.95,0) {\bf\color{color3}1};
\node at (6.3,0) {\bf\color{color3}1};
\node at (6.65,0) {\bf\color{color3}1};
\node at (7,0) {\bf\color{color3}1};
\node at (7.35,0) {\bf\color{color3}1};
\node at (7.7,0) {\bf\color{color3}1};

\node at (8.4,0) {\bf\color{color3}1};
\node at (8.75,0) {\bf\color{color3}1};
\node at (9.1,0) {\0};
\node at (9.45,0) {\bf\color{color3}1};
\node at (9.8,0) {\0};
\node at (10.15,0) {\bf\color{color3}1};
\node at (10.5,0) {\0};

\node at (11.2,0) {\bf\color{color4}1};
\node at (11.55,0) {\bf\color{color4}1};
\node at (11.9,0) {\bf\color{color4}1};
\node at (12.25,0) {\bf\color{color4}1};
\node at (12.6,0) {\bf\color{color4}1};
\node at (12.95,0) {\bf\color{color4}1};
\node at (13.3,0) {\bf\color{color4}1};
\end{scope}

\begin{scope}[shift={(0,-1)}]
\node at (0,0) {\bf\color{color4}1};
\node at (.35,0) {\bf\color{color4}1};
\node at (.7,0) {\bf\color{color4}1};
\node at (1.05,0) {\bf\color{color4}1};
\node at (1.4,0) {\bf\color{color4}1};
\node at (1.75,0) {\bf\color{color4}1};
\node at (2.1,0) {\bf\color{color4}1};

\node at (2.8,0) {\bf\color{color4}1};
\node at (3.15,0) {\bf\color{color4}1};
\node at (3.5,0) {\0};
\node at (3.85,0) {\bf\color{color4}1};
\node at (4.2,0) {\0};
\node at (4.55,0) {\bf\color{color4}1};
\node at (4.9,0) {\0};

\node at (5.6,0) {\bf\color{color5}1};
\node at (5.95,0) {\bf\color{color5}1};
\node at (6.3,0) {\bf\color{color5}1};
\node at (6.65,0) {\bf\color{color5}1};
\node at (7,0) {\bf\color{color5}1};
\node at (7.35,0) {\bf\color{color5}1};
\node at (7.7,0) {\bf\color{color5}1};

\node at (8.4,0) {\bf\color{color5}1};
\node at (8.75,0) {\bf\color{color5}1};
\node at (9.1,0) {\bf\color{color5}1};
\node at (9.45,0) {\bf\color{color5}1};
\node at (9.8,0) {\bf\color{color5}1};
\node at (10.15,0) {\bf\color{color5}1};
\node at (10.5,0) {\bf\color{color5}1};

\node at (11.2,0) {\bf\color{color5}1};
\node at (11.55,0) {\bf\color{color5}1};
\node at (11.9,0) {\0};
\node at (12.25,0) {\bf\color{color5}1};
\node at (12.6,0) {\0};
\node at (12.95,0) {\bf\color{color5}1};
\node at (13.3,0) {\0};
\end{scope}

\begin{scope}[shift={(0,-1.6)}]

\node at (-.8,0) {\bf {Sum:}};

\node at (0,0) {\bf 3};
\node at (.35,0) {\bf 3};
\node at (.7,0) {\bf 2};
\node at (1.05,0) {\bf 3};
\node at (1.4,0) {\bf 2};
\node at (1.75,0) {\bf 3};
\node at (2.1,0) {\bf 2};

\node at (2.8,0) {\bf 3};
\node at (3.15,0) {\bf 3};
\node at (3.5,0) {\bf 2};
\node at (3.85,0) {\bf 3};
\node at (4.2,0) {\bf 2};
\node at (4.55,0) {\bf 3};
\node at (4.9,0) {\bf 2};

\node at (5.6,0) {\bf 3};
\node at (5.95,0) {\bf 3};
\node at (6.3,0) {\bf 2};
\node at (6.65,0) {\bf 3};
\node at (7,0) {\bf 2};
\node at (7.35,0) {\bf 3};
\node at (7.7,0) {\bf 2};

\node at (8.4,0) {\bf 3};
\node at (8.75,0) {\bf 3};
\node at (9.1,0) {\bf 2};
\node at (9.45,0) {\bf 3};
\node at (9.8,0) {\bf 2};
\node at (10.15,0) {\bf 3};
\node at (10.5,0) {\bf 2};

\node at (11.2,0) {\bf 3};
\node at (11.55,0) {\bf 3};
\node at (11.9,0) {\bf 2};
\node at (12.25,0) {\bf 3};
\node at (12.6,0) {\bf 2};
\node at (12.95,0) {\bf 3};
\node at (13.3,0) {\bf 2};

\end{scope}

\end{tikzpicture}
\caption{An example of the construction used for the proof of Theorem~\ref{thm:snakeconstruction} when $k=5$ and $\lambda=7$. The different colors represent different segments. The 1s are in the correct column (but not the correct row) for a single snake so that we can see the impact on the sum vector. }\label{fig:snakeconstruction2}
\end{figure}

Once we have the contribution to the sum vector from a single snake, showing that the total sum vector has period $\lambda$ is even easier than it was in the previous case. In particular, using a co-slither of $S^2L^{r-1}$ is equivalent to taking $\lambda$ sum vectors, each subsequently shifted $2$ positions, and then adding one more sum vector shifted $1$ position. The first $\lambda$ contributions to the sum vectors must add up to a constant sum because $\gcd(2,\lambda) = 1$. Thus, the total sum vector has period $\lambda$ because a single sum vector has period $\lambda$.  
\end{proof}

There is nothing special about the constructions used for Theorem~\ref{thm:snakeconstruction}, and we expect that there are many other classes of ticker tapes satisfying the conditions of the theorem. The challenge was to incorporate just enough asymmetry so as not to shrink the period of the sum vector.  One idea for future research would be to characterize the relative frequencies of different sum vector periods.

\section{Concluding remarks}\label{sec:conclusions}

We began the work described in this paper thinking it was a fun but fairly narrow and self-contained problem about toggling independent sets of cycle graphs. What we encountered was an unexplored mathematical theory that is applicable to other combinatorial actions. The iteration of a fixed Coxeter element $\tau$ in any generalized toggle group defines a finite dynamical system, and the concepts of a scroll, ticker tape, and orbit table all carry over. There is nothing special about these objects on their own, as they are just ways to describe and visualize the dynamics. What makes this problem unique is the fact that there are two commuting bijections, the successor and co-successor functions, that act simply transitively on the live entries. As far as we know, studying actions on the live entries is new, and it endows the orbits of the global action with a signature algebraic structure. 

It is natural to ask which other combinatorial actions also have torsor structures on their orbits. For this to have any chance of working, there needs to be strong structural regularity of the underlying graph. For example, in the current paper, the automorphism group of the graph---the dihedral group of order $2n$---acts transitively on the vertices, and the action commutes with the toggle operation. It is hard to see how the orbits of a general action could admit any meaningful algebraic structure without this. In other words, we should be looking at other vertex-transitive graphs that are in some sense similar to cycle graphs. In ongoing work, we have found torsor structures on orbits from toggling independent sets over distance-$2$ cycle graphs and from toggling other combinatorial objects over cycle graphs. 

In another direction, toggling independent sets can be formalized as one of the 256 elementary cellular automata (CA) rules. Specifically, a CA is a regular grid of cells, where each cell has a Boolean state in $\ff_2=\{0,1\}$, and the state of a vertex $v$ is updated at each timestep based on the states of $v$ and its neighbors. In the $1$-dimensional setting, a ``grid'' is simply a cycle graph or an infinite path graph. In either case, the update function is defined by some $f\colon\ff_2^3\to\ff_2$. Each one is characterized by its \emph{truth table}, a generic example of which is shown in Eq.~\eqref{eq:eca}.

\begin{equation}\label{eq:eca}
\begin{tabular}{c||c|c|c|c|c|c|c|c} 
  $x_{i-1}x_ix_{i+1}$ & 111 & 110 & 101 & 100 & 011 & 010 & 001 & 000 
  \\ \hline 
  $f^{(k)}_i(x_{i-1},x_i,x_{i+1})$ & $b_7$ & $b_6$ & $b_5$ & $b_4$ & $b_3$ & $b_2$ & $b_1$ & $b_0$  \\ 
\end{tabular}
\end{equation}
Since each $b_i$ is in $\ff_2$, there are $2^8=256$ possible functions; each such function is indexed by the integer $k\in\{0,1,\dots,255\}$, which is the integer whose binary representation is $b_7b_6b_5b_4b_3b_2b_1b_0$. The one indexed by $k$ is called the \emph{elementary cellular automata (ECA) rule $k$}. In this setting, the toggle functions introduced in this paper can be realized as ECA rule $1$. Technically, the bits $b_7$, $b_6$, and $b_3$ can be anything because the the substrings $111$, $110$, and $011$ do not appear in the set $\call$ of independent sets of $\calc_n$. However, it is arguably more natural to use ECA rule $1$, also known as the \emph{logical NOR} function. ECA rules are defined on all $2^n$ states, and the periodic points of ECA rule $1$ are precisely the independent sets. 

Casting the work from this paper in the setting of cellular automata poses new questions that would likely not be asked from within the dynamical algebraic combinatorics community. For example, which ECA rules lead to dynamics whose live entries admit a torsor structure? For such a toggle action to be defined, the local functions need to act on the set of periodic points, and this happens for precisely $104$ ECA rules, or $41$ up to the equivalences defined by reflection and inversion. These rules were classified by the third author with McCammond and Mortveit in \cite{macauley2008order}, and their toggle groups were studied in \cite{macauley2011dynamics}, though under the name of ``dynamics groups.'' Preliminary investigation has revealed that most of these $41$ ECA rules do not admit an interesting torsor structure, but largely for mundane reasons. For example, the toggle group is trivial for $26$ of the $41$ rules. For more on the connections between dynamical algebraic combinatorics and cellular automata, the interested reader can consult our paper in the proceedings of the annual AUTOMATA conference \cite{david2021toggling}. The first two thirds of that paper is a survey bringing these fields together, and the last third is an ``extended abstract'' of this current paper. In it, we also propose a number of open-ended problems in both fields, inspired by ideas and themes of the other one. It is our hope that the present work, and the ideas within, catches the interest of researchers with different backgrounds in these fields and beyond.

\section*{Acknowledgements}
We are grateful to the Banff International Research Station and the organizers of the Dynamical Algebraic Combinatorics Workshop, which was held virtually in 2020. We thank Laurent David for several substantial ideas and conversations during the initial stages of this project. We also thank Adam Dzedzej and David Einstein for helpful discussions. The first author was supported by the National Science Foundation under Award No.\ DGE--1656466 and Award No.\ 2201907, by a Fannie and John Hertz Foundation Fellowship, and by a Benjamin Peirce Fellowship at Harvard University. The third author was supported by Simons Foundation Collaboration Grant \#358242.
\bibliography{bibliography}
\bibliographystyle{halpha}
\end{document}